\documentclass{amsart}
\usepackage{amssymb,enumerate,amsmath,amsthm,mathtools,tikz-cd,footmisc,footnote}
\usepackage[hide links]{hyperref}

\makesavenoteenv{tabular}

\newtheorem{theorem}{Theorem}
\newtheorem{question}{Problem}
\newtheorem{conjecture}[theorem]{Conjecture}

\newtheorem{challenge}[question]{Problem}
\newtheorem{lemma}[theorem]{Lemma}
\newtheorem{proposition}[theorem]{Proposition}
\newtheorem{corollary}[theorem]{Corollary}

\theoremstyle{definition}
\newtheorem{definition}[theorem]{Definition}

\newcommand{\e}{\epsilon}

\newcommand{\Z}{\mathcal Z}
\newcommand{\Cu}{\mathrm{Cu}}

\makeatletter
\AtEndDocument{\write\@auxout{\protect\total@problems{\Roman{question}}}}
\def\total@problems#1{\global\def\totalproblems{#1}}
\total@problems{0}
\makeatother

\usepackage{perpage}
\newcounter{mparcnt}
\MakePerPage{mparcnt}

\begin{document}

\author[C. Schafhauser]{Christopher Schafhauser}
\address{\hspace{.5ex}Christopher Schafhauser, Department of  Mathematics, University of Nebraska-\linebreak Lincoln, Lincoln, NE 68588, United States}
\email{cschafhauser2@nebraska.edu}

\author[A. Tikuisis]{Aaron Tikuisis}
\address{\hspace{.5ex}Aaron Tikuisis, Department of Mathematics and Statistics, University of
  Ottawa, 585 King Edward, Ottawa, ON, K1N 6N5, Canada}
\email{aaron.tikuisis@uottawa.ca}

\author[S. White]{Stuart White}
\address{\hspace{.5ex}Stuart White, Mathematical Institute, University of Oxford,
  Oxford, OX2 6GG, United Kingdom}
\email{stuart.white@maths.ox.ac.uk}

\thanks{This work was supported by: NSF grant DMS-2400178 (Schafhauser); NSERC Discovery Grant (Tikuisis); a Visiting Research Fellowship at All Souls College (Tikuisis); Engineering and Physical Sciences Research Council [EP/X026647/1] (White). For the purpose of open access, the authors have applied a CC-BY license to any author accepted manuscript arising from this submission.}

\title{Nuclear {$C^*$}-algebras: 99 problems}
\dedicatory{In memory of Eberhard Kirchberg}
\date{\today}

\begin{abstract}
We present a collection of questions related to the structure and classification of nuclear {$C^*$}-algebras.
\end{abstract}

\maketitle

The Z\"urich International Congress of Mathematics in 1994 featured two talks on the structure and classification of simple nuclear {$C^*$}-algebras. In one, George Elliott set out his ambitious classification conjecture for simple separable nuclear {$C^*$}-algebras (\cite{Elliott:ICM}). In the other, Eberhard Kirchberg described a solution to half of Elliott's conjecture (\cite{Kirchberg:ICM}) -- this would later be published in \cite{Phillips:DM} as the renowned Kirchberg--Phillips theorem -- together with his celebrated `Geneva theorems’ on tensorial absorption. Over the next thirty years, and allowing for a certain amount of modification, the other half of Elliott’s grand vision has been realised through classification and structure theorems for simple nuclear {$C^*$}-algebras (Theorems \ref{Thm:UnitalClassification} and \ref{Thm:Structure} below), which parallel Connes' work on the structure and consequent uniqueness of the injective II$_1$ factor (\cite{Connes:Ann}).  Yet despite this success, there are many major challenges that remain; our purpose in this article is to collate a series of open questions stemming from the structure and classification of nuclear {$C^*$}-algebras. The continued fast pace of progress made for an extra challenge we were not expecting: some of our initial questions were (partially) answered during the time it took us to assemble this collection. In particular, \cite{AGKP:preprint} answers Problem~\ref{q:C*F2comparison} in full and Problem~\ref{q:Selfless} partially and instigated a whole slew of further developments, the very recent preprint \cite{Szabo:preprint} solves Problem \ref{KKUniqueness} and has impacts on others, and \cite{HP:arXiv} makes very interesting progress solving a case of Problem \ref{FreeIsoQn}. We have left these problems, and our commentary on them, essentially unchanged from our original version but added addenda to the end of Sections \ref{sec:K1injectivity}, \ref{sec:NonNuc} and \ref{Sect:Gen} setting out some of these developments.

Kirchberg’s contribution to and influence on the classification and structure theory for nuclear {$C^*$}-algebras has been immense.  A central theme of the paper is how results -- typically proved or inspired by Kirchberg -- for purely infinite {$C^*$}-algebras lay down a road to follow for stably finite {$C^*$}-algebras. It is no surprise that some essence of Kirchberg and his work can be found in nearly every topic within this paper.

\smallskip
\noindent\textbf{Acknowledgements. } Our work has benefited immensely from our participation in the AIM SQuaRE \emph{von Neumann algebraic techniques in the classification of {$C^*$}-algebras}, and we thank the other members of our SQuaRE -- Jos\'e Carri\'on, Kristin Courtney, and Jamie Gabe -- for their insights. We also thank Jakub Curda, Jamie Gabe, Julian Kranz, Robert Neagu, G\'abor Szab\'o, Hannes Thiel, Andrea Vaccaro, and the referee for the very helpful feedback and corrections they provided on earlier drafts of this manuscript.

\tableofcontents

\section{Classification and structure theorems for simple nuclear {$C^*$}-algebras}

Before turning to the questions, we start with a short summary of the current state of structure and classification theorems for {$C^*$}-algebras.
Projections in operator algebras are either infinite or finite, according to whether they are Murray--von Neumann equivalent to a proper subprojection of themselves. This provides the foundation of the theory of von Neumann algebras in Murray and von Neumann’s type decomposition, which ensures that a factor $\mathcal M$ is either finite (all projections are finite), the tensor product of a finite factor with $\mathcal B(\mathcal H)$, or type III -- purely infinite -- where all non-zero projections are infinite (and mutually equivalent in the countably decomposable setting).  R\o{}rdam’s famous example from \cite{Rordam:Acta} shows that a corresponding result does not hold for {$C^*$}-algebras. Nevertheless, there is a profound dichotomy -- due, perhaps inevitably, to Kirchberg\footnote{This result appears in R\o{}rdam’s book \cite{Rordam:Book} as Theorem 4.1.10 and in \cite{BlanchardKirchberg:JFA}, and we are grateful to Mikael R\o{}rdam for pointing out that Kirchberg communicated his dichotomy theorem in the mid-1990s. The form of the dichotomy for $\Z$-stable {$C^*$}-algebras in Theorem \ref{Thm:KDichotomoy} was independently noted in \cite{GJS:CMB} soon after the introduction of $\Z$.} -- which splits the {$C^*$}-algebras within the scope of classification into those that are stably finite (all projections in all matrix amplifications are finite) and those that are purely infinite (analogous to the type III factors; we defer the precise definition a few pages).  

The Jiang--Su algebra $\mathcal Z$ from \cite{JiangSu:AJM} appearing in the final part of the dichotomy below is, in a precise sense, the most natural {$C^*$}-analogue of the hyperfinite II$_1$ factor $\mathcal R$,\footnote{More precisely, $\Z$ is the minimal strongly self-absorbing {$C^*$}-algebra (\cite{Winter:JNCG}). Strongly self-absorbing {$C^*$}-algebras are discussed further in Section~\ref{sec:SSA}.}  and so $\Z$-stability for a {$C^*$}-algebra $A$ (i.e.\ $A\cong A\otimes\mathcal Z$) is best regarded as a {$C^*$}-analogue of the McDuff property for a II$_1$ factor $\mathcal M$ (i.e.\ $\mathcal M\cong\mathcal M\,\overline{\otimes}\,\mathcal R$). The algebra $\mathcal Z$ also shares many properties with $\mathbb C$ and is $KK$-equivalent to it, so that it can also be viewed as a non-commutative version of $\mathbb C$.

\begin{theorem}[{Kirchberg's dichotomy; \cite[Corollary~3.11]{BlanchardKirchberg:JFA}}]\label{Thm:KDichotomoy}
    Let $A$ and $B$ be simple non-elementary {$C^*$}-algebras.  If one of $A$ or $B$ is not stably finite, then $A\otimes B$ is purely infinite.
    In particular, every simple $\mathcal Z$-stable {$C^*$}-algebra is either stably finite or purely infinite.
\end{theorem}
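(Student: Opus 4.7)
The plan is to prove the first (main) assertion, from which the corollary for $\mathcal Z$-stable algebras is immediate: taking $B = \mathcal Z$ (which is simple and non-elementary) and using $A \cong A \otimes \mathcal Z$ shows that if a simple $\mathcal Z$-stable $A$ fails to be stably finite, then $A$ is purely infinite. For the main assertion I would characterise pure infiniteness of the tensor product $A \otimes B$ via the Kirchberg--Rørdam criterion that every nonzero positive element be properly infinite.

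\textbf{Step 1 --- extract a properly infinite projection in $A$.} Assume without loss of generality that $A$ is not stably finite, so some $M_n(A)$ contains an infinite projection $q$. Simplicity of $A$, combined with a standard comparison/halving argument going back to Cuntz, upgrades this to a properly infinite projection, which I continue to denote $q$, living in $M_n(A)$ for some possibly larger $n$.

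\textbf{Step 2 --- the elementary tensor $q \otimes b$ is properly infinite.} For any nonzero $b \in B_+$, choose $v_1, v_2 \in qM_n(A)q$ with $v_i^* v_i = q$ and $v_1 v_1^* \perp v_2 v_2^* \leq q$ witnessing proper infiniteness of $q$. Setting $w_i := v_i \otimes b^{1/2} \in M_n(A \otimes B)$, one computes $w_i^* w_i = q \otimes b$ and $w_1 w_1^* \perp w_2 w_2^* \leq q \otimes b$, so $q \otimes b$ is properly infinite in $M_n(A \otimes B)$.

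\textbf{Step 3 --- propagate proper infiniteness to every nonzero positive element.} Given $c \in (A \otimes B)_+ \setminus \{0\}$, the goal is that $c \oplus c$ is Cuntz-subequivalent to $c$. My strategy is to approximate $c$ by a finite sum of elementary tensors $\sum a_i \otimes b_i$, then invoke simplicity of $A$ together with Kirchberg's slice lemma to exhibit, inside the hereditary subalgebra generated by $c$, a nonzero element Cuntz-equivalent to $q \otimes b'$ for some nonzero $b' \in B_+$. Non-elementariness of the simple algebra $B$ supplies, in every hereditary subalgebra of $B$, arbitrarily many mutually orthogonal nonzero positive subelements; Cuntz-comparison bookkeeping then upgrades proper infiniteness of $q \otimes b'$ to proper infiniteness of $c$ itself. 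Since this holds for every nonzero positive $c$, the Kirchberg--Rørdam characterisation yields that $A \otimes B$ is purely infinite.

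\textbf{Main obstacle.} The technical burden of the proof sits entirely in Step 3, namely converting the existence of a single properly infinite elementary tensor into proper infiniteness of every nonzero positive element of $A \otimes B$. Both simplicity of $B$ (to perform slicing arguments inside $A \otimes B$) and the non-elementariness of $B$ (to fit orthogonal copies into arbitrarily small hereditary subalgebras) are essential here --- Rørdam's example shows that without the tensor factor $B$, the presence of a properly infinite element in $A$ is far from enough to conclude that $A$ itself is purely infinite. Steps 1 and 2 by contrast are formal, and the deduction of the $\mathcal Z$-stable dichotomy from the tensor product statement is immediate.
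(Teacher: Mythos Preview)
The paper does not contain a proof of this theorem: it is a survey of open problems, and Kirchberg's dichotomy is merely stated with a citation to \cite[Corollary~3.11]{BlanchardKirchberg:JFA} (with a footnote pointing also to \cite[Theorem~4.1.10]{Rordam:Book}). There is therefore no in-paper argument to compare against.

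As to the substance of your outline: Steps~1 and~2 are correct and standard --- in a simple {$C^*$}-algebra an infinite projection is automatically properly infinite by the usual Cuntz argument (dominate infinitely many orthogonal copies of the defect projection, then use simplicity to get $p \precsim q^{\oplus n}$), and your computation in Step~2 goes through once you note that $v_1^*v_2 = 0$ follows from $v_1v_1^* \perp v_2v_2^*$ for partial isometries in $qM_n(A)q$. Your identification of Step~3 as carrying the entire technical weight is accurate, and the ingredients you name --- the slice lemma to find a nonzero elementary tensor inside $\overline{c(A\otimes B)c}$, non-elementariness of $B$ to split into orthogonal pieces, and simplicity for comparison --- are exactly those used in the Blanchard--Kirchberg argument. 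One point worth making explicit: you are implicitly using that $A \otimes B$ is simple (minimal tensor product of simple algebras), both to invoke the ``every nonzero positive is properly infinite'' characterisation of simple pure infiniteness and to compare $c$ against matrix amplifications of the slice $a_0 \otimes b_0$.
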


Structure and classification results for operator algebras go hand in glove.  Murray and von Neumann’s uniqueness theorem for the (separably acting) hyperfinite II$_1$ factor is a classification result, but it is Connes’ structural counterpart -- the abstract characterisation of hyperfiniteness in terms of amenability and injectivity -- which both makes the uniqueness theorem vastly applicable in examples (particularly those coming from groups and dynamics) and underpins much of modern von Neumann algebra theory.

Turning to {$C^*$}-algebras, the \emph{unital classification theorem} is a centrepiece result established through the combined work of many researchers, including \cite{Kirchberg:ICM,Kirchberg:Book,Phillips:DM} for the purely infinite case -- the Kirchberg--Phillips theorem -- and \cite{Winter:Crelle,Winter:AJM,GLN:CR1,GLN:CR2,EGLN:JNCG,TWW:Ann} which handle the finite case, including conceptual breakthroughs by Winter (\cite{Winter:Crelle,Winter:AJM}) and Gong, Lin, and Niu's opus \cite{GLN:CR1,GLN:CR2}, which is the culmination of the long-term project of classifying by tracial approximation (\cite{Lin:Duke,Lin:IM,LinNiu:AIM}, for example).
The preprint \cite{CGSTW} gives an alternative, more abstract, proof of the finite case.  We follow the notation of \cite[Definition~2.3]{CGSTW} and write $KT_u(\,\cdot\,)$ for the classification invariant, consisting of $K$-theory together with the position of the unit, traces, and their pairing.\footnote{This formally differs from the Elliott invariant $\mathrm{Ell}(\,\cdot\,)$ in that the order structure on $K_0$ is not included (though on the class of algebras covered by the unital classification theorem, this order is recovered from tracial data, so the two invariants carry the same information).  See the discussion after \cite[Definition~2.3]{CGSTW} about the precise relation between these invariants.}

\begin{theorem}[Unital classification theorem]\label{Thm:UnitalClassification} Let $A$ and $B$ be unital simple separable nuclear $\mathcal Z$-stable {$C^*$}-algebras which satisfy Rosenberg and Schochet’s universal coefficient theorem (UCT).  Then $A\cong B$ if and only if $KT_u(A)\cong KT_u(B)$. \end{theorem}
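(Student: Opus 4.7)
The forward direction ($A \cong B \Rightarrow KT_u(A) \cong KT_u(B)$) is a routine functoriality check, since $K$-theory, the tracial simplex, and their pairing are all functorial. For the converse, my plan is to invoke Kirchberg's dichotomy (Theorem~\ref{Thm:KDichotomoy}) to split into the purely infinite and stably finite cases, and in both cases to run Elliott's approximate intertwining argument: given an isomorphism $\alpha \colon KT_u(A) \to KT_u(B)$, build $*$-homomorphisms $\varphi_n \colon A \to B$ and $\psi_n \colon B \to A$ whose induced $KT_u$-maps approximate $\alpha$ and $\alpha^{-1}$ on exhausting finite subsets, and such that $\psi_n \circ \varphi_n$ and $\varphi_{n+1} \circ \psi_n$ are approximately unitarily equivalent (with summable control) to the identity and to $\varphi_n$ respectively. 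This reduces the theorem to an \emph{existence} theorem (any $KT_u$-morphism is induced by a $*$-homomorphism, at least approximately) and a \emph{uniqueness} theorem (two $*$-homomorphisms inducing the same $KT_u$-map are approximately unitarily equivalent).

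In the purely infinite case, $KT_u$ collapses to pointed $K$-theory because traces are absent. After using Kirchberg's $\mathcal{O}_\infty$-absorption to make both $A$ and $B$ strongly purely infinite with full $KK$-theoretic flexibility, the UCT realises $\alpha$ as a $KK$-equivalence; Kirchberg's existence theorem lifts such a $KK$-class to a unital nuclear $*$-homomorphism, and Kirchberg's uniqueness theorem (two unital nuclear $*$-homomorphisms with the same $KK$-class are approximately unitarily equivalent) supplies the compatibility needed for the intertwining. This is the Kirchberg--Phillips route.

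In the stably finite case, both inputs are genuinely harder, because traces must be tracked and the pairing $K_0 \times T \to \mathbb{R}$ must be respected. Following the abstract route of \cite{CGSTW}, I would first classify unital $*$-homomorphisms from $A$ into ultrapower central sequence algebras $B_\omega \cap B'$ by their induced $KT_u$-data. $\Z$-stability enters decisively through tracial Cuntz comparison and the rich structure of traces on $\Z$-stable ultrapowers, which rigidifies the tracial side; the UCT converts the $KK$/$KL$-obstructions into tractable homotopy invariants via Dadarlat--Eilers-type stable uniqueness; and nuclearity provides the completely positive approximations needed to implement liftings. One then transfers this ultrapower-level classification to an approximate statement in $B$ itself, which yields both existence and uniqueness in the form required by Elliott's intertwining.

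The main obstacle is the stably finite uniqueness theorem: two $KT_u$-compatible maps $\varphi, \psi \colon A \to B$ are typically not literally approximately unitarily equivalent on the nose, and the standard remedy is a \emph{two-coloured} or stable uniqueness statement, showing they become approximately unitarily equivalent after adding a common auxiliary summand with controlled tracial behaviour. The crux is then to absorb this auxiliary summand back into $B$, which is where $\Z$-stability, McDuff-type absorption at the level of central sequences, and a careful Cuntz-comparison analysis of tracial data all have to cooperate. Once this tracial-$K$-theoretic reconciliation is achieved, the intertwining runs in both cases of the dichotomy and combines to yield $A \cong B$.
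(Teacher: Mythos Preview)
The paper is a survey and does not itself prove this theorem; it states it with references to \cite{Kirchberg:ICM,Kirchberg:Book,Phillips:DM} (purely infinite) and \cite{Winter:Crelle,Winter:AJM,GLN:CR1,GLN:CR2,EGLN:JNCG,TWW:Ann,CGSTW} (stably finite). Your high-level architecture---Kirchberg's dichotomy, reduction to existence and uniqueness of $^*$-homomorphisms, Elliott intertwining---matches what the paper describes as the strategy (see the paragraph after Theorem~\ref{Thm:UnitalClassification} and the opening of Section~\ref{Sec:Zembed}).

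There are, however, two concrete inaccuracies in your stably finite sketch. First, the target of the classified maps in \cite{CGSTW} is $B_\omega$ (or $B$ itself), \emph{not} the central sequence algebra $B_\omega\cap B'$. The relative commutant appears in the regularity machinery ($\Z$-stability, property (SI)) but is not where one classifies embeddings of $A$. Second, your uniqueness statement---that two $^*$-homomorphisms inducing the same $KT_u$-map are approximately unitarily equivalent---is false as stated: uniqueness up to approximate unitary equivalence requires the \emph{total} invariant $\underline{K}T_u$ (total $K$-theory plus the Hausdorffised algebraic $K_1$; see the discussion before Problem~\ref{q:ExistActions}). The passage from a $KT_u$-isomorphism to a $\underline{K}T_u$-isomorphism is a separate step (\cite[Theorem~3.9]{CGSTW}), and only then does uniqueness apply. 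Relatedly, your invocation of ``two-coloured'' uniqueness and absorbing auxiliary summands conflates the structure-theorem technology (Matui--Sato, \cite{SWW:IM,BBSTWW:MAMS}) with the classification uniqueness theorems; in the \cite{CGSTW} route, uniqueness runs instead through the trace-kernel extension and a $\Z$-stable $KK$-uniqueness theorem via Paschke duality (see Section~\ref{sec:K1injectivity}).
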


A non-unital classification -- for {$C^*$}-algebras satisfying the same hypotheses except unitality -- was announced independently by Gong and Lin and by the authors with Carri\'on and Gabe around 2021.
In the former case, this consists of the papers \cite{EGLN:JGP,GongLin:JGP,GongLin:AKT,GongLin:AKT2} (some of which are joint with Elliott and Niu). See the survey article \cite{GLN23:Survey} for an account of the Gong--Lin--Niu tracial approximation approach to classification, covering both the unital and non-unital cases. The latter approach continues to be in preparation (\cite{CGSTW:draft}).
We call simple separable nuclear {$C^*$}-algebras which are $\Z$-stable and satisfy the UCT \emph{classifiable}.

The classification theorems, and their proofs, split according to Kirchberg's dichotomy: the purely infinite case was settled independently by Kirchberg and Phillips in the '90s (a detailed comparative overview of the two proofs is given in \cite{Rordam:Book}). Together with Carri\'on and Gabe, we wrote a history and survey of Theorem~\ref{Thm:UnitalClassification} in the introduction to \cite{CGSTW} (see also \cite{White:ICM}), so we will be relatively brief here and note that the classification of {$C^*$}-algebras is obtained from a classification of $^*$-embeddings, and that the hypotheses involved split into two components.  The first batch of hypotheses -- working with (unital) simple separable nuclear {$C^*$}-algebras -- correspond to working with injective factors in the von Neumann situation. The two other hypotheses -- being $\Z$-stable and satisfying the UCT -- are more subtle. We will have lots more to say about both, but for now, we note that one key difference is that there is an abundance of known non-$\Z$-stable simple separable nuclear {$C^*$}-algebras out there (pioneered by Villadsen in \cite{Villadsen:JFA} and refined to great effect in \cite{Rordam:DM,Rordam:Acta,Villadsen:JAMS,Toms:Ann}, for example), whereas it remains a major challenge whether the UCT is automatic for (simple) separable nuclear {$C^*$}-algebras (Problem~\ref{q:UCT}).

The existence of non-$\mathcal Z$-stable {$C^*$}-algebras led to \emph{regularity} becoming a major theme in the structure theory of simple nuclear {$C^*$}-algebras. Initiated by Toms and Winter's analysis in \cite{TomsWinter:JFA}, this centres around distinguishing (simple nuclear) {$C^*$}-algebras which are poorly-behaved (for instance, due to unusual ordered $K$-theory) from those which are well-behaved.  With the unital classification theorem in place, the primary goal of the `regularity programme' is to characterise $\Z$-stability within the class of (simple) separable nuclear {$C^*$}-algebras.\footnote{It is worth emphasising that `regularity' type results, like Theorem \ref{Thm:Structure}, are expected to be independent of the UCT.} The following structure theorem sums up a major achievement of this endeavour to date.

\begin{theorem}[{Structure theorem; \cite{KirchbergPhillips:Crelle,Winter:IM10,Winter:IM12,Tikuisis:MA,MatuiSato:Duke, BBSTWW:MAMS,CETWW:IM,CE:APDE}}]\label{Thm:Structure}
    Let $A$ be a simple separable nuclear non-elementary {$C^*$}-algebra.
    The following are equivalent:
    \begin{enumerate}
        \item[(i)] 
        $A$ has finite nuclear dimension;
        \item[(i$'$)]
        $A$ has nuclear dimension at most one;
        \item[(ii)]
        $A$ is $\mathcal Z$-stable.
    \end{enumerate}
    If $A$ is stably finite and all its traces are quasidiagonal,\footnote{In the non-unital case, `all its traces are quasidiagonal' must be interpreted to include traces on hereditary subalgebras; see \cite[Theorem~7.2]{CE:APDE}.} then these are also equivalent to
    \begin{enumerate}
        \item[(i$''$)]
        $A$ has decomposition rank at most one.
    \end{enumerate}
\end{theorem}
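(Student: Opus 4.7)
The plan is to establish the cyclic implications
$(i') \Rightarrow (i) \Rightarrow (ii) \Rightarrow (i')$
(with $(i') \Rightarrow (i)$ trivial), and then refine $(i') \Rightarrow (i'')$ under the quasidiagonality hypothesis. Before anything, I would pass to the unital case: since nuclear dimension and $\mathcal Z$-stability are preserved by (and can be checked in) unitisations and hereditary subalgebras of simple $C^*$-algebras, I would reduce to the case where $A$ is unital, deferring the non-unital subtleties (the content of \cite{CE:APDE}) to a bootstrapping step at the end.

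For $(i) \Rightarrow (ii)$, I would follow Winter's approach: finite nuclear dimension together with simplicity gives strict comparison of positive elements in $A$, and using the fact that Cuntz-approximation by order-zero maps into finite-dimensional algebras localises $\mathcal Z$-stability in the bivariant sense, one obtains unital embeddings of dimension-drop algebras $I_{p,q}$ into central sequence algebras, hence $A \cong A \otimes \mathcal Z$. This is essentially \cite{Winter:IM10, Winter:IM12, Tikuisis:MA} — the latter extends the argument beyond the stably finite, unital case Winter originally handled by controlling the Cuntz semigroup through the complete reduction to the almost-unperforated, almost-divisible setting.

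The main obstacle, and the bulk of the work, is $(ii) \Rightarrow (i')$. Here I would split along Kirchberg's dichotomy (Theorem~\ref{Thm:KDichotomoy}). In the purely infinite case, the Kirchberg--Phillips structure theorem (together with the refinements in \cite{KirchbergPhillips:Crelle, BBSTWW:MAMS}) gives nuclear dimension at most $1$: any Kirchberg algebra is an inductive limit of one-dimensional NCCW complexes tensored with $\mathcal O_\infty$-like building blocks, and the two-coloured absorption of $\mathcal O_\infty$ lets one build $2$-coloured c.p.\ approximations. In the stably finite case — the hard one — I would combine the Matui--Sato technology of property (SI) and uniform tracial approximation \cite{MatuiSato:Duke} with the complemented partitions of unity developed in \cite{BBSTWW:MAMS} to produce, for every finite subset $F \subset A$ and $\varepsilon > 0$, two coloured c.p.c.\ approximations through finite-dimensional algebras factoring through order-zero maps whose images are approximately orthogonal. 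The combinatorial heart is to patch the tracial picture (where $A$ looks finite-dimensional after a uniform $2$-norm approximation) with the norm picture using $\mathcal Z$-stability to align central sequences; the final upgrade from nuclear dimension $\leq 2$ to $\leq 1$ is the content of \cite{CETWW:IM}, which uses a refined colour-reduction via unital embeddings of $\mathcal Z \otimes \mathcal Z \hookrightarrow \mathcal Z$.

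For the final clause, assuming $A$ is stably finite with all traces quasidiagonal, I would upgrade the c.p.c.\ order-zero approximations above to be quasidiagonal — that is, asymptotically multiplicative on the target and approximately preserving some faithful trace — so that the resulting decomposition provides a decomposition-rank bound rather than merely a nuclear dimension bound. This step uses the quasidiagonality of the traces to replace the $\mathcal O_\infty$-style absorption (which is incompatible with finite decomposition rank) by the $\mathcal Z$-style absorption with an additional tracial approximation built from the Tikuisis--White--Winter quasidiagonality theorem applied fibrewise over the trace simplex. The hardest step throughout is the passage from uniform tracial $2$-coloured approximations to genuine norm approximations, i.e.\ the interplay of (SI), $\mathcal Z$-stability, and the Cuntz semigroup in the non-monotracial case; this is where \cite{BBSTWW:MAMS} really does its heavy lifting, and it is the bottleneck I would expect to devote the most care to.
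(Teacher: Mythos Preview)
The paper is a survey and does not give a self-contained proof of this theorem; it cites the result and offers informal commentary on the proof strategy (see the paragraphs following Theorem~\ref{Thm:Structure} and the end of Section~\ref{Sec:CuReg}). Comparing your sketch against that commentary and the cited literature, the overall architecture is right --- split via Kirchberg's dichotomy, handle (i)$\Rightarrow$(ii) via Winter and (ii)$\Rightarrow$(i$'$) via the Matui--Sato/BBSTWW/CETWW line --- but there are several concrete errors that would derail the argument as written.

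First, your treatment of the purely infinite case is wrong. Kirchberg algebras are \emph{not} in general inductive limits of one-dimensional NCCW complexes tensored with $\mathcal O_\infty$-like building blocks; such models are only available under the UCT, and even then the nuclear dimension bound is not obtained that way. The actual route (see \cite{BBSTWW:MAMS}, and \cite{MatuiSato:Duke} for the earlier non-optimal bound) goes via $2$-coloured quasidiagonality of the cone over $A$ combined with $\mathcal O_\infty$-stability --- no inductive limit structure is invoked. Second, you attribute complemented partitions of unity to \cite{BBSTWW:MAMS}; these were introduced in \cite{CETWW:IM}. The paper \cite{BBSTWW:MAMS} handles the case where $\partial_e T(A)$ is compact, and \cite{CETWW:IM} removes that hypothesis via CPoU --- it is not an ``upgrade from $\leq 2$ to $\leq 1$'' (the $2$-colour bound was already present in the earlier work), and the mechanism has nothing to do with embeddings $\mathcal Z\otimes\mathcal Z\hookrightarrow\mathcal Z$. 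Third, in the decomposition rank clause, invoking the Tikuisis--White--Winter theorem ``fibrewise'' is confused: quasidiagonality of traces is \emph{assumed} here, not deduced, and its role is to supply genuine (not merely order-zero) finite-dimensional models so that the approximating maps can be taken approximately multiplicative --- this is the Matui--Sato mechanism from \cite{MatuiSato:Duke}, extended in \cite{BBSTWW:MAMS,CETWW:IM,CE:APDE}.
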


Here,  nuclear dimension and decomposition rank are the non-commutative covering dimensions from \cite{WinterZacharias:AIM} and \cite{KirchbergWinter:IJM}, respectively.\footnote{We have more to say about quasidiagonality further below, but as the quasidiagonality of all traces condition in (i$''$) is necessary, having finite decomposition rank is a stronger condition than finite nuclear dimension and is only applicable on the stably finite side of Kirchberg’s dichotomy.} The unital classification theorem was first proved with finite nuclear dimension in place of $\mathcal Z$-stability (and before it was shown that they are equivalent). Approximately finite-dimensional (AF) {$C^*$}-algebras are precisely those of nuclear dimension zero. From this viewpoint, the bound of one in Theorem~\ref{Thm:Structure}(i$'$) allows one to view condition \eqref{it:TW.1} as a one-dimensional -- or  $2$-coloured -- {$C^*$}-version of hyperfiniteness for von Neumann algebras. 

There are strong parallels between the structure theorem and Connes' theorem which persist at the technical level of proofs.  The last part of Connes' proof is to show that an injective II$_1$ factor $\mathcal M$ is hyperfinite after having established that it is McDuff. This uses the approximately inner flip on $\mathcal R$ and an embedding $\mathcal M\hookrightarrow \mathcal R^\omega$ (obtained from injectivity and providing the source of the finite-dimensional approximations needed for hyperfiniteness). In the unique trace case, the proof of Theorem \ref{Thm:Structure}(ii)$\implies$(i) follows a coloured (or higher dimensional)  version of this argument; in their paper \cite{MatuiSato:Duke}, Matui and Sato show how to use quasidiagonality (which, for nuclear {$C^*$}-algebras, can be thought of as an analogue of Connes’ embedding $\mathcal M\hookrightarrow \mathcal R^\omega$) together with a two-coloured version of the approximately inner flip (a concept made explicit in \cite{SWW:IM}) to obtain finite decomposition rank from $\Z$-stability (in hindsight, their proof also views $\Z$-stability as a $2$-coloured version of UHF-stability).  Later, a $2$-coloured version of quasidiagonality was given in \cite{SWW:IM} to prove \eqref{it:TW.2}$\implies$\eqref{it:TW.1} in the unique trace case, with increasingly more general trace spaces handled in \cite{BBSTWW:MAMS,CETWW:IM}.

While the equivalence in Theorem \ref{Thm:Structure} gives two very different ways of accessing the regularity needed for classification in examples,\footnote{Depending on the nature of an example of interest, one of these routes can be much more tractable than the other. See the discussion in  \cite[Section~1.1.4]{CGSTW}.} both the conditions of finite nuclear dimension and $\Z$-stability are somewhat technical. Ideally, the structure theorem would be further extended to characterise $\Z$-stability of simple separable nuclear {$C^*$}-algebras in even more basic terms.  For the infinite part of the structure theorem, one of Kirchberg’s famous Geneva theorems shows the way.

\begin{theorem}[Kirchberg]\label{Thm:Oinftystable}
Let $A$ be a simple separable nuclear {$C^*$}-algebra. Then $A$ is $\mathcal O_\infty$-stable if and only if $A$ is purely infinite.  
\end{theorem}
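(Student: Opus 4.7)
The forward implication follows immediately from Kirchberg's dichotomy (Theorem~\ref{Thm:KDichotomoy}): when $A$ is non-elementary, the fact that $\mathcal{O}_\infty$ is purely infinite (and so not stably finite) forces $A \cong A \otimes \mathcal{O}_\infty$ to be purely infinite. In the elementary case $A \cong \mathcal{K}(\mathcal{H})$ both sides of the equivalence fail, so there is nothing to check.

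For the converse the plan is to produce a unital $*$-homomorphism $\mathcal{O}_\infty \to F(A)$, where $F(A) = (A' \cap A^\omega)/\mathrm{Ann}(A, A^\omega)$ denotes Kirchberg's central sequence algebra. Since $\mathcal{O}_\infty$ is strongly self-absorbing, the general absorption principle -- a separable $C^*$-algebra $A$ is $\mathcal{D}$-stable precisely when there is a unital $*$-homomorphism $\mathcal{D} \to F(A)$ -- then gives $A \cong A \otimes \mathcal{O}_\infty$. Pure infiniteness supplies the raw material: in every non-zero hereditary subalgebra of $A$ one can find isometries $s_1, s_2, \ldots$ satisfying the $\mathcal{O}_\infty$ relations $s_i^* s_j = \delta_{ij}$ with $\sum_{i=1}^n s_i s_i^* < 1$ for all $n$ (equivalently, unital copies of $\mathcal{O}_\infty$ in arbitrarily small full corners). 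Nuclearity supplies the mechanism for centrality: by Choi--Effros lifting, $\mathrm{id}_A$ admits completely positive contractive approximate factorisations through matrix algebras, which allow one to perturb elements of $A^\omega$ into the relative commutant of any prescribed finite subset of $A$.

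The main obstacle is combining these ingredients: pure infiniteness alone yields Cuntz isometries without centrality, while nuclearity alone yields approximately central elements without the Cuntz relations. The synthesis proceeds by a Glimm-halving style argument. In a purely infinite simple $C^*$-algebra one can find countably many pairwise orthogonal full hereditary subalgebras, and so repeated application of pure infiniteness yields mutually orthogonal unital copies of $\mathcal{O}_\infty$ supported on very small positive elements. Transporting these copies through the completely positive approximations of $\mathrm{id}_A$ produces, inside $A^\omega$, isometries that still satisfy the $\mathcal{O}_\infty$ relations but whose commutators with a chosen finite subset of $A$ are arbitrarily small. A diagonal argument along $\omega$ then assembles these into a unital $*$-homomorphism $\mathcal{O}_\infty \to A' \cap A^\omega$, which descends to $F(A)$. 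This construction is the technical heart of the argument; once it is in place, strong self-absorption of $\mathcal{O}_\infty$ closes the proof.
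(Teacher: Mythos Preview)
The paper does not prove Theorem~\ref{Thm:Oinftystable}; it is stated as one of Kirchberg's celebrated Geneva theorems and cited without proof. So there is no argument in the paper to compare against, and I evaluate your proposal on its own terms.

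Your forward direction is correct, though invoking the full dichotomy theorem is heavier than necessary: that $A\otimes\mathcal O_\infty$ is purely infinite when $A$ is simple and non-elementary can be seen directly from the fact that every non-zero hereditary subalgebra of $A\otimes\mathcal O_\infty$ contains an infinite projection. (A minor terminological slip: the approximate factorisations of $\mathrm{id}_A$ through matrix algebras come from the completely positive approximation property, i.e.\ nuclearity itself, not from Choi--Effros lifting.)

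For the converse, you correctly identify the target---a unital embedding $\mathcal O_\infty\hookrightarrow F(A)$---but the mechanism you sketch has a genuine gap. The assertion that ``transporting these copies [of $\mathcal O_\infty$] through the completely positive approximations of $\mathrm{id}_A$ produces \ldots\ isometries that still satisfy the $\mathcal O_\infty$ relations'' does not work: completely positive contractions do not preserve isometry relations, so pushing an isometry $s$ through a factorisation $A\to M_n\to A$ yields neither an isometry nor anything approximating one, and there is no evident reason such images should lie near the relative commutant either. Nuclearity does enter Kirchberg's actual proof, but through a different door: the key input is that nuclear $^*$-homomorphisms into a simple purely infinite $C^*$-algebra are approximately inner (a consequence of Voiculescu-type absorption). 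This is leveraged to show that $A_\omega\cap A'$ is itself simple and purely infinite, whence it receives a unital copy of $\mathcal O_\infty$. Your Glimm-halving picture---orthogonal small copies of $\mathcal O_\infty$ assembled via a diagonal argument---is closer in spirit to how one establishes divisibility or tracial approximations in the stably finite setting and does not supply the centrality you need here; the missing idea is precisely the approximate innerness of nuclear maps.
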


Note that as a consequence of Kirchberg’s Theorems \ref{Thm:KDichotomoy} and \ref{Thm:Oinftystable} and the existence of traces on exact stably finite {$C^*$}-algebras (see Theorem \ref{thm:HaagerupQT} below), any  simple separable nuclear $\Z$-stable {$C^*$}-algebra without a non-zero densely defined lower semicontinuous trace is necessarily $\mathcal O_\infty$-stable.  

We now turn to the definition of simple purely infinite {$C^*$}-algebras, and emphasise the viewpoint that this is a straightforward condition on the order structure of positive elements.  Whereas projections play a key role in the structure of von Neumann algebras, {$C^*$}-algebras need not have many or even any projections, and instead one must work with positive elements. The theory of \emph{Cuntz comparison} is the positive element version of Murray and von Neumann's comparison theory for projections in von Neumann algebras. For positive elements $a$ and $b$ in a {$C^*$}-algebra $A$, $a$ is \emph{Cuntz below} $b$, written $a\precsim b$, if $a=\lim_{n\to\infty} x_n^*bx_n$ for some sequence $(x_n)_{n=1}^\infty \subseteq A$; they are called \emph{Cuntz equivalent} when $a\precsim b$ and $b\precsim a$.  The \emph{Cuntz semigroup} $\Cu(A)$ is built from positive elements in the stabilisation $A\otimes\mathcal K$ modulo Cuntz equivalence. (The Cuntz semigroup has been well-studied both as a tool for {$C^*$}-algebraic advances and, to some extent, as an object of interest in its own right. The survey articles \cite{AraPereraToms} and \cite{GardellaPerera} are excellent places to get started.)
Then a simple {$C^*$}-algebra $A\neq \mathbb C$ is \emph{purely infinite} if any two non-zero positive elements are Cuntz equivalent, or equivalently $\Cu(A)\cong\{0,\infty\}$. This should be compared with the fact that a countably decomposable von Neumann factor which is not $\mathbb C$ is type III if and only if all its non-zero projections are equivalent.
Although it is not immediate from this definition that simple purely infinite {$C^*$}-algebras have any projections, it turns out that they have real rank zero (\cite[Proposition~3.9]{BrownPedersen:JFA} for the unital case; the non-unital case can be seen by combining this result with \cite[Theorem~1.2]{BlackadarCuntz:AMJ} and \cite[Theorem~3.8]{BrownPedersen:JFA}).

It is very natural to ask for a stably finite analogue of Theorem~\ref{Thm:Oinftystable}, i.e.\ to characterise $\Z$-stability for  simple separable nuclear {$C^*$}-algebras in terms of positive elements.  We will ask this in Problems \ref{q:TW} and \ref{Q7} below,
but we are certainly not the first to pose this question. Indeed, our exposition has proceeded ahistorically -- all of Theorem \ref{Thm:Structure} and more was predicted in the highly prophetic \emph{Toms--Winter conjecture} from around 2008.  

We need one more concept to describe the Toms--Winter conjecture. Functionals on the Cuntz semigroup arise from quasitraces (as discussed in \cite{MilhojRordam}, for example).\footnote{In the context of the Toms--Winter conjecture, one can work with traces as, by Haagerup's Theorem \ref{thm:HaagerupQT}, any quasitrace on an exact {$C^*$}-algebra is a trace.} For a unital {$C^*$}-algebra $A$ (where this is easier to describe), let $QT(A)$ denote the set of normalised quasitraces. Then any $\tau\in QT(A)$ induces an invariant of Cuntz equivalence by
\begin{equation}\label{dtau}
d_\tau(a)\coloneqq \lim_{n\to\infty}\tau(a^{1/n}),\quad a\in (A\otimes\mathcal K)_+
\end{equation}
(extending $\tau$ canonically to a densely defined lower semicontinuous quasitrace on $A\otimes\mathcal K$). We say that a unital simple {$C^*$}-algebra $A$ has \emph{strict comparison}\footnote{The reader is warned that the literature contains many variants of `strict comparison' (often under the same name, and especially outside of the unital simple case covered here); one must be careful about this when applying results.} if for all non-zero $a,b \in (A\otimes\mathcal K)_+$, 
\begin{equation}\label{eqn:StrictComp}
d_\tau(a)<d_\tau(b)\text{ for all }\tau \in QT(A)\Longrightarrow a\precsim b.
\end{equation}
This is the same as $\Cu(A)$ being almost unperforated (see \cite[Proposition~3.2]{Rordam:IJM}, which is generalised to an appropriate  version of strict comparison in the non-simple case).
Notice that if a unital simple {$C^*$}-algebra has no  quasitraces, then the left-hand side of \eqref{eqn:StrictComp} is vacuous and strict comparison is equivalent to $A\otimes\mathcal K$ being purely infinite (which is equivalent to pure infiniteness of $A$).  
The notion of strict comparison goes back at least to Blackadar's `fundamental comparability question' (version 2) in \cite{Blackadar88}.\footnote{This is a version of \eqref{eqn:StrictComp} for projections, which was ultimately answered by Villadsen \cite{Villadsen:JFA}.}
For stably finite {$C^*$}-algebras, we view strict comparison as the appropriate {$C^*$}-algebra version of the fact that the order on projections in a II$_1$ factor is determined by the trace.   With this setup, the Toms--Winter conjecture (and now mostly theorem) can be stated as follows.

\begin{conjecture}[Toms--Winter]\label{TWConjecture}
Let $A$ be a simple separable nuclear non-elementary {$C^*$}-algebra.  The following are equivalent:
\begin{enumerate}[(i)]
\item \label{it:TW.1}
$A$ has finite nuclear dimension;
\item \label{it:TW.2}
$A$ is $\mathcal Z$-stable;
\item \label{it:TW.3}
$A$ has strict comparison.
\end{enumerate}
\end{conjecture}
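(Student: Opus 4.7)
Since Theorem~\ref{Thm:Structure} already establishes (i) $\Leftrightarrow$ (ii), the task reduces to fitting strict comparison into this equivalence, which I split into (ii) $\Rightarrow$ (iii) (a theorem) and (iii) $\Rightarrow$ (ii) (the genuinely open content of the conjecture, known only under additional tracial hypotheses).

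For (ii) $\Rightarrow$ (iii), the plan is to invoke R\o{}rdam's theorem that $\Z$-stability forces $\Cu(A)$ to be almost unperforated, whose core is a functional calculus argument exploiting that $\Z$ contains c.p.c.\ order-zero contractions with prescribed rational tracial values. Simplicity and unitality then convert almost unperforation into strict comparison via \cite[Proposition~3.2]{Rordam:IJM}, delivering exactly \eqref{eqn:StrictComp}; non-elementarity enters only to exclude finite-dimensional $A$ where the condition degenerates.

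For (iii) $\Rightarrow$ (ii) the standard strategy, due to Matui--Sato, is to build a unital $^*$-homomorphism $\Z \to F(A)$ into Kirchberg's central sequence algebra $F(A) = (A^\omega \cap A')/\mathrm{Ann}(A, A^\omega)$, which by R\o{}rdam's characterisation suffices for $\Z$-stability. The three steps are: (a) use nuclearity together with the weak$^*$-affine structure on $T(A)$ to produce sequences of c.p.c.\ order-zero maps $M_k \to A^\omega \cap A'$ whose induced maps on traces are unital; (b) upgrade strict comparison to the Matui--Sato \emph{property (SI)}, which is precisely the device that converts tracial smallness of central sequences into genuine Cuntz subequivalence; (c) glue (a) and (b) by an approximate intertwining to embed a prime dimension-drop algebra, and hence $\Z$, unitally into $F(A)$.

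The main obstacle is step (a) in full generality. When $\partial_e T(A)$ has finitely many points (Matui--Sato), is compact and finite-dimensional (Kirchberg--R\o{}rdam, Sato), or is merely compact (CETWW via the trace-kernel ideal $J_A \subseteq A^\omega$), a partition-of-unity argument over $\partial_e T(A)$ assembles matricial approximations from local nuclear data. For extremal boundaries that are non-compact or infinite-dimensional, neither the partition construction nor the CETWW trace-kernel extension $0 \to J_A \to A^\omega \to A^\omega/J_A \to 0$ currently delivers such maps without further input; breakthrough ideas are likely to come either from Cuntz comparison techniques working uniformly in traces, or from new vanishing results for ideals of $A^\omega$ paralleling what $\mathcal O_\infty$-absorption achieves on the infinite side of Kirchberg's dichotomy.
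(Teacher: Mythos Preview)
This is a \emph{conjecture}, and the paper does not prove it; the open direction (iii)$\Rightarrow$(ii) is recorded as Problem~\ref{q:TW}. Your outline of what \emph{is} known --- (i)$\Leftrightarrow$(ii) from Theorem~\ref{Thm:Structure}, (ii)$\Rightarrow$(iii) from R{\o}rdam, and (iii)$\Rightarrow$(ii) open but partially resolved via Matui--Sato techniques --- matches the paper's treatment.

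There is, however, a factual error in your survey of partial results. You say step~(a) is handled when $\partial_e T(A)$ is ``merely compact (CETWW via the trace-kernel ideal)''. This is wrong twice over: \cite{CETWW:IM} establishes (ii)$\Rightarrow$(i), not (iii)$\Rightarrow$(ii); and (iii)$\Rightarrow$(ii) is \emph{not} known under compactness of the extremal boundary alone. The results of \cite{KirchbergRordam:Crelle,TWW:IMRN,Sato12} require $\partial_e T(A)$ compact \emph{and} finite-dimensional (Theorem~\ref{FiniteDimBoundaryImpliesGamma}), since Ozawa's $W^*$-bundle argument for property~$\Gamma$ needs a finite-dimensional base. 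You also omit the paper's sharper reformulation via Theorem~\ref{Thm:ZStableEquivGammaSC}: the open content is precisely whether strict comparison forces uniform property~$\Gamma$ (Problem~\ref{q:Gamma}), and Addendum~\ref{Addendum6.1} records that Elliott--Niu have now shown this fails in general, so any eventual proof of (iii)$\Rightarrow$(ii) cannot proceed by deducing property~$\Gamma$ from nuclearity alone.
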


 Conjecture~\ref{TWConjecture} holds in the absence of (densely defined lower semicontinuous) traces. We record this separately for later reference, though it essentially is contained in the results described above.\footnote{We noted above that Theorems \ref{Thm:KDichotomoy} and \ref{Thm:Oinftystable} give (ii)$\implies$(iii) and (iii$'$), and the reverse direction follows from Theorem \ref{Thm:Oinftystable} as $\mathcal O_\infty$ is $\Z$-stable.  (i)$\implies$(ii) is best done using a version of the dichotomy theorem for simple {$C^*$}-algebras with finite nuclear dimension (\cite[Theorem 5.4]{WinterZacharias:AIM}, heavily using Kirchberg's work from \cite{Kirchberg:Abel}), and the reverse implication was first established by Matui and Sato in \cite{MatuiSato:Duke} with the optimal bound later obtained in \cite{BBSTWW:MAMS}.}

 \begin{theorem}\label{Thm:Tracelessstructure}
 Let $A$ be a simple separable nuclear (non-elementary) {$C^*$}-algebra with no non-zero densely defined lower semicontinuous traces.  The following are equivalent:
\begin{enumerate}[(i)]
\item[(i)] 
$A$ has finite nuclear dimension;
\item[(i$'$)] $A$ has nuclear dimension one;
\item[(ii)]
$A$ is $\mathcal Z$-stable;
\item[(ii$'$)] $A$ is $\mathcal O_\infty$-stable;
\item[(iii)] 
$A$ has strict comparison;
\item[(iii$'$)] $A$ is purely infinite.
\end{enumerate}
 \end{theorem}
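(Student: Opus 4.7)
The plan is to exploit the absence of traces to collapse Theorem~\ref{Thm:Tracelessstructure} into the purely infinite setting, where Kirchberg's Geneva theorem (Theorem~\ref{Thm:Oinftystable}) does most of the work. The key observation is that a simple separable nuclear $C^*$-algebra $A$ with no non-zero densely defined lower semicontinuous trace cannot be stably finite: by Theorem~\ref{thm:HaagerupQT} a quasitrace on an exact $C^*$-algebra is a trace, and a simple stably finite (non-elementary) nuclear $C^*$-algebra supports a non-zero densely defined lower semicontinuous quasitrace. Together with Kirchberg's dichotomy (Theorem~\ref{Thm:KDichotomoy}), any $\Z$-stable such $A$ must therefore be purely infinite.

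From this observation I would unify conditions (ii), (ii$'$), (iii), and (iii$'$) quickly. The implication (ii)$\Rightarrow$(iii$'$) is precisely the remark above; (iii$'$)$\Rightarrow$(ii$'$) is Theorem~\ref{Thm:Oinftystable}; and (ii$'$)$\Rightarrow$(ii) is immediate since $\mathcal{O}_\infty$ is $\Z$-stable. For strict comparison, pure infiniteness means $\Cu(A)=\{0,\infty\}$, which trivially satisfies \eqref{eqn:StrictComp}, giving (iii$'$)$\Rightarrow$(iii). Conversely, because $A$ has no quasitraces, the left-hand side of \eqref{eqn:StrictComp} is vacuous, so strict comparison forces $a\precsim b$ for every pair of non-zero positive elements in $A\otimes\mathcal{K}$; this is exactly $\Cu(A)=\{0,\infty\}$, yielding (iii)$\Rightarrow$(iii$'$).

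For the nuclear dimension conditions I would close the loop as follows. For (i)$\Rightarrow$(ii), I would invoke the finite-nuclear-dimension version of Kirchberg's dichotomy from \cite[Theorem~5.4]{WinterZacharias:AIM} (building on \cite{Kirchberg:Abel}): a simple separable nuclear $C^*$-algebra with finite nuclear dimension and no trace is automatically purely infinite, hence $\mathcal{O}_\infty$-stable, and therefore $\Z$-stable. Conversely, (ii)$\Rightarrow$(i$'$) follows by combining the reduction to the purely infinite case already established with the Kirchberg--Phillips regularity results for Kirchberg algebras, refined to the optimal bound of one on nuclear dimension in \cite{MatuiSato:Duke,BBSTWW:MAMS,CETWW:IM}. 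The trivial (i$'$)$\Rightarrow$(i) then closes the cycle.

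The chief technical obstacle is the sharp upper bound of one on nuclear dimension in (i$'$): softer arguments (for instance, the original estimates of \cite{WinterZacharias:AIM}) yield only finiteness, and the refinement to exactly one for Kirchberg algebras requires the two-coloured absorption technology of \cite{MatuiSato:Duke,BBSTWW:MAMS,CETWW:IM}. Everything else in the argument is essentially a bookkeeping reduction to the purely infinite case, which is why Theorem~\ref{Thm:Tracelessstructure} follows so cleanly from the results already catalogued in the excerpt.
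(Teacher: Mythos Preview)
Your proposal is correct and follows essentially the same route as the paper's own argument (given in the footnote accompanying Theorem~\ref{Thm:Tracelessstructure}): reduce to pure infiniteness via the dichotomy and Haagerup's theorem, invoke Kirchberg's $\mathcal O_\infty$-absorption theorem for the equivalences among (ii), (ii$'$), (iii), (iii$'$), use \cite[Theorem~5.4]{WinterZacharias:AIM} for (i)$\Rightarrow$(ii), and appeal to Matui--Sato and \cite{BBSTWW:MAMS} for the optimal nuclear dimension bound. One small cleanup: the citation to \cite{CETWW:IM} is superfluous here, as that paper concerns the stably finite case with general trace spaces; in the traceless setting the sharp bound of one is already delivered by \cite{BBSTWW:MAMS}.
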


 Toms and Winter made their conjecture\footnote{Unfortunately, Toms and Winter failed to write down their conjecture in a joint paper; the best early references are \cite[Remarks 3.5]{TomsWinter:JFA}, which is only for the finite case and uses decomposition rank instead of nuclear dimension, and \cite[Conjecture 9.3]{WinterZacharias:AIM}.} based on their analysis of Villadsen’s construction in \cite{TomsWinter:JFA} before any of the stably finite parts of Theorem \ref{Thm:Structure} were known in generality.  Sticking to our non-chronological approach, even before this, R\o{}rdam 
proved \eqref{it:TW.2}$\implies$\eqref{it:TW.3} in Conjecture \ref{TWConjecture} (see \cite[Corollary~4.6]{Rordam:IJM} for the unital case -- the non-unital case follows from \cite[Corollary~4.7]{Rordam:IJM}; unlike all the other implications in the conjecture, this holds without nuclearity). The remaining challenge in Conjecture \ref{TWConjecture} (Problem~\ref{q:TW} below) is \eqref{it:TW.3}$\implies$\eqref{it:TW.2}. 
 Matui and Sato made a major breakthrough with this implication in 2012, establishing it under the additional hypothesis that $A$ has a unique trace, or more generally, when $A$ has finitely many extremal traces. 
\begin{theorem}[Matui--Sato; \cite{MatuiSato:Acta}]\label{Thm:MS}
Let $A$ be a unital simple separable nuclear stably finite {$C^*$}-algebra with strict comparison and finitely many extremal traces.  Then $A$ is $\Z$-stable.
\end{theorem}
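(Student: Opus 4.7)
The plan is to follow the central sequence strategy: I would aim to show that the normalised central sequence algebra
\[
F_\omega(A) \;=\; \frac{A^\omega \cap A'}{\{x \in A^\omega \cap A' : xA = 0\}}
\]
receives a unital $^*$-homomorphism from a prime dimension-drop algebra $Z_{p,p+1}$ (equivalently, from $\Z$). Via standard reindexing / McDuff-style absorption arguments, this will give $A \cong A \otimes \Z$. So the whole proof reduces to producing unital completely positive order-zero maps from matrix algebras into $F_\omega(A)$ whose images satisfy the relations witnessing a dimension-drop embedding.

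The first step is to exploit the tracial hypothesis to extract enough \emph{von Neumann algebraic} structure. Because $A$ is nuclear and $T(A)$ is a (finite-dimensional) Bauer simplex with extreme boundary $\{\tau_1, \ldots, \tau_n\}$, each weak closure $\pi_{\tau_i}(A)''$ is an injective II$_1$ factor, hence by Connes isomorphic to the hyperfinite $\mathcal R$; the finite product $M \coloneqq \bigoplus_{i=1}^n \pi_{\tau_i}(A)''$ is therefore McDuff. A Kaplansky-density / Kirchberg $\varepsilon$-test style lift of central sequences in $M^\omega$ then produces, for each $k$, completely positive order-zero contractions $\varphi_k : M_k \to A^\omega \cap A'$ that behave like a unit on $A$ \emph{modulo traces}, i.e.\ $1 - \varphi_k(1_{M_k})$ has arbitrarily small $d_\tau$ uniformly in $\tau \in T(A)$.

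The second, and by far the hardest, step is to upgrade these ``unital up to traces'' order-zero maps to genuinely unital order-zero maps in $F_\omega(A)$. This is the content of Matui--Sato's \emph{property (SI)}: given positive contractions $e,f \in A^\omega \cap A'$ with $\lim_{k} \sup_\tau d_\tau(e_k) = 0$ and $\inf_m \liminf_k \inf_\tau \tau(f_k^m) > 0$, there exists $s \in A^\omega \cap A'$ with $s^*s = e$ and $fs = s$. The verification of (SI) is where strict comparison enters essentially: one chooses $e = 1 - \varphi_k(1)$ (trace-small by Step 1) and $f$ a positive element coming from a second, ``spare'' copy of the same order-zero map constructed in the orthogonal complement, and strict comparison together with the finiteness of the extreme boundary (used to have uniform control on all $\tau$ simultaneously) promotes the tracial inequality $d_\tau(e) < d_\tau(f^m)$ to an actual Cuntz subequivalence in $A^\omega$. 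Finite extreme boundary is used here because one must construct the ``spare'' full element with controlled traces at \emph{every} $\tau_i$ at once, which requires only finitely many independent constraints.

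With (SI) in hand, the third step is routine: apply (SI) with $e = 1 - \varphi_p(1_{M_p})$ and a full positive contraction $f$ built orthogonally from $\varphi_{p+1}$ to glue $\varphi_p$ and $\varphi_{p+1}$ into a unital $^*$-homomorphism $Z_{p,p+1} \to F_\omega(A)$, and conclude $A \cong A \otimes \Z$. The main obstacle is Step 2: transferring the Cuntz-comparison hypothesis on $A$ (a statement about positive elements of $A$) to the ultrapower-mod-annihilator $F_\omega(A)$, uniformly over the whole trace simplex, is the innovation of \cite{MatuiSato:Acta}, and it is precisely at this point that the ``finitely many extremal traces'' hypothesis is consumed.
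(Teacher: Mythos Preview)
Your proposal is correct and follows the same route the paper sketches: lift McDuffness of the tracial von Neumann closure via central surjectivity and order-zero projectivity, then use property~(SI) to absorb the defect and obtain the dimension-drop embedding.

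One clarification on the role of the hypotheses: property~(SI) itself requires only nuclearity and strict comparison, not the trace-space condition (this is exactly Theorem~\ref{Thm:SI} as stated). The ``finitely many extremal traces'' assumption is consumed in your Step~1, not Step~2: it makes the relevant tracial completion a finite direct sum of II$_1$ factors (hence McDuff in the uniform sense), so that the lifted order-zero map $\varphi$ automatically satisfies the uniform largeness $\inf_m \inf_\tau \tau(\varphi(e_{11})^m) > 0$ needed as \emph{input} to (SI). With a larger extreme boundary one still gets a pointwise-McDuff lift, but not a uniformly trace-large one --- this is why the later extensions (compact finite-dimensional extreme boundary, uniform property~$\Gamma$) all work by strengthening Step~1 rather than Step~2.
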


This covers many natural examples (e.g.\ crossed products arising from uniquely ergodic free minimal actions of discrete amenable groups), and the breakthrough methods they introduced (see Section \ref{sec:TW}) sparked the modern use of von Neumann algebraic techniques to tackle structural problems in simple nuclear {$C^*$}-algebras. 

\section{The quasitrace problem}
\label{sec:QT}

Quasitraces are a technical generalisation of traces arising from dimension functions. From a modern prospective, they correspond one-to-one with functionals on the Cuntz  semigroup, in that every such functional arises in the form \eqref{dtau} for some quasitrace $\tau$.\footnote{As with traces, the term `quasitrace' (by which we mean a $2$-quasitrace) is sometimes used to denote normalised functions, whereas other times, it may indicate a lower semicontinuous extended function.  In this paper, we are at times intentionally ambiguous to encapsulate different variations on themes and questions in the literature.} The essential difference between quasitraces and traces is that the former need only be additive on commuting elements. Quasitraces have a long history, dating right back to Murray and von Neumann's first paper \cite{MurrayVonNeumann1} (though the terminology and their abstract study came later in \cite{BlackadarHandelman:JFA}). Murray and von Neumann originally showed that a II$_1$ factor has a unique quasitrace (\cite[Theorem XIII]{MurrayVonNeumann1}) and later showed the additivity in \cite{MurrayVonNeumann2}, so that II$_1$ factors have a unique trace.

A long-standing open question about quasitraces, which in spirit goes back to \cite{Kaplansky1,Kaplansky2}, asks whether the ``quasi'' is necessary.  This is equivalent to Kaplansky's question of whether type II$_1$ $AW^*$-factors are von Neumann algebras.

\begin{question}\label{q:QT}
    If $\tau$ is a bounded quasitrace on a {$C^*$}-algebra, must $\tau$ be a trace?
\end{question}

The following is a tremendous result by Haagerup (\cite{Haagerup:CRMASSRC})\footnote{The preprint of this paper was completed in 1991 and was circulated among the community from that point.} in the unital case and then extended to lower semicontinuous unbounded quasitraces on non-unital {$C^*$}-algebras by Kirchberg (\cite{Kirchberg97}; see also \cite[Remark~2.29(i)]{BlanchardKirchberg:JFA}):

\begin{theorem}[Haagerup, Kirchberg]\label{thm:HaagerupQT}
    If $A$ is an exact {$C^*$}-algebra then all lower semicontinuous quasitraces on $A$ are traces.
\end{theorem}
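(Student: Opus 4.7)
The plan is to first carry out Kirchberg's reduction from the lower semicontinuous unbounded case to Haagerup's original unital bounded setting, and then follow Haagerup's attack exploiting exactness through a Grothendieck-type inequality. For the reduction, observe that a lower semicontinuous quasitrace $\tau$ on a non-unital exact $C^*$-algebra $A$ restricts to a bounded quasitrace on each hereditary subalgebra $\overline{aAa}$ with $a$ in the Pedersen ideal of $A$, and that exactness passes to such hereditary subalgebras. After unitizing, it therefore suffices to prove the theorem for bounded quasitraces on unital exact $C^*$-algebras, whereafter $\tau$ on $A$ is reassembled as a trace by lower semicontinuity along an approximate unit.

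In the bounded unital case, I would perform a GNS-type construction using $\tau$: the quasitrace induces a semi-inner product on $A$ (sesquilinear when $a$ and $b$ lie in a common commutative $C^*$-subalgebra, and globally controlled by a Cauchy--Schwarz inequality coming from $2$-positivity), yielding a Hilbert space $H_\tau$ and a cyclic representation $\pi_\tau \colon A \to B(H_\tau)$. Let $M = \pi_\tau(A)''$. The endgame is to extend $\tau$ to a \emph{normal} quasitrace $\tilde\tau$ on $M$, because normal quasitraces on finite von Neumann algebras are automatically traces: in the factor case this is Murray and von Neumann's theorem, and the general case reduces to this via direct integral decomposition. Restricting such a $\tilde\tau$ back to $A$ then yields the desired trace.

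The main obstacle lies in constructing the normal extension $\tilde\tau$: because $\tau$ is not linear, standard Hahn--Banach or dual-space arguments are unavailable, and exactness must be used analytically. Haagerup's approach is to encode $\tau$ in sesquilinear forms on $A \times A$ such as $(a,b) \mapsto \tau(a^*b)$ and apply a Grothendieck-type inequality valid for jointly completely bounded bilinear forms on exact $C^*$-algebras, which factors such forms through Hilbert space and provides the compactness needed to obtain ultraweak continuity of the extension. A more constructive variant would be to approximate $\tau$ by compositions $\mathrm{tr}_n \circ \varphi$ for ucp maps $\varphi \colon A \to M_n$, using that quasitraces on matrix algebras are traces, and to exploit exactness to control the point-norm convergence of such approximations (for instance through an ultrafilter limit). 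Either way, exactness enters at the single genuinely non-linear step of the proof, and this is the place where one cannot simply mimic the nuclear case.
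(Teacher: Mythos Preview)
The paper does not prove this theorem: it is a survey of open problems, and Theorem~\ref{thm:HaagerupQT} is simply stated with citations to Haagerup and Kirchberg. There is therefore no ``paper's own proof'' to compare against.

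That said, your proposal has a genuine gap at its core. You plan to perform a GNS-type construction using $\tau$, obtaining a Hilbert space $H_\tau$ and a representation $\pi_\tau$, and then to work with the ``sesquilinear form'' $(a,b)\mapsto\tau(a^*b)$. But this form is sesquilinear precisely when $\tau$ is linear, which is what you are trying to prove. Your parenthetical acknowledgment that the form is only sesquilinear on commuting pairs does not resolve this: without global sesquilinearity you do not obtain a genuine pre-inner product, hence no Hilbert space, no representation $\pi_\tau$, and no bicommutant $M$ to extend $\tau$ to. The entire architecture collapses at this step. Likewise, the Grothendieck-inequality machinery you invoke applies to genuine bilinear forms, so feeding it $(a,b)\mapsto\tau(a^*b)$ presupposes the conclusion.

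Your final ``constructive variant'' --- approximating $\tau$ by $\mathrm{tr}_n\circ\varphi$ for ucp maps into matrices and exploiting that quasitraces on $M_n$ are traces --- is much closer in spirit to how exactness is actually used, and the Kirchberg reduction you outline at the start is accurate. But as written, the proposal is a plan whose central load-bearing step is circular rather than merely incomplete.
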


For this reason, the open part of the quasitrace problem lies outside the realm of nuclear {$C^*$}-algebras.

The following corollary is an important consequence of Theorem \ref{thm:HaagerupQT} together with Blackadar and Handelman's work \cite{BlackadarHandelman:JFA}.\footnote{One can also use these ideas to obtain non-zero densely defined traces on more general exact stably finite algebras, for example in the unital case, or when the primitive ideal space of $A$ is compact (see \cite[Theorem 2.15]{Rordam:TAMS}, a reference we thank the referee for bringing to our attention). But algebras such as $C_0((0,1])\otimes\mathcal O_2$ are stably finite as their matrix amplifications have no projections, but equally have no non-zero traces.}

\begin{corollary}\label{cor:BH}
    If $A$ is a unital simple exact stably finite {$C^*$}-algebra then $A$ has a non-zero lower semicontinuous densely defined trace.
\end{corollary}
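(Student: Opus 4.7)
The plan is to combine Blackadar--Handelman's quasitrace existence theorem from \cite{BlackadarHandelman:JFA} with the Haagerup--Kirchberg upgrade from quasitraces to traces (Theorem \ref{thm:HaagerupQT}). In two steps: first produce a non-zero lower semicontinuous densely defined quasitrace on $A$; then invoke exactness to conclude it is automatically a trace.

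For the first step, I would handle the unital case directly via Blackadar--Handelman. Stable finiteness forces $[1_A]$ to be an order unit in the ordered monoid of Murray--von Neumann equivalence classes of projections in $M_\infty(A)$, so standard state-existence results on ordered abelian groups with order unit produce non-trivial dimension functions, which the main theorem of \cite{BlackadarHandelman:JFA} lifts to (bounded) normalised quasitraces on $A$. For non-unital $A$, I would fix a non-zero $a\in A_+$ and work with the net of hereditary sub-$C^*$-algebras $\overline{(a-\epsilon)_+ A (a-\epsilon)_+}$ as $\epsilon\searrow 0$, applying the unital result to suitable (unital) compressions and normalising so that the resulting quasitraces form a compatible family whose limit is a densely defined lower semicontinuous quasitrace on $A$ with positive value at $a$.

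For the second step, since the quasitrace produced above is lower semicontinuous and $A$ is exact, Theorem \ref{thm:HaagerupQT} immediately gives that it is in fact a trace -- and Kirchberg's extension of Haagerup's result to lower semicontinuous unbounded quasitraces (rather than just the unital bounded case) is precisely what allows us to conclude this in the non-unital setting. The main obstacle lies in the non-unital case of the Blackadar--Handelman step: one must ensure the resulting quasitrace is non-zero on $A$ itself (rather than a pullback from a quotient) and, crucially, must construct a \emph{densely defined} rather than merely bounded quasitrace. This rules out a naive unitisation-and-restriction argument, because many stably finite exact $C^*$-algebras -- notably $\mathcal K$ -- admit no non-zero bounded trace; the desired trace is genuinely unbounded and the lower semicontinuity is what makes the limiting construction across hereditary sub-$C^*$-algebras meaningful.
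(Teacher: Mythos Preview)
Your proposal is correct and follows exactly the route the paper indicates: the corollary is stated as an immediate consequence of Blackadar--Handelman's quasitrace existence for stably finite algebras together with Theorem~\ref{thm:HaagerupQT}, and the paper gives no further details beyond this attribution. Your treatment of the non-unital case via hereditary subalgebras is more elaborate than anything the paper provides (and is admittedly sketchy in places), but the two-step strategy---produce a lower semicontinuous densely defined quasitrace from stable finiteness, then upgrade to a trace via exactness and Haagerup--Kirchberg---is precisely the intended argument.
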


There is much more to be said about the quasitrace problem and its equivalence to other problems (such as whether the minimal tensor product of stably finite {$C^*$}-algebras is again stably finite). For this and further recent developments, we refer the reader to Milh\o{}j and R\o{}rdam's article \cite{MilhojRordam} 
devoted to this problem.

\section{The UCT}\label{sec:UCT}

The \emph{universal coefficient theorem (UCT)} of Rosenberg and Schochet (\cite{Rosenberg-Schochet87}) provides a powerful method for computing $KK$-groups in terms of the operator $K$-theory groups $K_0$ and $K_1$.  The name comes from the analogous theorem in algebraic topology, which, for suitable spaces $X$, computes the cohomology groups $H^*(X ; G)$ with coefficients in an abelian group $G$ in terms of $G$ and the homology groups $H_*(X)$. 
For {$C^*$}-algebras, one of the cleanest characterisations is that a separable {$C^*$}-algebra $A$ satisfies the UCT if for every ($\sigma$-unital)\footnote{With a suitable interpretation of the group $KK(A, B)$, this also holds for non-$\sigma$-unital {$C^*$}-algebras $B$ -- see \cite[Appendix~B]{CGSTW}, for example.  Reducing the UCT to the $\sigma$-unital case can be done using \cite[Proposition~1.11]{Schafhauser:Ann}, for example.} {$C^*$}-algebra $B$ such that $K_*(B)$ is divisible, the natural map
\begin{equation}\label{div-UCT}
\begin{tikzcd}
    KK(A, B) \arrow{r}{\alpha}  & \mathrm{Hom}\big(K_*(A), K_*(B)\big)
\end{tikzcd}
\end{equation}
is an isomorphism.  The fairly restrictive condition that $K_*(B)$ is divisible should be regarded as a normalisation condition.  If $A$ satisfies the UCT in the sense above, then for all {$C^*$}-algebras $B$, there is a natural short exact sequence
\begin{equation}\label{UCT}
\begin{tikzcd}
    \mathrm{Ext}\big(K_*(A), K_{*+1}(B)\big) \arrow[tail]{r}{\gamma} & KK(A, B) \arrow[two heads]{r}{\alpha} & \mathrm{Hom}\big(K_*(A), K_*(B)\big),
\end{tikzcd}
\end{equation}
which splits unnaturally.\footnote{Other normalisations are possible as well.  For example, a separable {$C^*$}-algebra $A$ satisfies the UCT if and only if for all {$C^*$}-algebras $B$, $K_*(B) = 0$ implies $KK(A, B) = 0$.  Informally, this implies that if there is any method for computing $KK(A, \,\cdot\,)$ in terms of $K_*(A)$, then there is an exact sequence as in \eqref{UCT}. The argument for this is a mapping cone construction similar to the second part of the proof of \cite[Corollary~8.4.6(ii)$\implies$(iii)]{Rordam:Book}.}

Although, on the surface, the UCT has an algebraic nature, it is usually verified through analytic methods.  For a {$C^*$}-algebra $B$ with $K_*(B)$ divisible, the functors $KK(\,\cdot\,, B)$ and $\mathrm{Hom}(K_*(\,\cdot\,), K_*(B))$ are homotopy invariant, Morita invariant, half-exact on semisplit extensions,\footnote{The failure of half-exactness of $KK$-theory in general is one of the impeti for the development of $E$-theory (\cite{Higson:JPAA, ConnesHigson:CRAcadSci}).} and $\sigma$-additive -- in the case of $KK(\,\cdot\,, B)$, these are deep analytic results of Kasparov (\cite{Kasparov79}).
Using the naturality of the map $\alpha$ in \eqref{div-UCT} and various six-term exact sequences arising from varying $A$ in both sides of \eqref{div-UCT}, Rosenberg and Schochet establish several permanence properties for the class of {$C^*$}-algebras satisfying the UCT:\footnote{Rosenberg and Schochet only consider permanence properties for nuclear {$C^*$}-algebras, but their arguments apply more generally as presented here.} it is closed under semisplit extensions, suspensions, direct limits of nuclear {$C^*$}-algebras, and $KK$-equivalence (in particular, homotopy equivalence and Morita equivalence).  Using the Pimsner--Voiculescu sequence and Connes' Thom isomorphism, the class is also closed under crossed products by $\mathbb Z$ and $\mathbb R$.  It is easy to see that $\mathbb C$ satisfies the UCT, and hence the UCT holds on a bootstrap class of separable {$C^*$}-algebras that can be built from $\mathbb C$ by iteratively applying these constructions.
To show all commutative {$C^*$}-algebras satisfy the UCT, Rosenberg and Schochet used their permanence properties to reduce it first to the case of CW complexes, and then to $C_0(\mathbb R^n)$, which is handled via Bott periodicity.
Pushing this further, they proved that all type I {$C^*$}-algebras satisfy the UCT, together with their inductive limits, thus including all approximately subhomogeneous (ASH) algebras -- the key observation being that all stable type I {$C^*$}-algebras admit a (typically transfinite) composition series with factors of the form $C_0(X) \otimes \mathcal K$.\footnote{One of many equivalent characterisations of type I {$C^*$}-algebras is that every quotient contains an abelian element (see \cite[Chapter 6 and Definition 6.1.1 in particular]{Pedersen:Book} or \cite[Chapter 4]{Dixmier:Book}).
Since the ideal generated by an abelian element is stably isomorphic to $C_0(X)\otimes \mathcal K$, repeatedly using such ideals leads to the required composition series.}

Beyond the ASH case, the most powerful method in practice for verifying the UCT comes from the theorem of Tu below on amenable groupoids.  The proof uses the same techniques needed to prove the Baum--Connes conjecture for such groupoids and is modelled on the result of Higson and Kasparov showing amenable groups satisfy the Baum--Connes conjecture (\cite{HigsonKasparov:IM}).

\begin{theorem}[Tu; \cite{Tu99}]\label{thm:Tu}
    If $\mathcal G$ is a second countable locally compact Hausdorff amen\-able groupoid, then $C^*(\mathcal G)$ satisfies the UCT.
\end{theorem}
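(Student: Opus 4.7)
The plan is to adapt Higson--Kasparov's proof of the Baum--Connes conjecture for a-T-menable groups (\cite{HigsonKasparov:IM}) to the groupoid setting, using Le Gall's $\mathcal G$-equivariant Kasparov theory. Because $\mathcal G$ is amenable, its full and reduced C*-algebras coincide, so it suffices to show $C^*_r(\mathcal G)$ is $KK$-equivalent to a C*-algebra already known to be in the UCT bootstrap class. The target will be $C_0(\underline E\mathcal G) \rtimes \mathcal G$, where $\underline E\mathcal G$ is a (second countable) universal proper $\mathcal G$-space; properness of the action makes this crossed product Morita equivalent to a commutative, hence type~I, C*-algebra, so it lies in the UCT class by the Rosenberg--Schochet permanence results recalled in Section~\ref{sec:UCT}.

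The central step is to produce, in $KK^{\mathcal G}$, a Dirac/dual-Dirac pair
\[
D \in KK^{\mathcal G}\bigl(C_0(\underline E\mathcal G), C_0(\mathcal G^{(0)})\bigr), \qquad \eta \in KK^{\mathcal G}\bigl(C_0(\mathcal G^{(0)}), C_0(\underline E\mathcal G)\bigr),
\]
with $\eta \otimes D = 1$ and $D \otimes \eta = 1$; Kasparov descent then turns this into a $KK$-equivalence between $C^*_r(\mathcal G) = C_0(\mathcal G^{(0)}) \rtimes \mathcal G$ and $C_0(\underline E\mathcal G) \rtimes \mathcal G$, which is what we need. The input for constructing $\eta$ is the Haagerup-type property: a proper continuous affine isometric action of $\mathcal G$ on a continuous field of Euclidean spaces over $\mathcal G^{(0)}$. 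In the group case, amenability produces this via an invariant mean; for groupoids one instead uses amenability to average a suitable family of conditionally negative type functions along the orbits. With this affine action in hand, $\eta$ is built from the Bott element of the associated continuous field of graded Clifford C*-algebras, $D$ from the corresponding fibrewise Dirac operator, and the inversion identities come from an infinite-dimensional Bott periodicity theorem.

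The main obstacle is precisely this last infinite-dimensional Bott periodicity in the groupoid setting: one must define continuous fields of graded Clifford C*-algebras over $\mathcal G^{(0)}$, package the Bott and Dirac elements as $\mathcal G$-equivariant unbounded Kasparov bimodules over these fields, and run a rotation homotopy between $\eta \otimes D$ and the identity while keeping uniform control along the unit space. Once this is achieved, the remaining steps -- Le Gall descent, identifying $C_0(\underline E\mathcal G) \rtimes \mathcal G$ as Morita equivalent to a commutative algebra via properness, and propagating the UCT across the $KK$-equivalence using the permanence results of Rosenberg--Schochet -- are formal given the infinite-dimensional Bott result.
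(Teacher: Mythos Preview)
Your outline is essentially Tu's approach, as the paper itself summarises: the proof ``uses the same techniques needed to prove the Baum--Connes conjecture for such groupoids and is modelled on the result of Higson and Kasparov.'' One point needs correcting, though. You claim that properness of the $\mathcal G$-action on $\underline E\mathcal G$ makes $C_0(\underline E\mathcal G)\rtimes\mathcal G$ Morita equivalent to a \emph{commutative} C*-algebra. That would require the action to be free as well as proper, and the universal proper $\mathcal G$-space carries no such freeness guarantee. What properness does buy you is that the transformation groupoid is proper, and proper (second countable, Hausdorff) groupoids have type~I C*-algebras; that is enough for the UCT via the Rosenberg--Schochet permanence properties, but it is a different statement from the one you wrote.

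There is also a related subtlety you glide over: the affine isometric action produced from amenability lands in a continuous field of \emph{infinite-dimensional} Hilbert spaces, so the relevant ``$\underline E\mathcal G$'' is not a locally compact space in any straightforward sense. Tu's actual conclusion, as the paper records, is that $C^*(\mathcal G)$ is $KK$-equivalent to a \emph{direct limit} of type~I C*-algebras --- the limit arising exactly from exhausting the Hilbert bundle by finite-dimensional subbundles, on each of which the affine action is genuinely proper. Your plan implicitly acknowledges this in identifying infinite-dimensional Bott periodicity as the crux, but the final ``formal'' step is not quite as you describe it: rather than a single Morita equivalence to a commutative algebra, one passes to a direct limit and invokes the bootstrap class's closure under such limits.
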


Tu's theorem also follows from analytic methods obtaining a decomposition theorem for $C^*(\mathcal G)$.  In fact, he shows $C^*(\mathcal G)$ is $KK$-equivalent to a direct limit of type~I {$C^*$}-algebras.  More recently, Barlak and Li (\cite{Barlak-Li17}) extended Tu's theorem to twisted groupoid {$C^*$}-algebras $C^*(\mathcal G, \Sigma)$ under the additional hypothesis that $\mathcal G$ is \'etale by using the Packer--Raeburn stabilisation trick (\cite{Packer-Raeburn1,Packer-Raeburn2}) to untwist the groupoid at a suitable place in Tu's proof.  Combining this with Renault's theorem (\cite{RenaultIMSB}), every separable nuclear {$C^*$}-algebra with a Cartan subalgebra satisfies the UCT.

There is a rather striking permanence property due to Dadarlat (\cite{Dadarlat01}) that highlights the analytic nature of the UCT.  If $A$ is a separable nuclear {$C^*$}-algebra and every finite subset of $A$ is approximately contained in a (not necessarily nuclear) UCT subalgebra $B \subseteq A$, then $A$ satisfies the UCT.  When the subalgebras $B$ form an increasing union, this follows from a Milnor $\lim^1$-sequence, but there is no reason to believe a priori that such local approximations can be arranged to be nested (cf.\ \cite{DadarlatEilers:Crelle}).
Dadarlat's proof involves a series of reductions, ending with the case that $A$ and each approximating subalgebra is simple and tracially AF, and then classification techniques are used to force the subalgebras $B$ to be nested.

In spite of the impressive collection of examples and permanence properties obtained over several decades, it still remains unclear how large the class of {$C^*$}-algebras satisfying the UCT is.  In the non-nuclear case, a result of Skandalis (\cite{Skandalis88}) shows that if $G$ is a biexact group with property (T), then $C^*_r(G)$ fails the UCT.  In the nuclear case, the following UCT problem is wide open and is perhaps the most important open problem about nuclear {$C^*$}-algebras.

\begin{question}\label{q:UCT}
Do all separable nuclear {$C^*$}-algebras satisfy Rosenberg and Schochet's universal coefficient theorem?
\end{question}

There are several known reductions.  An easy one is that if the UCT is preserved under taking quotients of nuclear {$C^*$}-algebras, then all nuclear {$C^*$}-algebras satisfy the UCT (use that any {$C^*$}-algebra $A$ is a quotient of its cone $C_0((0,1])\otimes A$, which is contractible, so will satisfy the UCT).

A much harder reduction, due to Kirchberg, is that if crossed products of separable nuclear {$C^*$}-algebras by $\mathbb T$ satisfy the UCT, then all nuclear {$C^*$}-algebras satisfy the UCT (see \cite[Exercise~23.15.12]{Blackadar-kbook}).  Using Kirchberg's ideas, it has also been shown that if crossed products by both $\mathbb Z/p$ and $\mathbb Z/q$ for relatively prime natural numbers $p, q \geq 2$ preserve the UCT for separable nuclear {$C^*$}-algebras, then all separable nuclear {$C^*$}-algebras satisfy the UCT (\cite[Theorem~4.17]{BarlakSzabo:TAMS}).

Kirchberg has also shown how to reduce the UCT problem to the simple purely infinite setting (\cite[Proposition~8.4.5]{Rordam:Book}).  So, if all Kirchberg algebras satisfy the UCT, then all separable nuclear {$C^*$}-algebras satisfy the UCT.  In fact, via a mapping cone construction and the Kirchberg--Phillips theorem, the UCT problem is equivalent to the statement that every unital Kirchberg algebra $A$ with $K_*(A) = 0$ is isomorphic to the Cuntz algebra $\mathcal O_2$ (see \cite[Corollary 8.4.6(ii)]{Rordam:Book}).

Almost nothing is known about the class of {$C^*$}-algebras $B$ for which the UCT exact sequence \eqref{UCT} holds for all separable nuclear {$C^*$}-algebras $A$.  Indeed, to our knowledge, the only $B$ that are known to have this property are those with $KK(\cdot,B)=0$ (so including $B=\mathcal O_2$, as well as all purely infinite von Neumann algebras, and multipliers of stable separable {$C^*$}-algebras). Indeed, when $B \coloneqq \mathbb C$, $KK(A, \mathbb C)$ is the $K$-homology group $K^0(A)$, and when $B \coloneqq C_0(\mathbb R)$, $KK(A, C_0(\mathbb R))$ is the Brown--Douglas--Fillmore extension group $K^1(A) = \mathrm{Ext}^{-1}(A)$.
Even in these cases it is unknown if \eqref{UCT} holds in any more generality than when $A$ satisfies the UCT.

A special case that has interested our AIM SQuaRE is when $B \coloneqq \mathcal R^\omega$, the tracial ultrapower of the separably acting hyperfinite II$_1$ factor $\mathcal R$ -- we have been (perhaps misleadingly)  calling this the `$\mathcal R^\omega$-UCT problem'.  As with all II$_1$ factors, $K_1(\mathcal R^\omega) = 0$ and there is an isomorphism $K_0(\mathcal R^\omega) \cong \mathbb R$ given by sending a projection in a matrix algebra over $\mathcal R^\omega$ to its trace.  If $A$ is a separable nuclear {$C^*$}-algebra, then by Connes' theorem, two $^*$-homomorphisms $\phi, \psi \colon A \rightarrow \mathcal R^\omega$ are unitarily equivalent (and hence agree in $KK$) if and only if $\mathrm{tr} \circ \phi = \mathrm{tr} \circ \psi$.  The injectivity of the map in the following problem essentially asks whether this generalises from $^*$-homomorphisms to `$KK$-maps' $A \rightarrow \mathcal R^\omega$ -- does agreement on traces imply agreement in $KK$?

\begin{question}[$\mathcal R^\omega$-UCT problem]\label{q:RUCT}
    If $A$ is a separable nuclear {$C^*$}-algebra, is the natural map 
    \begin{equation}\label{eq:RUCT}
    KK(A, \mathcal R^\omega) \longrightarrow \mathrm{Hom}(K_0(A), \mathbb R)
    \end{equation}
    an isomorphism?
\end{question}

Surprisingly, the $\mathcal R^\omega$-UCT does hold when $A\coloneqq \mathcal D$ is a finite strongly self-absorbing {$C^*$}-algebra (we discuss strongly self-absorbing algebras in more detail in  Section~\ref{sec:SSA}), which gives some evidence that it may hold generally.
This is because $\mathcal R^\omega$ is a quotient of $\mathcal D_\omega$, and it is therefore separably $\mathcal D$-stable.\footnote{Here $\mathcal D_\omega$ denotes the {$C^*$}-algebra ultrapower, and by separable $\mathcal D$-stability of $\mathcal R^\omega$, we mean that any norm-separable $B_0\subset \mathcal R^\omega$ is contained in a norm-separable $\mathcal D$-stable subalgebra of $\mathcal R^\omega$.}
Then one combines the general result that $KK(\mathcal D, A)\cong K_0(A)$ whenever $A$ is $\mathcal D$-stable (\cite[Theorem~3.4]{DadarlatWinter:MS}) with a standard separabilisation argument.  This plays a crucial role in the first-named author's recently announced $KK$-classification of finite strongly self-absorbing algebras (\cite[Theorem~C]{Schafhauser24}).

In view of the slow rate of progress on the UCT problem, it becomes natural to try to remove UCT assumptions from various results, including structural and classification results and general properties of $KK$.
We discuss a couple questions along these lines here, and this theme recurs frequently later -- for example, in the discussion after Theorem~\ref{thm:AFembedding}, in Section~\ref{sec:SSA}, and in Problem~\ref{q:KKclassification}.  The first such problem concerns the computation of $K$-theory of (minimal) tensor products. For {$C^*$}-algebras $A$ and $B$, there is a natural $\mathbb Z/2$-graded map
\begin{equation}\label{Kunneth}
    \alpha:K_*(A)\otimes_{\mathbb Z} K_*(B)\to K_*(A\otimes B), 
\end{equation}
given in $K_0$ by tensoring projections.  In the unital case, the product of a class $[u]_1\in K_1(A)$ and $[p]_0\in K_0(B)$, coming from a unitary $u\in M_m(A)$ and projection $p\in M_n(B)$, respectively, is given by the class of $u\otimes p+1_A\otimes (1_{M_n(B)}-p)$, and the product of elements in $K_0(A)$ with elements of $K_1(B)$ is defined similarly.  Bott periodicity is used to describe the map $K_1(A)\otimes_{\mathbb Z} K_1(B) \rightarrow K_0(A \otimes B)$ in terms of classes of unitaries; see \cite[Appendix 2]{Connes:Adv}, for example. In \cite{Schochet82}, Schochet showed that this is an isomorphism whenever $K_*(B)$ is torsion-free, and $A$ lives in a bootstrap class of separable nuclear {$C^*$}-algebras (containing inductive limits of type $I$ algebras and closed under stable isomorphism, under a `two-out-of-three' property for short exact sequences, and under crossed products by $\mathbb Z$ or $\mathbb R$).  Following Rosenberg and Schochet's work, one gets the same conclusion whenever $A$ satisfies the UCT and $K_*(B)$ is torsion-free.\footnote{For non-nuclear {$C^*$}-algebras, one gets this by showing that the class of separable {$C^*$}-algebras $A$, for which \eqref{Kunneth} is an isomorphism for all torsion-free $K_*(B)$, is closed under $KK$-equivalence.}

 The requirement that $K_*(B)$ is torsion-free is a normalisation condition. As set out in \cite[Section 4]{Schochet82}, if $A$ is a {$C^*$}-algebra such that \eqref{Kunneth} is an isomorphism whenever $K_*(B)$ is torsion-free, then for any {$C^*$}-algebra $B$ one has a natural short exact sequence
\begin{equation}\label{Kunneth2}
    0\to K_*(A)\otimes K_*(B)\stackrel{\alpha}{\longrightarrow} K_*(A\otimes B)\stackrel{\beta}{\longrightarrow}\mathrm{Tor}_1^{\mathbb Z}(K_*(A),K_*(B))\to 0
\end{equation}
 (where $\beta$ has degree $1$) known as the K\"unneth formula for tensor products. For separable nuclear {$C^*$}-algebras, the bootstrap classes used to obtain the K\"unneth formula and the UCT are the same, but a priori,  the K\"unneth formula for $K$-theory is weaker than the full force of the UCT.

 In fact, as pointed out to us by the referee, there are examples of (non-nuclear) separable $C^*$-algebras which fail the UCT but satisfy the K\"unneth formula for minimal tensor products.  For example, if $G$ is a biexact group with property (T), then $C^*_r(G)$ fails the UCT by Skandalis's result recalled just before Problem~\ref{q:UCT} (\cite{Skandalis88}).  On the other hand, $G$ satisfies the Baum--Connes conjecture with coefficients (\cite[Th\'eor\`eme~1.2] {Lafforgue:JNCG}) and hence $C^*_r(G)$ satisfies the K\"unneth formula for minimal tensor products (\cite[Corollary~0.2]{Chabert-Echterhoff-OyonoOyono:GAFA}).

\begin{question}[K\"unneth formula for tensor products]
Let $A$ be a separable nuclear {$C^*$}-algebra and $B$ a separable {$C^*$}-algebra such that $K_*(B)$ is torsion-free.  Is the natural map $\alpha$ in \eqref{Kunneth} an isomorphism? 
\end{question}

In another direction, there is a natural topology on $KK(A, B)$ (see \cite{Dadarlat:JFA} for a definition and a discussion of the history), and the Hausdorffised groups $KL(A, B) \coloneqq KK(A, B) / \overline{\{0\}}$ play an important role in the classification theorem, dating back to R{\o}rdam's introduction of $KL$-groups in \cite{Rordam:JFA} (defined there only when $A$ satisfies the UCT!).

Under the UCT, the `universal multi-coefficient theorem' of Dadarlat and Loring (\cite{Dadarlat-Loring:Duke}) readily implies that $KL(\,\cdot\,, B)$ preserves limits on the class of nuclear {$C^*$}-algebras in the UCT class.  This was recently shown to hold without a UCT condition by Carri\'on and the first-named author in \cite{Carrion-Schafhauser}.  In a similar direction, it is easy to show that $KK(A, B)$ is Hausdorff whenever $A$ satisfies the UCT and $K_*(B)$ is divisible.
Without the UCT, divisibility of $K_*(B)$ alone should not put many restrictions on $KK(A, B)$, but divisibility conditions on the {$C^*$}-algebra $B$ itself might.
A positive answer to the following, in the case when $B \coloneqq \mathcal  R^\omega$, for example, would follow from a positive answer to Problem~\ref{q:RUCT}.  In the following, $\mathcal Q$ denotes the universal UHF algebra (so $K_0(\mathcal Q) \cong \mathbb Q)$.

\begin{question}
    If $A$ is a separable nuclear {$C^*$}-algebra and $B$ is a (separably) $\mathcal Q$-stable {$C^*$}-algebra, is $KK(A, B)$ Hausdorff?
\end{question}



On the structural side, the UCT has been used to produce various approximation properties for nuclear regular {$C^*$}-algebras of both an internal and external nature.  For external approximations, we defer the discussion to the next section on quasidiagonality and AF-embeddability.  Internally, the UCT has been used to produce tracial approximations for certain {$C^*$}-algebras in the work of Elliott, Gong, Lin, and Niu in \cite{EGLN}, which was one of the final steps in the Gong--Lin--Niu approach to the stably finite side of Theorem~\ref{Thm:UnitalClassification}.

\begin{question}\label{Q5}
    If $A$ is a unital simple separable nuclear $\mathcal Q$-stable stably finite {$C^*$}-algebra such that every trace on $A$ is quasidiagonal, does $A$ have generalised tracial rank at most one?
\end{question}

In the unique trace case, Problem~\ref{Q5} has a positive answer. Matui and Sato's \cite[Theorem 6.1]{MatuiSato:Duke} shows that unital simple separable  nuclear quasidiagonal {$C^*$}-algebras which absorb a UHF algebra of infinite type are tracially AF.  Matui and Sato's direct argument to access tracial approximations from quasidiagonality in this context is very much in the spirit of the final steps of Connes' proof that injectivity implies hyperfiniteness (as per the discussion following Theorem \ref{Thm:Structure}).  Problem~\ref{Q5} asks whether we can obtain tracial approximations in the spirit of a regularity type result, without the UCT.

\section{Quasidiagonality and AF-embeddability}

A {$C^*$}-algebra is residually finite-dimensional if it has approximately isometric finite-dimensional representations. Of course, no infinite-dimensional simple {$C^*$}-algebra can be residually finite-dimensional, but upon weakening genuine finite-dimensional representations to c.p.c.\ approximately multiplicative maps, suddenly a great many {$C^*$}-algebras have such approximations (including all the stably finite classifiable {$C^*$}-algebras).  This is the notion of quasidiagonality -- a property that was first  introduced by Halmos in the context of operator theory (\cite{Halmos:BAMS}) and then characterized by Voiculescu in terms of the aforementioned approximations (\cite{Voiculescu:Duke}). Putting Voiculescu's result another way, a separable {$C^*$}-algebra $A$ is quasidiagonal if and only if it has an embedding into $\mathcal Q_\omega$ 
with a c.p.c.\ lift to $\ell_\infty(\mathcal Q)$.
When $A$ is nuclear, the Choi--Effros lifting theorem (\cite{ChoiEffros:Ann}) tells us that it is enough that $A$ is \emph{matricial field} (MF), i.e.\ embeds into $\mathcal Q_\omega$; in this formulation, one sees the links to Connes' embedding problem (which originated in Connes' observation that separably acting injective II$_1$ factors embed into the von Neumann ultraproduct of the hyperfinite II$_1$ factor). The condition of being MF is equivalent to being the fibre at $\infty$ of some continuous field of {$C^*$}-algebras over $\mathbb N \cup \{\infty\}$, whose fibre at each $n\in\mathbb N$ is finite-dimensional (\cite[Theorem~3.2.2]{BlackadarKirchberg:MA}).

In their seminal papers \cite{BlackadarKirchberg:MA,BlackadarKirchberg:PJM}, Blackadar and Kirchberg examined the combination of nuclearity and quasidiagonality, calling such algebras \emph{NF algebras} (for `nuclear finite' in the hope of a positive answer to the next problem).
They demonstrated that this property is equivalent to the existence of a certain generalised inductive limit presentation:
a sequence of finite-dimensional {$C^*$}-algebras $(F_n)_{n=1}^\infty$ with c.p.c.\ connecting maps $\phi_n\colon F_n \to F_{n+1}$ which are asymptotically multiplicative.\footnote{There is a similar characterisation of MF algebras, not requiring the $\phi_n$ to be c.p.c.}
They ask the following question.
\begin{question}[{Blackadar--Kirchberg; \cite[Question 7.3.2]{BlackadarKirchberg:MA}}] \label{q:QD}
    Is every separable nuclear stably finite {$C^*$}-algebra quasidiagonal?
\end{question}

The more general question of whether every separable stably finite (not necessarily nuclear) {$C^*$}-algebra is MF is resolved by the negative solution to the Connes embedding problem (\cite{JNVWY}): if $\mathcal M$ is a separably acting II$_1$ factor which is not $\mathcal R^\omega$-embeddable then take a strongly dense separable {$C^*$}-subalgebra of $\mathcal M$ with unique trace -- this cannot be $\mathcal Q_\omega$-embeddable.

A related set of questions concerns AF-embeddability.
A {$C^*$}-algebra $A$ is \emph{AF-embeddable} if it can be embedded in some AF algebra. This property clearly implies quasidiagonality, and there are no other known obstructions for a separable exact {$C^*$}-algebra to be AF-embeddable.
Disentangling this question from the quasidiagonality question, the pertinent open question asks whether every  separable exact quasidiagonal {$C^*$}-algebra is AF-embeddable.
Blackadar and Kirchberg ask a weaker version of this question in \cite[Question 7.3.3]{BlackadarKirchberg:MA}, replacing exactness by nuclearity.

\begin{question}
\label{q:AFembedding}
    Is every separable exact quasidiagonal {$C^*$}-algebra AF-embeddable?
\end{question}

The quasidiagonality and AF-embeddability questions have received significant attention, especially for reduced group {$C^*$}-algebras of discrete amenable groups,\footnote{Quasidiagonality for such {$C^*$}-algebras is commonly referred to as Rosenberg's conjecture (see \cite{CarrionDadarlatEckhard:JFA,OzawaRordamSato:GAFA}, for example) and is now a theorem (\cite[Theorem~C]{TWW:Ann}).} but also for other cases such as crossed products (\cite{Pimsner:ETDS,Brown:JFA,Brown:HMJ, Lin:IMRP}) and contractible {$C^*$}-algebras (\cite{Voiculescu:Duke,Ozawa:GAFA}).
The state-of-the-art is that both problems have a positive solution, for separable exact {$C^*$}-algebras satisfying the UCT which also have a faithful amenable trace (a significant strengthening of stable finiteness in the non-simple case).  Using traces as a framework for studying amenability is one of the major ideas in Connes' celebrated work \cite{Connes:Ann}; this was further developed (outside the II$_1$ factor setting) by Kirchberg in \cite{Kirchberg:MathAnn} who called these traces `liftable'. 
An influential and comprehensive study of amenable and quasidiagonal traces (see below) was later undertaken by Nate Brown in \cite{Brown:MAMS}.

\begin{theorem}[\cite{TWW:Ann,Gabe:JFA,Schafhauser:Crelle,Schafhauser:Ann}]
\label{thm:AFembedding}
    Let $A$ be a separable exact {$C^*$}-algebra which satisfies the UCT.
    Then $A$ embeds into a unital simple  AF algebra if and only if $A$ has a faithful amenable trace.
\end{theorem}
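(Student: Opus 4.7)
The forward direction is routine. If $A \hookrightarrow B$ with $B$ a unital simple AF algebra, then $B$ admits a faithful tracial state $\tau_B$ (stable finiteness of AF algebras provides a trace, and simplicity forces faithfulness). Its restriction $\tau \coloneqq \tau_B|_A$ is faithful, and the trace-preserving GNS inclusion $\pi_\tau(A)'' \hookrightarrow \pi_{\tau_B}(B)''$ admits a trace-preserving conditional expectation. Since $\pi_{\tau_B}(B)''$ is hyperfinite (as $B$ is AF), so is $\pi_\tau(A)''$; for exact $A$, this is equivalent to $\tau$ being amenable.

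For the converse, suppose $A$ is separable, exact, UCT-satisfying, and carries a faithful amenable trace $\tau$. The plan proceeds in two stages. First, I invoke the quasidiagonality theorem of Tikuisis--White--Winter, extended by Gabe to the exact case: every faithful amenable trace on a separable exact {$C^*$}-algebra in the UCT class is quasidiagonal. This produces c.p.c.\ maps $\phi_n \colon A \to M_{k(n)}$ that are asymptotically multiplicative and satisfy $\mathrm{tr}_{k(n)} \circ \phi_n \to \tau$ pointwise, giving rise to a trace-preserving $^*$-embedding $\Phi \colon A \hookrightarrow \mathcal Q_\omega$ into a matricial ultraproduct.

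Second, following Schafhauser, I would upgrade this asymptotic embedding into a genuine embedding into a unital simple AF algebra. One chooses a target $B$ -- a unital simple AF algebra with unique trace $\tau_B$ -- whose dimension group $K_0(B) \subseteq \mathbb R$ is large enough to house the values of $\tau$ on $K_0(A)$ (with sufficient extra room to absorb obstructions). The UCT then supplies a $KK$-element $\kappa \in KK(A, B)$ whose induced map on $K_0$ realises the composition $K_0(A) \to \mathbb R \hookrightarrow K_0(B)$ coming from $\tau$. A stable uniqueness theorem, combined with the quasidiagonal data $(\phi_n)$, allows one to realise $\kappa$ by an honest $^*$-homomorphism $\psi \colon A \to B$ that approximates $\Phi$ on the trace. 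The hard part -- and the key technical contribution of Schafhauser's papers -- is showing $\psi$ is injective: the $KK$-theoretic construction only delivers a homomorphism with prescribed $K$-theoretic and tracial data, and genuine injectivity must be forced from faithfulness of $\tau$ together with simplicity of $B$, via an Elliott-type one-sided intertwining that promotes trace-faithfulness of $\psi$ to algebraic injectivity while keeping the target inside a single AF algebra.
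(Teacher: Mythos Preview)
The paper is a survey and does not supply a proof of this theorem; it is stated with citations to \cite{TWW:Ann,Gabe:JFA,Schafhauser:Crelle,Schafhauser:Ann} and then used as background. So there is no in-paper argument to compare against, only the cited sources.

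Your forward direction is correct. Your converse has the right two-stage architecture --- first invoke the quasidiagonality theorem (in Gabe's exact-case form) to make the faithful amenable trace quasidiagonal, then feed this into Schafhauser's AF-embedding machinery --- and this matches how the result is assembled from the cited papers.

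Where your sketch drifts from the actual content of \cite{Schafhauser:Ann} is in the description of the second stage. Schafhauser does not realise a $KK$-class by a stable uniqueness theorem and then run an Elliott-type one-sided intertwining to force injectivity. His method is an extension-lifting argument: the quasidiagonal trace yields a $^*$-homomorphism from $A$ into a tracial ultrapower $B^\omega$ of a suitable unital simple AF algebra $B$, and one asks to lift this along the trace-kernel extension $0\to J\to B_\omega\to B^\omega\to 0$. The obstruction to lifting is a class in $KK^1(A,J)\cong\mathrm{Ext}(A,J)$, and the UCT together with a judicious choice of $K_0(B)$ is used to kill (or absorb) it. Once lifted, injectivity is \emph{not} the hard part: the map to $B^\omega$ is already injective because $\tau$ is faithful, so any lift to $B_\omega$ is automatically injective, and one then pulls back to a separable AF subalgebra. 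Your account locates the difficulty in the wrong place and invokes machinery (stable uniqueness, one-sided intertwining for injectivity) that does not appear in the cited proof.
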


In light of this, and in the spirit of `removing the UCT' from known structural results (as in Section~\ref{sec:UCT}), one is led to natural special cases of Problems~\ref{q:QD} and \ref{q:AFembedding} which are major challenges in their own right. 
\begin{question}\label{q:QDTrace}
\begin{enumerate}[(1)]
\item
    Let $A$ be a separable nuclear {$C^*$}-algebra with a faithful trace.  Is $A$ quasidiagonal?\label{q:QDTrace.1}
    \item   Let $A$ be a separable exact {$C^*$}-algebra with a faithful quasidiagonal trace.  Is $A$ AF-embeddable?\label{q:QDTrace.2}
    \end{enumerate}
\end{question}

Nate Brown developed the concept of quasidiagonal traces appearing in Problem \ref{q:QDTrace}(\ref{q:QDTrace.2}) above by converting the Hilbert--Schmidt norm approximations used to define amenability of traces to a stronger operator norm condition (\cite{Brown:MAMS}). His prediction that quasidiagonal  traces would prove important objects in the classification programme proved very accurate, particularly through the combination of \cite{EGLN,TWW:Ann}. In Brown's framework, Rosenberg's conjecture naturally extends to traces. If Problem~\ref{q:tracesqd}(\ref{q:tracesqd.1}) below (cf.\ \cite[Discussion before Proposition 3.5.1]{Brown:MAMS}) has a positive answer, it would imply a positive answer to Problem \ref{q:QDTrace}(\ref{q:QDTrace.1}).  It seems likely to us that for not necessarily exact {$C^*$}-algebras, the heart of matter is whether the trace on the hyperfinite II$_1$ factor is quasidiagonal (it is certainly an amenable trace).  Using that $\mathcal R$ has a unique trace and the trace is faithful, this is equivalent to Problem~\ref{q:tracesqd}(\ref{q:tracesqd.2}).

\begin{question}\label{q:tracesqd}
\begin{enumerate}[(1)]
\item \label{q:tracesqd.1}
    Are amenable traces on {$C^*$}-algebras necessarily quasidiagonal? 
    \item  Is the hyperfinite II$_1$ factor quasidiagonal?\label{q:tracesqd.2}
    \end{enumerate}
\end{question}

Note that Problem~\ref{q:tracesqd}(\ref{q:tracesqd.1}) easily reduces to the case of separable {$C^*$}-algebras.  Further, for exact {$C^*$}-algebras, Problem~\ref{q:tracesqd}(\ref{q:tracesqd.1}) can be reduced to the case of faithful traces.  Indeed, every trace $\tau$ on a {$C^*$}-algebra $A$ induces a faithful trace on $\pi_\tau(A)$, 
where $\pi_\tau$ denotes the GNS representation.  Further, when $A$ is exact, amenability of $\tau$ is equivalent to injectivity of $\pi_\tau(A)''$,\footnote{That this holds follows from \cite[Corollary~4.3.4]{Brown:MAMS}, which is a consequence of Connes' theorem, and Kirchberg's theorem that exact {$C^*$}-algebras are locally reflexive (see \cite[Corollary 9.4.1]{BrownOzawa:book}).}
and so $\tau$ is amenable on $A$ if and only if the induced trace on $\pi_\tau(A)$ is amenable.  It follows that if Theorem~\ref{thm:AFembedding} holds without the UCT, then Problem~\ref{q:tracesqd} holds for exact {$C^*$}-algebras.\footnote{This reduction actually requires something a bit stronger than Theorem~\ref{thm:AFembedding}: namely, that the AF-embedding can be arranged so that the given faithful amenable trace extends to a trace on the AF algebra. This stronger statement is what is proven under the UCT in \cite{Schafhauser:Ann}.}

In addition to those traces covered by the quasidiagonality theorem (\cite{TWW:Ann,Gabe:JFA,Schafhauser:Crelle}), there is a positive answer for all amenable traces on cones.  This is essentially a result of Gabe (\cite{Gabe:JFA}) but is recorded as \cite[Propositon 3.2]{BCW:Abel}.  But outside these frameworks, little is known. For example, the amenable traces always form a face in the tracial states; are the quasidiagonal traces also a face?

The discussion above has focused mostly on the case of {$C^*$}-algebras with a faithful trace since most recent progress has been in this setting.  The situation where there are no faithful traces is also very much of interest. Indeed at the other extreme, the quasidiagonality and AF-embeddability problems have been solved (without a UCT requirement) in the complete absence of traces.  The appropriate traceless condition is made precise through the following definition.

\begin{definition}\label{Def:TracialWeight}
    A \emph{tracial weight} on a {$C^*$}-algebra $A$ is an additive and $\mathbb R_+$-ho\-mo\-ge\-neous function $\tau\colon A_+\to[0,\infty]$ with $\tau(xx^*)=\tau(x^*x)$ for all $x\in A_+$.\footnote{These are sometimes called \emph{extended traces}, for example by R\o{}rdam in \cite{Rordam:IJM} and by Gabe in \cite{Gabe:GAFA}.}  A {$C^*$}-algebra is called \emph{traceless} if all lower semicontinuous tracial weights only take the values $\{0,\infty\}$.\footnote{As discussed in Section \ref{sec:nonsimpleclass}, the $\{0,\infty\}$-valued lower semicontinuous tracial weights correspond to ideals; being traceless means that there are no other lower semicontinuous tracial weights. }
\end{definition}

While simple exact traceless {$C^*$}-algebras cannot be stably finite by Corollary~\ref{cor:BH}, and so cannot be quasidiagonal, outside the simple case, stably finite algebras can be traceless.  Indeed, an example is a cone $C_0((0,1])\otimes A$ over a simple purely infinite {$C^*$}-algebra $A$, which is stably finite by virtue of being projectionless.  All cones are quasidiagonal by Voiculescu's homotopy-invariance of quasidiagonality (\cite{Voiculescu:Duke}). Gabe's result goes much further and characterises quasidiagonality and AF-embeddability for traceless separable exact algebras in terms of the primitive ideal space.

\begin{theorem}[Gabe; {\cite[Theorem~C]{Gabe:GAFA}}]
    For a separable exact traceless {$C^*$}-algebra $A$, the following are equivalent:
    \begin{enumerate}
        \item $A$ is AF-embeddable;
        \item $A$ is quasidiagonal;
        \item $A$ is stably finite;
        \item the primitive ideal space $\mathrm{Prim}(A)$ contains no non-empty open compact subsets.
    \end{enumerate}
\end{theorem}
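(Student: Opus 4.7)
The plan is to prove the equivalences via the cycle $(1) \Rightarrow (2) \Rightarrow (3) \Rightarrow (4) \Rightarrow (1)$. The first two implications are standard and do not require tracelessness or exactness: AF algebras are quasidiagonal and quasidiagonality passes to subalgebras, while if $v^*v = 1_n$ in $M_n(A)$ for quasidiagonal $A$, then applying c.p.c.\ approximately multiplicative maps $\phi_k\colon M_n(A) \to M_{d_k}$ produces approximate isometries---necessarily approximate unitaries in the matrix algebra $M_{d_k}$---yielding $vv^* = 1$ in the limit.

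For $(3) \Rightarrow (4)$, I argue by contrapositive. A non-empty compact open subset $U \subseteq \mathrm{Prim}(A)$ corresponds to an ideal $I \trianglelefteq A$ with $\mathrm{Prim}(I) = U$ compact; $I$ inherits exactness and tracelessness. Compactness of $\mathrm{Prim}(I)$ yields, via a finite subcover of a pointwise non-vanishing cover, a full positive element $a \in I$. For each $P \in U$, the simple quotient $I/P$ is exact and traceless, hence purely infinite by Kirchberg's dichotomy (Theorem~\ref{Thm:KDichotomoy}) together with Corollary~\ref{cor:BH} (a simple exact stably finite algebra would carry a non-trivial trace, contradicting tracelessness), and so contains properly infinite projections. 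Kirchberg--R\o{}rdam-style techniques for promoting pure infiniteness from simple subquotients---combined with the compactness of $\mathrm{Prim}(I)$---then yield a properly infinite projection in $I$, which is an infinite projection in $A$ and contradicts stable finiteness.

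For $(4) \Rightarrow (1)$, the main content, the plan is to build an AF-embedding of $A$ inductively along a decomposition of the primitive ideal space. Condition (4) should allow one to choose a decreasing sequence of open subsets $X = X_0 \supsetneq X_1 \supsetneq \cdots$ of $\mathrm{Prim}(A)$ with $\bigcap_n X_n = \emptyset$ (no $X_n$ can stabilise, since no $X_n$ is compact open), yielding an ideal filtration $A \supseteq I_1 \supseteq I_2 \supseteq \cdots$ with $\bigcap_n I_n = 0$, in which each quotient $A/I_n$ again has no compact open subsets in its primitive ideal space. An AF-embedding $A \hookrightarrow \varinjlim \mathrm{AF}_n$ can then be assembled by inductively producing coherent AF-embeddings of the finite layers $A/I_n$, applying at each stage a non-simple traceless analogue of Theorem~\ref{thm:AFembedding} to handle the increment $I_n/I_{n+1}$ while respecting previously made choices.

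The principal obstacle is $(4) \Rightarrow (1)$. Even in the simple case, AF-embeddability is a substantial theorem (Theorem~\ref{thm:AFembedding}), and the non-simple traceless version demands that AF-embeddings of quotients and of ideals fit together consistently---a relative existence problem for $*$-homomorphisms into AF targets parameterised by the ideal lattice. I expect the technical machinery of \cite{Gabe:GAFA}, probably involving a Weyl--von Neumann-type absorption result or a relative lifting theorem sensitive to the primitive ideal space, to be the crucial ingredient that bridges the structural condition (4) and the explicit construction of an AF target.
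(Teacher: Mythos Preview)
The paper does not prove this theorem; it quotes it from \cite{Gabe:GAFA} as context for the quasidiagonality discussion, so there is no in-paper argument to compare against. That said, your proposal has genuine gaps in both non-trivial directions.

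For $(3)\Rightarrow(4)$: your claim that $I/P$ is simple for $P\in\mathrm{Prim}(I)$ is incorrect---primitive quotients need not be simple---so you cannot invoke Kirchberg's dichotomy (Theorem~\ref{Thm:KDichotomoy}) fibrewise, and the hand-wave to ``Kirchberg--R\o{}rdam-style techniques'' does not repair this. A cleaner line is: compactness of $\mathrm{Prim}(I)$ for separable $I$ forces $I\otimes\mathcal K$ to contain a full projection $p$; the corner $p(I\otimes\mathcal K)p$ is then unital, exact, and traceless, hence by Corollary~\ref{cor:BH} not stably finite, producing an infinite projection in a matrix amplification of $A$.

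For $(4)\Rightarrow(1)$: you acknowledge this is the crux, but your plan is not a proof and is not on the right track. Invoking ``a non-simple traceless analogue of Theorem~\ref{thm:AFembedding}'' at each stage of an ideal filtration is essentially assuming what is to be proved; and Theorem~\ref{thm:AFembedding} itself relies on a faithful amenable trace and the UCT, neither of which is available here. Gabe's actual route, signalled by the title of \cite{Gabe:GAFA}, is via unsuspended $E$-theory: condition~(4) is precisely what allows asymptotic morphisms out of $A$ to be desuspended (morally, $A$ behaves like a cone from the point of view of homotopy/$E$-theory, cf.\ the remarks on $C_0((0,1])\otimes A$ preceding the theorem), and this is combined with absorption and classification-of-maps machinery to produce the AF-embedding. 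No filtration of $\mathrm{Prim}(A)$ is used to assemble the target.
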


The origin of the AF-embedding problem is Pimsner and Voiculescu's famous AF-embedding of the irrational rotation algebras (\cite{PV:JOT}). This led to crossed products becoming a natural focus for these problems, and a result of Pimsner solves both the quasidiagonality and AF-embeddability problems for crossed products $C(X) \rtimes \mathbb Z$.
(In the absence of a faithful invariant probability measure on $X$, such a crossed product does not have a faithful trace.)

\begin{theorem}[Pimsner; {\cite{Pimsner:ETDS}}]\label{thm:pimsner:AFE}
    For a homeomorphism $\alpha$ of a compact metrisable space $X$, the following are equivalent:
    \begin{enumerate}
        \item\label{pim1} $C(X) \rtimes_\alpha \mathbb Z$ is AF-embeddable;
        \item\label{pim2} $C(X) \rtimes_\alpha \mathbb Z$ is quasidiagonal;
        \item\label{pim3} $C(X) \rtimes_\alpha \mathbb Z$ is (stably) finite;
        \item\label{pim4} there is no open set $U \subseteq X$ such that $\overline{\alpha(U)}$ is a proper subset of $U$.
    \end{enumerate}
\end{theorem}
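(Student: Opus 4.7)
The strategy is to prove the cycle $(1) \Rightarrow (2) \Rightarrow (3) \Rightarrow (4) \Rightarrow (1)$, with $(4) \Rightarrow (1)$ being Pimsner's main construction. The first three implications are essentially formal. AF-algebras are inductive limits of matrix algebras and hence quasidiagonal, and quasidiagonality passes to subalgebras, giving $(1) \Rightarrow (2)$. For $(2) \Rightarrow (3)$, a proper isometry $v \in M_n(A)$ with $v^*v = 1$ and $vv^* \neq 1$ would, under a sufficiently accurate c.p.c.\ quasidiagonal approximation $A \to M_k$, yield an approximate isometry in $M_{kn}$ whose range projection is a proper subprojection — impossible by rank--nullity.

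For $(3) \Rightarrow (4)$, argue contrapositively. Given open $U \subseteq X$ with $\overline{\alpha(U)} \subsetneq U$, set $W := U \setminus \overline{\alpha(U)}$; iterating the inclusion $\alpha(U) \subseteq \overline{\alpha(U)} \subsetneq U$ shows $\{\alpha^n(W)\}_{n \geq 0}$ is a pairwise disjoint family of non-empty open sets (a \emph{wandering set} for $\alpha$). Pick a non-zero $f \in C_c(W)_+$ and let $u$ be the canonical unitary in $A := C(X) \rtimes_\alpha \mathbb Z$ (using $ugu^* = g \circ \alpha^{-1}$). The partial isometries $v_n := u^n f^{1/2}$ satisfy $v_n^* v_n = f$, while the ``range'' elements $v_n v_n^* = f \circ \alpha^{-n}$ are pairwise orthogonal in $C(X)$. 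Cutting down via $(f - \tfrac12)_+$ functional calculus and assembling these into a shift-like structure produces an infinite element in the Cuntz sense — and thence an infinite projection in some matrix amplification of $A$ — contradicting stable finiteness.

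The crux is $(4) \Rightarrow (1)$. The content of (4) is a \emph{recurrence} condition on $\alpha$: no open set is strictly swallowed by its image. The plan is to exploit this to produce, for any open cover $\mathcal U$ of $X$ and any height bound $N$, a Rokhlin-type decomposition of $X$ (up to a small boundary) into finitely many disjoint \emph{towers} $\{B_i, \alpha(B_i), \ldots, \alpha^{n_i - 1}(B_i)\}$ with $n_i \geq N$ and bases $B_i$ refining $\mathcal U$. Each tower provides a block of matrix units in an ambient finite-dimensional algebra; as $\mathcal U$ refines and $N \to \infty$, these nest into an AF algebra containing $C(X) \rtimes_\alpha \mathbb Z$, with the generators $C(X)$ and $u$ approximated to arbitrary precision by elements of the finite-dimensional subalgebras.

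The main obstacle lies in the tower construction itself: translating the static, purely topological condition (4) into a genuinely dynamical Rokhlin-type partition is delicate. Two situations require particular care — periodic and pre-periodic orbits, which must be peeled off as finite-dimensional summands of the AF limit, and ``attractor'' sets of the form $\bigcap_n \overline{\alpha^n(U)}$, which refuse to be partitioned into tall towers and must be absorbed into the combinatorial-topological induction. This construction, which anticipates the later Vershik--Putnam treatment of minimal Cantor systems via Bratteli diagrams, is where Pimsner's technical ingenuity lies.
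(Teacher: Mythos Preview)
The paper does not contain a proof of this theorem. This is a survey of open problems, and Pimsner's theorem is stated as a cited result from \cite{Pimsner:ETDS} with no argument given; there is therefore nothing in the paper to compare your proposal against.

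That said, your sketch is a reasonable outline of how Pimsner's original argument goes. The implications $(1)\Rightarrow(2)\Rightarrow(3)$ are indeed formal, and your contrapositive for $(3)\Rightarrow(4)$ --- producing a wandering open set and hence infinitely many orthogonal, mutually Cuntz-equivalent positive elements --- is the right idea, though the passage from ``shift-like structure'' to an honest infinite projection in a matrix amplification deserves a line or two more care (one clean route: choose $g\in C(X)$ with $g\equiv 1$ on $\overline{\alpha(U)}$ and $\mathrm{supp}(g)\subseteq U$, and check that $ug^{1/2}$ has $g$ as its modulus-squared while $ug^{1/2}\cdot g = ug^{1/2}$, which already witnesses proper infiniteness at the Cuntz-semigroup level). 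Your description of $(4)\Rightarrow(1)$ as a Rokhlin-tower construction feeding into nested finite-dimensional subalgebras is broadly faithful to Pimsner's method, and you are right that the periodic points and the residual ``attractor'' sets are where the technical work lives.
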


It is natural to try to extend Theorem~\ref{thm:pimsner:AFE} to more general amenable groups.  An invariant probability measure on the space gives rise to a trace on the crossed product, which will be faithful when the measure is faithful (i.e.\ non-empty open sets have non-zero measure). In the presence of such a faithful invariant measure, Theorem~\ref{thm:AFembedding} applies (using Tu's Theorem~\ref{thm:Tu} to verify the UCT).
However, Pimsner's theorem shows that one should expect quasidiagonal (even AF-embeddable) crossed products well beyond this case.

\begin{question}
    Let $X$ be a compact metrisable space, let $d \geq 2$ be an integer, and let
    $\mathbb Z^d \curvearrowright X$ be an action.  
    If $C(X) \rtimes \mathbb Z^d$ is stably finite, must it be quasidiagonal?  In this case, must it also be AF-embeddable?
\end{question}

To the best of our knowledge, it is also not known when such a crossed product is even stably finite.  While the question is of interest for more general groups,  $\mathbb Z^d$ (or even just $\mathbb Z^2$) is a natural starting point.

Varying Pimsner's Theorem~\ref{thm:pimsner:AFE} in a different direction, Nate Brown solved the AF-embedding problem for crossed products $A \rtimes \mathbb Z$ for an AF algebra $A$ (\cite{Brown:JFA}).  For such crossed products, a direct analogue of Theorem~\ref{thm:pimsner:AFE} holds with Condition~\ref{pim4} replaced with a similar `incompressibility' condition on the induced action of $\mathbb Z$ on $K_0(A)$.  Brown's theorem was extended to automorphisms of real rank zero approximately homogeneous (AH) algebras in \cite{RainoneSchafhauser:Adv} but with the weaker conclusion of quasidiagonality in place of AF-embeddability. This appears to be one of the few classes where we currently know the answer to the quasidiagonality problem but not the AF-embeddability problem, prompting the following question.

\begin{question}
    Let $A$ be an AH algebra of real rank zero.  For an action $\alpha \colon \mathbb Z \curvearrowright A$ such that $A \rtimes \mathbb Z$ is quasidiagonal, is $A \rtimes \mathbb Z$ AF-embeddable?
\end{question}

\section{Strongly self-absorbing {$C^*$}-algebras}\label{sec:SSA}

Strongly self-absorbing algebras play an important role in modern structure and classification theory, from the use of $\mathcal Z$-stability as a unifying regularity property to the role of $\mathcal O_\infty$- and $\mathcal O_2$-stability in purely infinite algebras (whose importance largely emerged in Kirchberg's Geneva theorems, \cite{Kirchberg:ICM,KirchbergPhillips:Crelle}, including Theorem~\ref{Thm:Oinftystable}).
The role of tensorially absorbing a strongly self-absorbing algebra mirrors the use of $\mathcal R$-stability (a.k.a.\ the McDuff property) in II$_1$ factors. Indeed, being $\mathcal D$-stable for strongly self-absorbing $\mathcal D$ has a `McDuff-type' central sequence characterisation (\cite[Theorem~2.1]{TomsWinter:TAMS} or \cite[Proposition~4.11]{Kirchberg:Abel}).

A \emph{strongly self-absorbing} {$C^*$}-algebra is a unital separable {$C^*$}-algebra $\mathcal D\neq \mathbb C$, such that $\mathcal D \cong \mathcal D \otimes \mathcal D$ in a strong way.\footnote{Precisely, the first-factor embedding $\mathcal D \to \mathcal D \otimes \mathcal D$ is approximately unitarily equivalent to an isomorphism.}
These {$C^*$}-algebras, introduced independently by Kirchberg in \cite[Section~4]{Kirchberg:Abel} and by Toms and Winter in \cite{TomsWinter:TAMS},  have a number of convenient structural properties: simplicity, nuclearity, $\mathcal Z$-stability (\cite{Winter:JNCG}), and at most one trace.
Many of these structural properties arise from much earlier work of Effros and Rosenberg on {$C^*$}-algebras with an approximately inner flip (\cite{EffrosRosenberg:PJM}).
When $\mathcal D$ is strongly self-absorbing, the McDuff-type characterisation of $\mathcal D$-stability implies that a pair of strongly self-absorbing {$C^*$}-algebras are isomorphic if and only if they are (approximately) bi-embeddable. There is only a small list of known strongly self-absorbing {$C^*$}-algebras: the Cuntz algebras $\mathcal O_2$ and $\mathcal O_\infty$, the Jiang--Su algebra $\mathcal Z$, UHF algebras of infinite type, and tensor products of $\mathcal O_\infty$ with a UHF algebra of infinite type.
Through classification, it is known that these are the only strongly self-absorbing {$C^*$}-algebras satisfying the UCT.

In talks and in \cite{Winter:Abel}, Winter has highlighted the class of strongly self-absorbing {$C^*$}-algebras as a `microcosm' of general nuclear {$C^*$}-algebras, suggesting the following as a line of attack on the general UCT problem.

\begin{question}[{\cite[6.3]{Winter:Abel}}] \label{q:UCTssa}
    If $\mathcal D$ is a strongly self-absorbing {$C^*$}-algebra, does $\mathcal D$ satisfy the UCT?
    Equivalently, is $\mathcal D$ one of the known strongly self-absorbing {$C^*$}-algebras (listed above)?
\end{question}

Picking apart consequences of the UCT (such as the K\"unneth formula and quasidiagonality in the stably finite case) naturally breaks Problem~\ref{q:UCTssa} down into three subquestions.
Here is the first, which has both a finite version (which, by Kirchberg's dichotomy theorem and nuclearity of strongly self-absorbing algebras, is a special case of Problem \ref{q:QD}) and a general version.

\begin{question}\label{q:SSAQDQ}
    Let $\mathcal D$ be a strongly self-absorbing {$C^*$}-algebra.
    \begin{enumerate}[(1)]
        \item \label{it:SSAqd1}
        $($\cite[3.9]{Winter:Abel}$)$ If $\mathcal D$ is finite, must it be quasidiagonal? Equivalently, do we have $\mathcal Q \cong \mathcal Q \otimes \mathcal D$?\footnote{If $\mathcal D$ is quasidiagonal, this gives an embedding $\mathcal D\to \mathcal Q_\omega$, which can be upgraded to an embedding into $\mathcal Q_\omega \cap \mathcal Q'$ by the strong self-absorption of $\mathcal Q$. This is equivalent to $\mathcal Q$ being $\mathcal D$-stable by the McDuff-type characterisation.}
        \item \label{it:SSAqd2}
        $($\cite[Conjecture~4.7]{Kirchberg:Abel}, \cite[3.10]{Winter:Abel}$)$ In general, if $\mathcal D \not\cong \mathcal O_2$, does $\mathcal D$ embed into $(\mathcal Q \otimes \mathcal O_\infty)_\omega$? Equivalently, do we have $\mathcal Q\otimes \mathcal O_\infty \cong \mathcal Q \otimes \mathcal O_\infty \otimes \mathcal D$?
    \end{enumerate}
\end{question}

In a recent preprint, the first-named author has shown that a positive answer to Problem\ref{q:SSAQDQ}(\ref{it:SSAqd2}) gives a positive answer to Problem\ref{q:SSAQDQ}(\ref{it:SSAqd1}) (\cite[Corollary 4.6]{Schafhauser24}).

The K\"unneth formula for $K$-theory in \eqref{Kunneth2} holds when one of $A$ or $B$ satisfies the UCT, and so computes $K_*(A\otimes B)$ in terms of $K_*(A)$ and $K_*(B)$. When $A$ is strongly self-absorbing and satisfies the UCT, this imposes strong constraints on $K_*(A)$: $K_1(A)=0$, and if $K_0(A)$ is non-zero, it must be a subring of $\mathbb Q$.  The second subproblem below essentially asks whether this K\"unneth formula result applies generally to strongly self-absorbing {$C^*$}-algebras, modulo the previous question.

\begin{question}
    Let $\mathcal D$ be a strongly self-absorbing {$C^*$}-algebra which is $(\mathcal Q\otimes \mathcal O_\infty)_\omega$-embeddable.
    Must $\mathcal D$ have the same $K$-theory as a known strongly self-absorbing {$C^*$}-algebra?
\end{question}

The final subquestion asks for a classification of strongly self-absorbing algebras, modulo the previous questions.

\begin{question}
    Let $\mathcal D$ be a strongly self-absorbing {$C^*$}-algebra which is $(\mathcal Q\otimes \mathcal O_\infty)_\omega$-embeddable (or even quasidiagonal) and which has the same ordered $K$-theory as a known strongly self-absorbing {$C^*$}-algebra $\mathcal E$.
    Does it follow that $\mathcal D \cong \mathcal E$?
\end{question}

To illustrate the point of the last question, suppose $\mathcal D$ is a quasidiagonal strongly self-absorbing {$C^*$}-algebra with $K_0(\mathcal D)=\mathbb Z[\frac12]$ and $K_1(\mathcal D)=0$ (the same $K$-theory as the CAR algebra $M_{2^\infty}$); we think of $\mathcal D$ as a `putative CAR algebra'. Certainly the $K$-theory assumption ensures $M_{2^\infty}$ embeds into $\mathcal D$ (for this, it is enough that $\mathcal D$ has cancellation because it is $\mathcal Z$-stable -- by \cite{Winter:JNCG} -- and stably finite).   Quasidiagonality provides a unital embedding of $\mathcal D$ into $\mathcal Q_\omega$, but it is unclear whether the unital embedding into $\mathcal Q_\omega$ uses the right matrix approximations, i.e.\ lands in the subalgebra $(M_{2^\infty})_\omega$.
(Alternatively, quasidiagonality provides an embedding into $(M_{2^\infty})_\omega$, but this embedding is not a priori unital.)

The known strongly self-absorbing {$C^*$}-algebras almost group into pairs, where each such stably finite strongly self-absorbing {$C^*$}-algebra $\mathcal D$, has a purely infinite counterpart $\mathcal D\otimes\mathcal O_\infty$. The exception is $\mathcal O_2$: no stably finite unital {$C^*$}-algebra can have zero $K$-theory, so that a stably finite counterpart of $\mathcal O_2$ would need to be non-unital.  However, a theory of non-unital strongly self-absorbing {$C^*$}-algebras has not been satisfactorily developed; one issue is that when $A$ is non-unital, one doesn't have a canonical first (or second) factor embedding of $A$ into $A\otimes A$.  Potential examples of non-unital strongly self-absorbing algebras may include $\mathcal D\otimes\mathcal K$ when $\mathcal D$ is unital and strongly self-absorbing; these algebras appear as fibres in Dadarlat and Pennig's generalised Dixmier--Douady theory (\cite{DadarlatPennig:Crelle,DadarlatPennig:JNCG,DadarlatPennig:AGT}) but, as pointed out by some of the readers of the previous version of this paper, it is unclear whether they should be thought of as strongly self-absorbing.\footnote{One issue is that absorbing $\mathcal D\otimes\mathcal K$ will not necessarily pass to hereditary subalgebras.}  

Another important candidate is the Razak--Jacelon algebra $\mathcal W$, studied early on in \cite{Razak:CJM,Dean:CJM,Jacelon:JLMS}. This is $KK$-contractible and has a unique tracial state, and through classification, it is now known to be self-absorbing (\cite{EGLN:JGP} or \cite{Nawata:APDE}). To some extent, $\mathcal W$ is therefore a stably finite analogue of $\mathcal O_2$.  Although $\mathcal W$-stability passes to ideals and quotients,\footnote{As pointed out to us by the referee, this is a consequence of $\mathcal W$ being simple and exact rather than a deeper property of $\mathcal W$. Indeed, any ideal $I\lhd A\otimes \mathcal W$ will take the form $J\otimes\mathcal W$ with quotient $(A\otimes \mathcal W)/I\cong (A/J)\otimes\mathcal W$, so that ideals and quotients of $\mathcal W$-stable $C^*$-algebras are again $\mathcal W$-stable.  To see this, set $J=\{x\in A:\exists w\in\mathcal W,\ w\neq 0,\ x\otimes w\in I\}=\{x\in A:x\otimes \mathcal W\subseteq I\}$, the latter equality coming from simplicity of $\mathcal W$, so that $J\otimes\mathcal W\subseteq I\subseteq A\otimes \mathcal W$ and by exactness $(A\otimes\mathcal W)/(J\otimes\mathcal W)\cong (A/J)\otimes\mathcal W$. If $I$ strictly contains $J\otimes\mathcal W$, then by Kirchberg's slice lemma the image of $I$ in $(A/J)\otimes\mathcal W$ would contain a non-zero elementary tensor, leading to a contradiction. We do not know whether hereditary subalgebras of $\mathcal W$-stable algebras are $\mathcal W$-stable.} other permanence properties as in the first part of the problem below are unknown. This together with the second part addresses to what extent $\mathcal W$-stability behaves like $\mathcal D$-stability for a strongly self-absorbing $\mathcal D$, while the third asks for a concrete proof of self-absorption, which has been sought since \cite{Jacelon:JLMS}.

\begin{question}
\begin{enumerate}[(1)]
    \item Is $\mathcal W$-stability preserved by taking extensions or passing to hereditary subalgebras?
\item Is there a characterisation of $\mathcal W$-stability of a separable {$C^*$}-algebra $A$ in terms of central sequences?\footnote{One test for such a characterisation, is that it could be used to answer part (1) of this problem in the spirit of \cite[Section 4]{TomsWinter:TAMS}.  That said, since $\mathcal R^\omega$ (which is certainly not separably $\mathcal W$-stable as it is not unital), is a quotient of $\mathcal W_\omega$, there are reasons to believe that there is no such central sequence characterisation for $\mathcal W$-stability.}
\item Find a direct proof that $\mathcal W$ is self-absorbing -- not relying on major classification results --   either in the spirit of Jiang and Su's original argument for $\Z\cong\Z\otimes\Z$ or of the new proofs of this fundamental fact in \cite{Schemaitat:JFA,Ghasemi:BLMS}.
    \end{enumerate}
\end{question}

One challenge is that as $\mathcal W$ is non-unital, there is no natural analogue of the first factor embeddings $\mathrm{id}_{\mathcal D}\otimes 1_{\mathcal D}\colon \mathcal D\to\mathcal D\otimes\mathcal D$ and $\mathrm{id}_{A}\otimes 1_{\mathcal D}\colon A\to A\otimes\mathcal D$ which have been crucial to working with strongly self-absorbing $\mathcal D$.




\section{From strict comparison to $\Z$-stability}
\label{sec:TW}

Despite all the progress on the fine structure of simple nuclear {$C^*$}-algebras over the last decade, the final open implication in the Toms--Winter conjecture remains a major challenge.\footnote{We wish to emphasise that regularity should be viewed independently from the UCT.  The structure theorem (Theorem \ref{Thm:Structure}) and the Toms--Winter conjecture (Conjecture~\ref{TWConjecture}) do not have a UCT hypothesis, nor do its partial confirmations (Theorem~\ref{Thm:MS} and its generalisations).}

\begin{question}\label{q:TW}
Let $A$ be a simple separable nuclear non-elementary {$C^*$}-algebra with strict comparison. Must $A$ be $\mathcal Z$-stable?
\end{question}

In concrete settings, there are a number of families for which the Toms--Winter conjecture is known in full generality.  These include AH algebras (\cite{Toms:CRMASSRC}, heavily using \cite{Winter:IM12}; we will return to this later below) and crossed products $C(X)\rtimes \mathbb Z^d$ associated to free minimal actions on compact Hausdorff spaces (\cite[Corollary~7.14]{LiNiu:StableRank}, a preprint making use of \cite{Niu:CJM}; the $d=1$ case was obtained earlier in \cite{AL:MJM}). 
Note that both of these classes contain examples of both $\Z$-stable and non-$\Z$-stable {$C^*$}-algebras.  We return to crossed products in Section \ref{sec:crossed-product} below.

Matui and Sato's Theorem \ref{Thm:MS} played a major role in initiating the modern use of von Neumann techniques in {$C^*$}-algebras. Their strategy is to lift the corresponding von Neumann algebra result to the {$C^*$}-level through the central sequence trace-kernel extension. Since this underpins most known abstract results giving conditions when \eqref{it:TW.3}$\implies$\eqref{it:TW.2} holds in Conjecture \ref{TWConjecture}, let us describe it in more detail.

Let $A$ be a unital simple separable nuclear non-elementary stably finite {$C^*$}-algebra with a unique trace $\tau$, and write $\mathcal M\coloneqq\pi_\tau(A)''$, where $\pi_\tau$ is the GNS-representation associated to $\tau$ (by Connes' theorem, $\mathcal M$ is necessarily the hyperfinite II$_1$ factor, but we stick to $\mathcal M$ to make the point that the full force of Connes' theorem is not needed).  For a free ultrafilter $\omega\in\beta\mathbb N\setminus\mathbb N$, we can form the {$C^*$}-ultrapower $A_\omega$, and the tracial ultrapower $\mathcal M^\omega$,
and it is a consequence of Kaplansky's density theorem that $\pi_\tau$ induces a surjection $A_\omega\to\mathcal M^\omega$.  The kernel of this map, $J_A$, is called the \emph{trace-kernel ideal}.   More subtle is that $\pi_\tau$ still induces a surjection at the level of central sequence algebras\footnote{This is proved in \cite[Theorem~3.3]{KirchbergRordam:Crelle} using Kirchberg's $\sigma$-ideal techniques introduced in \cite{Kirchberg:Abel}, building on the earlier observations for nuclear {$C^*$}-algebras (using different methods) chaining back to \cite{Sato11}.} -- a fact we call \emph{central surjectivity} -- resulting in the short exact sequence
\begin{equation}\label{centralsurjectivity}
\begin{tikzcd}
    0 \arrow{r} &  J_A\cap A' \arrow{r} & A_\omega\cap A' \arrow{r} & \mathcal M^\omega\cap \mathcal M'\arrow{r} & 0.
\end{tikzcd}
\end{equation}
Since $\mathcal M$ is an injective von Neumann algebra, it is McDuff (the implication \eqref{it:TW.3}$\implies$\eqref{it:TW.2} in the Toms--Winter conjecture is analogous to this part of Connes' theorem), so there exists a unital embedding $M_2\hookrightarrow \mathcal M^\omega\cap \mathcal M'$. By a projectivity theorem of Loring (\cite{Loring:MathScand}), this embedding lifts to a c.p.c.\ order zero map $\phi$ as shown below:
\begin{equation}\label{ozliftequation}
\begin{tikzcd}
    0\arrow[r]&J_A\cap A'\arrow[r]&A_\omega\cap A'\arrow[r]&{\mathcal M^\omega}\cap \mathcal M'\arrow[r]&0.\\&&M_2\arrow[u,"\phi"',"\text{order zero}",dashed]\arrow[ur,hook]&&
\end{tikzcd}
\end{equation}
This is almost enough to show $\mathcal Z$-stability of $A$; a McDuff-type characterisation of $\Z$-stability requires that the defect $1_{A_\omega}-\phi(1_{M_2})$ of $\phi$ (measuring how much $\phi$ fails to be unital) is Cuntz below $\phi(e_{11})$ in $A_\omega \cap A'$ (\cite{Kirchberg:Abel,TomsWinter:TAMS,RordamWinter:Crelle}).\footnote{The characterisation given by these references appears formally stronger, requiring the existence of $v\in A_\omega\cap A'$ with $v^*v=1_{A_\omega}-\phi(1_{M_2})$ and $\phi(e_{11})v=v$.
Upon changing $\phi$ via functional calculus and using standard Cuntz semigroup techniques, this can be verified if we have $1_{A_\omega}-\phi(1_{M_2})\preceq \phi(e_{11})$.}
Here, the defect $1_{A_\omega}-\phi(1_{M_2})$ lies in the trace-kernel ideal $J_A$, so it is infinitesimal in trace, and strict comparison of $A$ ensures that it is Cuntz below $\phi(e_{11})$ in $A_\omega$.  Matui and Sato's breakthrough showed how to use nuclearity to upgrade strict comparison to a comparison condition for the central sequence algebra (known as \emph{property (SI)}.\footnote{Property (SI) is really a `small-to-large' comparison condition on the central sequence algebra, but more general comparison results follow from property (SI).}  Property (SI) is exactly what is needed to ensure the Cuntz subequivalence $1_{A_\omega}-\phi(1_{M_2}) \precsim \phi(e_{11})$ can be taken in $A_\omega\cap A'$, proving Theorem \ref{Thm:MS} in the unique trace case.

\begin{theorem}[{Matui--Sato; \cite{MatuiSato:Acta} -- cf.\ \cite[Corollary 5.11]{KirchbergRordam:Crelle}}]\label{Thm:SI}
Let $A$ be a unital simple separable nuclear non-elementary stably finite {$C^*$}-algebra with strict comparison.  Then $A$ has property (SI);
i.e.\ for positive contractions $e\in J_A\cap A'$ and $f=(f_n)_{n=1}^\infty\in A_\omega\cap A'$ with
\begin{equation}\lim_{k\to\infty}\lim_{n\to\omega}\inf_{\tau\in T(A)}\tau(f^k_n)>0,\footnote{This `largeness' condition on $f$ should be viewed as saying that the spectral projection of $f$ at $1$ is bounded below under all limit traces, or more precisely, $\inf_{k\in\mathbb N} \inf_{\tau \in T_\omega(A)} \tau(f^k) > 0$ (where $T_\omega(A)$ denotes the `limit traces' on $A_\omega$).}
\end{equation}
there exists $s\in A_\omega\cap A'$ with $s^*s=e$ and $fs=s$.
\end{theorem}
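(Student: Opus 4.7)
The plan is to realize $s$ as an element of $A_\omega \cap A'$ by constructing a representing sequence $(s_n)$ in $A$, first solving the problem pointwise via strict comparison and then centralizing via nuclearity. I would start by representing $e=(e_n)$ and $f=(f_n)$ by sequences of positive contractions approximately commuting with a fixed countable dense subset of $A$. The hypothesis $e\in J_A$, combined with compactness of $T(A)$, translates to $\lim_{n\to\omega}\sup_{\tau\in T(A)}\tau(e_n)=0$, while the largeness assumption on $f$ produces $k_0\in\mathbb N$ and $\alpha>0$ with $\lim_{n\to\omega}\inf_{\tau\in T(A)}\tau(f_n^{k_0})\geq \alpha$.

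\textbf{Step 1 (Pointwise strict comparison).} Fix a small $\eta>0$ and replace $f_n^{k_0}$ by $(f_n-\eta)_+^{k_0}$, whose tracial mass remains bounded below by roughly $\alpha$. For each large $n$, the inequality $d_\tau(e_n)< d_\tau((f_n-\eta)_+^{k_0})$ holds uniformly in $\tau\in T(A)$, and strict comparison in $A$ yields $t_n\in A$ with $t_n^*(f_n-\eta)_+^{k_0}t_n\approx e_n$. Setting $u_n:=(f_n-\eta)_+^{k_0/2}\,t_n\,e_n^{1/2}$ gives a witness with $u_n^*u_n\approx e_n$ and $\|f_nu_n-u_n\|$ small, but not yet approximately central.

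\textbf{Step 2 (Centralization via nuclearity).} The crucial move is to upgrade $(u_n)$ to an approximately central sequence. Using nuclearity of $A$, I would take c.p.c.\ approximate factorizations $A\xrightarrow{\phi_n}F_n\xrightarrow{\psi_n}A$ through finite-dimensional $F_n$ that are asymptotically multiplicative on a prescribed dense subset of $A$. Inside each $F_n$, an approximately central substitute for $\phi_n(u_n)$ can be produced by convex averaging over a sufficiently rich finite-dimensional unitary group (or, equivalently, by invoking Kirchberg's $\sigma$-ideal machinery applied directly to the central sequence algebra in \eqref{centralsurjectivity}). Pushing the centralized element back through $\psi_n$, and using the trace bounds to control the loss, produces $s_n\in A$ satisfying $s_n^*s_n\approx e_n$, $\|f_ns_n-s_n\|$ small, and $\|[s_n,a]\|$ small for $a$ in prescribed finite subsets of $A$. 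A diagonal argument via Kirchberg's $\varepsilon$-test then assembles the $s_n$ into an element $s\in A_\omega\cap A'$ with $s^*s=e$ and $fs=s$.

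\textbf{Main obstacle.} The heart of the proof is Step 2: strict comparison is a hypothesis about $A$ itself, but the conclusion lives in $A_\omega\cap A'$, and bridging this gap is precisely the Matui--Sato insight. It requires the full force of nuclearity both to obtain the finite-dimensional factorizations through $F_n$ and to ensure the tracial control needed for the averaging. The trace-kernel extension \eqref{centralsurjectivity} underpins the argument, guaranteeing that `small in trace' elements of the central sequence algebra can still be dominated by elements with definite tracial mass. Without nuclearity, no substitute for this centralization step is presently known, which is why Problem~\ref{q:TW} remains open and why the $\mathcal Z$-stability direction of the Toms--Winter conjecture is so delicate.
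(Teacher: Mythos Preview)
The paper is a survey and does not prove this theorem; it merely cites Matui--Sato and states the result. So there is no in-paper proof to compare against, but your Step~2 contains a genuine gap worth naming.

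Averaging does not preserve the quadratic relation $s^*s=e$. If you replace your non-central witness $u_n$ (or its image in $F_n$) by a convex combination $\tilde u=\frac{1}{N}\sum_j w_j u w_j^*$ over unitaries, then $\tilde u^*\tilde u$ is not the average of the conjugates $w_j u^*u w_j^*$; the cross terms persist, and nothing forces $\tilde u^*\tilde u$ close to $e_n$. The same obstruction hits the relation $f_n s_n\approx s_n$. Averaging is a device for centralizing \emph{linear} data (values of traces, say), not for centralizing an element while preserving multiplicative constraints. Your alternative of invoking the $\sigma$-ideal machinery behind \eqref{centralsurjectivity} does not fill this gap either: that machinery produces quasicentral approximate units for $J_A$ relative to separable subalgebras, or lifts elements from the tracial quotient to $A_\omega\cap A'$; it does not take an arbitrary $u\in A_\omega$ satisfying polynomial relations and return a central element satisfying the same relations.

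Matui and Sato's actual argument runs in the opposite order. Nuclearity is used \emph{first}: a c.p.c.\ factorization of $\mathrm{id}_A$ through a matrix algebra is used to place approximately central copies of matrix units inside the hereditary subalgebra $\overline{(f-\eta)_+A(f-\eta)_+}$, with the tracial largeness of $f$ and strict comparison ensuring there is room to do so. The element $s$ is then assembled from these already-central pieces paired against $e$. One does not centralize a pre-existing witness; one manufactures a central witness directly from the completely positive approximation. This is exactly the ``Matui--Sato insight'' you allude to in your final paragraph, but your sketch does not implement it.
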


In addition to converting \eqref{ozliftequation} to $\Z$-stability, Theorem \ref{Thm:SI} leads to an important characterisation of $\Z$-stability for unital simple separable nuclear stably finite {$C^*$}-algebras.
This property is called \emph{tracial $\Z$-stability}, which asks for c.p.c.\ order zero maps $\phi\colon M_k\to A$ with approximately central range, whose defect $1_A-\phi(1_{M_k})$ can be taken to be arbitrarily small in the Cuntz semigroup (see \cite[Definition 2.1]{HO:JFA}).\footnote{The adjective `tracial' is in the spirit of Lin's tracial approximations.}  This was worked out by Hirshberg and Orovitz in \cite{HO:JFA}, and, via Kerr's $\Z$-stability theorem (Theorem~\ref{Thm:Kerr}, discussed in Section~\ref{sec:crossed-product}), has proved particularly important in applications to $\Z$-stability of crossed products.

To generalise the Matui--Sato approach to obtaining $\Z$-stability from strict comparison to the setting where $A$ has multiple traces, the naive thing to try is to take $\mathcal M$ to be the finite part of the bidual, $A^{**}_{\mathrm{fin}}$, which will be McDuff; the central surjectivity and order zero lifting part of Matui and Sato's argument work in this context, but unless $A$ has only finitely many extremal traces (as in Matui and Sato's Theorem \ref{Thm:MS}), one cannot expect the defect of the resulting map $\phi$ to be Cuntz below $\phi(e_{11})$ in $A_\omega$, let alone in the central sequence algebra.  Instead, one must work with an object capturing tracial behaviour uniformly rather than pointwise. This is achieved by taking $\mathcal M$ to be Ozawa's uniform tracial completion, $\overline{A}^{T(A)}$, of $A$.\footnote{The uniform trace (semi)norm is given by $\|a\|_{2,T(A)}\coloneqq \sup_{\tau \in T(A)} \tau(a^*a)^{1/2}$, and then the tracial completion $\overline{A}^{T(A)}$ is the {$C^*$}-algebra of norm-bounded $\|\cdot\|_{2,T(A)}$-Cauchy sequences, modulo the norm-bounded $\|\cdot\|_{2,T(A)}$-null sequences.}  Matui and Sato's argument proves the implication \eqref{it:TW.3}$\implies$\eqref{it:TW.2} in the Toms--Winter conjecture whenever the uniform tracial completion $\mathcal M\coloneqq \overline{A}^{T(A)}$ has the \emph{McDuff property} in the sense that there is an embedding of $M_2$ into the central sequence algebra $\mathcal M^\omega\cap \mathcal M’$.\footnote{The ultrapower here is defined using $\|\cdot\|_{2,T(A)}$.} 

In fact, one does not need to obtain the full strength of the McDuff property. Using the tools introduced in \cite{CETWW:IM} to prove the implication \eqref{it:TW.2}$\implies$\eqref{it:TW.1} of the Toms--Winter conjecture, it suffices to be able to produce approximately central projections which divide elements in trace.  The precise condition is given below; it is a property for the tracial completion analogous to Murray and von Neumann's property $\Gamma$ for II$_1$ factors. In the case when $T(A)$ is a Bauer simplex, it suffices to take $x\coloneqq 1_{\mathcal M}$ in Definition \ref{def:Gamma} (\cite[Proposition~5.27]{CCEGSTW}).

\begin{definition}[{\cite[Definition 5.19 and Proposition 5.23]{CCEGSTW}}]\label{def:Gamma}
Let $A$ be a {$C^*$}-algebra whose tracial state space $T(A)$ is non-empty and compact.  We say that the tracial completion $\mathcal M\coloneqq \overline{A}^{T(A)}$ has \emph{property $\Gamma$} when there is a projection $p\in \mathcal M^\omega\cap \mathcal M'$ with $\tau(px)=\tau(x)/2$ for all $x\in\mathcal M$ and all limit traces $\tau$ on $\mathcal M^\omega$.\footnote{Limit traces are those traces $\tau$ defined using a sequence $(\tau_n)_{n=1}^\infty$ from $T(A)$ by $\tau(x)\coloneqq \lim_{n\to\omega}\tau_n(x_n)$, where $(x_n)_{n=1}^\infty$ is a representative sequence of $x\in \mathcal M^\omega$.}
\end{definition}

Using density, property $\Gamma$ for $\overline{A}^{T(A)}$ can be phrased at the level of $A$ using \mbox{$\|\cdot\|_{2,T(A)}$}-approximately central approximate projections which approximately divide the trace of elements of $A$.  We say that $A$ has \emph{uniform property $\Gamma$} when its tracial completion has property $\Gamma$.  See the introductions to \cite{CETWW:IM} and \cite{CCEGSTW} for a description of the passage from uniform property $\Gamma$ to McDuffness of the tracial completion of a unital simple separable nuclear {$C^*$}-algebra.\footnote{This goes by way of a condition known as \emph{complemented partitions of unity}, which enables one to glue local behaviour at fibres $\pi_\tau(A)''$ over traces $\tau \in T(A)$ to (approximate) global behaviour.}  Combining Matui and Sato's technique with these ideas shows that property $\Gamma$ is the missing link in the Toms--Winter conjecture.

\begin{theorem}[{\cite[Theorem 5.6]{CETW:IMRN}}]\label{Thm:ZStableEquivGammaSC}
Let $A$ be a unital simple separable nuclear stably finite {$C^*$}-algebra. Then $A$ is $\Z$-stable if and only if $A$ has strict comparison and uniform property $\Gamma$.
\end{theorem}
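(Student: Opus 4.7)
For the easy direction, suppose $A$ is $\mathcal Z$-stable. Strict comparison is immediate from R\o{}rdam's implication \eqref{it:TW.2}$\implies$\eqref{it:TW.3} of the Toms--Winter conjecture. For uniform property $\Gamma$, I would use that the unital embedding $\mathcal Z\hookrightarrow A_\omega\cap A'$ (coming from the McDuff-type characterisation of $\Z$-stability) composed with reduction modulo the trace-kernel ideal produces approximately central projections in $\mathcal M^\omega\cap\mathcal M'$ (where $\mathcal M \coloneqq \overline{A}^{T(A)}$) that halve every limit trace; take, for instance, the image of a projection of trace $1/2$ coming from $M_2\subseteq\mathcal Z$.

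The substantive direction is that strict comparison and uniform property $\Gamma$ together imply $\mathcal Z$-stability. The plan is to run the Matui--Sato argument outlined in the excerpt, with uniform property $\Gamma$ taking the place of the McDuff property of $\mathcal M$. Concretely, I would proceed in four steps:

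\textbf{Step 1: McDuffness of the tracial completion.} Starting from uniform property $\Gamma$, promote the halving projection to a unital embedding $M_2\hookrightarrow \mathcal M^\omega\cap\mathcal M'$. This is exactly where complemented partitions of unity (from \cite{CETWW:IM,CCEGSTW}) enter: property $\Gamma$ lets one fibrewise build halving projections over each trace $\tau\in T(A)$, and complemented partitions of unity, applied using nuclearity, glue these to a single projection that works uniformly, and then iterate to get a full copy of $M_2$ (and in fact of $\mathcal R$) in $\mathcal M^\omega\cap\mathcal M'$.

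\textbf{Step 2: Central surjectivity and order zero lifting.} Use the central surjectivity short exact sequence \eqref{centralsurjectivity} (which in the multi-trace case needs to be set up for $\mathcal M = \overline{A}^{T(A)}$ rather than the bidual; this uses nuclearity of $A$ and Kirchberg's $\sigma$-ideal techniques). Loring's projectivity theorem then lifts the unital embedding $M_2\hookrightarrow\mathcal M^\omega\cap\mathcal M'$ to a c.p.c.\ order zero map $\phi\colon M_2\to A_\omega\cap A'$, as in the diagram \eqref{ozliftequation}.

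\textbf{Step 3: Handling the defect via property (SI).} The defect $1_{A_\omega}-\phi(1_{M_2})$ lies in the trace-kernel ideal $J_A$ and is therefore uniformly small in trace along limit traces, while $\phi(e_{11})$ is uniformly large in trace. By Matui--Sato's property (SI) (Theorem~\ref{Thm:SI}), which holds since $A$ is unital simple separable nuclear and has strict comparison, there exists $s\in A_\omega\cap A'$ with $s^*s = 1_{A_\omega}-\phi(1_{M_2})$ and $\phi(e_{11})s = s$. This upgrades the defect-subequivalence from $A_\omega$ to $A_\omega\cap A'$.

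\textbf{Step 4: Conclude $\mathcal Z$-stability.} The existence of such a pair $(\phi,s)$ is precisely the McDuff-type characterisation of $\mathcal Z$-stability from \cite{RordamWinter:Crelle,TomsWinter:TAMS}, so $A\cong A\otimes\mathcal Z$.

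The main obstacle is Step~1: extracting a genuine McDuff-type structure on $\mathcal M^\omega\cap\mathcal M'$ from the seemingly weaker data of a single trace-halving projection. This is where the full strength of complemented partitions of unity --- and hence of nuclearity --- is used, since property $\Gamma$ alone is a priori only a fibrewise statement while $\mathcal Z$-stability demands uniform control across $T(A)$. Once Step~1 is in place, Steps~2--4 are essentially the Matui--Sato template transported from the unique-trace case.
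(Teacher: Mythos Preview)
Your sketch is correct and follows exactly the approach the paper outlines in the paragraphs preceding the theorem: uniform property $\Gamma$ yields (via complemented partitions of unity and nuclearity) McDuffness of the tracial completion, after which Steps~2--4 are the Matui--Sato template verbatim. One minor slip in the easy direction: $\mathcal Z$ is projectionless, so there is no inclusion $M_2 \subseteq \mathcal Z$ and hence no projection of trace $1/2$ in $\mathcal Z$; instead, the unital embedding $\mathcal Z \hookrightarrow A_\omega \cap A'$ descends to $\mathcal Z \hookrightarrow \mathcal M^\omega \cap \mathcal M'$, and since the latter is tracially complete this extends by $\|\cdot\|_{2}$-continuity to the tracial closure $\mathcal R$ of $\mathcal Z$, whence the image of $e_{11} \in M_2 \subseteq \mathcal R$ supplies the halving projection.
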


In Connes’ theorem, the very first step is to obtain property $\Gamma$ for a II$_1$ factor from semidiscreteness (\cite[Corollary~2.2]{Connes:Ann}) by means of a spectral gap argument (see \cite{Marrakchi18,Marrakchi19} for new approaches to this implication), but it remains a mystery whether uniform property $\Gamma$ can be obtained from nuclearity.\footnote{We state the question for simple {$C^*$}-algebras, but it is also open for {$C^*$}-algebras with no finite-dimensional representations.}  By Theorem \ref{Thm:ZStableEquivGammaSC}, a positive answer to Problem~\ref{q:Gamma} would imply a positive answer to Problem~\ref{q:TW}.

\begin{question}[{\cite[Question C]{CETW:IMRN}}]\label{q:Gamma}
Does every unital simple separable nuclear non-el\-e\-men\-tary stably finite  {$C^*$}-algebra have uniform property $\Gamma$?
\end{question}

Uniform property $\Gamma$ is known to hold for many unital simple separable nuclear stably finite {$C^*$}-algebras. For example, in the setting of dynamics, property $\Gamma$ holds for crossed products $C(X)\rtimes G$ associated to free minimal actions $G\curvearrowright X$ with the small boundary property of amenable groups $G$ on compact Hausdorff spaces $X$ (\cite[Theorem 9.4]{KerrSzabo:CMP}, stated as Theorem~\ref{Thm:KerrSzabo} below).
With hindsight, uniform property $\Gamma$ underpins the the extension of Matui and Sato's Theorem \ref{Thm:MS} to prove the implication \eqref{it:TW.3}$\implies$\eqref{it:TW.2} of the Toms--Winter conjecture whenever the extremal boundary $\partial_eT(A)$ of $T(A)$ is compact and has finite covering dimension (\cite{KirchbergRordam:Crelle,TWW:IMRN,Sato12}).\footnote{Ozawa formalised the uniform tracial completion after these papers. In the case when $T(A)$ is a Bauer simplex with boundary $K$, then the uniform tracial completion of $A$ is a $W^*$-bundle over $K$. When $A$ is nuclear and has no finite-dimensional representations, the fibres of this bundle are the hyperfinite II$_1$ factor.  Ozawa then showed that any $W^*$-bundle with fibre $\mathcal R$ over a finite-dimensional compact space will have property $\Gamma$.}

\begin{theorem}\label{FiniteDimBoundaryImpliesGamma}
Let $A$ be a unital simple separable nuclear non-el\-e\-men\-tary stably finite {$C^*$}-algebra whose tracial state space has a compact extremal boundary of finite covering dimension.  Then $A$ has uniform property $\Gamma$. Accordingly, the Toms--Winter conjecture holds for such algebras.
\end{theorem}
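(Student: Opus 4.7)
The plan is to reduce the problem to the $W^*$-bundle picture of the uniform tracial completion and then apply Ozawa's finite-dimensional $\Gamma$ theorem for $W^*$-bundles with hyperfinite fibres. First I would use the hypothesis that $\partial_e T(A)$ is compact to conclude that $T(A)$ is a Bauer simplex with closed boundary $K \coloneqq \partial_e T(A)$. Set $\mathcal{M} \coloneqq \overline{A}^{T(A)}$. Over a Bauer simplex, $\mathcal{M}$ is a (tracially continuous) $W^*$-bundle over $K$, with fibre at $\tau \in K$ given by $\pi_\tau(A)''$. Because $A$ is simple, nuclear, non-elementary, and stably finite, each such fibre is a nuclear (hence injective) finite factor with separable predual, so by Connes' theorem every fibre is isomorphic to the hyperfinite II$_1$ factor $\mathcal{R}$.

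Next I would invoke Ozawa's theorem: a $W^*$-bundle with fibre $\mathcal{R}$ over a compact Hausdorff space of finite covering dimension has property $\Gamma$ in the sense that there exists a projection $p \in \mathcal{M}^\omega \cap \mathcal{M}'$ dividing the trace of $1_\mathcal{M}$ in half under every limit trace. Since $T(A)$ is a Bauer simplex, the cited Proposition~5.27 of \cite{CCEGSTW} reduces Definition~\ref{def:Gamma} to the special case $x = 1_\mathcal{M}$, so Ozawa's projection already certifies property $\Gamma$ for $\mathcal{M}$. By definition, this means $A$ has uniform property $\Gamma$.

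The ``accordingly'' assertion is then immediate: combining the uniform property $\Gamma$ just established with the strict comparison hypothesis of the Toms--Winter conjecture and applying Theorem~\ref{Thm:ZStableEquivGammaSC} yields $\mathcal{Z}$-stability, confirming the missing implication \eqref{it:TW.3}$\Rightarrow$\eqref{it:TW.2} in this setting.

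The main obstacle in this plan is packaged inside the citation to Ozawa: one has to construct a global halving projection in $\mathcal{M}^\omega \cap \mathcal{M}'$ from the fibrewise property $\Gamma$ of $\mathcal{R}$, and this gluing is precisely what the finite covering dimension of $K$ is used for (via a partition-of-unity argument combined with local Murray--von Neumann type halving). Beyond that, the only minor technical point is verifying the $W^*$-bundle description of $\overline{A}^{T(A)}$ over the Bauer simplex boundary and the identification of its fibres with $\mathcal{R}$, both of which are standard once nuclearity of $A$ is in hand.
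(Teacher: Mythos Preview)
Your proposal is correct and follows essentially the same approach as the paper's own account, which appears in the footnote immediately preceding Theorem~\ref{FiniteDimBoundaryImpliesGamma}: pass to the $W^*$-bundle picture of $\overline{A}^{T(A)}$ over $K=\partial_eT(A)$, identify the fibres with $\mathcal R$ via Connes' theorem, and invoke Ozawa's result that such bundles over finite-dimensional compact spaces have property $\Gamma$, then conclude via Theorem~\ref{Thm:ZStableEquivGammaSC}. The paper is a survey and gives no formal proof beyond that sketch, so your write-up is a faithful expansion of it.
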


There are now more general results where, under trace space conditions, the implication \eqref{it:TW.3}$\implies$\eqref{it:TW.2} of the Toms--Winter conjecture holds (\cite{Lin:AIM,Lin:arXiv22,Zhang:JFA}).
We expect, but have not checked, that simple separable nuclear non-elementary {$C^*$}-algebras with these trace space conditions automatically have uniform property $\Gamma$.

Since uniform property $\Gamma$ is automatic when the trace space of $A$ is small, there are certainly non-$\mathcal Z$-stable simple nuclear {$C^*$}-algebras which have uniform property $\Gamma$ (such as Villadsen’s second type examples with unique trace and higher stable rank from \cite{Villadsen:JAMS}).  But, for reasons we explain further in Section \ref{sec:TW2}, we are particularly interested in the stable rank one case. We have been unable to determine whether the non-$\Z$-stable simple nuclear {$C^*$}-algebras with large trace spaces have uniform property $\Gamma$.\footnote{In \cite{CETW:IMRN}, two of us asserted that the relevant {$C^*$}-algebras did have uniform property $\Gamma$ -- this was based on an erroneous calculation of their trace spaces.  See the discussion following \cite[Question~5.29]{CCEGSTW}. In fact this problem has been a bit of a trap; the first version of the preprint \cite{ElliottNiu25} claimed that the relevant $C^*$-algebras do not have property $\Gamma$, but that argument, while introducing a lot of very promising ideas, also contains a gap.}

\begin{question}\label{q:VilladsenGamma}
    Do the Villadsen algebras of the first type (from \cite{Villadsen:JFA}) have uniform property $\Gamma$?
\end{question}

Given a Villadsen algebra $A$ of the first type which fails to have strict comparison, let $D$ be the canonical Cartan subalgebra in $A$ (coming from the inductive limit construction).  A recent preprint of Elliott and Niu shows that the inclusion $D\subseteq A$ fails to have uniform property $\Gamma$; i.e.\ one cannot find projections $p$ witnessing property $\Gamma$ for $\overline{A}^{T(A)}$ inside the subalgebra $\overline{D}^{T(A)}$ (\cite[Theorem~1.1 and Proposition~4.8]{ElliottNiu24}).
This rules out one natural way to try to prove property $\Gamma$ for this example -- one might have hoped to find the approximately central projection inside $\overline{D}^{T(A)}$ as one can certainly divide the unit in an approximately central fashion there.
Some of the authors view this as evidence that Problem~\ref{q:VilladsenGamma} has a negative answer.

Elliott and Niu's work also shows that for Villadsen algebras of the first type, tractability of the trace space ensures classifiability (\cite[Theorem 1.4]{ElliottNiu24}): these {$C^*$}-algebras are $\Z$-stable when they have a Bauer simplex of traces.\footnote{The particular form of these algebras guarantees that the unit is approximately centrally divisible in trace, so that when the trace space is Bauer, they automatically have uniform property $\Gamma$. See \cite[Proposition 5.10]{CETW:IMRN} for diagonal AH algebras (though note that the paragraph following this proposition is now known to be wrong) and the sketch found in the proof of \cite[Theorem~4.6]{ElliottNiu24} for crossed products $C(X)\rtimes\mathbb Z^d$ arising from free and minimal actions, using arguments from the preprint \cite{LiNiu:StableRank}. In fact, this latter result works for all algebras $C(X)\rtimes \Gamma$ coming from free and minimal actions of amenable groups with Niu's uniform Rokhlin property from \cite{Niu:JAM}.} 

In another direction, Lin's recent notion of a stably finite $C^*$-algebra having \emph{tracial approximate oscillation zero} (and various modifications of this concept developed by Fu and Lin  in \cite{FuLin:CJM}) power his stable rank one theorem (Theorem \ref{thm:Lin} from \cite{Lin23}, described in Section \ref{Sec:CuReg}), and his examples of further trace spaces for which \eqref{it:TW.3}$\implies$\eqref{it:TW.2} of the Toms--Winter conjecture holds (\cite{Lin:AIM,Lin:arXiv22}). The original definition of tracial approximate oscillation zero is a bit technical, but has now been cleanly reformulated as real rank zero of the uniform tracial ultrapower (\cite[Section 6]{FuLin:CJM} and \cite{Fu:Preprint}).  

Just as Connes' original proof that injective II$_1$ factors have property $\Gamma$ crucially uses arbitrarily small projections, we wonder whether such a real rank zero condition in fact characterises uniform property $\Gamma$.  From \cite{PereraRordam:JFA}, real rank zero for the tracial ultrapower should entail a certain amount of divisibility of the unit, giving rise to small projections. 

\begin{question}\label{NewRR0Problem}
    Let $A$ be a unital simple separable nuclear non-elementary stably finite {$C^*$}-algebra such that $(\overline{A}^{T(A)})^\omega$ has real rank zero.  Must $A$ have uniform property $\Gamma$?
\end{question}

We suspect Problem \ref{NewRR0Problem} really lives at the level of tracially complete algebras as formalised in \cite{CCEGSTW}. In that framework, the formulation becomes: given a type II$_1$ factorial amenable tracially complete {$C^*$}-algebra $(\mathcal M,X)$ such that $(\mathcal M,X)^\omega$ has real rank zero, does $(\mathcal M,X)$ have property $\Gamma$?

\section{Intermezzo: the trace problem}

Having introduced the uniform tracial completion in the previous section, let us take an interlude to describe a problem in the foundations of this subject. In \cite{Ozawa:JMSUT}, Ozawa introduced the notion of $W^*$-bundles (topological bundles of finite von Neumann algebras) as an abstract framework for studying the uniform tracial completions of {$C^*$}-algebras whose tracial state space is Bauer (i.e.\ has a compact extremal boundary). In the preprint \cite{CCEGSTW}, together with Castillejos, Carri\'on, Evington, and Gabe, we developed an abstract framework for the study of uniform tracial completions and established structure and classification results for these.

By construction, traces on $A$ extend by $\|\cdot\|_{2,T(A)}$-continuity to traces on its uniform tracial completion $\overline{A}^{T(A)}$, and indeed, $T(A)$ identifies with the set of \mbox{$\|\cdot\|_{2,T(A)}$}-continuous traces on $\overline{A}^{T(A)}$. In our work, we have been much irked by the question as to whether these are all the traces; in particular, we would very much like the uniform tracial completion $\overline{A}^{T(A)}$ to be uniformly tracially complete with respect to all of its traces.

\begin{question}[{cf.\ \cite[Question~1.1]{CCEGSTW}}]\label{Q6}
    Let $A$ be a {$C^*$}-algebra with $T(A)$ non-empty and compact. Are all traces on $\overline{A}^{T(A)}$ automatically $\|\cdot\|_{2,T(A)}$-continuous? Equivalently, is the canonical embedding $T(A)\subseteq T\big(\overline{A}^{T(A)}\big)$ an equality? 
\end{question}

In \cite{CCEGSTW}, we set the question out in even more generality (explaining how it can be viewed as analogous to the fact that the trace on a II$_1$ factor is unique amongst all traces, and not just amongst the normal traces). When $T(A)$ is a finite-dimensional simplex, $\overline{A}^{T(A)}$ is the finite part of the bidual of $A$ -- a finite direct sum of factors. Here the problem has a positive answer -- essentially because in this case, all traces on the finite part of the bidual are normal.  In \cite{Evington:draft}, Evington gives a positive solution to Problem~\ref{Q6} assuming that $\overline{A}^{T(A)}$ has the regularity property of complemented partitions of unity (this is a technical property which follows from  property $\Gamma$; see \cite[Section 6]{CCEGSTW}). Since the first version of this paper, Farah and Vaccaro have found further situations where the trace problem has a positive answer, such as when $T(A)$ is a Bauer simplex whose extremal boundary is at most $1$-dimensional (\cite{FarahVaccaro:arXiv}). In general, the question is open, and it is also open when $A$ is nuclear (although, the only reason we would expect nuclearity to simplify things is if Problem~\ref{q:Gamma} had a positive answer).

\section{Ranks of operators}
\label{sec:TW2}

The fundamental fact that the trace $\tau$ on a II$_1$ factor $\mathcal M$ induces an identification of the Murray--von Neumann classes of projections in $\mathcal M$ with $[0,1]$ amounts to two conditions:
\begin{enumerate}[(a)]
    \item for projections $p,q\in\mathcal M$ with $\tau(p)\leq \tau(q)$, we have $p\precsim q$;\label{Proj1}
    \item for any $t\in [0,1]$, there exists a projection $p\in\mathcal M$ with $\tau(p)=t$.\label{Proj2}
\end{enumerate}
Of these, strict comparison is the appropriate {$C^*$}-algebra analogue of \eqref{Proj1}, but what about \eqref{Proj2}?  We will describe this for a unital simple {$C^*$}-algebra $A$, so we can work with the collection of normalised (quasi)traces $QT(A)$; however, all of what we have to say applies generally by working with the cone of densely defined lower semicontinuous quasitraces; see \cite{ERS:AJM,APRT:Duke}.  Every positive element $a\in A\otimes \mathcal K$ gives rise to a lower semicontinuous affine function
\begin{equation}\label{DefRank}
\mathrm{Rank}(a)\colon QT(A)\to [0,\infty]\colon \tau \mapsto d_\tau(a).
\end{equation}
(here $\tau$ is extended canonically to a densely defined lower semicontinuous quasitrace on $A\otimes \mathcal K$) which will be strictly positive whenever $a\neq 0$, since $A$ is simple. As this function measures the rank of $a$ in all (quasi)traces, it is viewed as the \emph{rank function}\footnote{In the literature, the rank function is sometimes simply called the ``rank'' of $a$. Also, note that in \cite{BlackadarHandelman:JFA}, Blackadar and Handelman use the terminology ``rank function'' to mean something different, though their meaning of the term is no longer is use.}
 induced by $a$.  Just as \eqref{Proj2} gives that all possible trace values of projections occur in a II$_1$ factor, the appropriate {$C^*$}-analogue would be for all possible rank functions to arise from positive elements.  To our knowledge, this problem was first made explicit in talks by Nate Brown in the late 2000s motivated by \cite[Section~5]{BPT:Crelle}.
 
\begin{question}[Rank Problem]\label{q:Rank}
    Let $A$ be a unital simple separable stably finite and non-elementary  {$C^*$}-algebra.  Given any lower semicontinuous affine map $f\colon QT(A)\to (0,\infty]$, does there exist a positive element $a\in A\otimes\mathcal K$ such that $d_\tau(a)=f(a)$ for all $\tau\in QT(A)$?
\end{question}

We say that \emph{all ranks occur} in a {$C^*$}-algebra $A$ when it has a positive answer to Problem \ref{q:Rank}. This is the case when $A$ has a unique (quasi)trace,\footnote{It is straightforward to see that the set of realised ranks (in this case a subset of $[0,\infty]$) is dense. Additionally, the ranks that occur are always closed under infinite sums.} and this extends to {$C^*$}-algebras with finitely many extremal quasitraces.

  The connection between the rank problem and the Toms--Winter conjecture comes as all ranks occur in simple $\Z$-stable algebras (\cite[Corollary 6.8]{ERS:AJM}, extending \cite[Theorem 5.5]{BPT:Crelle}); thus if \eqref{it:TW.3}$\implies$\eqref{it:TW.2} holds in Conjecture \ref{TWConjecture}, one must obtain a solution to the rank problem in the simple nuclear case from strict comparison. We will return to this in the next section. Dadarlat and Toms (\cite{DadarlatToms:JFA}) showed that all ranks occur in a unital simple separable non-elementary stably finite {$C^*$}-algebra $A$ with strict comparison whose quasitraces are traces\footnote{Dadarlat and Toms set up the rank problem using traces, and implicitly work with strict comparison with respect to traces.} satisfying $T(A)$ has compact extreme boundary of finite covering dimension.  Note that this is the same condition on traces as in Theorem \ref{FiniteDimBoundaryImpliesGamma}, and both results are instances of a fairly typical phenomenon: results that hold for unital simple {$C^*$}-algebras with unique trace, can sometimes be extended to unital simple {$C^*$}-algebras whose trace space has a compact extreme boundary of finite covering dimension by means of partition of unity arguments.
  
 A weaker version of the rank question asks whether the uniform closure of the rank functions associated to positive elements in $A\otimes\mathcal K$ contains all continuous affine maps $QT(A)\to (0,\infty)$ -- in this case, we say that \emph{all ranks almost occur}.  This holds for unital simple non-elementary AH algebras\footnote{This is implicit in the proof of \cite[Theorem 5.3]{BPT:Crelle}.  Note that while this theorem has a stable rank one hypothesis, it is only used to ensure the existence of suprema of bounded increasing sequences in the incomplete Cuntz semigroup used in that paper.  One of the main points of working with the complete Cuntz semigroup $\Cu$ from \cite{CEI:Crelle} is that every increasing sequence in $\Cu(A)$ has a suprema.\label{Footnote24}} and for unital simple non-elementary ASH algebras of slow dimension growth (\cite{Toms:IM}). It is open, and seemingly challenging, whether some condition like slow dimension growth is necessary in the ASH case.

\begin{question}\label{ASH.Ranks}
Do all ranks almost occur in every unital simple non-elementary ASH algebra?
\end{question}

A major breakthrough was made by Thiel in 2017, demonstrating a surprising connection to stable rank. Stable rank one is a strong form of finiteness, characterised (in the unital case) by the density of invertible elements (which accordingly obstructs the existence of a proper isometry).   Simple $\Z$-stable stably finite {$C^*$}-algebras have stable rank one (\cite[Theorem 6.5]{Rordam:IJM} gives the unital case; the non-unital case was obtained only recently in \cite{FLL:JLMS}).

\begin{theorem}[{Thiel; \cite[Theorem 8.11]{Thiel:CMP}}]\label{RankThm}
Problem \ref{q:Rank} has a positive answer when $A$ has stable rank one.
\end{theorem}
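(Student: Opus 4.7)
The plan is to exploit the rigid structure of the Cuntz semigroup $\Cu(A)$ under stable rank one, where (roughly) classes of ``soft'' elements are parametrised by lower semicontinuous affine functions on $QT(A)$ via the rank map. First, I would write $f = \sup_n f_n$ as a pointwise increasing supremum of bounded continuous affine functions $f_n \colon QT(A) \to (0,\infty)$ with $f_n + \delta_n \le f_{n+1}$ for some $\delta_n > 0$; such an approximation exists because $f$ is lsc affine on the compact Choquet simplex $QT(A)$, via a Dini-type argument in the Kadison function representation.

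Next, I would realise each continuous target $f_n$ \emph{exactly} as a rank. The first input is dense realisation of rank functions, in the spirit of Brown--Perera--Toms and Dadarlat--Toms, which for any simple, stably finite, non-elementary $A$ produces positive elements $b_m \in (A \otimes \mathcal K)_+$ with $d_\tau(b_m)$ converging uniformly to $f_n$. The second input, which genuinely requires stable rank one, upgrades dense realisation to exact realisation: stable rank one implies that the rank map is injective on soft classes of $\Cu(A)$ (this is the cornerstone structural fact, essentially that the soft part of $\Cu(A)$ is isomorphic, via rank, to the semigroup of strictly positive lsc affine functions on $QT(A)$), so approximate realisations may be promoted via a Cauchy/supremum argument inside $\Cu(A)$ to an $a_n \in (A \otimes \mathcal K)_+$ with $d_\tau(a_n) = f_n(\tau)$ identically.

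Finally, I would glue the sequence $(a_n)$ into a single positive element. The strict inequalities $f_n + \delta_n \le f_{n+1}$ translate, via continuity on the compact set $QT(A)$, into the way-below relations $[a_n] \ll [a_{n+1}]$ in $\Cu(A)$, i.e.\ $(a_n - \varepsilon)_+ \precsim a_{n+1}$ for some $\varepsilon > 0$. Under stable rank one, $\Cu(A)$ has enough completeness that $\sup_n [a_n]$ is attained by an actual element $a \in (A \otimes \mathcal K)_+$, and rank is continuous on such increasing sequences, giving $d_\tau(a) = \sup_n f_n(\tau) = f(\tau)$ for every $\tau \in QT(A)$.

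The main obstacle is the exact realisation step: jumping from ``rank functions of positive elements are dense among the bounded continuous positive affine functions on $QT(A)$'' to ``every such function is \emph{exactly} a rank'' is precisely where stable rank one does the heavy lifting, and it is delicate. Without stable rank one one only obtains the approximate statement, and absent a rigidity theorem for the rank map on the soft part of $\Cu(A)$ there is no mechanism to convert the approximations into an equality. I expect this step to require careful use of functional calculus together with the fact that, under stable rank one, Cuntz sub-equivalences $a \precsim b$ can be realised by well-controlled witnesses (in particular $a = x^*bx$ with $x$ not merely a limit), which is what enables the passage from ``limit of realised ranks'' to ``realised rank''.
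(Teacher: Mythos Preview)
Your step 2 contains a genuine gap. The ``first input'' claims that dense realisation of rank functions holds for \emph{any} unital simple separable stably finite non-elementary $A$, citing Brown--Perera--Toms and Dadarlat--Toms. But those results do not give this: \cite{BPT:Crelle} requires $\mathcal Z$-stability, and \cite{DadarlatToms:JFA} requires strict comparison together with a compact finite-dimensional extremal tracial boundary. As the paper explicitly discusses (see the paragraph around Problem~\ref{ASH.Ranks}), the statement that all ranks \emph{almost} occur is itself only known for restricted classes (simple AH algebras, simple ASH algebras of slow dimension growth), and is open in general. So you are assuming as input something not available at the outset, and the subsequent ``upgrade via injectivity'' argument --- even setting aside that injectivity of the rank map on soft classes does not by itself convert an approximating sequence into an exact realisation, since the approximants need not be increasing --- never gets off the ground.

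Thiel's actual route in \cite{Thiel:CMP} is rather different and does not pass through dense realisation at all. The key new technical ingredient is that when $A$ has stable rank one, $\Cu(A)$ is an inf-semilattice: every pair of elements has an infimum. From this (together with the standard axioms (O5)--(O6) and related structure), Thiel verifies Edwards' condition and Riesz interpolation for $\Cu(A)$, and then invokes the abstract $\Cu$-semigroup machinery developed in \cite{APT:MAMS,ERS:AJM} to deduce that the rank map onto $\mathrm{LAff}_{>0}(QT(A))$ is surjective. So stable rank one enters through the existence of infima in the Cuntz semigroup, not through a mechanism for sharpening approximate rank realisations; your proposal misidentifies where the hypothesis does its work.
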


Theorem \ref{RankThm} was later extended by Antoine, Perera, Robert, and Thiel to  unital {$C^*$}-algebras of stable rank one which are non-simple (\cite[Theorem~7.14]{APRT:Duke}): all ranks occur (provided the {$C^*$}-algebra has no non-zero finite-dimensional quotients). Both of these papers take full advantage of the vast amount of work developing an axiomatic approach to the Cuntz semigroup (initiated in \cite{CEI:Crelle}; see particularly the memoir \cite{APT:MAMS}) and demonstrate the power these tools provide.

On the subject of stable rank one, it is unclear how else this property might give rise to regularity, noting that Villadsen’s first construction of exotic simple nuclear {$C^*$}-algebras with perforation in $K$-theory from \cite{Villadsen:JFA} all have stable rank one (\cite{EHT:JFA}). Crossed products of the form $C(X)\rtimes \mathbb Z^d$ also have stable rank one whenever they are simple (\cite{AL:MJM,LiNiu:StableRank}), and like Villadsen's examples, these can also be non-$\mathcal Z$-stable (\cite{GK:Crelle}).
That said (and unlike the second type of Villadsen counterexamples with higher stable rank), the counterexamples of the first type all have large trace simplices -- the Poulsen simplex (\cite[Theorem~4.5]{ElliottLiNiu:JFA}).
This provokes the following question.

\begin{question}
    Let $A$ be a simple separable nuclear non-elementary {$C^*$}-algebra with unique trace and with stable rank one.
    Must $A$ be $\mathcal Z$-stable?
\end{question}

\section{Cuntz semigroup regularity}\label{Sec:CuReg}

In the absence of a complete resolution to \eqref{it:TW.3}$\implies$\eqref{it:TW.2} in the Toms--Winter conjecture (Problem \ref{q:TW}), it is very natural to seek any positive element characterisation of $\Z$-stability within the class of simple separable nuclear non-elementary {$C^*$}-algebras, i.e.\ a characterisation at the level of the Cuntz semigroup.  There is an obvious necessary condition for $A$ to be $\Z$-stable: namely that $\Z$-stability holds at the level of the Cuntz semigroup. A striking theorem of Winter shows that this is sufficient under the additional hypothesis of locally finite nuclear dimension (Theorem \ref{Thm:Winter2} was extended to the non-unital case in \cite{Tikuisis:MA}).\footnote{$A$ has locally finite nuclear dimension when, for any finite subset $\mathcal F\subset A$ and $\e>0$, there exists a subalgebra $B\subseteq A$ of finite nuclear dimension such that $\mathcal F\subset_\e B$, i.e.\ for all $x\in \mathcal F$, there exists $y\in B$ with $\|x-y\|<\e$.  The point of course is that $\dim_{\mathrm{nuc}}(B)$ can depend on $\mathcal F$ and $\e$. The locally finite nuclear dimension hypothesis, while technical, should be thought of as providing an abstract version of being an ASH algebra.  Indeed, any inductive limit of subhomogeneous algebras could be replaced with one where the building blocks are finitely generated and hence have finite-dimensional spectrum.  It is not known whether every separable nuclear {$C^*$}-algebra has locally finite nuclear dimension.}

\begin{theorem}[{Winter; \cite{Winter:IM12}}]\label{Thm:Winter2}
Let $A$ be a unital simple separable stably finite\footnote{The theorem holds without the stably finite assumption and, in the infinite case, with the a priori weaker condition of nuclearity in place of locally finite nuclear dimension. Indeed, $\Cu(A)\cong \Cu(A\otimes\Z)$ ensures that $A$ has strict comparison, then one can use Kirchberg's results collected as Theorem \ref{Thm:Tracelessstructure}.   Winter's paper \cite{Winter:IM12} claims that (at least in the setting of locally finite nuclear dimension) one can simultaneously handle the tracial and traceless cases.  Unfortunately, as explained in \cite{RobertTikuisis:TAMS} (see \cite[Example~3.5]{RobertTikuisis:TAMS}), it does not presently seem possible to unify the proofs in the way Winter intended, and Winter's theorem only proves the stably finite case.\label{Foot43}} {$C^*$}-algebra with locally finite nuclear dimension. Then $A\cong A\otimes \mathcal Z$ if and only if $\Cu(A)\cong \Cu(A\otimes\mathcal Z)$.
\end{theorem}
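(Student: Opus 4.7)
The forward direction is trivial. For the converse, my plan is to verify $\Z$-stability via the McDuff-type characterisation (see \cite{TomsWinter:TAMS}) by producing, for each $n \in \mathbb{N}$, a c.p.c.\ order zero map $\phi \colon M_n \to A_\omega \cap A'$ whose defect $1_{A_\omega} - \phi(1_{M_n})$ is Cuntz-dominated in $A_\omega \cap A'$ by $\phi(e_{11})$. This will close the argument in a manner similar to how \eqref{ozliftequation} was closed in the unique-trace case, but with the central sequence approximations coming from the local finite-nuclear-dimension structure rather than from the ambient von Neumann algebra.

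First I would extract structural information from the hypothesis. Since $A \otimes \Z$ is $\Z$-stable, $\Cu(A \otimes \Z)$ is both almost unperforated and almost divisible. Transporting these along the given isomorphism, $\Cu(A)$ enjoys the same properties: $A$ has strict comparison, and for every $x \in \Cu(A)_+$ and $n \in \mathbb{N}$ there exist $y, z \in \Cu(A)$ with $ny \le x$ and $x \le (n+1)y + z$ where $z$ is $\varepsilon$-small in a suitable sense. These are exactly the Cuntz-level tools needed to govern defects of order zero maps into matrix algebras. Next I would use locally finite nuclear dimension: for each finite $\mathcal{F} \subseteq A$ and $\varepsilon > 0$, a finite-nuclear-dimension subalgebra $B \subseteq A$ contains $\mathcal{F}$ to within $\varepsilon$, and on $B$ the technology of \cite{WinterZacharias:AIM} provides $(k+1)$-coloured c.p.\ approximations factoring through finite-dimensional algebras via c.p.c.\ order zero maps. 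Feeding these approximations through the standard dimension-drop picture of $\Z$, I can manufacture c.p.c.\ order zero maps $M_n \to B$ with a prescribed approximately complementary positive element.

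The third step is the assembly: vary $\mathcal{F}$ and $\varepsilon$ and index along the ultrafilter to produce a c.p.c.\ order zero map $\phi \colon M_n \to A_\omega$ whose range is approximately central, using that the $B$'s exhaust $A$ locally. Then use almost divisibility of $\Cu(A)$ to select the complementary element so that its Cuntz class in $A_\omega$ lies below $\phi(e_{11})$; strict comparison upgrades the rank comparison (which is what almost divisibility supplies on the tracial side) to an actual Cuntz subequivalence in $A_\omega$. Finally, the Kirchberg $\sigma$-ideal machinery of \cite{Kirchberg:Abel} permits the witness of this Cuntz subequivalence to be chosen inside the relative commutant $A_\omega \cap A'$, verifying the McDuff-type condition and hence $A \cong A \otimes \Z$.

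The main obstacle is the third step: translating the \emph{local} order zero data (available only on the subalgebras $B$ of finite nuclear dimension, with a priori no compatibility between different choices of $B$) into a \emph{globally} approximately central order zero map whose defect remains controlled in $\Cu(A_\omega \cap A')$. This is exactly where Winter's delicate assembly argument in \cite{Winter:IM12} does its work, and it is sensitive to the finite/infinite dichotomy in the way noted in footnote~\ref{Foot43}. Accordingly, the stably finite hypothesis enters essentially in the assembly, and one cannot in this plan hope to extract $\Z$-stability in the traceless regime directly from the Cuntz-semigroup hypothesis, deferring instead to Theorem~\ref{Thm:Tracelessstructure} if one wanted a unified statement.
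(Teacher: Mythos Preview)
Your high-level plan matches the paper's summary of Winter's proof: extract pureness (almost unperforation and almost divisibility) from $\Cu(A)\cong\Cu(A\otimes\Z)$, then use locally finite nuclear dimension to establish both a divisibility condition and a comparison condition on $A_\omega\cap A'$, and conclude via the McDuff-type criterion. The paper frames Winter's argument (see the discussion around Problem~\ref{Q7b}) as two separate transfers: (i) a divisibility condition on $A$ yields a divisibility condition on $A_\omega\cap A'$ (\cite[Section~5]{Winter:IM12}), and (ii) strict comparison on $A$ yields a small-to-large comparison condition on $A_\omega\cap A'$ (\cite[Section~6]{Winter:IM12}), the latter being a precursor to Matui--Sato's property (SI).

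There is a genuine gap in your execution of step (ii). You write that once $1-\phi(1_{M_n})\precsim\phi(e_{11})$ holds in $A_\omega$, ``the Kirchberg $\sigma$-ideal machinery of \cite{Kirchberg:Abel} permits the witness of this Cuntz subequivalence to be chosen inside the relative commutant $A_\omega\cap A'$.'' This is not what $\sigma$-ideal techniques accomplish: they give central surjectivity --- lifting through quotients compatibly with relative commutants, as in \eqref{centralsurjectivity} --- not the promotion of a Cuntz subequivalence from $A_\omega$ into $A_\omega\cap A'$. Transferring comparison into the central sequence algebra is exactly the content of property (SI) (Theorem~\ref{Thm:SI}) and of Winter's precursor in \cite[Section~6]{Winter:IM12}, and it requires a direct argument exploiting the nuclear (or locally-finite-nuclear-dimension) approximations; it is not a formal consequence of the $\sigma$-ideal structure. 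The paper in fact notes that one may now replace Winter's step (ii) wholesale by Matui--Sato's Theorem~\ref{Thm:SI}, but either route is a substantive piece of work and not the mechanism you have invoked. Your assembly in step (i) is also under-specified: merely having order zero maps $M_n\to B$ with the $B$'s locally exhausting $A$ does not make their images approximately central relative to $A$ --- Winter obtains centrality from the specific colour decomposition in the finite-nuclear-dimension approximation of $B$, not from the exhaustion alone.
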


While we are unsure as to whether \eqref{it:TW.3}$\implies$\eqref{it:TW.2} in the Toms--Winter conjecture (Problem \ref{q:TW}) will hold, we, and many others, expect that locally finite nuclear dimension is not needed in the above theorem, producing the following question (which we believe was first asked by Winter in his 2012 CBMS lectures).

\begin{question}\label{Q7}
If $A$ is a unital simple separable stably finite nuclear {$C^*$}-algebra such that $\Cu(A)\cong \Cu(A\otimes \mathcal Z)$, does it follow that $A$ is $\mathcal Z$-stable?
\end{question}


So it is natural to wonder for simple {$C^*$}-algebras, what is the difference between $\Cu(A)\cong\Cu(A\otimes\mathcal Z)$ and strict comparison? It turns out that if there is a difference between these properties, then this must lie in divisibility conditions for the Cuntz semigroup.
It is too much to expect to be able to divide a positive element $x\in A_+$ exactly in half (or into $n$ pieces for any $n\in\mathbb N$); for example, there is no element $x\in\Cu(\Z)$ with $2x=[1_\Z]$. Instead, one can ask for \emph{almost-divisibility}: for all $x\in \Cu(A)$ and $n\in\mathbb N$, there exists $y\in \Cu(A)$ with $ny\leq x\leq (n+1)y$.  Winter called a simple {$C^*$}-algebra \emph{pure} if it has strict comparison and its Cuntz semigroup is almost divisible. Since $\Z$-stable {$C^*$}-algebras have almost divisible Cuntz semigroups (a result essentially going back to \cite{Rordam:IJM}), a simple {$C^*$}-algebra $A$ with $\Cu(A)\cong\Cu(A\otimes\Z)$ is pure. 

Problem \ref{Q7} predicts that, at least in the nuclear case, properties of simple separable $\Z$-stable {$C^*$}-algebras should in fact follow from pureness. An important facet to consider is R\o{}rdam's theorem that a unital simple $\Z$-stable  finite {$C^*$}-algebra has stable rank one (\cite[Theorem 6.7]{Rordam:IJM}, extending his earlier UHF-stable result from \cite{Rordam:JFA91}). This was extended to the non-unital setting, initially through Robert's notion of `almost stable rank one' of non-unital simple $\Z$-stable finite {$C^*$}-algebras (\cite{Robert:AIM}), and more recently by Fu, Li, and Lin to obtain the full force of stable rank one for these algebras (\cite[Corollary 6.8]{FLL:JLMS}).  In a notable breakthrough, Lin was able to obtain the following Cuntz semigroup version of R\o{}rdam's theorem. As a consequence, he obtains a dichotomy theorem, since traceless pure {$C^*$}-algebras are purely infinite.

\begin{theorem}[{Lin \cite[Corollary~1.3]{Lin23}}]\label{thm:Lin}
    Let $A$ be a simple separable finite pure {$C^*$}-algebra. Then $A$ has stable rank one. In particular, simple separable pure {$C^*$}-algebras are either stably finite or purely infinite.
\end{theorem}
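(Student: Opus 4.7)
The plan splits naturally into the main stable rank one statement and the dichotomy corollary. For the main claim, my strategy is to show that invertible elements are dense in $A$ by the standard perturbation route: given a positive contraction $a \in A_+$ and $\e > 0$, construct a positive $h \in A$ with $\|h\| < \e$, supported approximately orthogonal to $(a - \e/2)_+$, such that $(a - \e)_+ + h$ is invertible in the hereditary subalgebra it generates; concatenating this with a functional-calculus perturbation of $a$ itself then produces an invertible within $\e$ of $a$. The Cuntz-semigroup condition needed for this construction is that the rank function $\mathrm{Rank}(h)$ on $QT(A)$ strictly dominates $\mathrm{Rank}((\e - a)_+)$, and that the resulting Cuntz subequivalence $(\e - a)_+ \precsim h$ is implemented by an honest partial isometry in $A$.

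The semigroup-level information is readily available from pureness. Almost divisibility of $\Cu(A)$ lets me partition a positive element of large rank into many approximately equal-rank pieces, so I can produce a candidate $h$ whose rank function strictly dominates $\mathrm{Rank}((\e - a)_+)$ on $QT(A)$; strict comparison then yields the required Cuntz subequivalence. What is \emph{not} immediate is that this subequivalence can be implemented inside $A$; this is where finiteness must enter, combined with the fine structure coming from pureness.

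The genuine difficulty lies in upgrading the abstract Cuntz subequivalence into a concrete norm-level perturbation. In the $\mathcal Z$-stable case, R\o{}rdam's proof uses approximately central $\mathcal Z$-like elements to stabilise the partial isometry witnessing $(\e - a)_+ \precsim h$. Without $\mathcal Z$-stability, one has to work fibrewise over $QT(A)$: at each extremal trace the GNS closure is a finite factor in which partial isometries of prescribed rank are easy to build, and then one splices the local data into a global partial isometry in $A$. The technical device for the splicing should be a tracial-oscillation-zero or uniform-property-$\Gamma$ type consequence of pureness — morally, that the uniform tracial completion $\overline{A}^{T(A)}$ is tractable enough (for example, that its ultrapower has real rank zero, in the spirit of Problem \ref{NewRR0Problem}) to support the global gluing. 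I expect this splicing step to be the main obstacle, and finiteness is essential to it, ruling out proper isometries that would otherwise obstruct extending local data to global.

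The dichotomy follows at once. If $A$ is simple, separable, and pure but not stably finite, it contains an infinite projection; simplicity spreads infiniteness through hereditary subalgebras, and in the resulting absence of non-trivial bounded quasitraces, strict comparison collapses $\Cu(A)$ to $\{0, \infty\}$, forcing pure infiniteness. Conversely, the main claim produces stable rank one in the finite case, and stable rank one precludes infinite projections. Hence a simple separable pure {$C^*$}-algebra is either stably finite (and then of stable rank one) or purely infinite.
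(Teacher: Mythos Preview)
The paper does not prove this theorem; it is stated with attribution to Lin \cite{Lin23}, and the only further commentary (in Addendum~\ref{Addendum6.1}) notes that Lin's argument passes through the tracial-oscillation-zero machinery of Fu--Lin \cite{FuLin:CJM}. So there is no in-paper proof to compare against, and your sketch must be assessed on its own terms.

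Your proposal has two genuine gaps. First, the reduction is aimed at the wrong target: stable rank one asks for density of invertibles among \emph{all} elements of the unitisation, and for a positive contraction $a$ this is already trivial via functional calculus (take $a+\tfrac{\e}{2}1$). The substantive case is a non-normal $x$, where the obstruction lives in the partial isometry $v\in A^{**}$ from the polar decomposition $x=v|x|$; your scheme of building $h$ approximately orthogonal to $(a-\e/2)_+$ never engages with this, and the phrase ``invertible in the hereditary subalgebra it generates'' does not correspond to any standard reduction of stable rank one to a positive-element statement. Second---and you acknowledge this yourself---the ``splicing step'' is the entire substance of the theorem, and you offer only the hope that pureness yields ``a tracial-oscillation-zero or uniform-property-$\Gamma$ type consequence''. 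That is precisely what must be \emph{proved}: uniform property $\Gamma$ is not known to follow from pureness (cf.\ Problem~\ref{q:Gamma}), and while Lin's actual argument does go through tracial oscillation technology, extracting the relevant oscillation control from strict comparison and almost divisibility is the core of \cite{Lin23} and requires genuinely new ideas beyond R\o{}rdam's $\mathcal Z$-stable template. Your dichotomy deduction (no quasitraces plus strict comparison forces $\Cu(A)\cong\{0,\infty\}$) is essentially correct, but that is the easy half.
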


We view Lin's theorem (together with Theorem \ref{Thm:Tracelessstructure}) as providing even more evidence on top of Theorem \ref{Thm:Winter2} that Problem \ref{Q7} should have a positive answer.

For simple separable finite $C^*$-algebras, it turns out that pureness is precisely the condition of $\Z$-absorption at the level of the Cuntz-semigroup,  $\Cu(A)\cong\Cu(A\otimes \Z)$. We had believed this result to be folklore before we looked into it. As noted above, the passage from $\Z$-stability at the level of the Cuntz semigroup to pureness is a straightforward application of R\o{}rdam's work on $\Z$-stable {$C^*$}-algebras.  To go back, we want to use the computation of the Cuntz semigroup of a simple pure $C^*$-algebra $A$ (going back to \cite{PereraToms:MA,BPT:Crelle}) as $V(A)\sqcup \mathrm{LAff}_{> 0}(QT(A))$, where $QT(A)$ here means the cone of densely defined lower semicontinuous $2$-quasitraces, and $\mathrm{LAff}_{> 0}(QT(A))$ the lower semicontinuous affine functions $QT(A)\to[0,\infty]$ which are strictly positive except at $0$.   However, to arrange for this to match up with the corresponding computation for $A\otimes \Z$, we need to know that $QT(A)\cong QT(A\otimes\mathcal Z)$ and $V(A)\cong  V(A\otimes\Z)$ canonically.  The latter is not automatic and our argument below requires strict comparison and cancellation of projections.
\begin{lemma}\label{CR:Lem}
Let $A$ be a separable $C^*$-algebra.  
\begin{enumerate}
    \item 
The first factor embedding $A\to A\otimes\Z$ induces an affine homeomorphism $QT(A)\cong QT(A\otimes\Z)$;\footnote{In general if $B$ is a simple $C^*$-algebras with a unique tracial state, we do not know whether $QT(A)\cong QT(A\otimes B)$ (with the minimal tensor product).  Indeed, taking $B=C^*_r(\mathbb F_2)$, Haagerup's work \cite{Haagerup:CRMASSRC} shows that for a unital $C^*$-algebra $A$, the conditions that $A$ has no tracial state and $A\otimes C^*_r(\mathbb F_2)$ is properly infinite are equivalent (see also \cite[Theorem 3.1]{MilhojRordam}).  Therefore, with thanks to the referee for pointing this out, Kaplansky's conjecture that all quasitraces are traces (or equivalently all type II$_1$ AW$^*$-factors are von Neumann algebras) is equivalent to the first factor embedding inducing an affine homeomorphism $QT(A)\cong QT(A\otimes C^*_r(\mathbb F_2))$. (If there exists an AW$^*$ type II$_1$ factor $A$ which is not a von Neumann algebra, then it has a quasitrace but no trace, so that $QT(A\otimes C_r^*(\mathbb F_2))=\emptyset$ from proper infiniteness). We wonder how generally $QT(A\otimes B)$ can be computed as the convex product of $QT(A)$ and $T(B)$ when $B$ is nuclear?  Might it help if $B$ has (locally) finite nuclear dimension?}
\item Let $A$ be simple, stably finite, with strict comparison and cancellation of projections.  Then the first factor embedding $A\to A\otimes\Z$ gives an isomorphism $V(A)\to V(A\otimes\Z)$.
\end{enumerate}
\end{lemma}
\begin{proof}
There are many ways to prove the first part. To our knowledge it is implicitly contained in \cite{R:Scand}, through a Cuntz semigroup approach. Firstly, Elliott, Robert and Santiago established a canonical identification of $QT(A)$  with the cone $F(\Cu(A))$ of functionals on $\Cu(A)$ from \cite[Theorem 4.4]{ERS:AJM}. Then the combination of \cite[Proposition 3.1.1]{R:Scand}, \cite[Theorem 5.2.1]{R:Scand} shows that 
\begin{equation}
    F(\Cu(A))\cong F(\Cu(A)_{\mathbb R})\cong F(\Cu(A\otimes\mathcal W))
\end{equation} (see \cite[Section 3]{R:Scand} for the definition of $\Cu(A)_{\mathbb R}$, and note that in \cite{R:Scand}, the algebra $\mathcal W$ is denoted by $\mathcal R$). That is $QT(A)\cong QT(A\otimes\mathcal W)$. Since $A\otimes\mathcal W$ is $\Z$-stable, the first factor embedding $A\otimes \mathcal W\to A\otimes \mathcal W\otimes\mathcal Z$ is approximately unitarily equivalent to an isomorphism and hence induces an isomorphism on the Cuntz semigroup. In this way the first factor embedding $A\to A\otimes\mathcal Z$ induces an isomorphism $QT(A)\cong QT(A\otimes\mathcal Z)$.

For the second part,  suppose $A$ is simple and has strict comparison and cancellation of projections. For projections $p,q\in M_n(A)$  with $p\otimes 1_{\mathcal Z} \sim q \otimes 1_{\mathcal Z}$, as the map $K_0(A)\to K_0(A\otimes\mathcal Z)$ is injective, we have $[p]_0=[q]_0$ in $K_0(A)$, and hence $p\sim q$ by cancellation in $A$.  Accordingly the map $V(A)\to V(A\otimes\mathcal Z$) induced by the first factor embedding is injective.  

For surjectivity, we first reduce to the case $A$ is unital.  If $V(A \otimes \mathcal Z) = 0$, there is nothing to show.  Otherwise, there is a non-zero projection $r \in M_m(A\otimes \mathcal Z)$ for some integer $m \geq 1$.  Writing $\mathcal Z$ as a direct limit of dimension drop algebras $\mathcal Z_{k, k+1}$, the stability of projections implies there is a non-zero projection $r_0 \in M_m(A \otimes \mathcal Z_{k, k+1})$ for some $k$.  Evaluating at a point in $[0, 1]$ gives a non-zero projection in $r_1 \in M_m(A \otimes M_k \otimes M_{k+1})$.  Then as $A$ is simple and separable, $A$ is stably isomorphic to the unital $C^*$-algebra $r_1 M_n(A \otimes M_k \otimes M_{k+1})r_1$.  Since the Murray--von Neumann semigroup is a stable isomorphism invariant, we may assume $A$ is unital.

Now, assuming again $r \in A \otimes \mathcal Z$ is a non-zero projection, since $K_0(A)\to K_0(A\otimes \mathcal Z)$ is surjective, there are an integer $m \geq 1$ and projections $p, q \in M_m(A)$ such that $[r]_0=[q\otimes 1_{\mathcal Z}]_0-[p\otimes 1_{\mathcal Z}]_0$. 
Then for $\tau \in QT(A)$, $\tau(q-p)=(\tau\otimes\tau_\Z)(r)>0$ so by strict comparison for $A$, we have $p\precsim q$ in $M_m(A)$.  Therefore, there exists a projection $s\in M_m(A)$ with $p\oplus s\sim q$.  Applying cancellation in $A\otimes \mathcal Z$ (which follows as $A\otimes\Z$ has stable rank one),\footnote{For unital simple $\Z$-stable $C^*$-algebras, stable rank one was obtained in \cite[Theorem 6.7]{Rordam:IJM}.} we have $s\otimes 1_\Z\sim r$.  Since the orders on $V(A)$ and $V(A\otimes\mathcal Z)$ are algebraic (i.e., $x\leq y$ if and only if there exists $z$ with $x+z=y$), the monoid isomorphism induced by the first factor embedding is also an order isomorphism.
\end{proof}

In order to get comparison of projections from pureness so as to use the previous lemma, we found ourselves going through Lin's Theorem \ref{thm:Lin}, using \cite{Rieffel:PLMS} to obtain cancellation from stable rank one. In the literature, the implication that simple pure {$C^*$}-algebras absorb $\Z$ at the level of the Cuntz semigroup has been attributed to Toms, but Toms' result (\cite[Theorem 1.2]{Toms:IM}) is for simple approximately subhomogeneous algebras of slow dimension growth, where this cancellation was established by Phillips (\cite[Theorem 0.1]{Phillips:TAMS}).  We would be interested to know if there is a Cuntz semigroup-theoretic way to obtain cancellation of projections from pureness directly at the level of the Cuntz semigroup -- without passing through stable rank one. By this, we mean a proof for $\Cu$-semigroups with additional axioms known to hold for Cuntz semigroups arising from (simple) stably finite separable {$C^*$}-algebras (cf.\ the example in \cite[Chapter 9, Question 8]{APT:MAMS}).

\begin{proposition}\label{prop:cu-regular}
    Let $A$ be a simple separable stably finite {$C^*$}-algebra. The following are equivalent:
    \begin{enumerate}
        \item\label{cu-reg1} the first-factor embedding $A \rightarrow A \otimes \mathcal Z$ induces an isomorphism on the Cuntz semigroup;
        \item\label{cu-reg2} $\Cu(A)\cong \Cu(A\otimes\Z)$;
        \item\label{cu-reg3} $\Cu(A)\cong\Cu(B\otimes\Z)$ for some {$C^*$}-algebra $B$;
        \item\label{cu-regextra}$\Cu(A)\cong \Cu(A)\otimes_{\Cu}\Cu(\Z)$;\footnote{The Cuntz semigroup tensor product used here, and in the next condition, is developed in \cite{APT:MAMS}.}
        \item \label{cu-reg4}$\Cu(A)\cong S\otimes_{\Cu}\Cu(\Z)$ for some abstract Cuntz semigroup $S$;
        \item\label{cu-reg5} $A$ is pure;
        \item\label{cu-reg-new} $A$ has strict comparison and all ranks almost occur;
        \item\label{cu-reg6} $\Cu(A)=V(A)\amalg \mathrm{LAff}_{> 0}(QT(A))$, with the ordered semigroup structure as described in \cite[Definition~5.4]{AraPereraToms}.\footnote{Strictly speaking, this means that the map $\Cu(A)\to V(A)\amalg \mathrm{LAff}_{> 0}(QT(A))$ which is defined for $A$ finite (see footnote \ref{Footnote:Welldefined}) by taking classes in $\Cu(A)$ which are Cuntz equivalent to a projection $p$, to the corresponding class in $V(A)$, and taking classes $\langle a\rangle$ which are not equivalent to projections, to the rank function $\mathrm{Rank}(a)$ from \eqref{DefRank}. The addition and order are as specified just before \cite[Theorem 6.2]{TT:CMB}.}
    \end{enumerate}
\end{proposition}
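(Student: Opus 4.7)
The proof plan is to prove the equivalences by a cycle, with the Cuntz-semigroup tensor-product manipulations and the standard representation result for pure finite Cuntz semigroups doing most of the work, and the one serious step being the passage from pureness back to $\Cu(A)\cong\Cu(A\otimes\Z)$.

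First I would dispatch the obvious trivialities: \eqref{cu-reg1}$\Rightarrow$\eqref{cu-reg2}$\Rightarrow$\eqref{cu-reg3} and \eqref{cu-regextra}$\Rightarrow$\eqref{cu-reg4} are definitional, and \eqref{cu-reg3}$\Leftrightarrow$\eqref{cu-reg4}$\Leftrightarrow$\eqref{cu-reg5} follow from the Antoine--Perera--Thiel $\Cu$-tensor product machinery in \cite{APT:MAMS} together with the fact that $\Cu(B\otimes\Z)\cong \Cu(B)\otimes_{\Cu}\Cu(\Z)$ and that $\Cu(\Z)$ is idempotent for $\otimes_{\Cu}$ (since $\Z\cong\Z\otimes\Z$). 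For \eqref{cu-reg4}$\Rightarrow$\eqref{cu-reg1}, I would identify the first-factor map on Cuntz semigroups with the map $x\mapsto x\otimes[1_{\Z}]$ into $\Cu(A)\otimes_{\Cu}\Cu(\Z)$ and invoke that this map is already an isomorphism by unitality of $[1_{\Z}]$ in $\Cu(\Z)$.

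Next, for \eqref{cu-reg5}$\Leftrightarrow$\eqref{cu-reg6}$\Leftrightarrow$\eqref{cu-reg-new}, I would use R\o{}rdam's theorem that $\Cu(\Z)$ is almost unperforated and almost divisible (so tensoring with $\Cu(\Z)$ produces strict comparison plus almost divisibility, which is pureness) and, conversely, note that in the presence of strict comparison, almost divisibility and the property that all ranks almost occur are routinely interchangeable via a standard approximation argument with rank functions in $\mathrm{LAff}_{>0}(QT(A))$. For \eqref{cu-reg6}$\Leftrightarrow$\eqref{cu-reg-new} in particular, almost divisibility lets one build, for any continuous affine $f$, positive elements whose ranks straddle $f$ to within $1/n$, and conversely density of realisable ranks produces the required $n$-fold splittings.

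The main work is the step \eqref{cu-reg5}$\Rightarrow$\eqref{cu-reg2} (or equivalently \eqref{cu-reg5}$\Rightarrow$\eqref{cu-reg6}). Here I would apply Lin's theorem \ref{thm:Lin} to obtain that the simple separable finite pure algebra $A$ has stable rank one, which in turn yields cancellation of projections, so that $V(A)\cong V(A\otimes\Z)$ via the first-factor embedding. Combined with strict comparison and almost divisibility, the usual Perera--Toms--Brown--Perera--Toms computation (as in \cite{PereraToms:MA,BPT:Crelle}, see footnote~\ref{Footnote24}) then identifies $\Cu(A)$ with $V(A)\amalg \mathrm{LAff}_{>0}(QT(A))$, giving \eqref{cu-reg6}. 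The same computation applies to $A\otimes\Z$ (which is automatically pure and finite of stable rank one), and since the first-factor embedding is the identity on $V$ and on $QT$, one concludes \eqref{cu-reg1} directly.

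The delicate part is the cancellation step: without Lin's theorem, one has no direct way to pass from Cuntz-semigroup-level data to the required cancellation in $V(A)$, which is why the proof unavoidably routes through stable rank one. The tensor-product manipulations and the representation theorem for Cuntz semigroups of pure finite algebras are, by contrast, largely formal once one is willing to cite \cite{APT:MAMS,Lin23,FLL:JLMS}.
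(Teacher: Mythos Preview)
Your overall architecture is right, and you correctly identify Lin's stable-rank-one theorem as the crucial input for closing the cycle. However, your proposed shortcut \ref{cu-reg4}$\Rightarrow$\ref{cu-reg1} has a genuine gap: you want to identify the first-factor map $\Cu(A)\to\Cu(A\otimes\Z)$ with the formal map $x\mapsto x\otimes[1_{\Z}]$ into $\Cu(A)\otimes_{\Cu}\Cu(\Z)$, but this requires a natural isomorphism $\Cu(A\otimes\Z)\cong\Cu(A)\otimes_{\Cu}\Cu(\Z)$ compatible with first-factor embeddings. That identification is \emph{not} known in general --- the paper explicitly flags it as open (Problem~\ref{q:PureCUReg} and the surrounding discussion). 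The same issue infects your argument for \ref{cu-reg3}$\Rightarrow$\ref{cu-reg4}, where you invoke $\Cu(B\otimes\Z)\cong\Cu(B)\otimes_{\Cu}\Cu(\Z)$ for arbitrary $B$. The paper avoids this by going \ref{cu-reg3}$\Rightarrow$\ref{cu-reg5} directly via R{\o}rdam (since $B\otimes\Z$ is $\Z$-stable, its Cuntz semigroup is almost unperforated and almost divisible), never touching $\otimes_{\Cu}$ for that step.

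Your second route --- Lin's theorem gives stable rank one, hence cancellation, hence $V(A)\cong V(A\otimes\Z)$, then the explicit computation $\Cu(A)=V(A)\amalg\mathrm{LAff}_{>0}(QT(A))$ matches with the same computation for $A\otimes\Z$ --- is essentially the paper's proof of \ref{cu-reg6}$\Rightarrow$\ref{cu-reg1}, and is correct. You should drop the $\otimes_{\Cu}$ shortcut entirely and rely on this explicit route; the paper in fact argues somewhat more carefully here, first using \ref{cu-reg6} to deduce pureness (so Lin applies), and then checking by hand that $V(A)\to V(A\otimes\Z)$ is an order isomorphism using cancellation together with injectivity and surjectivity of $K_0(A)\to K_0(A\otimes\Z)$. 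The chain \ref{cu-reg5}$\Rightarrow$\ref{cu-reg-new}$\Rightarrow$\ref{cu-reg6} is handled in the paper by citing \cite{Lin23} and \cite{TT:CMB} rather than your direct rank-approximation sketch, but your outline there is in the right spirit.
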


\begin{proof}
    \ref{cu-reg1}$\implies$\ref{cu-reg2}$\implies$\ref{cu-reg3} are tautologies.  
    \ref{cu-reg3}$\implies$\ref{cu-reg5} is essentially proven by R{\o}rdam in \cite{Rordam:IJM} (see \cite[Proposition~3.7]{Winter:IM12}).  Conditions \ref{cu-reg4}, \ref{cu-regextra} and \ref{cu-reg5} are all equivalent by \cite[Theorem~7.3.11]{APT:MAMS} (and in fact this holds without assuming simplicity of $A$).\footnote{To get \ref{cu-reg4}$\implies$\ref{cu-regextra}, use that $\Cu(\Z)\otimes_{\Cu}\Cu(\Z)\cong \Cu(\Z)$ (\cite[Proposition 7.3.3]{APT:MAMS}).}
    \ref{cu-reg5}$\implies$\ref{cu-reg-new} can be found as (3)$\implies$(1) of \cite[Theorem~2.13]{Lin23}, and then  \ref{cu-reg-new}$\implies$\ref{cu-reg6} is \cite[Theorem 6.2]{TT:CMB} (though that result is stated with an exactness hypothesis so as to work with traces rather than quasitraces).\footnote{As noted above, these sorts of calculations have appeared before, though with more hypotheses (such as stable rank 1, as in \cite[Theorem (or really exercise) 5.27]{AraPereraToms}, or \cite[Theorem 2.6]{BT:IMRN}, building on \cite{BPT:Crelle}).  One of us relearnt about \cite[Theorem 6.2]{TT:CMB} -- which works in the generality we need -- from \cite[Theorem 2.13]{Lin23}. In fact \cite{TT:CMB} goes too far in removing hypotheses: there is an implicit stable finiteness hypothesis missing (which does not affect our use of the theorem). Without stable finiteness, Cuntz equivalent projections need not be Murray--von Neumann equivalent (as opposed to mutually Murray--von Neumann subequivalent), and the proposed map may not be well-defined. \label{Footnote:Welldefined}}

Finally for \ref{cu-reg6}$\implies$\ref{cu-reg1}, observe that the order structure on 
\begin{equation}
    V(A)\amalg \mathrm{LAff}_{> 0}(QT(A))
\end{equation}
is almost unperforated and almost divisible, i.e.\  $A$ is pure (essentially because the order structure on $\mathrm{LAff}_{> 0}(QT(A))$ has these properties).     Therefore, by Lin's Theorem (Theorem~\ref{thm:Lin}), $A$ has stable rank one and hence cancellation (by \cite{Rieffel:PLMS}).   We now have all the pieces to assemble the following diagram of isomorphisms, where the horizontal maps are induced by the first factor embeddings.
\begin{equation} 
\begin{tikzcd}
    V(A)\amalg \mathrm{LAff}_{> 0}(QT(A))\ar[r,"\cong"]&V(A\otimes \mathcal Z)\amalg \mathrm{LAff}_{> 0}(QT(A\otimes \mathcal Z))\\
    \Cu(A)\ar[u,"\cong"]\ar[r]&\Cu(A\otimes\Z)\ar[u,"\cong"']
\end{tikzcd}
\end{equation}
Lemma \ref{CR:Lem} ensures the top horizontal map is an isomorphism from, the left vertical map is an isomorphism by the hypothesis \ref{cu-reg6}, and the right vertical map is an isomorphism by the already estabilished implication \ref{cu-reg1}$\implies$\ref{cu-reg6} applied to $A\otimes\Z$. Accordingly the bottom horizontal map is an isomorphism, verifying \ref{cu-reg1}.
\end{proof}

When $A$ is non-simple, we do not know if pureness implies any of the conditions \ref{cu-reg1}-\ref{cu-reg3} in Proposition~\ref{prop:cu-regular} -- we will come back to this in Section \ref{sec:NonsimpleTW} (see Problem~\ref{q:PureCUReg}). In general, we view the first condition above as the most natural version of $\Z$-stability at the level of the Cuntz semigroup.
\begin{definition}
    We say a separable {$C^*$}-algebra $A$ is \emph{Cuntz semigroup regular} when Condition \ref{cu-reg1} of Proposition~\ref{prop:cu-regular} holds.
\end{definition}


Returning to the concept of pureness, almost-divisibility is not the only way of formulating a divisibility condition on the Cuntz semigroup.   Indeed, the viewpoint at the beginning of the previous section suggests thinking of the condition that all rank functions occur as a kind of divisibility-type condition, and this is borne out by the equivalence \ref{cu-reg5}$\Leftrightarrow$\ref{cu-reg-new} of Proposition~\ref{prop:cu-regular}.  There are further divisibility conditions still, such as in trace (e.g.\ \cite[Definition~3.5]{Winter:IM12} or Dadarlat and Toms' condition (iii) from \cite[Theorem~1.1]{DadarlatToms:JFA})  or higher-dimensional/coloured versions (see \cite[Definition~3.5]{Winter:IM12}).  

It is folklore that in the presence of strict comparison, all these (and other) divisibility conditions coincide and can be used to define pureness (see \cite[Section~5]{CETW:IMRN}).  
Without strict comparison, we would expect some of these divisibility-type conditions to differ, though there are certainly implications between them.\footnote{Both almost-divisibility and Winter's tracial divisibility property will give condition (iii) from \cite[Theorem 1.1]{DadarlatToms:JFA}, and hence all ranks almost occur. Interestingly, when the trace space is a Bauer simplex one only needs almost-divisibility of the unit (or in fact the weaker rank condition of \cite[Theorem 1.1(ii)]{DadarlatToms:JFA}) to obtain the same conclusion.} When preparing this paper, we realised less was known (at least to us) than we thought. In particular, we were unable to find an example of a unital simple separable nuclear non-elementary {$C^*$}-algebra without almost-divisibility for which all ranks are known to occur, though we very much believe such an example should exist. To our knowledge it, is open whether almost-divisibility holds for the Villadsen algebras of first type which fail to have strict comparison (these have stable rank one, and hence all ranks occur by Thiel's theorem) or the Villadsen algebras of second type  from \cite{Villadsen:JAMS} (which have unique trace, so all ranks occur) -- cf.\ Problem~\ref{Q:CuVilToms}. 

It is unclear whether strict comparison gives rise to some form of divisibility, or indeed (and much more speculatively), whether sufficiently strong divisibility conditions give rise to strict comparison. We also do not know to what extent nuclearity may affect the answer to these questions.

\begin{question}\label{ComparisonVsDivisibility}
\begin{enumerate}[(1)]
\item Let $A$ be a unital simple separable (nuclear) non-elementary {$C^*$}-algebra with strict comparison.  Must $A$ be pure? \label{ComparisonVsDivisibility.1}
\item Let $A$ be a unital simple separable (nuclear) {$C^*$}-algebra whose Cuntz semigroup is almost divisible. Must $A$ be pure?\label{ComparisonVsDivisibility.2}
\end{enumerate}
\end{question}

As discussed above, Problem \ref{ComparisonVsDivisibility}(\ref{ComparisonVsDivisibility.1}) has a positive answer when all ranks (almost) occur in $A$. Since ASH algebras have locally finite nuclear dimension, the combination of Thiel’s rank theorem (Theorem \ref{RankThm}) and Winter’s $\Z$-stability theorem (Theorem \ref{Thm:Winter2}) establishes the remaining part of the Toms--Winter conjecture for ASH algebras of stable rank one.  The Toms--Winter conjecture holds in a very similar fashion for unital simple non-elementary AH algebras as all ranks almost occur in these algebras.\footnote{As discussed in footnote~\ref{Footnote24}, the stable rank one hypothesis of \cite[Theorem 5.3]{BPT:Crelle} used in \cite[Corollary 2.2]{Toms:CRMASSRC} is not needed.}

For a simple unital non-elementary ASH algebra $A$, Toms showed that if $A$ has slow dimension growth, then $A$ has strict comparison (\cite{Toms:CMP}) and all ranks almost occur in $A$ (\cite[Theorem 3.4]{Toms:IM}).  Accordingly, slow dimension growth ensures $\Z$-stability (by Theorem \ref{Thm:Winter2}) and so classifiability. Conversely, tensoring an arbitrary ASH system with an appropriate system of dimension drop algebras shows that a simple $\mathcal Z$-stable {$C^*$}-algebra has slow dimension growth.\footnote{Combining range-of-invariant results with classification, one can in fact get bounded dimension growth.}
Altogether, for simple unital non-elementary ASH algebras, $\mathcal Z$-stability, Cuntz semigroup regularity, and slow dimension growth are equivalent.
However, the Toms--Winter conjecture remains open for simple finite ASH algebras without assuming stable rank one, essentially due to Problem~\ref{ASH.Ranks}.

The McDuff-type characterisation of $\Z$-stability can be phrased in terms of the Cuntz semigroup of the central sequence algebra: in the unital case, $A$ is $\Z$-stable if and only if its central sequence algebra $A_\omega\cap A'$ is pure.\footnote{We thank Hannes Thiel for bringing this formulation of $\Z$-stability to our attention. As $A_\omega \cap A'$ is non-simple when $A$ is finite and non-elementary (\cite[Theorem~2.12]{Kirchberg:Abel}), one needs to define pureness for non-simple algebras. An appropriate definition is that $B$ is pure if its Cuntz semigroup is almost divisible and almost unperforated; almost 	unperforation is equivalent to strict comparison both in the simple case and (with the right definition of strict comparison) in the non-simple case as well.  We will discuss pureness further for non-simple {$C^*$}-algebras in Section \ref{sec:NonsimpleTW}.\label{Footnote:DefPure} }
Certainly, if $A_\omega \cap A'$ is pure, then almost-divisibility leads to an order zero map $\phi:M_n \to A_\omega \cap A'$, such that $1_{A_\omega \cap A'} \preceq \phi(1_{M_n})\oplus\phi(e_{11})$.
Strict comparison (as appropriately defined in the non-simple setting -- see \cite[Proposition~3.2]{Rordam:IJM}) then implies that $1_{A_\omega \cap A'} -\phi(1_{M_n}) \preceq \phi(e_{11})$, which gives rise to $\mathcal Z$-stability of $A$.\footnote{In the non-unital case, the same argument can be used with Kirchberg's central sequence algebra $F(A)=(A_\omega \cap A')/A^\perp$, so that $\mathcal Z$-stability of $A$ is equivalent to pureness of this algebra.}
The proof of Winter's Theorem~\ref{Thm:Winter2} for stably finite {$C^*$}-algebras essentially factors through this argument, with both almost-divisibility and strict comparison weakened.
More precisely, for a simple {$C^*$}-algebra $A$ with locally finite nuclear dimension, Winter proves
\begin{enumerate}
\item\label{WintersProof.2} a suitable divisibility condition on $A$ implies a suitable divisibility condition on $A_\omega\cap A'$ (\cite[Section 5]{Winter:IM12}), and 
\item\label{WintersProof.1} strict comparison for $A$ implies a suitable small-to-large comparison for $A_\omega\cap A'$ (\cite[Section 6]{Winter:IM12}).
\end{enumerate}
Step~\ref{WintersProof.1} can be seen as a precursor to Matui and Sato's property (SI), and one can now bypass this step by using Matui and Sato's Theorem~\ref{Thm:SI}, which also removes the need for locally finite nuclear dimension here.
It is natural to ask whether the locally finite nuclear dimension hypothesis is also needed in \ref{WintersProof.2}, which amounts to the following question.

\begin{question}\label{Q7b}
Let $A$ be a unital simple separable nuclear non-elementary stably finite {$C^*$}-algebra which has Winter’s tracial divisibility property: for all $k,n\in\mathbb N$, $\epsilon>0$, and non-zero $a\in M_k(A)_+$, there exists a c.p.c.\ order zero map $\phi\colon M_n\to \overline{aM_k(A)a}$ with $\tau(\phi(1_{M_n}))>\tau(a)-\epsilon$ for all $\tau\in T(A)$. Does $A$ have uniform property $\Gamma$?
\end{question}

We end the section by commenting on how Winter used the ideas in his Theorem~\ref{Thm:Winter2} to prove (i)$\implies$(ii) in Theorem \ref{Thm:Structure}.  This result must involve a `dimension reduction' to pass from an arbitrary finite value of the nuclear dimension to the low-dimensional condition of $\Z$-stability.  In Winter's argument, this is done by weakening pureness to higher dimensional versions -- $(m,m')$-pureness\footnote{The constants $m$ and $m'$ keep track of the appropriate higher-dimensional versions of comparison and divisibility; see \cite[Section 3]{Winter:IM12}.} --  which is entailed by finite nuclear dimension.  Winter in fact proves a stronger version of Theorem \ref{Thm:Winter2} obtaining $\Z$-stability for
unital simple separable non-elementary stably finite {$C^*$}-algebras from $(m,m')$-pureness in the presence of locally finite nuclear dimension. The dimension reduction happens in the midst of Winter's argument at the level of central sequences through a geometric series trick.  In a recent preprint, Antoine, Perera, Robert, and Thiel made this dimension reduction more conceptual by showing that it occurs directly at the Cuntz semigroup level.\footnote{Even more recently, Antoine, Perera, Thiel and Vilalta have extended this result to the non-simple setting in the preprint \cite{APTV24}. Note that in both of these works, the definition of $m'$-almost-divisibility is slightly different than Winter's. See the fourth paragraph of \cite[Section~2.3]{RobertTikuisis:TAMS} for a thorough comparison.}
\begin{theorem}[{\cite[Theorem D]{APRT:JFA}}]\label{PureThm}
Any simple $(m,m')$-pure {$C^*$}-algebra is already pure.
\end{theorem}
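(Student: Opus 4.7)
The plan is to work entirely at the abstract level of $\mathrm{Cu}$-semigroups, treating $(m,m')$-pureness as a structural property of $\mathrm{Cu}(A)$ and reducing the defect parameters $(m,m')$ to $(0,0)$ through a geometric-series iteration inside $\mathrm{Cu}(A)$. Simplicity enters to ensure that every non-zero element of $\mathrm{Cu}(A)$ is full and that the functionals on $\mathrm{Cu}(A)$ coming from lower semicontinuous quasitraces form a rich separating family — stably finite simple $C^*$-algebras have such functionals by Corollary~\ref{cor:BH}, while in the purely infinite case $\mathrm{Cu}(A)\cong\{0,\infty\}$ and the statement is vacuous. The two halves of pureness (almost unperforation and almost divisibility) will be handled separately but with controlled interaction, each feeding improved information into the next iterate of the other.

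For almost unperforation, given $(k+1)\langle a\rangle\leq k\langle b\rangle$ in $\mathrm{Cu}(A)$, I would apply $m'$-almost-divisibility to $\langle b\rangle$ to split it into many sufficiently small pieces with controlled defect. The pieces can then be reassembled so that, after applying $m$-comparison to the reshuffled sum, one obtains an inequality $\langle a'\rangle\leq\langle b\rangle$ for each $a' \ll a$. Passing to the supremum over a cofinal family of such $a'$ yields $\langle a\rangle\leq \langle b\rangle$. The key point is that the "$m$" loss in comparison and the "$m'$" defect in divisibility can be concentrated into small tail pieces that are absorbed when one takes the supremum.

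For almost divisibility, I would bootstrap from the $m'$-divisibility inequality $n\langle z\rangle\leq \langle x\rangle\leq (n+1+m')\langle z\rangle$ by iterating and using the almost unperforation just obtained. At each step, the error $m'\langle z\rangle$ is absorbed into a tighter approximation by passing to a larger $n$ and then applying $m$-comparison to the resulting slightly over-counted sum; a geometric-series argument — at step $k$ the defect shrinks by a fixed factor strictly less than $1$ — produces in the limit an element $\tilde z$ satisfying $n\langle\tilde z\rangle\leq \langle x\rangle \leq (n+1)\langle\tilde z\rangle$.

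The main obstacle I foresee is ensuring that the iterative scheme converges inside the axiomatic framework of $\mathrm{Cu}$-semigroups developed in \cite{APT:MAMS}: one must verify that the residuals produced at each step admit a common upper bound that vanishes in the way-below sense, and that the successive suprema, splittings, and reassemblies are legitimately performable in $\mathrm{Cu}(A)$. Simplicity is decisive precisely here, as it prevents residuals from hiding in ideal-like substructure and guarantees that the functional-theoretic controls actually detect the elements manipulated in the iteration. A secondary subtlety is the bookkeeping that allows the $m$-comparison and $m'$-divisibility hypotheses to be simultaneously amplified without the defects compounding into something uncontrollable; carrying this out cleanly is the technical heart of the APRT approach and is what differentiates this Cuntz-semigroup-level proof from Winter's earlier central-sequence argument.
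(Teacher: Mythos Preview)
The paper does not contain its own proof of this theorem: it is stated as a citation of \cite[Theorem~D]{APRT:JFA} in a survey of open problems, with no argument given. There is therefore nothing in the paper to compare your proposal against line by line.

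That said, the surrounding discussion in the paper does characterise the APRT approach, and your proposal is broadly consistent with it. The paper explains that Winter's original dimension reduction from $(m,m')$-pureness to $\mathcal Z$-stability occurred ``in the midst of Winter's argument at the level of central sequences through a geometric series trick,'' and that APRT ``made this dimension reduction more conceptual by showing that it occurs directly at the Cuntz semigroup level.'' Your plan---working entirely in $\mathrm{Cu}(A)$, iterating a geometric-series reduction of the defect parameters, and explicitly contrasting this with Winter's central-sequence approach---matches that description. Your identification of the main technical obstacle (ensuring convergence of the iteration within the axiomatic $\mathrm{Cu}$-framework of \cite{APT:MAMS}, with simplicity preventing residuals from hiding in ideal structure) is also appropriate.

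One minor caution: your appeal to Corollary~\ref{cor:BH} for the existence of traces requires exactness, which is not assumed in the theorem. The actual APRT argument does not go through traces in this way; the functionals on $\mathrm{Cu}(A)$ that matter are the lower semicontinuous dimension functions (equivalently $2$-quasitraces via \cite{ERS:AJM}), and these exist for simple stably finite $C^*$-algebras without any exactness hypothesis by \cite{BlackadarHandelman:JFA}. This is a fixable detail rather than a genuine gap in your strategy.
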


Today, one would prove (i)$\implies$(ii) in Theorem \ref{Thm:Structure} by obtaining $(m,m')$-pureness from finite nuclear dimension (\cite[Theorem~3.1]{RobertTikuisis:TAMS}), applying Theorem~\ref{PureThm} to get pureness and hence Cuntz semigroup regularity, and following Step~\ref{WintersProof.2} in Winter's proof of Theorem \ref{Thm:Winter2} (taking advantage of the lack of need for higher comparison and divisibility conditions), and finally end by using Matui and Sato's property (SI) from Theorem \ref{Thm:SI}.

\section{Does real rank zero give rise to regularity?}\label{sec:rr0regularity}

In \cite{Pedersen:JOT,BrownPedersen:JFA}, Larry Brown and Pedersen introduced the notion of real rank zero, a property which implies an abundance of projections.
(In \cite{BrownPedersen:JFA}, the property is the zero-dimensional case of a more general non-commutative dimension theory, from which the name arises.)
The property is enjoyed by many important classes of {$C^*$}-algebras, including von Neumann algebras, AF algebras, Kirchberg algebras (and indeed, all simple purely infinite algebras), and examples such as irrational rotation algebras.  On the other hand, many regular {$C^*$}-algebras such as $\Z$ contain few projections, and consequently cannot have real rank zero.  For a unital simple exact  $\Z$-stable finite {$C^*$}-algebra $A$ (for example, if $A$ is a unital finite classifiable {$C^*$}-algebra) real rank zero is characterised by the image of the pairing map $\rho_A\colon K_0(A)\to\mathrm{Aff}(T(A))$ being dense in $\mathrm{Aff}(T(A))$ by \cite[Theorem~7.2]{Rordam:IJM}.

Real rank zero is often a helpful simplifying assumption -- for example, the Cuntz semigroup of a real rank zero {$C^*$}-algebra is determined by its Murray--von Neumann semigroup (projections in the stabilisation modulo Murray--von Neumann equivalence). On the other hand, while real rank zero gives rise to some amount of regularity, it is unclear exactly how much. This is a topic which Shuang Zhang heavily investigated from the early '90s; see 
\cite{Zhang:CJM}--\cite{Zhang:Ann}.
\nocite{Zhang:CJM,Zhang:PJM2,Zhang:JOT,Zhang:TAMS,Zhang:CM,Zhang:IJM,Zhang:PJM,Zhang:KT,Zhang:Ann}
Major challenges remain; indeed, the following `real rank zero dichotomy problem' demonstrates how little we know about the full implications of real rank zero even for simple {$C^*$}-algebras.

\begin{question}[{R{\o}rdam; \cite[Question 7.6]{Rordam:Acta}}]\label{q:RR0dichotomy}
Is every simple (nuclear) {$C^*$}-algebra with real rank zero  either stably finite or purely infinite?\footnote{R{\o}rdam's \cite[Question 7.6]{Rordam:Acta} is the contrapositive of this question: is there a simple nuclear {$C^*$}-algebra which is real rank zero and contains both a finite and an infinite projection? If so, then it is neither stably finite nor purely infinite. 
If not, then every simple nuclear {$C^*$}-algebra with real rank zero which is not stably finite must have only infinite projections and therefore an infinite projection in every hereditary subalgebra, which is a characterisation of pure infiniteness for simple {$C^*$}-algebras.}
\end{question}

This problem was posed by R\o rdam in the context of his examples of simple nuclear {$C^*$}-algebras from \cite{Rordam:Acta} containing both finite and infinite projections (which he later showed do not have real rank zero in \cite{Rordam05}).
The question is open even without the nuclearity hypothesis.
In \cite{OrtegaPereraRordam:TAMS} (and independently by Zhang in unpublished work), it is shown that combining real rank zero with a very mild regularity property -- the corona factorisation property -- is sufficient to conclude the stably finite/purely infinite dichotomy.

The following asks whether real rank zero is a regularity property in an even stronger sense.

\begin{question}\label{q:RR0Z}
    Is every simple separable nuclear non-elementary {$C^*$}-algebra with real rank zero automatically $\mathcal Z$-stable?
\end{question}

A positive answer to this question would also solve Problem~\ref{q:RR0dichotomy} (in the affirmative) using Kirchberg's dichotomy. While serious attempts have been made, it remains unclear whether any variant of Villadsen's constructions might yield an example of a non-$\mathcal Z$-stable real rank zero {$C^*$}-algebra.

In \cite[Theorem~5.8]{PereraRordam:JFA}, Perera and R\o rdam show that a non-type $I$ separable {$C^*$}-algebra $A$ with real rank zero is weakly divisible (defined in \cite[Definition~5.1]{PereraRordam:JFA}). They pose what turns out (by \cite[Theorem 2.3]{TomsWinter:CJM}) to be a stronger version of Problem \ref{q:RR0Z}: is every unital simple separable nuclear non-elementary {$C^*$}-algebra $A$ of real rank zero approximately divisible?\footnote{Approximate divisibility says that for any $n$, there is a unital embedding $M_n\oplus M_{n+1}\rightarrow A_\omega\cap A'$.  As approximate divisibility is really a central divisibility condition, at least one of the authors would like to go back in time and change the terminology (especially as it is easy to get it confused with almost-divisibility, which is a non-central condition). In particular, it is important to realise that approximate divisibility should not be thought of in the same spirit as the divisibility-type conditions discussed in Section~\ref{Sec:CuReg}, though it implies almost-divisibility.} 

Somewhat related to these problems is the question of whether real rank zero is a property of the Cuntz semigroup of a {$C^*$}-algebra. 
If $A$ has real rank zero, then every hereditary subalgebra of $A$ (and of $A \otimes \mathcal K$) has an approximate unit consisting of projections (this, in fact, characterises real rank zero). 
It follows that every element of $\mathrm{Cu}(A)$ is a supremum of an increasing sequence of compact elements (those $x \in \mathrm{Cu}(A)$ with $x \ll x$), as was noticed early on by Perera in \cite[Theorem~2.8]{Perera:IJM}.  Such Cuntz semigroups are called \emph{algebraic} in \cite[Definition~5.5.1]{APT:MAMS}, and this property seems the likely candidate (if any) for detecting real rank zero at the Cuntz semigroup level.

\begin{question}\label{q:cu-algebraic}
    If $A$ is a simple stably finite {$C^*$}-algebra and $\mathrm{Cu}(A)$ is algebraic, must $A$ have real rank zero?
\end{question}

In the non-simple, purely infinite case, $\mathrm{Cu}(A)$ can be algebraic without $A$ having real rank zero, as the following example due to Hannes Thiel shows.
Take an extension $0 \to I \to E \to \mathcal O_\infty \to 0$ such that $I$ is a stable Kirchberg algebra and the exponential map $K_0(\mathcal O_\infty) \to K_1(I)$ is an isomorphism (so that $K_1(I)\cong \mathbb Z$).
Then since $E$ is $\mathcal O_\infty$-stable, its Cuntz semigroup contains three elements (one for each ideal), and is easily seen to be algebraic.
However, it cannot have real rank zero, because this would force the exponential map to vanish by \cite[Theorem~3.14]{BrownPedersen:JFA}.

When $A$ has stable rank one, Problem \ref{q:cu-algebraic} has a positive answer by the second paragraph of \cite[Corollary~5]{CEI:Crelle}.

We ask further questions, related to $K$-theoretic regularity of real rank zero {$C^*$}-algebras, and connected to the dichotomy question (Problem \ref{q:RR0dichotomy}) in Section~\ref{sec:K1injectivity} as Problem~\ref{q:K1SurjectRR0}.
    
\section{Classifiability of {$C^*$}-algebras associated to commutative dynamics}\label{sec:crossed-product}

The power of Connes’ structural theorem is seen through its ease of application in examples. Given a free ergodic measure-preserving action of a countable discrete amenable group $G$ on a non-atomic standard probability space $(X,\mu)$, it is straightforward to verify that the crossed product II$_1$ factor $L^\infty(X,\mu)\rtimes G$ is injective, so Connes’ theorem shows that it is the hyperfinite II$_1$ factor.\footnote{In this framework, it was later possible to see hyperfiniteness directly through Ornstein and Weiss’ Rokhlin lemma (\cite{OrnsteinWeiss:BAMS} and \cite[Theorem~6.1]{Dye:AJM}).}
A lot of effort has gone into the analogous question of which {$C^*$}-algebras arising from group actions are classifiable.
Although we focus our attention on group actions, the questions posed here can be generalised to \'etale groupoids (and even twisted groupoid {$C^*$}-algebras).

Most of the classification hypotheses are well-understood for reduced crossed products $C(X)\rtimes G$: given an action of a countable discrete group $G$ on a compact metrisable space $X$, the reduced crossed product {$C^*$}-algebra $C(X)\rtimes G$ is automatically separable and unital; it is nuclear precisely when the action is topologically amenable (\cite[Corollary 6.2.14]{ADR:Book}).\footnote{Generalising the notion of amenability for groups, amenable actions of groups on {$C^*$}-algebras were defined by Anantharaman-Delaroche in \cite{AD:MA}; in the commutative case, this provides the notion of an amenable action on a locally compact Hausdorff space.
See \cite{OzawaSuzuki:Selecta} for characterizations.
Amenable groups always act amenably.  For an amenable action, the reduced and full crossed products agree (\cite[Proposition~4.8]{AD:MA}), although the converse is open (see \cite{Suzuki:JNCG,BEW:ContempMath}).

One should be warned that the terminology ``amenable action'' is not consistent across the literature (for actions on non-commutative {$C^*$}-algebras); see \cite[Remark~2.2]{BEW:ContempMath}.}
In this case, the full and reduced crossed products agree and satisfy the UCT (Theorem~\ref{thm:Tu}, by Tu), and the crossed product is simple precisely when the action is topologically free and minimal (\cite[Corollary to Theorem~2]{AS:PEMS}).  Moreover, a dichotomy between being stably finite or being traceless is given by amenability and non-amenability of $G$ (see the folklore lemma \cite[Lemma 2.2]{GGKN:Crelle}).  So the fundamental challenge is to determine when topologically free minimal actions give rise to $\Z$-stable crossed products.

The precise analysis and, in particular, the expected role of the space $X$ differs on the two sides of the finite/infinite dichotomy. In the stably finite case, Giol and Kerr provide an example of a free minimal $\mathbb Z$-action on an infinite-dimensional compact metrisable space $X$ such that $C(X)\rtimes \mathbb Z$ is not $\mathcal Z$-stable (\cite{GK:Crelle}), whereas early positive results showed that when $X$ is finite-dimensional (and has infinitely many points), all minimal $\mathbb Z$-actions (which are automatically free) give $\Z$-stable crossed products (\cite{TW:GAFA}). 
 In contrast, when $G$ is non-amenable it seems likely that the space $X$ does not matter: there are no known examples of topologically free minimal amenable actions of non-amenable groups whose crossed product is not a Kirchberg algebra, and recently, many examples of groups (including all non-amenable free groups) have been found for which all such actions give classifiable crossed products (\cite{GGKN:Crelle}).  These and subsequent developments have given particular prominence to the following problem.

\begin{question}\label{q:ZCrossedProd}
\begin{enumerate}[(1)]
    \item    Let $G$ be a countably infinite discrete amenable group, let $X$ be a compact metrisable space with finite covering dimension, and let $\alpha\colon G \curvearrowright X$ be a free minimal  action. Does it follow that $C(X)\rtimes G$ is $\mathcal Z$-stable?\label{q:ZCrossedProd.1}
    \item   Let $G$ be a countable discrete exact but non-amenable group, let $X$ be a compact metrisable space, and let $\alpha\colon G \curvearrowright X$ be a free minimal amenable action. Does it follow that $C(X)\rtimes G$ is $\mathcal Z$-stable?\label{q:ZCrossedProd.2}
\end{enumerate}
\end{question}


These questions have been answered for many classes of groups, and there have been two main approaches. The first is to proceed by directly bounding the nuclear dimension of the crossed product via dimensional conditions on the action -- e.g.\ Rokhlin dimension (\cite{HWZ:CMP,Szabo:PLMS,SWZ:ETDS}) or Hirshberg and Wu's recent long-thin covering dimension (\cite{HW:arXiv}); $\Z$-stability then follows from the structure theorem (Theorem~\ref{Thm:Structure}). A culminating result is that Problem~\ref{q:ZCrossedProd}(\ref{q:ZCrossedProd.1}) has a positive answer provided that $G$ has polynomial growth\footnote{\label{ft:Gromov}The result in \cite{SWZ:ETDS} is stated for finitely generated virtually nilpotent groups.  By Gromov's celebrated result in \cite{Gromov:IHES}, a finitely generated group has polynomial growth if and only if it is virtually nilpotent.} (\cite[Theorem~8.8]{SWZ:ETDS}). The above problems can also be asked for actions which are only topologically free rather than free. The long-thin covering dimension is well-suited to non-free actions (which finite Rokhlin dimension excludes); one of many results in Hirshberg and Wu's preprint \cite{HW:arXiv} is that all actions of groups of polynomial growth on finite-dimensional spaces have finite long-thin covering dimension, which implies finite nuclear dimension for the crossed product. The structure theorem (Theorem \ref{Thm:Structure}) then converts this to $\Z$-stability whenever the action is additionally topologically free and minimal. 
However, it seems likely that a general approach to all actions of a given group via dimension-bounding will only work under polynomial growth.\footnote{We note, on the other hand, that the long-thin covering dimension method from \cite{HW:arXiv} does give results for certain actions of groups outside this class, including some examples of non-free actions which cannot be handled by almost finiteness (described below).}

The second approach aims to go to $\Z$-stability (or, in the non-amenable case, pure infiniteness) more directly. State-of-the-art results in this direction include positive answers to Problem~\ref{q:ZCrossedProd}(\ref{q:ZCrossedProd.1}) for all groups locally of subexponential growth (\cite[Theorem~8.1]{KerrSzabo:CMP}, using \cite[Theorem~6.33]{DownarowiczZhang:MAMS}), all elementary amenable groups (in the preprint \cite[Corollary~B]{KerrNaryshkin}), and many classes of non-amenable groups all of whose free minimal amenable actions behave as predicted in  Problem~\ref{q:ZCrossedProd}(\ref{q:ZCrossedProd.2}), (\cite{GGKN:Crelle}); these include non-amenable hyperbolic and Baumslag--Solitar groups).  Note though that the results from \cite{KerrSzabo:CMP,KerrNaryshkin} do not apply to actions which are only topologically free (without requiring minimality, as in \cite{GGKN:Crelle}).

An analogue of strict comparison is pervasive in the second approach to both parts of Problem~\ref{q:ZCrossedProd}. For open sets $U,V \subseteq X$, write $U\precsim V$ if every compact subset of $U$ can be covered by finitely many open sets such that these open sets can be translated (via the group action) to a family of disjoint subsets of $V$.
The action $G \curvearrowright X$ has \emph{dynamical comparison} if 
for non-empty open sets $U,V\subseteq X$, if $\mu(U)<\mu(V)$ for every invariant probability measure $\mu$, then $U\precsim V$.
Whereas strict comparison asks that a gap in the rank functions associated to two positive elements always has a good {$C^*$}-algebraic explanation (Cuntz subequivalence), dynamical comparison asks that a gap in the invariant measures of two open subsets has a good dynamical explanation.

\begin{question}\label{q:DynComparison}
Do all free minimal amenable actions of countable discrete groups on compact metrisable spaces have dynamical comparison?
\end{question}

As pointed out to us by Julian Kranz, no examples of actions without dynamical comparison are known even in the more general setting of discrete groups acting on compact spaces (with no further assumptions).
When there are no invariant measures, Ma has shown that topologically free minimal actions with dynamical comparison (in this case, really dynamical pure infiniteness: $X \precsim V$ for any non-empty open set $V$) give simple purely infinite crossed products (\cite[Theorem~1.1]{Ma:TAMS}).\footnote{The challenge in Ma's result is to go from the condition that $1 \precsim f$ in $C(X)\rtimes G$ for all non-zero $f \in C(X)_+$ to $1 \precsim a$ for all non-zero $a \in (C(X)\rtimes G)_+$.}  Accordingly, a positive solution to Problem~\ref{q:DynComparison} for non-amenable groups would therefore give a positive solution to Problem~\ref{q:ZCrossedProd}(\ref{q:ZCrossedProd.2}). This is the approach taken in \cite[Theorem B]{GGKN:Crelle}, which shows that minimal actions of non-amenable groups with a certain paradoxical tower condition (and products of these with arbitrary groups) automatically have dynamical comparison.

In the stably finite setting (i.e.\ for actions of amenable groups), Kerr's notion of \emph{almost finiteness} (\cite[Definition~8.2]{Kerr:JEMS}) has proved extremely instrumental. Generalising a notion due to Matui for actions on the Cantor set, almost finiteness  is a property for actions of amenable groups that looks a bit like a dynamical version of tracial $\Z$-stability, provided you squint hard enough.  Kerr used an intricate argument with Ornstein--Weiss quasitilings (\cite{OrnsteinWeiss:BAMS}) to achieve approximate centrality from F{\o}lner conditions, establishing the following (via tracial $\Z$-stability and Matui and Sato's Theorem \ref{Thm:SI}).

\begin{theorem}[{Kerr; \cite[Theorem~12.4]{Kerr:JEMS}}]\label{Thm:Kerr}
    Let $G$ be a countable discrete amenable group, $X$ a compact metrisable space, and  $G \curvearrowright X$ a free\footnote{This theorem is generalised to essentially free minimal actions in the recent preprint \cite{GGGKN24}. For virtually cyclic groups, this was obtained earlier in the \cite{LiMa23}.} minimal action that is almost finite.
    Then $C(X)\rtimes G$ is $\mathcal Z$-stable.
\end{theorem}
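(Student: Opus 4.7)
The plan is to show that $A \coloneqq C(X) \rtimes G$ satisfies Hirshberg--Orovitz's tracial $\mathcal Z$-stability condition and then appeal to the Matui--Sato machinery (Theorem~\ref{Thm:SI}) together with nuclearity of $A$ (which follows from amenability of $G$) to upgrade this to genuine $\mathcal Z$-stability. By Theorem~\ref{Thm:KDichotomoy} and the fact that $A$ admits at least one $G$-invariant probability measure (since $G$ is amenable), $A$ is stably finite, so the Matui--Sato framework is available. Concretely, for each $n \in \mathbb N$ and $\varepsilon > 0$, I would aim to construct an approximately central c.p.c.\ order zero map $\phi\colon M_n \to A$ whose defect $1_A - \phi(1_{M_n})$ is $\varepsilon$-small in the uniform trace seminorm (equivalently, Cuntz-below $\phi(e_{11})$ modulo $\varepsilon$ in trace, at the level of the central sequence algebra).

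The heart of the proof is the dynamical input. Almost finiteness supplies, for any finite $F \subset G$ and any $\delta > 0$, a finite collection of open Rokhlin towers $\{(V_k, S_k)\}_{k=1}^N$ with $(F,\delta)$-invariant (i.e.\ F{\o}lner) shapes $S_k \subseteq G$, with the translates $\{sV_k : s \in S_k\}$ pairwise disjoint, covering all of $X$ except for a set which is dynamically subequivalent to a set of arbitrarily small uniform measure. The key additional feature of almost finiteness (as opposed to mere approximate tileability) is that each shape $S_k$ may be partitioned into $n$ subsets $S_k^{(1)}, \dots, S_k^{(n)}$ with $|{|S_k^{(i)}| - |S_k|/n}| \leq 1$. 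Using these equi-partitions, I would define $\phi\colon M_n \to A$ tower by tower: for each $k$, choose a clopen refinement of $V_k$ compatible with the partition and let the matrix units $e_{ij}$ act via the partial isometries $\sum_{s \in S_k^{(j)}} u_{\alpha(s,i,j)} \chi_{s \cdot V_k}$ that shuttle the $j$-th equal portion of the $k$-th tower to the $i$-th portion, summed over $k$. Standard arguments (e.g., those in \cite{HO:JFA} style constructions) then show that this is a well-defined c.p.c.\ order zero map whose image is approximately contained in $A$, with defect supported on the uncovered remainder, hence small in every trace.

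The hard part will be establishing approximate centrality of $\phi$: one must show that $\phi$ approximately commutes with both $C(X)$ and the generators $u_g$, $g \in F$. Commutation with $C(X)$ is immediate from the construction since $\phi(e_{ij})$ is built out of characteristic functions of the tower levels, which form a refinement of the partition of $X$ being used. Commutation with $u_g$ for $g$ in a prescribed F{\o}lner set is the delicate point: conjugation by $u_g$ translates each level $s \cdot V_k$ to $(gs) \cdot V_k$, and when $gs \in S_k$ this stays inside the same tower (permuting levels in a controlled way), but when $gs \notin S_k$ one risks leaving the tower altogether. This is exactly the boundary-versus-interior problem that Ornstein--Weiss quasitilings address: by passing to an $\varepsilon$-quasitiling refinement, one arranges that the proportion of $s$ for which $gs$ escapes $S_k$ is negligible (F{\o}lner condition with respect to $F$), so that the failure of commutation is of order $\varepsilon$ in the uniform trace norm. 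Summing the error contributions across all towers and bounding them uniformly in traces by the measure of the exceptional set, one obtains that $\phi$ is approximately central modulo $J_A$. With this in hand, almost finiteness yields that $1_A - \phi(1_{M_n})$ is dominated (in the dynamical subequivalence sense, which translates into Cuntz subequivalence inside $A$ using Ma's comparison arguments) by an arbitrarily small amount of $\phi(e_{11})$; this is precisely the tracial $\mathcal Z$-stability condition. Feeding this into property~(SI) (Theorem~\ref{Thm:SI}), which applies because nuclearity and strict comparison hold (strict comparison itself follows from dynamical comparison, as established in Kerr's framework), lifts the Cuntz comparison to the central sequence algebra $A_\omega \cap A'$, delivering $\mathcal Z$-stability via the standard McDuff-type criterion.
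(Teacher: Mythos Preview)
Your overall strategy --- verify Hirshberg--Orovitz tracial $\mathcal Z$-stability from the tower structure supplied by almost finiteness, with Ornstein--Weiss quasitilings as the key input, and then upgrade via property~(SI) --- is exactly the route the paper attributes to Kerr. But there is a genuine gap in your centrality argument. Tracial $\mathcal Z$-stability requires $\phi$ to approximately commute with the prescribed finite subset of $A$ in \emph{operator norm}, and your construction (partition each shape $S_k$ directly into $n$ equal pieces $S_k^{(i)}$ and shuttle between them) does not deliver this: even well inside the tower, left translation by $g$ has no reason to respect the partition, so for instance $u_g\phi(e_{ii})u_g^*-\phi(e_{ii})$ is a difference of characteristic functions of distinct sets and has norm~$1$. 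The F{\o}lner boundary count you invoke only controls the \emph{trace} of such commutators, which is why you retreat to ``approximately central modulo $J_A$''. At that point, however, your appeal to property~(SI) to return to $A_\omega\cap A'$ requires strict comparison of $A$ as an input (Theorem~\ref{Thm:SI}), and there is no independent route from dynamical comparison of open subsets of $X$ to strict comparison of arbitrary positive elements in the crossed product --- that implication runs through $\mathcal Z$-stability itself --- so the argument becomes circular. (The appeal to Ma's comparison result is also misplaced: that concerns the purely infinite setting.)

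The ``intricate argument with Ornstein--Weiss quasitilings'' the paper refers to is doing more than bounding boundary effects in trace. Kerr uses the quasitiling of each large shape $S_k$ by translates of a fixed family of smaller F{\o}lner tiles to assemble the $M_n$-structure tile-by-tile in a way that genuinely achieves operator-norm approximate centrality with the $u_g$; getting this right is precisely the hard part you have flagged but not solved. With operator-norm centrality in hand, the Cuntz-smallness of the defect follows directly from the dynamical subequivalence built into almost finiteness (no prior strict comparison hypothesis on $A$ is needed), giving tracial $\mathcal Z$-stability outright, and Hirshberg--Orovitz then yields $\mathcal Z$-stability.
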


In \cite[Theorem~B]{KerrSzabo:CMP}, Kerr and Szab\'o proved that for an amenable group $G$, if every free action of $G$ on a zero-dimensional space is almost finite, then the same is true for every free action of $G$ on any finite-dimensional space.
Tantalisingly, Conley, Jackson, Kerr, Marks, Seward, and Tucker-Drob prove that for any amenable group $G$,  generic free minimal actions of $G$ on the Cantor set are almost finite (\cite[Theorem~4.2]{CJKMST:MA}).
Although it seems impossible to merge this with the aforementioned result of Kerr and Szab\'o to establish a genericity result on arbitrary finite-dimensional spaces, we nonetheless regard these developments as strong evidence that Problem~\ref{q:ZCrossedProd}(\ref{q:ZCrossedProd.1}) has a positive answer (which we hope is also the case for topologically free actions; particularly in the light of \cite{HW:arXiv}).

Elliott and Niu established the first general result for actions of amenable groups on infinite-dimensional spaces in \cite{ElliottNiu:Duke}, showing that all free minimal $\mathbb Z$-actions with mean dimension zero give rise to $\Z$-stable crossed products; this was generalised by Niu to $\mathbb Z^d$-actions in \cite{Niu:CJM}.
These results suggest mean dimension zero or the small boundary property (which are equivalent for free minimal $\mathbb Z^d$-actions by \cite[Theorem~1.3 and Corollary~5.4]{GLT:GAFA} and conjecturally equivalent in general) as a dynamical condition to ensure $\mathcal Z$-stability beyond the case of finite-dimensional base space.\footnote{Note that in the finite-dimensional base space case, the small boundary property and mean dimension zero are easily verified.}
Kerr and Szab\'o linked this idea back to dynamical comparison with the following result.

\begin{theorem}[{\cite[Theorem~A]{KerrSzabo:CMP}}]\label{Thm:KerrSzabo}
    Let $G$ be a countably infinite amenable group, $X$ a compact metrisable space, and  $G \curvearrowright X$ a free action.
    The following are equivalent.
    \begin{enumerate}
        \item $G\curvearrowright X$ is almost finite.
        \item $G\curvearrowright X$ has dynamical comparison and the small boundary property.
    \end{enumerate}
\end{theorem}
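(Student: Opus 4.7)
The proof will go in both directions via Ornstein--Weiss quasitiling combined with the dynamical Cuntz subequivalence calculus for open sets. The common technical object is a \emph{castle}: a finite collection of tower bases $B_1,\dots,B_k$ (disjoint open sets in $X$) together with finite shapes $S_1,\dots,S_k \subseteq G$ such that $\{sB_i : s \in S_i, 1\le i\le k\}$ is a pairwise disjoint family of open sets. Almost finiteness is the assertion that, for any prescribed invariance $(K,\delta)$ and any $n$, one can find such a castle whose shapes are $(K,\delta)$-invariant and whose ``leftover'' $R = X \setminus \bigsqcup s B_i$ is Cuntz subequivalent (in the dynamical sense $\precsim$) to $n$-fold overlaps of the base sets.

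\textbf{(1)$\Rightarrow$(2).} For the small boundary property, the idea is to use almost finiteness to produce, for each point $x\in X$ and each neighbourhood, arbitrarily invariant castles whose base sets can be chosen to contain $x$ in a shrinkable way: a compactness/refinement argument (applied to a decreasing sequence of castles) lets one pick the base sets $B_i$ so that $\mu(\partial B_i) = 0$ for all invariant probability measures $\mu$, because the $(K,\delta)$-invariance forces the measure of $\partial B_i$ to be asymptotically controlled by the shape defects. For dynamical comparison, given open $U,V$ with $\eta \coloneqq \inf_\mu (\mu(V)-\mu(U)) > 0$, pick a castle with shapes so $(K,\delta)$-invariant that $1-\mu(\bigsqcup sB_i) < \eta/2$ uniformly in $\mu$. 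On each tower, the fibrewise occupation of $U$ is smaller than that of $V$ by at least $\eta/2$; a Hall-type marriage argument along the shape $S_i$ then produces a partial bijection matching translates of $U\cap B_i$ into translates inside $V$. The leftover part of $U$ (coming from $X \setminus \bigsqcup sB_i$) is absorbed directly by the Cuntz subequivalence built into almost finiteness, yielding $U \precsim V$.

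\textbf{(2)$\Rightarrow$(1).} This is the substantive direction. Starting from any finite $K\subseteq G$ and $\delta>0$, apply the Ornstein--Weiss quasitiling theorem at the group level to obtain finitely many $(K,\delta)$-invariant F\o{}lner shapes $S_1,\dots,S_k$ such that any sufficiently invariant F\o{}lner set can be $\epsilon$-covered by disjoint right translates of the $S_i$. Use SBP to select candidate base sets $B_i \subseteq X$ with $\mu(\partial B_i) = 0$ for all invariant $\mu$ and with $\mu(B_i)$ small but controlled. The task is to embed the group-level tiling into $X$ as genuinely disjoint open towers. The standard technique is a Zorn/maximality argument within the poset of partial castles built from shrinkings of the $B_i$: maximality combined with the combinatorial Ornstein--Weiss bound gives a castle whose union $C$ satisfies $\mu(X\setminus C) < \eta$ uniformly in $\mu$, with $\eta$ as small as desired. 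Here SBP is essential: without measure-null boundaries, one cannot convert the group-theoretic density into a uniform measure estimate on $X$. Finally, invoke dynamical comparison to the pair $(X\setminus\overline C,\; \text{small subcastle})$: since $\mu(X\setminus\overline C) < \eta$ can be arranged smaller than the measure of $n$ copies of any base set, comparison gives the required $R \precsim$ $n$-fold overlap, completing the definition of almost finiteness.

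\textbf{Main obstacle.} The hard step is the construction in (2)$\Rightarrow$(1) of a castle whose \emph{geometric} disjointness in $X$ matches the \emph{combinatorial} disjointness of the Ornstein--Weiss tiles in $G$, while simultaneously keeping the shapes $(K,\delta)$-invariant and the leftover uniformly small in every invariant measure. Getting these three conditions to coexist requires a delicate interplay: SBP is what unlocks the measure-theoretic control (boundaries are null so one can iterate without accruing overlaps), and dynamical comparison is what converts a uniformly small invariant-measure residue into a genuine Cuntz subequivalence, but the bookkeeping to choose shapes, bases, and the maximal placement simultaneously is where the real technical work lies. The Hall-marriage argument in (1)$\Rightarrow$(2), by contrast, is a cleaner combinatorial step once the castle is in hand.
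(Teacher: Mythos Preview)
The paper under review is a survey of open problems and does not contain a proof of this theorem; it simply cites \cite[Theorem~A]{KerrSzabo:CMP} as a known result. There is therefore no in-paper proof to compare against.

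Assessing your proposal on its own merits: the architecture of (2)$\Rightarrow$(1) --- Ornstein--Weiss quasitiling plus SBP to build castles with uniformly small measure-leftover, then dynamical comparison to convert this into the Cuntz subequivalence required by almost finiteness --- is correct and matches Kerr--Szab\'o's actual route, which passes through the intermediate notion of \emph{almost finiteness in measure}. Likewise the Hall-marriage idea for (1)$\Rightarrow$(comparison) is right.

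Your argument for (1)$\Rightarrow$(SBP), however, has a genuine gap. You write that ``the $(K,\delta)$-invariance forces the measure of $\partial B_i$ to be asymptotically controlled by the shape defects,'' but invariance of the F\o lner shapes $S_i\subseteq G$ says nothing about the topological boundaries of the base sets $B_i\subseteq X$. Almost finiteness hands you open towers whose leftover is small in the dynamical Cuntz sense; the base sets could still have boundaries of large invariant measure. Kerr--Szab\'o's actual proof of this implication (almost finiteness in measure $\Rightarrow$ SBP) is a separate and more delicate argument: one iteratively refines towers and runs a measure-exhaustion procedure to manufacture sets with small boundary in every invariant measure, rather than reading null boundaries off shape invariance. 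Your sketch does not supply any such mechanism, so as written the (1)$\Rightarrow$(2) half is incomplete.
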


Hence a positive answer to Problem~\ref{q:DynComparison} would confirm that all free minimal actions of amenable groups with the small boundary property give rise to $\Z$-stable crossed products, hence giving a positive answer to both parts of Problem~\ref{q:ZCrossedProd}(\ref{q:ZCrossedProd.1}). Naryshkin provided a positive answer to Problem~\ref{q:DynComparison} for groups with polynomial growth (\cite{Naryshkin21}), which, combined with Theorem \ref{Thm:KerrSzabo}, generalises Niu's result from $\mathbb Z^d$ to groups with polynomial growth groups (albeit using the small boundary property instead of mean dimension zero).

So far, the discussion has been around establishing {$C^*$}-regularity from dynamical regularity, but another intriguing challenge concerns going in the other direction.

\begin{challenge}\label{q:dynamicalcharacteriseZstable}
    Characterise, in terms of dynamical properties of $G\curvearrowright X$, when the crossed product $C(X)\rtimes G$ is $\mathcal Z$-stable for free minimal actions of countable discrete amenable groups.
\end{challenge}

Optimistically, one might expect the dynamical condition to be almost finiteness, mean dimension zero, and/or the small boundary property, although even for $\mathbb Z$-actions this is open.

The small boundary property for a free minimal action $G\curvearrowright X$ of an amenable group $G$ now has a {$C^*$}-algebraic description in terms of the inclusion of $C(X)$ into $C(X)\rtimes G$: the small boundary is equivalent to a relative version of uniform property $\Gamma$ for $(C(X)\subseteq C(X)\rtimes G)$ as described at the end of Section \ref{sec:TW} (see \cite{KerrSzabo:CMP} for one direction and the recent \cite{KLTV} or preprint \cite{ElliottNiu24} for the other).
As a consequence, almost finiteness for a free minimal action is equivalent to a relative version of tracial $\Z$-stability for the inclusion $(C(X)\subseteq C(X)\rtimes G)$, a clear strengthening of (tracial) $\mathcal Z$-stability of the crossed product. 

The {$C^*$}-algebraic characterisation of the small boundary property allows Theorem \ref{Thm:Kerr} to be viewed as a dynamical version of the characterisation of $\mathcal Z$-stability (in the nuclear setting) given by Theorem~\ref{Thm:ZStableEquivGammaSC}.

The following table summarises these regularity properties in (simple separable nuclear non-elementary) {$C^*$}-algebras and topological dynamics (of free minimal actions of countable discrete amenable groups on compact metrisable spaces).
Interestingly, the status of the properties is reversed in the two settings.

    \begin{tabular}{c|c|c}
         & {$C^*$}-algebras & Topological dynamics \\ \hline
        & \textit{Strict comparison} & \textit{Dynamical comparison} \\
         Comparison & Known to fail in examples & Always holds? \\
         & (Villadsen, ...) & (Problem~\ref{q:DynComparison}) \\ \hline
         & \textit{Uniform property $\Gamma$} & \textit{Relative $\Gamma$ = SBP} \\
         Property $\Gamma$ & Always holds? & Known to fail in examples \\
         & (Problem~\ref{q:Gamma}) & (Lindenstrauss--Weiss \cite{LindenstraussWeiss:IJM})\footnote{In \cite[Proposition~3.5]{LindenstraussWeiss:IJM}, Lindenstrauss and Weiss give an example of a minimal $\mathbb Z$-action with non-zero mean dimension, and then in \cite[Theorem~5.4]{LindenstraussWeiss:IJM} they show that this implies it does not have the small boundary property.}
    \end{tabular}
    \label{table}
\medskip

For the Giol--Kerr example of a free minimal $\mathbb Z$-action with a non-$\mathcal Z$-stable crossed product, the action has dynamical comparison (by Naryshkin's result -- \cite[Theorem~A]{Naryshkin21}), but the inclusion $(C(X)\subseteq C(X)\rtimes \mathbb Z)$ does not have uniform property $\Gamma$ (because the action does not have mean dimension zero; see \cite[Section~3]{GK:Crelle}).
The crossed product does not have strict comparison (\cite[Theorem~3.1]{GK:Crelle}), and it is not known whether it has uniform property $\Gamma$. It is also unclear to the authors whether this is actually the same question as Problem~\ref{q:VilladsenGamma}.

\begin{question}\label{q:TCiso}
    Let $A$ be a non-$\mathcal Z$-stable crossed product from \cite{GK:Crelle}. Is the tracial completion of $A$ the same as the tracial completion of a Villadsen algebra of the first type?
\end{question}

Going beyond classifiability, the Phillips--Toms conjecture predicts a deep connection between the mean dimension of a free minimal action and the radius of comparison of the crossed product.

\begin{question}[Phillips--Toms; cf.\ the introduction to \cite{Niu:CJM}]\label{q:PhillipsToms}
    Let $\alpha\colon \mathbb Z\curvearrowright X$ be a minimal action.  Is the radius of comparison of $C(X)\rtimes_\alpha\mathbb Z$ equal to one half of the mean dimension of $\alpha$?
\end{question}

Once it is known that a crossed product of the form $C(X)\rtimes G$ is classifiable, it becomes very relevant to actually classify it -- i.e.\ to compute its invariant $KT_u(C(X)\rtimes G)$.

\begin{question}\label{q:ComputeKTheory}
    Compute $KT_u(C(X)\rtimes G)$ for those actions $G\curvearrowright X$ which give classifiable {$C^*$}-algebras.
\end{question}

In the (essentially) free case, the trace simplex identifies with the set of invariant probability measures (\cite[Corollary 1.12]{Ursu:Adv}, using amenability to work with the full crossed product); for actions of free groups, the Pimsner--Voiculescu exact sequence helps to compute the $K$-theory.
But even in the cases when these two tools compute the $K$-theory and traces, determining the pairing requires more work.
For this, there seem to be very limited general methods -- the only one known by the authors is due to Pimsner for crossed products of {$C^*$}-algebras by free groups, and in particular for crossed products by $\mathbb Z$ (\cite{Pimsner:OACTET}).\footnote{Pimsner does not explicitly describe the pairing for $A\rtimes \mathbb Z$, but the methods do give a calculation of the pairing, which is written explicitly as $\Lambda_\tau$ in \cite[Section 10.10.1]{Blackadar-kbook}. This will be further fleshed out in forthcoming work of Neagu and the first-named author.}

We found relatively few complete calculations of the invariant of the crossed products in Problem \ref{q:ComputeKTheory} in the literature. For example, the case of $\mathbb Z$-actions of Cantor minimal systems was done by Boyle and Handelman in \cite[Theorem~5.2]{BoyleHandelman:IJM} (see also \cite{GPS}), and an irrational rotation on the circle is a classical result of Rieffel along with Pimsner and Voiculescu (\cite{Rieffel:PJM,PimsnerVoiculescu:JOT}). Note the importance of the pairing here: irrational rotation algebras all have the same $K$-theory and traces; it is the pairing that classifies them.
The Pimsner--Voiculescu exact sequence is also used to compute the $K$-theory of certain actions of free groups on the Cantor set in \cite{Suzuki:Crelle}.
More recently, triangulated category  methods of Meyer and Nest growing out of work on the Baum--Connes conjecture have been used by Proietti and Yamashita to derive a spectral sequence converging to the $K$-theory of certain crossed product (and groupoid) {$C^*$}-algebras (\cite[Theorem~A]{ProiettiYamashita:ETDS}); more generally, work around Matui's HK conjecture has prompted new $K$-theory calculations for groupoid {$C^*$}-algebras (for example, \cite{Scarparo:ETDS,Deeley:ETDS,FKPS:MJM}).
We expect the Baum--Connes conjecture with coefficients (which is a theorem in the amenable case -- see \cite{HigsonKasparov:IM, Tu99})  to continue to feature in further $K$-theory computations.
Calculations of the invariant of crossed products by boundary actions of certain hyperbolic groups can be found in \cite{LacaSpielberg:Crelle,MeslundSengun:JNCG,GeffenKranz:MJM} (the invariant is just $K$-theory in these cases) leading to surprising isomorphisms that do not come from the underlying groupoids.

Needless to say, we recognise that Problem \ref{q:ComputeKTheory} is vast in scope.
We expect progress to be made on specific classes of classifiable crossed products rather than a sweeping result for all at once.

\section{Classifiability of {$C^*$}-algebras associated to non-commutative dynamics}\label{Sect:noncomdynamics}

We now turn to non-commutative dynamics -- groups acting on {$C^*$}-algebras -- and their resulting crossed products. Given an action of a group $G$ on a simple separable nuclear (and classifiable) {$C^*$}-algebra, when is the resulting crossed product classifiable?  The corresponding question in the von Neumann setting is answered as a straightforward consequence of Connes' theorem for the same reason as described in the previous section: crossed products of actions of amenable groups preserve injectivity.  Following Connes' theorem, there was significant interest in the structure and classification of group actions on factors (\cite{Connes75, Connes77, Jones79, Jones80, Ocneanu}).  A landmark result of Ocneanu (\cite{Ocneanu}) shows that for every countable discrete amenable group $G$ there is, up to cocycle conjugacy\footnote{Two actions $\alpha\colon G\curvearrowright A$ and $\beta\colon G\curvearrowright B$ are \emph{cocycle conjugate} if there is an isomorphism $\sigma\colon A\to B$ and a function $w \colon G \rightarrow U(B)$ (or $U(\mathcal M(B))$ when $B$ is non-unital) such that
    $\sigma \alpha_g \sigma^{-1} = \mathrm{Ad}(w_g) \beta_g$ and $w_{gh}=w_g\beta_g(w_h)$ for all $g,h\in G$.} -- the appropriate notion of equivalence for group actions -- a unique outer (cocycle) action of $G$ on the hyperfinite II$_1$ factor $\mathcal R$. One of many consequences of Ocneanu's theorem is that any outer action $\alpha\colon G\curvearrowright \mathcal R$ is cocycle conjugate to $\alpha\otimes 1_{\mathcal R}\colon G\curvearrowright\mathcal R\otimes\mathcal R\cong \mathcal R$ -- this is equivariant $\mathcal R$-stability of the action.\footnote{In fact, this is a key step in Ocneanu's proof (\cite[Theorem~8.5]{Ocneanu}) and holds more generally; the theorem is valid for centrally free actions of amenable groups on separably acting McDuff factors. Szab\'o and Wouters returned to this theme recently, removing all assumptions save separability of the predual and amenability of the action: equivariant $\mathcal R$-stability holds automatically for all amenable actions of discrete groups on separably acting McDuff von Neumann algebras (\cite[Theorem A]{SzaboWouters:JIMJ}).  In particular, factoriality is not required.}

This section is devoted to classifiability of crossed products $A\rtimes G$, for simple $A$, with emphasis on the case where $A$ is classifiable.  This question has been around for a long time, with early work focusing on $\mathbb Z$-actions with the Rokhlin property on particular algebras such as UHF algebras  (see \cite[Theorem 1.3]{Kishimoto:Crelle}, for example).  In Section \ref{sec:classaction} we will turn to classifying the actions themselves: that is, the search for {$C^*$}-analogues of Ocneanu's theorem.

It is known that the (full or reduced) crossed product $A \rtimes G$ is nuclear if and only if $A$ is nuclear and the action $G \curvearrowright A$ is amenable (\cite[Th\'eor\`eme~4.5]{AD:MA}).
Sufficient conditions are known for simplicity: for instance, Kishimoto showed in \cite{Kishimoto:CMP} that an outer action of a discrete group on a simple {$C^*$}-algebra always produces a simple reduced crossed product.\footnote{Broader results on the ideal structure of crossed products of non-commutative {$C^*$}-algebras can be found in \cite{KennedySchafhauser} and the  \cite{GeffenUrsu}.} So the heart of the  classifiability question amounts to deciding when $A \rtimes G$ is $\mathcal Z$-stable and satisfies the UCT.  

The UCT problem is rather subtle.  While Tu's theorem gives the UCT for crossed products arising from amenable actions on commutative {$C^*$}-algebras, this is not known when the base algebra is a non-commutative nuclear {$C^*$}-algebra  satisfying the UCT (see the remarks following Problem~\ref{q:UCT}, for example).  
In the positive direction, the Pimsner--Voiculescu sequence can be used to show crossed products by free groups (including $\mathbb Z$)  preserve the UCT.  
More generally, using Higson and Kasparov's formidable result on the Baum--Connes conjecture (\cite{HigsonKasparov:IM}), Meyer and Nest  have shown that if $G$ is a torsion-free discrete amenable group acting on a separable {$C^*$}-algebra $A$ satisfying the UCT, then the crossed product $A \rtimes G$ satisfies the UCT (\cite[Corollary~9.4]{MeyerNest:Top}).

We now turn to $\Z$-stability; the following general problem is open even when $G$ is $\mathbb Z$. 

\begin{question}\label{q:z-stable-product}
Let $\alpha\colon G\curvearrowright A$ be an outer\footnote{It is not clear if outerness will be necessary to obtain $\mathcal Z$-stability -- see Problem~\ref{q:z-stable-action} below. It is a natural restriction in Problem~\ref{q:z-stable-product} as it provides simplicity.}
amenable action of a countable discrete group on a unital simple separable nuclear $\Z$-stable stably finite {$C^*$}-algebra. When is the crossed product $A\rtimes G$ $\Z$-stable?
\end{question}

In the case of a Kirchberg algebra $A$, Problem \ref{q:z-stable-product} has a positive answer. Indeed,  any crossed product of a simple purely infinite algebra by an outer action is again simple and purely infinite by \cite[Lemma~10]{KK:OAA} (see also \cite[Lemma~6.3]{Suzuki:IMRN}); hence when $A$ is also nuclear and the action is amenable, the crossed product is nuclear and thereby $\mathcal O_\infty$-stable (by Kirchberg's Theorem \ref{Thm:Oinftystable}). In the case when $A$ is unital and stably finite and the group is amenable, at least one trace will be fixed by the action, and so the crossed product by such an outer action will also be stably finite. Just as in the foundational work focusing on explicit examples, Rokhlin-type conditions have played substantial roles in the abstract setting: all Rokhlin actions of $\mathbb Z$ give $\Z$-stable crossed products (\cite{HirshbergWinter:PJM}), and more generally, the same holds for actions of groups of polynomial growth with finite Rokhlin dimension (\cite{SWZ:ETDS}).\footnote{When the action is not assumed outer, one uses \cite[Theorem F]{SWZ:ETDS}, and one needs the strong form of Rokhlin dimension with commuting towers.  For outer actions, one can use \cite[Theorems~B and~C]{SWZ:ETDS} together with the structure theorem (Theorem~\ref{Thm:Structure}) to move between $\Z$-stability and finite nuclear dimension for simple separable nuclear non-elementary {$C^*$}-algebras.}  In \cite{MatuiSato:AJM}, Matui and Sato develop an action version of the `von Neumann lifting strategy' discussed in Section~\ref{sec:TW}, aiming to exploit properties of actions of amenable groups on the hyperfinite II$_1$ factor, to obtain weak positive element versions of the Rokhlin property at the von Neumann level (see \cite[Theorem 3.6]{MatuiSato:AJM}, for example) from strong outerness.\footnote{Strong outerness is the condition that for every non-trivial $g\in G$, and every $\alpha_g$-invariant trace $\tau$, the automorphism induced on $\pi_{\tau}(A)''$ is outer. 
 } This, like some of the results in Section \ref{sec:TW}, depends on trace space conditions. Matui and Sato then combine this `weak Rokhlin property' with their property (SI) -- satisfied by all unital simple separable nuclear $\Z$-stable {$C^*$}-algebras -- to obtain $\Z$-stability of the crossed product (\cite[Theorem~4.9]{MatuiSato:AJM}). Putting things together, Matui and Sato answer Problem \ref{q:z-stable-product} for strongly outer actions of elementary amenable groups on unital simple separable nuclear $\Z$-stable {$C^*$}-algebras with finitely many extremal traces (\cite[Corollary 4.11 and Remark 4.12]{MatuiSato:AJM}).\footnote{Note that throughout Sections 4.1 and 4.2 of \cite{MatuiSato:AJM}, there is a standing hypothesis omitted from the statements of the main theorems that $A$ is a unital simple separable nuclear non-elementary stably finite {$C^*$}-algebra with finitely many extremal traces.  These results do not handle arbitrary trace spaces.}

This last result and many subsequent developments all pass through \emph{equivariant $\mathcal Z$-stability}.\footnote{An action 
$\alpha \colon G \curvearrowright A$ is called \emph{equivariantly $\mathcal Z$-stable} if it is cocycle conjugate to the action $\alpha \otimes 1_{\mathcal Z} \colon G \curvearrowright A \otimes \mathcal Z$.  As cocycle conjugate actions induce isomorphic crossed products, it follows that an equivariantly $\mathcal Z$-stable action induces a $\mathcal Z$-stable crossed product.} 
Establishing equivariant $\mathcal Z$-stability of actions is also relevant to classifying actions (discussed more in Section~\ref{sec:classaction}) as an action $\alpha$ on a $\mathcal Z$-stable {$C^*$}-algebra is indistinguishable on many invariants from its $\mathcal Z$-stabilization $\alpha \otimes 1_{\mathcal Z}$.
Hence we focus on the following conjecture of Szab\'o, which is analogous to the automatic equivariant McDuffness of actions of amenable groups on amenable factors: Ocneanu's original result from \cite{Ocneanu}, and its significant generalisation in \cite{SzaboWouters:JIMJ}, where no outerness condition is present. Correspondingly, note that  no outerness conditions are imposed on the action in Problem \ref{q:z-stable-action}.

\begin{question}[{\cite[Conjecture~A]{Szabo:AnalPDE}}]\label{q:z-stable-action}
    If $G$ is a countable discrete amenable group and $A$ is a unital  simple separable nuclear $\mathcal Z$-stable {$C^*$}-algebra, is every action of $G$ on $A$ equivariantly $\mathcal Z$-stable?
\end{question}

In the traceless setting, Problem~\ref{q:z-stable-action} has a definitive answer for outer actions: the equivariant $\mathcal O_\infty$-stability theorem of Szab\'o in \cite[Theorem~3.4]{Szabo:CMP} implies that all (cocycle) actions of countable discrete amenable groups on Kirchberg algebras are equivariantly $\mathcal O_\infty$-stable (and hence equivariantly $\mathcal Z$-stable).\footnote{In \cite[Corollary~3.6]{Szabo:MJM}, Szab\'o extends this result to amenable actions of non-amenable groups on Kirchberg algebras, and so it is natural ask whether Problem \ref{q:z-stable-action} extends to amenable actions of non-amenable groups on simple separable nuclear $\Z$-stable {$C^*$}-algebras (this is also asked in the introduction \cite{Szabo:MJM}).  We will come back to these actions below.}  In the finite setting, a generic action of any countable discrete group on a $\Z$-stable {$C^*$}-algebra is equivariantly $\Z$-stable (\cite[Corollary 11.6 and Remark 11.15]{SWZ:ETDS}) and so has $\Z$-stable crossed product.

Substantial progress has been made on Problem~\ref{q:z-stable-action} (some time before it appeared explicitly in \cite{Szabo:AnalPDE}). In the unique trace setting, Sato showed how to lift Ocneanu's equivariant $\mathcal R$-stability back from the von Neumann level to obtain equivariant $\Z$-stability through equivariant versions of property (SI).\footnote{Both equivariant $\mathcal R$-stability and equivariant $\Z$-stability have McDuff type characterisations in terms of embeddings into the fixed point algebra of the central sequence algebra, and it is through these and an `equivariant central surjectivity' result corresponding to \eqref{centralsurjectivity} that Sato's result is obtained in parallel to the lifting strategy described in Section \ref{sec:TW}.} This solves Problems~\ref{q:z-stable-product} and~\ref{q:z-stable-action} when the {$C^*$}-algebra involved has a unique trace (\cite{Sato:ASPM}). This holds more broadly, but just as in Section \ref{sec:TW}, conditions on the traces and often the induced action are currently needed. Sato's result holds when the extreme traces are compact and of finite dimension and the action is trivial on traces. The condition on the action was further relaxed in \cite{GHV:JMPA} to allow for uniformly bounded orbits of $G$ on the compact finite-dimensional $\partial_eT(A)$; this includes the case when there are finitely many extremal cases, which is also covered in Szab\'o's \cite[Theorem~C]{Szabo:AnalPDE}.\footnote{Szab\'o's work gives a self-contained very general approach to equivariant (SI) and also handles the non-unital situation.}  But even with the assumption that the extreme traces are compact and finite-dimensional, there are few cases where full answers to Problems~\ref{q:z-stable-product} and~\ref{q:z-stable-action} are known.  The only such results we are aware of are the case of finite groups, which is covered by \cite[Theorem B]{GHV:JMPA}, and Wouters' \cite[Theorem A]{Wouters} for $G\coloneqq\mathbb Z$.

For compact -- but no longer finite-dimensional -- extreme boundaries, the recent work \cite{GGNV:Adv} obtains $\Z$-stability of the crossed product (but not equivariant $\mathcal Z$-stability) from a Rokhlin-type condition on the induced action $G\curvearrowright \partial_eT(A)$ on the tracial boundary (\cite[Corollary D]{GGNV:Adv}). By \cite{Szabo:PLMS}, this condition is automatic whenever  $G\curvearrowright \partial_eT(A)$ is both free and has the small boundary property and, in particular, for free actions when the boundary also has finite covering dimension (\cite[Corollary E]{GGNV:Adv}). Along the way, the authors prove a general weak dynamical comparison result for actions of countable discrete amenable groups on unital simple separable $\Z$-stable {$C^*$}-algebras (\cite[Theorem C]{GGNV:Adv}), akin to the dynamical comparison property for commutative actions described in the previous section.  
The authors of \cite{GGNV:Adv} leave open the general question of equivariant $\mathcal Z$-stability, though it is handled for groups of polynomial growth by \cite[Theorem B]{Wouters}.\footnote{The result in \cite{Wouters} is stated for virtually nilpotent groups, but see footnote \ref{ft:Gromov}.}

No non-amenable group can act amenably on a von Neumann factor, but the same obstruction does not apply to amenable actions on simple {$C^*$}-algebras.\footnote{If an amenable action on a von Neumann algebra leaves a state on the centre invariant, then the acting group must be amenable (\cite[Proposition 3.6]{Anantharaman-Delaroche:MathScand}). The same principle obstructs `strongly amenable' actions of non-amenable groups on simple {$C^*$}-algebras.}  This possibility became a reality in \cite{Suzuki:JNCG,OzawaSuzuki:Selecta}: every non-amenable locally compact group acts amenably on a simple nuclear {$C^*$}-algebra. These examples were initially on Kirchberg algebras, but recently, Suzuki gave examples of such actions on simple stably finite algebras (\cite{Suzuki:AJM}), on stably projectionless algebras (\cite{Suzuki:MJM}), and in his recent paper \cite{Suzuki24}, also on simple nuclear {$C^*$}-algebras of `R\o{}rdam-type' with infinite and finite projections.  Just as a group has to be exact to admit an amenable action on a compact space (in fact, this characterises exactness; see \cite[Theorem~7.2]{AD:MA}), so too  must a group be exact to act amenably on a unital {$C^*$}-algebra (\cite[Corollary 3.6]{OzawaSuzuki:Selecta}, and in fact every exact group acts amenably on a unital Kirchberg algebra; \cite[Corollary 6.2(1)]{OzawaSuzuki:Selecta}).  At present, to the best of our knowledge, no amenable actions of any non-amenable exact groups on unital stably finite {$C^*$}-algebras are known to exist. Nonetheless, the recent developments just mentioned lead us to expect that they should, and further that the following question should have a positive answer.  

\begin{question}
Does every countable discrete exact but not amenable group act amenably on a unital stably finite classifiable {$C^*$}-algebra? 
\end{question}

For such amenable actions (if any exist), \cite[Theorem C]{GGKNV:MathAnn} gives a positive answer to Problem \ref{q:z-stable-product} whenever the acting group contains the free group on $2$-generators.   Suzuki's construction of actions of on {$C^*$}-algebras of R\o{}rdam type have crossed products also of R\o{}rdam type (\cite[Theorem 1]{Suzuki24}; this result requires the group to be free when the algebra is unital). Taking $G\coloneqq\mathbb Z$ in Suzuki's \cite[Theorem 1]{Suzuki24} gives an outer $\mathbb Z$-action on a non-$\Z$-stable unital separable nuclear {$C^*$}-algebra whose crossed product is not $\Z$-stable. In contrast, Hirshberg has constructed examples of an outer $\mathbb Z$-action on a non-$\Z$-stable unital separable nuclear {$C^*$}-algebra whose crossed product is $\Z$-stable (\cite[Theorem A]{Hirshberg:JFA}). At present, it is far from clear what a potential dynamical condition might be which characterises $\Z$-stability of the crossed product, leading to the following non-commutative version of Problem \ref{q:dynamicalcharacteriseZstable}.

\begin{question}
   Characterise when an outer amenable action $G\curvearrowright A$  of a discrete group on a unital simple separable  nuclear but non-$\Z$-stable {$C^*$}-algebra $A$ gives rise to a $\Z$-stable crossed product $A\rtimes G$.
\end{question}

In \cite{Hirshberg:JFA}, Hirshberg produces his action on a {$C^*$}-algebra $A$ constructed by suitably modifying a Villadsen algebra of the first type; in particular, $T(A)$ will be very complex and certainly not Bauer.\footnote{We have not attempted to compute $T(A)$.} Is such a construction possible on a monotracial $A$?  The main motivation for Hirshberg's work is to demonstrate that the condition that $G$ is discrete in Problems~\ref{q:z-stable-product} and \ref{q:z-stable-action} is necessary: by taking the dual action $\mathbb T\curvearrowright (A\rtimes \mathbb Z)$, he obtains an action of the circle on a unital classifiable {$C^*$}-algebra whose crossed product is not $\Z$-stable (since it is stably isomorphic to $A$); see \cite[Corollary B]{Hirshberg:JFA}.  Hirshberg asks whether such a circle action can be found on a monotracial classifiable {$C^*$}-algebra (\cite[Question 3]{Hirshberg:JFA}).

Just as in the previous section, with the rapid growth in tools giving rise to classifiability of crossed products, it becomes increasingly pressing to calculate the invariant. As with Problem \ref{q:ComputeKTheory}, this is vast in scope. 

\begin{question}\label{q:computeKcrossedprod}
    Compute $KT_u(A\rtimes G)$ for outer amenable actions $G\curvearrowright A$ on unital simple separable nuclear $\Z$-stable {$C^*$}-algebras. 
\end{question}

When $G\coloneqq \mathbb Z$ (or more generally for free groups), the Pimsner--Voiculescu sequence gives a tool for computing $K$-theory.  As studied thoroughly in \cite{Ursu:Adv}, care is needed before concluding that all traces on a crossed product $A\rtimes G$ are canonical, i.e.\ are of the form $\tau \circ E$ for some invariant trace $\tau$ on $A$ (where $E$ is the expectation from $A\rtimes_r G$ onto $A$). One instance where this holds is when the induced action of the amenable radical of $G$ on $\pi_\tau(A)''$ is properly outer\footnote{An automorphism $\theta$ of a von Neumann algebra $\mathcal M$ is \emph{properly outer} if there does not exist a non-zero central invariant projection $p\in \mathcal M$ such that $\theta|_{\mathcal Mp}$ is inner. An action $\alpha\colon G\curvearrowright \mathcal M$ is properly outer when $\alpha_g$ is properly outer whenever $g\neq 1$.} for every invariant trace $\tau$ on $A$ (\cite[Corollary~1.7]{Ursu:Adv}).  In particular, when $A$ has a unique trace and $G\curvearrowright A$ is a strongly outer action, then the reduced crossed product has a unique trace.

Non-commutative Bernoulli shifts provide particularly natural examples to consider. Given a unital {$C^*$}-algebra $A$ and countable discrete group $G$, one can equip the tensor product $A^{\otimes G}$ with the natural Bernoulli shift action by permuting the tensor factors to obtain $A^{\otimes G}\rtimes G$. This is especially natural when $A$ is strongly self-absorbing, so that $A^{\otimes G}\cong A$.  For example -- one of our favourite examples --  $\Z^{\otimes \mathbb Z}\rtimes\mathbb Z$ has the integers in both $K_0$ and $K_1 $ (via the Pimsner--Voiculescu sequence) and a unique trace.  This Bernoulli shift turns out to be a particularly nice example for visualising the role of algebraic $K_1$ in the uniqueness of automorphisms (see the discussion in \cite[Example 9.11]{CGSTW}). 

Various $K$-theory computations for Bernoulli shifts (and generalisations coming from groups acting on a set indexing the tensor product) by groups satisfying the Baum--Connes conjecture with coefficients were recently given in \cite{CEKN:MathAnn}, generalising computations in the special case $G\coloneqq\mathbb Z_2$ in \cite{Izumi:ASPM}. The main technical result handles the situation where $A$ is unital, separable, satisfies the UCT, and the inclusion $\mathbb C\to A$ gives a split injection at the level of $K_0$. This covers irrational rotation algebras, $\mathcal O_\infty$, $\Z$, and other examples. More examples beyond this context, covering the cases when $A$ is a Cuntz algebra or an AF algebra (or more general inductive limits), are given in \cite[Sections~4 and~5]{CEKN:MathAnn}. 

Another situation which has been recently examined comes from the canonical action of $SL_2(\mathbb Z)$ on the irrational rotation algebras $A_\theta$.\footnote{An element $\begin{pmatrix}a&b\\c&d\end{pmatrix}$ acts on the canonical generators $u$ and $v$ of $A_\theta$ by $u\mapsto e^{i\pi ac\theta}u^av^c$ and $v\mapsto e^{i\pi bd\theta}u^bv^d$.} 
For finite subgroups of $SL_2(\mathbb Z)$, the complete invariant of the crossed product (including the pairing!) is calculated in \cite[Theorem~4.9]{ELPW:Crelle}, and classification machinery already available at the time was deployed to show that these crossed products are AF algebras.
Given an element of infinite order in $SL_2(\mathbb Z)$, the Pimsner--Voiculescu sequence can be used to compute the $K$-theory of the crossed product $A_\theta\rtimes\mathbb Z$. This crossed product falls within the scope of classification\footnote{As $A_\theta$ is $\Z$-stable and monotracial, this holds using Sato's \cite{Sato:ASPM}, for example.} and has a unique trace, so the main work in \cite{BCHL:JFA} is to understand the pairing, which leads to a concrete (and computable) conditions on the angle and element of $SL_2(\mathbb Z)$ for two such crossed products to be isomorphic.  With the full force of classification now available, we feel there is much scope to analyse other natural examples.

\section{Dynamical presentations, groupoids, and Cartan subalgebras}\label{S14}

With a definitive classification theorem in place, one can use it to discover structure inside classifiable {$C^*$}-algebras. This is in the spirit of how Blackadar produces a symmetry of the CAR algebra whose fixed point algebra is not AF (\cite{Blackadar:Ann}) by means of a construction of what turns out to be the CAR algebra, and in which the required symmetry is baked in, followed by use of a classification theorem -- in this case Elliott's AF classification -- to recognise it as the CAR algebra.  Today, through the combination of the classification theorems and Elliott's range-of-invariant result (\cite{Elliott:CMS}), all stably finite classifiable {$C^*$}-algebras are approximately subhomogeneous of dimension at most $2$, whereas in earlier days it was necessary to obtain internal inductive limit structure to access classification.\footnote{A classic example of this is the result of Elliott and Evans that irrational rotation algebras are A$\mathbb T$ (\cite{EE:Ann}) and so are encompassed within the larger class of A$\mathbb T$ algebras of real rank zero classified in \cite{Elliott:Crelle}.}   Complementary to questions about when a (classical) dynamical system gives rise to a classifiable {$C^*$}-algebra is the question of which classifiable {$C^*$}-algebras have hitherto unseen dynamical presentations, such as being the {$C^*$}-algebra of a (twisted) groupoid.  Particular groupoid constructions may be helpful to solve other problems about classifiable {$C^*$}-algebras; see our discussion around Problem \ref{q:ExistActions} and the introduction to \cite{CFaH:EM} for some further examples of this in the purely infinite case. 

\begin{question}\label{groupoidq}
Given a classifiable {$C^*$}-algebra $A$, does there exist an \'etale groupoid $\mathcal G$ such that $A \cong C^*(\mathcal G)$?
\end{question}

This question has already received significant attention, beginning with the Jiang--Su algebra as a special case (\cite{DPS:Crelle}).
For purely infinite {$C^*$}-algebras, Spielberg resolved the question affirmatively in the non-unital case (\cite{Spielberg:JOT}), and adaptations of his construction handle the unital case (\cite{LiRenault:TAMS,CFaH:EM}).

Allowing twists in the groupoid, Xin Li gave a positive answer (\cite{Li:IM}), so Problem~\ref{groupoidq} is asking whether one can do without the twist. Li's construction makes use of Elliott's inductive limit range-of-invariant result \cite{Elliott:CMS}; he produces a compatible system of Cartan subalgebras in the building blocks of this inductive limit so that they produce a Cartan subalgebra (and thereby a twisted groupoid presentation by Renault's reconstruction theorem from \cite{RenaultIMSB}) in the limit.  When there is no torsion in $K_0$, Li's construction does not require a twist, \cite[Corollary 1.8(i)]{Li:IM}. Further attention to this question can be found for example in \cite{Putnam:MA,AustinMitra:NYJM,DPS:PAMS,DPS:GGD,DPS:MA,Li:IMRN}.  While Li's result is definitive for twisted groupoids, it has the drawback that as the construction is underpinned by the known inductive limit approach to the range-of-the-invariant, it does not give a fundamentally new description of the stably finite classifiable {$C^*$}-algebras. An earlier approach by Deeley, Putnam and Strung is more dynamical; in \cite{DPS:Crelle}, they obtain a groupoid model for $\Z$ (and hence a Cartan subalgebra of $\Z$) by breaking the orbits in a suitable minimal dynamical system.  Extending these orbit breaking ideas, Deeley, Putnam and Strung obtained further examples of untwisted groupoid models, allowing for quite a lot of torsion in $K_0$, albeit at present with restrictions on the possible trace spaces; see \cite[Theorem 6.3]{DPS:MA}.\footnote{We thank Robin Deeley and Karen Strung for helpful comments on these themes.} At present these methods do not yet exhaust the invariant, so we would welcome further (twisted) groupoid models for all classifiable {$C^*$}-algebras, especially if these could be constructed in a more directly dynamical fashion without requiring inductive limits.

One can also strengthen Problem \ref{groupoidq} to ask if (or when) further structure can be imposed on the groupoid (e.g.\ principal, restrictions on the unit space, etc.).   In his recent work \cite{Wu:arXiv24},  Victor Wu obtains the most dynamical possible presentation for stable UCT Kirchberg algebras: they arise as crossed products by group actions on spaces.

\begin{theorem}[{Wu; \cite[Theorem B]{Wu:arXiv24}}]
    Let $A$ be a stable UCT Kirchberg algebra. Then there is a (necessarily amenable) action $G\curvearrowright X$ of a discrete (necessarily non-amenable) group on a locally compact Hausdorff space such that $A\cong C_0(X)\rtimes G$.\footnote{Wu's result is constructive; $G$ is built using the fundamental group of a directed graph of groups from the $K$-theory data of $A$, and $X$ is a directed Bass--Serre tree for this graph.} 
\end{theorem}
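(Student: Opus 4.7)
The plan is to combine the Kirchberg--Phillips classification with an explicit Bass--Serre construction. By Kirchberg--Phillips, stable UCT Kirchberg algebras are classified up to isomorphism by the pair $(K_0,K_1)$ of countable abelian groups (no order, no distinguished element, since $A$ is stable and purely infinite). So it suffices to realise, for each pair $(H_0,H_1)$ of countable abelian groups, an amenable action $G\curvearrowright X$ of a countable discrete group on a locally compact Hausdorff space such that $C_0(X)\rtimes G$ is a stable Kirchberg algebra with $K_i(C_0(X)\rtimes G)\cong H_i$. The UCT will come for free by Tu's Theorem~\ref{thm:Tu}, as an amenable action produces an amenable transformation groupoid.

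First, I would encode the target $K$-theory as a directed graph of groups. Starting from a presentation of $(H_0,H_1)$ by free abelian groups and relation matrices, I would design a directed graph $\Gamma$ with vertex groups and edge groups -- taken to be free (or otherwise $K$-contractible in the equivariant sense) -- so that the Mayer--Vietoris sequence associated to the splitting over $\Gamma$ recovers $(H_0,H_1)$ in the $K$-theory of the eventual crossed product. Take $G$ to be the fundamental group of this graph of groups and let $T$ be its Bass--Serre tree. The required space $X$ is then built from $T$ (or a suitable $G$-equivariant compactification of its end-space / boundary, possibly after perforating by further $G$-invariant locally compact pieces to absorb $K_1$).

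Next, I would verify the three structural requirements. Amenability of $G\curvearrowright X$ follows because $G$ acts on the tree $T$ with amenable (in fact trivial or finite cyclic) stabilisers, and such boundary actions are known to be topologically amenable; accordingly $C_0(X)\rtimes_r G$ is nuclear and the full and reduced crossed products agree. That $A:=C_0(X)\rtimes G$ is a Kirchberg algebra is established by checking minimality and topological freeness of the $G$-action on $X$ (hence simplicity of $A$ by Kishimoto), and by invoking hyperbolicity of the $G$-action on the tree boundary to produce a local contraction on every non-empty open subset of $X$; this gives dynamical pure infiniteness and, via Ma's theorem (or directly Theorem~\ref{Thm:Oinftystable}), pure infiniteness of $A$. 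Stability is arranged by taking $X$ non-compact, or by stabilising. Finally, the $K$-theory of $A$ is computed through the six-term exact sequence for groups acting on trees -- a Pimsner--Voiculescu-type sequence whose existence follows from the Baum--Connes conjecture with amenable coefficients, which holds here by Higson--Kasparov / Tu. By design, this sequence returns $(H_0,H_1)$.

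The main obstacle is designing the graph of groups so that \emph{simultaneously} the Mayer--Vietoris output is an arbitrary pair of countable abelian groups, the boundary action stays minimal and topologically free with enough hyperbolic dynamics for pure infiniteness, and the stabilisers remain amenable so that the action is amenable and the $K$-theory machine is applicable. Rich vertex groups help with realising intricate $K$-theory but can destroy topological freeness or amenability; dynamically clean free-product-type actions tend to restrict the attainable invariants (as in the earlier constructions of Spielberg and Li--Renault). Wu's innovation, which the plan has to reproduce, is presumably a careful interleaving of free-product and HNN-extension building blocks -- free products to expand rank, HNN extensions to install the required torsion and extension data in $K_*$ -- combined with an argument that the resulting Bass--Serre boundary action is still a purely infinite, minimal, topologically free, amenable action. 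Threading this needle is the technical heart of the construction; everything else (UCT, simplicity from Kishimoto, pure infiniteness from dynamical comparison on the boundary, classification via Kirchberg--Phillips) is then standard machinery.
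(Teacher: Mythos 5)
Your outline does track the construction that the paper attributes to Wu \cite{Wu:arXiv24}: reduce via Kirchberg--Phillips to realising an arbitrary pair of countable abelian groups, build a directed graph of groups from that $K$-theory data, let $G$ be its fundamental group acting on the associated directed Bass--Serre tree, obtain the UCT from Tu's theorem, simplicity from minimality and topological freeness, and pure infiniteness from the boundary dynamics. Note, however, that the paper itself contains no proof of this statement --- it records the theorem and describes the construction in a footnote --- so the only comparison available is with that sketch, and against it your proposal has a genuine gap: the entire content of the theorem is the explicit design of the graph of groups so that the Mayer--Vietoris/Pimsner--Voiculescu computation outputs an arbitrary pair $(H_0,H_1)$ \emph{while} the action simultaneously stays amenable, minimal, topologically free and locally contracting, and you explicitly defer exactly this step (``presumably a careful interleaving of free-product and HNN-extension building blocks''). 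A blind proof must exhibit the building blocks, compute the connecting maps in the six-term sequence, and verify the dynamical properties --- including for infinite graphs of groups, since $H_0$ and $H_1$ need not be finitely generated, so all verifications must also pass to inductive limits. Reducing the theorem to its own technical heart does not prove it.

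Two of the steps you do commit to are also shaky. First, your amenability argument is internally inconsistent: you choose vertex and edge groups to be free (hence typically non-amenable), yet assert that $G$ acts on the tree with ``trivial or finite cyclic'' stabilisers --- vertex stabilisers are the vertex groups themselves. What amenability of a boundary action actually requires is amenability of the stabilisers of boundary points (increasing unions of conjugates of edge groups) together with a genuine argument, e.g.\ the known fact that actions on trees with amenable edge stabilisers induce amenable actions on the boundary compactification; this constrains which vertex and edge groups are admissible and must be reconciled with the $K$-theoretic design. Second, ``stability\dots or by stabilising'' does not work: the theorem asserts $A\cong C_0(X)\rtimes G$ on the nose, and the naive dynamical stabilisation $X\rightsquigarrow X\times G$ with the diagonal action yields $C_0(X)\otimes\mathcal K(\ell^2 G)$ rather than $(C_0(X)\rtimes G)\otimes\mathcal K$, so stability has to be produced by the structure of the non-compact space and the action (e.g.\ via the Hjelmborg--R\o{}rdam criterion \cite{HR:JFA}), which again feeds back into the construction. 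Relatedly, Ma's theorem \cite{Ma:TAMS}, which you invoke for pure infiniteness, is formulated for compact spaces, so on your locally compact $X$ a non-unital variant is needed. None of this is fatal to the strategy --- it is, after all, Wu's strategy --- but as written the proposal establishes nothing beyond the standard reduction supplied by the classification machinery.
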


It is natural to ask for a corresponding result in the stably finite setting.

\begin{question}
Is every stable, stably finite classifiable {$C^*$}-algebra of the form $C_0(X)\rtimes G$ for a suitable action $G\curvearrowright X$ of an amenable group on a locally compact Hausdorff space?
\end{question}

One could even ask the analogous question for unital classifiable algebras (it appears to be open, even for UCT Kirchberg algebras, as far as we know). While here there are naturally occurring examples, such as UHF algebras (\cite[Example~8.1.24]{GKPT:Book}), or $\mathcal O_2$ (which is a crossed product coming from a minimal action of $(\mathbb Z/2\mathbb Z)*(\mathbb Z/3\mathbb Z)$; see \cite[Example 4.4.7]{Rordam:Book}, which attributes this to Archbold and Kumjian, independently), it also seems likely that there are restrictions. A very recently announced result of Ma and Jianchao Wu shows that if $\Z$ is a crossed product $C(X)\rtimes G$, then $G$ must be amenable, torsion-free, and rationally acyclic; see \cite[Theorem C]{MaWu24}.  It seems open whether any such groups exist.\footnote{We are grateful to Dawid Kielak for explaining to us that finitely generated infinite amenable groups have Euler characteristic zero, so those of type F (i.e.\ those groups whose classifying space has the homotopy type of a finite CW-complex) cannot be rationally acyclic (as the alternating sum of the rational Betti numbers must be zero).  It is open whether all finitely generated infinite amenable torsion-free groups are of type F.}

\begin{question}
When is a unital classifiable {$C^*$}-algebra of the form $C(X)\rtimes G$ for a suitable action $G\curvearrowright X$ of an amenable group on a compact Hausdorff space?
\end{question}

This question can also be asked for particular groups $G$, such as $G=\mathbb Z$, when there are certainly obstructions arising from the Pimsner--Voiculescu sequence to writing general unital classifiable {$C^*$}-algebras as $\mathbb Z$-crossed products -- no unital {$C^*$}-algebra with trivial $K_1$-group can have such a crossed product picture. This question has been taken up by Deeley, Putnam and Strung in \cite{DPS:GGD,DPS:MA}. They determine all possible $K$-groups of {$C^*$}-algebras $C(X)\rtimes \mathbb Z$ arising as crossed products from any actions on compact metrisable spaces with finitely generated $K$-theory and then construct uniquely ergodic minimal actions on such a space realising any possible pair of $K$-groups in a classifiable crossed product, computing the pairing (\cite[Theorem 4.5, Corollary 4.6]{DPS:MA}).  While their main statement obtains unique ergodicity, and so a unique trace on the crossed product, crossed products with more complex trace spaces are certainly possible. The full range of $KT_u(\cdot)$ of such crossed products (allowing also for spaces with infinitely generated $K$-theory) is not currently understood.

One can also ask similar questions regarding the presentation of classifiable {$C^*$}-algebras through non-commutative dynamics, particularly as crossed products of other classifiable {$C^*$}-algebras. 
 This question is closely tied to Problem \ref{q:computeKcrossedprod}.

\begin{question}\label{q:Whencrossedproduct}
When can a classifiable {$C^*$}-algebra be written in the form $A\rtimes G$ for some  outer amenable action of a non-trivial countable discrete group $G$ on a classifiable {$C^*$}-algebra $A$?
\end{question}

Stable UCT Kirchberg algebras are $\mathbb Z$-crossed products of simple separable stable A$\mathbb T$ algebras of real rank zero (see \cite[Proposition~4.3.3]{Rordam:Book}; one can use an AF algebra in the case when the Kirchberg algebra has torsion-free $K_1$-group).  Moreover, as $\Z\otimes \mathcal K$ is a crossed product of the form $B\rtimes\mathbb Z$ for some action $\alpha\colon \mathbb Z\curvearrowright B$ on a classifiable {$C^*$}-algebra $B$ (this is worked out over the course of \cite[Section~3]{EST:CMP}), so too is $A\otimes \mathcal K$ for any stable classifiable $A$ (consider the action $1 \otimes \alpha$ on $A\otimes B$).  Constructions of this form (and relaxations to allow $B$ to fall into a class of non-simple nuclear {$C^*$}-algebras which is suitably classified, such as AF algebras or A$\mathbb T$ algebras of real rank zero) have been applied to build flows on certain classifiable {$C^*$}-algebras exhibiting arbitrary KMS behaviour (\cite{ET:CRMASSRC,ElliottSato22,EST:CMP,Neagu24}). Due to the obstructions to realising unital classifiable {$C^*$}-algebras as $\mathbb Z$-crossed products (described above for commutative base algebras), we are interested in answers to Problem \ref{q:Whencrossedproduct} both ranging across all groups and actions and also for particular fixed groups such as $\mathbb Z$.   In \cite[Theorem B]{Jacelon:Sigma}, Jacelon gives a positive answer for $\mathbb Z$-actions within the setting of $KK$-contractible stably finite classifiable algebras, with compact tracial state space.

Following Connes' theorem, an analogous result was obtained by Connes, Feldman, and Weiss for amenable equivalence relations. Phrased in von Neumann algebraic language, a separably acting injective von Neumann algebra $\mathcal M$ has a unique Cartan subalgebra (in the sense of von Neumann algebras) up to conjugacy by an automorphism of $\mathcal M$ (\cite{CFW:ETDS}).  
The most naive analogues of this are patently false for classifiable {$C^*$}-algebras, as it is generally not hard to construct Cartan subalgebras with different spectra, and there are examples of algebras with very many non-conjugate Cartans  with the \emph{same} spectra (see \cite[Theorem 1.4]{Li:IMRN} or \cite[Proposition~C]{GeffenKranz:MJM}).  One situation where uniqueness is known -- going back to Power (\cite[Theorem 5.7]{Power:Book}) -- is for so-called AF Cartans.\footnote{An AF Cartan is a Cartan subalgebra which arises as an inductive limit of a system of finite-dimensional Cartans.}  The downside is that the condition is very much in terms of an inductive limit and not abstract.  Indeed, while we can characterise AF algebras within the classifiable class in terms of the invariant, there is not even a candidate abstract characterisation of AF algebras within all separable {$C^*$}-algebras (by such an abstract characterisation, we mean something akin -- in terms of its ease of checkability -- to injectivity for finite von Neumann algebras; such a characterisation has been sought at least as far back as \cite[Problem 6]{Effros:Kingston}).\footnote{Inspired by Glimm's analogous result for UHF algebras (\cite{Glimm:TAMS}), Bratteli demonstrated that for separable {$C^*$}-algebras being AF (i.e.\ an inductive limit of finite dimensional {$C^*$}-algebras) is equivalent to the somewhat more checkable condition of being `locally AF' (i.e.\  every finite subset of $A$ can be approximated by elements of a finite-dimensional subalgebra); (\cite{Bratteli:TAMS}).  This is not really an abstract characterisation, like injectivity is of hyperfiniteness, but more in the spirit of Murray and von Neumann's various equivalent characterisations of hyperfiniteness for II$_1$ factors with separable preduals in \cite{MurrayVonNeumann4}. Nevertheless, the local AF-test is certainly useful in some situations. For example, Blackadar uses it in \cite{Blackadar:Ann} to give access to Elliott's classification of AF algebras when constructing the exotic symmetry on the CAR algebra we described in the opening of this section (see \cite[Section 4.2 and in particular Remark 4.2.3]{Blackadar:Ann}). Of course, Effros was certainly well aware of Bratteli's result when he asked for an `effective criteria for determining whether or not a [separable] {$C^*$}-algebra is AF' in \cite{Effros:Kingston}.  For similar reasons we do not regard having nuclear dimension $0$ as being a suitable abstract characterisation of being AF for this purpose. In the unital case, nuclear dimension $0$ is very quickly seen to be the same as local AF; the non-unital case is trickier and goes back to \cite[Theorem~3.4]{Winter:JFA}.}
Does the corresponding question for Cartans become more tractable when we assume the underlying algebra is already known to be AF? Quoting directly from \cite{Effros:Kingston}, `although the question is rather vague, there will be no problem recognizing when we have found the correct answer.'

\begin{challenge}
    Is there an abstract criterion of when a Cartan with Cantor spectrum in an AF algebra is an AF Cartan? 
\end{challenge}

While AF Cartans in unital {$C^*$}-algebras have Cantor spectrum (at least in the case when there are no non-zero finite-dimensional representations), they need not be the unique Cantor Cartans. In \cite{MitscherSpielberg:TAMS}, Mitscher and Spielberg constructed new Cantor Cartans in the irrational rotation AF algebras, i.e.\ those AF algebras with the same ordered $K_0$ as an irrational rotational algebra. Their example is distinguished from the AF Cartan as the underlying groupoid is not principal, so their Cartan  fails to have the unique extension property (and so is not a {$C^*$}-diagonal); see \cite[Corollary 7.12 and Remark 7.13]{MitscherSpielberg:TAMS}. The strategy goes through classification; they build an appropriate groupoid so that the {$C^*$}-algebra has the same classification invariant as an irrational rotation AF algebra, and then estimate the nuclear dimension (initially obtaining the upper bound of $3$) through decomposing building blocks of their groupoid as extensions. This showcases how building well-chosen groupoid models and appealing to classification can be used to uncover new structure. 

At present, it is not known how broadly the phenomena of \cite{MitscherSpielberg:TAMS} extend to other AF algebras nor how to obtain a non-AF Cantor diagonal in an AF algebra.  Even so, while one might hope for some kind of classification of Cantor diagonals or Cartans, or at least those satisfying some yet-to-be-determined regularity condition, some of the authors expect there to be many such Cartans -- potentially so many that they cannot be reasonably classified. For this reason the next problem is stated very vaguely.

\begin{challenge}
    Is there a reasonable framework for classifying suitable Cartan subalgebras with Cantor spectrum in AF algebras up to conjugacy by an automorphism?
\end{challenge}

In a different direction, Connes and Jones exhibited the first (necessarily non-amenable) II$_1$ factor with multiple Cartan subalgebras, using relative property $\Gamma$ as the distinguishing invariant (\cite{ConnesJones:BAMS}). Can such phenomena be found within the class of simple separable nuclear {$C^*$}-algebras? 

\begin{question}\label{twocartans}
    Does there exist a unital simple separable nuclear {$C^*$}-algebra $A$ which contains Cartan subalgebras both with and without relative uniform property $\Gamma$? 
\end{question}

The loose conjecture following Problem \ref{q:dynamicalcharacteriseZstable} suggests that a Cartan pair $C(X)\subset C(X)\rtimes G$ coming from a free minimal action of a discrete group should remember the small boundary property for the action or (equivalently by \cite{ElliottNiu24,KLTV}) uniform property $\Gamma$ for the pair. So if this conjecture holds, the answer to Problem~\ref{twocartans} is no for Cartan subalgebras coming from group actions. At least in part, Problem \ref{twocartans} is related to whether this conjecture should hold much more generally -- for all twisted groupoids giving rise to a simple separable nuclear {$C^*$}-algebra -- and whether the equivalence of relative uniform property $\Gamma$ and the small boundary property extends to the level of generality of Cartan subalgebras.



\section{Finding loops of automorphisms}

For a unital Kirchberg algebra $A$, Dadarlat identified the homotopy groups $\pi_k(\mathrm{Aut}(A))$ with $KK^1(C_uA, S^kA)$ for $k \geq 1$; \cite{Dadarlat:JNCG}.\footnote{Here $\mathrm{Aut}(A)$ is equipped with the point-norm topology, $C_uA$ is the mapping cone of the unital inclusion $\mathbb C \rightarrow A$ and $SA$ is the suspension of $A$. Explicitly, $C_uA = \{f \in C([0, 1], A) : f(0) \in \mathbb C1_A \text{ and } f(1) = 0\}$ and $SA = C_0((0, 1), A)$.}  The isomorphism is obtained abstractly, using the Kirchberg--Phillips theorem to classify maps $A \rightarrow C(S^k, A)$, where $S^k$ is the $k$-dimensional sphere, and even when $k = 1$, there is no systematic way of `seeing' the loops in $\mathrm{Aut}(A)$.

In the case when $A=\mathcal O_n$, a direct computation of Dadarlat's invariant shows that $\pi_k(\mathrm{Aut}(A))$ can be identified with $\mathbb Z / (n-1)\mathbb Z$ when $k$ is odd and $0$ when $k$ is even.
In the case when $k = 1$, the generating loop in $\mathrm{Aut}(\mathcal O_n)$ is given by the canonical gauge action $\mathbb T \curvearrowright \mathcal O_n$.  This is one of the few cases where we know an explicit description of a non-trivial fundamental group of the automorphism group.

In the stably finite setting, work in progress of Jamie Gabe and the first-named author computes the homotopy groups of $\mathrm{Aut}(A)$ for all unital finite classifiable $A$.  In particular, Dadarlat's computation also holds in the unique trace setting so, for example, we have $\pi_1(\mathrm{Aut}(A_\theta)) \cong \mathbb Z^6$, where $A_\theta$ is an irrational rotation algebra.  The natural action $\gamma \colon \mathbb T^2 \curvearrowright A_\theta$ given by rotating the canonical unitary generators, defines a group homomorphism $\pi_1(\gamma) \colon \mathbb Z^2 \rightarrow \pi_1(\mathrm{Aut}(A_\theta))$.  Further, $\pi_1(\gamma)$ is injective.  This can be seen through the composition
\begin{equation}
\begin{tikzcd}
    \pi_1(\mathbb T^2) \arrow{r}{\pi_1(\gamma)} & \pi_1(\mathrm{Aut}(A_\theta)) \arrow{r} & \pi_1(U(A_\theta)) \arrow{r}{\mathrm{Bott}} & K_0(A_\theta) \arrow{r}{\mathrm{tr}} & \mathbb R,
\end{tikzcd}
\end{equation}
where the second map is evaluation at one of the two canonical generators of $A_\theta$.  These maps are integer-valued and precisely encode the winding number of a loop in the torus around the two copies of the circle.  So the direct sum of these maps is injective.  However while we strongly suspect these loops define a $\mathbb Z^2$ direct summand of the fundamental group (and we would welcome a proof), it is unclear where the remaining $4$ generators comes from.  

The analogous question can also be asked in the corresponding Kirchberg algebras $A_\theta\otimes\mathcal O_\infty$, where the action $\gamma \otimes \mathrm{id} \colon \mathbb T^2 \curvearrowright A_\theta \otimes \mathcal O_\infty$ also gives a group homomorphism $\pi_1(\gamma \otimes \mathrm{id})$ from $\mathbb Z^2$ into $\pi_1(\mathrm{Aut}(A_\theta \otimes \mathcal O_\infty))$.  The embedding $\mathrm{Aut}(A_\theta) \rightarrow \mathrm{Aut}(A_\theta \otimes \mathcal O_\infty) \colon \alpha \mapsto \alpha \otimes \mathrm{id}$ is a weak homotopy equivalence due to the the $KK$-theoretic computation of the homotopy groups, so $\pi_1(\gamma \otimes \mathrm{id})$ is also injective.

\begin{question}\label{q:pi1}
    Find explicit loops generating the group $\pi_1(\mathrm{Aut}(A_\theta))\cong \mathbb Z^6$, or the group $\mathrm{Aut}(A)$, where $A$ is the unital UCT-Kirchberg algebra with the same $K$-theory as $A_\theta$.  More generally obtain explicit descriptions of generators of $\pi_1(\mathrm{Aut}(A))$ for other UCT-Kirchberg or monotracial classifiable $C^*$-algebras $A$.
\end{question}

  In the spirit of the results of Mitscher and Spielberg (\cite{MitscherSpielberg:TAMS}) described in the previous section, a potential approach to finding the other generators is to produce a groupoid (or some other dynamical/geometric object) whose {$C^*$}-algebra is isomorphic to $A_\theta \otimes \mathcal O_\infty$ (via a non-constructive isomorphism coming from classification), where one can see the extra loops of automorphisms in the underlying groupoid. We would expect this to be easier in the Kirchberg setting, where, for example, one could look for generators in Cuntz-Krieger models, compared with the stably finite setting where the presently available range of groupoid models is less extensive.

\section{Classifying actions on {$C^*$}-algebras}\label{sec:classaction}

In the von Neumann algebra setting, the fact that $\mathcal R$ is the unique injective $\mathrm{II}_1$ factor leads to many different constructions of $\mathcal R$.  By exploiting symmetries of the various constructions of $\mathcal R$, this leads to many (cocycle) actions of groups on $\mathcal R$ which, at face value, appear to be very different.  
This is part of what makes Ocneanu's uniqueness theorem so striking.  For a countable discrete group $G$, constructing an outer action of $G$ on $\mathcal R$ is easy: let $\mathcal R^{\otimes G}$ denote the tensor product of $G$-many copies of $\mathcal R$, and consider the \emph{non-commutative Bernoulli shift} $G \curvearrowright \mathcal R^{\otimes G}$ given by permuting the tensor factors.

In the {$C^*$}-algebra setting, both the existence and uniqueness problems for group actions are far more delicate.  
Just as with the Bernoulli shift on $\mathcal R^{\otimes G}$ just described, any discrete group acts outerly on the Jiang--Su algebra as $G \curvearrowright \mathcal Z^{\otimes G} \cong \mathcal Z$.
By tensoring with the trivial action, $G$ also acts outerly on every $\mathcal Z$-stable {$C^*$}-algebra.  However, even in the case that $G$ is amenable, this action will typically be far from unique up to cocycle conjugacy.  For example, an action of this form on a $\mathcal Z$-stable {$C^*$}-algebra $A$ will always induce the trivial action on $K$-theory and traces.

One situation where uniqueness can be expected -- at least in the torsion-free case\footnote{There are $K$-theoretic obstructions in the presence of torsion; see \cite[Theorem 4.8]{Izumi:Duke}.} -- is for strongly outer actions on strongly self-absorbing {$C^*$}-algebras. The following problem goes back to \cite[Conjecture 2]{Izumi:ICM} and is reiterated as \cite[Conjecture A]{Szabo:CMP19}.

\begin{question}\label{q:UniqueActionSSA}
Let $\mathcal D$ be a strongly self-absorbing {$C^*$}-algebra and let $G$ be a countable discrete torsion-free amenable group.  Are all strongly outer actions $G\curvearrowright\mathcal D$ cocycle conjugate?
\end{question}

As one of many consequences of their spectacular dynamical Kirchberg--Phillips theorem (Theorem \ref{ThmDynKP} below), Gabe and Szab\'o give a positive answer to Problem~\ref{q:UniqueActionSSA} when $\mathcal D$ is Kirchberg (in that case, they even extend it further to amenable actions of exact groups with the Haagerup property). Historically, the first result was on the stably finite side: Kishimoto's solution for integer actions on UHF algebras.  Subsequently, there has been substantial development through \cite{Matui:Crelle,IzumiMatui:Adv,MatuiSato:AJM} and other works for actions on UHF algebras and on $\Z$,  to the current state-of-the-art in \cite[Theorem C]{Szabo:CMP19}, which gives a positive answer for the class of torsion-free elementary amenable groups generated by the trivial group and closed under directed unions and extensions by $\mathbb Z$. No UCT hypothesis is required in \cite{Szabo:CMP19}.

Returning to situations where there are non-trivial automorphisms of the invariant available, it is natural to ask which actions on the invariant can be realised by group actions on the underlying {$C^*$}-algebra. In \cite[10.11.3]{Blackadar-kbook}, Blackadar asked whether every automorphism of order $n$ of the scaled ordered $K_0$-group of an AF algebra $A$ can be lifted to an order $n$ automorphism of $A$. This remains open in general, even if $n=2$ and $A$ is simple!  In general, more information than just $K$-theory and traces is needed to determine the approximate unitary equivalent class of an automorphism of a unital classifiable algebra; one also needs to know the behaviour on total $K$-theory and a certain algebraic $K_1$-group developed by Thomsen \cite{Thomsen:RIMS}. The \emph{total invariant} $\underline{K}T_u(\,\cdot\,)$, as set out in \cite[Sections 2 and 3]{CGSTW}, collects all the data required for uniqueness and is the appropriate invariant for a general version of Blackadar's question. 

\begin{question}\label{q:ExistActions}
Let $\alpha$ be an action of a countable discrete amenable group $G$ on the total invariant $\underline{K}T_u(A)$ (or on $KT_u(A)$) of a unital classifiable {$C^*$}-algebra $A$. When is there an action of $G$ on $A$ inducing $\alpha$?
\end{question}

There is a general abstract procedure for extending (iso)morphisms of the invariant $KT_u(\,\cdot\,)$ to $\underline{K}T_u(\,\cdot\,)$ (\cite[Theorem 3.9]{CGSTW}). Very recently, this has been extended to actions in \cite[Corollary 8.3]{Nielsen:Thesis}; so a positive solution to Problem \ref{q:ExistActions} for $\underline{K}T_u(\,\cdot\,)$ would also solve the problem for actions on $KT_u(\,\cdot\,)$ (the converse is unclear, however).

By the existence part of the classification of automorphisms on unital classifiable algebras (\cite[Theorem 9.8]{CGSTW}), Problem \ref{q:ExistActions} has a positive answer when $G$ is $\mathbb Z$ or, more generally, is a free group (though in Problem \ref{q:ExistActions} above, we only asked the question in the amenable setting).  

Problem \ref{q:ExistActions} has a positive answer for finite groups in the presence of enough UHF-stability. This was established by Barlak and Szab\'o (\cite[Theorem 2.3]{BarlakSzabo:TAMS}) whenever automorphisms are classified up to approximate unitary equivalence. Combining their work with the classification theorems, gives the following result.

\begin{theorem}[{\cite[Theorem 9.14]{CGSTW}}]\label{thm:existractions}
Let $G$ be a finite group and let $A$ be a unital classifiable {$C^*$}-algebra with $A\cong A\otimes M_{|G|^\infty}$. Then any action $G\curvearrowright\underline{K}T_u(A)$ lifts to an action $G\curvearrowright A$ (which additionally can be chosen to have the Rokhlin property).  In particular when $A$ is a unital classifiable {$C^*$}-algebra which is stable under tensoring by the universal UHF algebra $\mathcal Q$, then any finite group action on $\underline{K}T_u(A)$ lifts to $A$.
\end{theorem}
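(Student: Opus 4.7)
The plan is to combine Barlak and Szab\'o's abstract lifting theorem \cite[Theorem~2.3]{BarlakSzabo:TAMS} with the classification of automorphisms of classifiable {$C^*$}-algebras derived from Theorem~\ref{Thm:UnitalClassification}. The Barlak--Szab\'o result takes as input a classification of $\mathrm{Aut}(A)$ up to approximate unitary equivalence together with $M_{|G|^\infty}$-stability of $A$, and produces liftings of finite group actions on the invariant with the Rokhlin property; it is this external result that supplies all the hard analytic work.

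For the classification-of-automorphisms input, I would use that, for a unital classifiable {$C^*$}-algebra $A$, every automorphism of $\underline{K}T_u(A)$ lifts to an automorphism of $A$ (existence), and two automorphisms of $A$ inducing the same map on $\underline{K}T_u(A)$ are approximately unitarily equivalent (uniqueness). Both assertions at the level of $KT_u$ follow from applying Theorem~\ref{Thm:UnitalClassification} to $\mathrm{id}_A$ with varying identifications of the invariant in the standard way, and the enhancement from $KT_u$ to the total invariant $\underline{K}T_u$ is the abstract procedure \cite[Theorem~3.9]{CGSTW} mentioned in the excerpt.

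Given an action $\alpha\colon G\curvearrowright \underline{K}T_u(A)$, I would choose, for each $g\in G$, an automorphism $\beta_g\in\mathrm{Aut}(A)$ lifting $\alpha_g$ (with $\beta_e=\mathrm{id}$). The uniqueness assertion then yields that each $\beta_g\beta_h\beta_{gh}^{-1}$ is approximately inner, precisely the ``weak cocycle action'' hypothesis of \cite[Theorem~2.3]{BarlakSzabo:TAMS}. Combined with the assumption $A\cong A\otimes M_{|G|^\infty}$, that theorem produces a genuine $G$-action on $A$ with the Rokhlin property inducing $\alpha$ on $\underline{K}T_u(A)$. For the ``in particular'' statement, observe that $\mathcal Q\cong \mathcal Q\otimes M_{|G|^\infty}$, so $\mathcal Q$-stability entails $M_{|G|^\infty}$-stability for any finite $G$.

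The only real obstacle is hidden inside the Barlak--Szab\'o theorem itself: promoting an approximately multiplicative choice of lifts to a genuine group action with the Rokhlin property requires finding compatible exact Rokhlin towers in tensor factors $M_{|G|^\infty}\subseteq A$ and running an equivariant Elliott-style intertwining. From the perspective of the present theorem, our contribution is simply to verify that classifiable {$C^*$}-algebras satisfy the automorphism-classification hypothesis of \cite{BarlakSzabo:TAMS}.
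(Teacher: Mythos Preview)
Your proposal is correct and matches the paper's approach exactly: the paper explicitly states just before the theorem that the result is obtained by combining Barlak and Szab\'o's \cite[Theorem~2.3]{BarlakSzabo:TAMS} with the classification of automorphisms up to approximate unitary equivalence, which is precisely the argument you outline. The paper gives no further detail beyond this, so your write-up is in fact more explicit than what appears there.
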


Theorem \ref{thm:existractions} comes with a uniqueness counterpart for Rokhlin actions on $M_{|G|^\infty}$-stable classifiable algebras by their induced actions on $\underline{K}T_u(\,\cdot\,)$. This is \cite[Corollary 9.15]{CGSTW}, heavily using results of Izumi from \cite{Izumi:Duke}.

Without UHF-stability, some positive results for Problem \ref{q:ExistActions} have been obtained in the purely infinite case.  For example, actions of cyclic groups on the $K$-theory of UCT Kirchberg algebras are induced by actions on the {$C^*$}-algebras (\cite{Katsura08}).  The approach is to build new models of UCT Kirchberg algebras with the required actions baked in and then appeal to classification.  Can this approach now be used to make significant progress on the stably finite side?

Turning now to the general uniqueness problem, Gabe and Szab\'o's very recent dynamical version of the Kirchberg--Phillips theorem (\cite{Gabe-Szabo}) is revolutionary for the classification of {$C^*$}-dynamics, just as the Kirchberg--Phillips theorem was for {$C^*$}-algebras.  Specialising to the discrete amenable case, their theorem reads as follows. 

\begin{theorem}[{\cite[Theorem B]{Gabe-Szabo}}]\label{ThmDynKP}
Let $G$ be a countable discrete amenable group and let $\alpha\colon G\curvearrowright A$ and $\beta\colon G\curvearrowright B$ be outer actions of $G$ on stable Kirchberg algebras $A$ and $B$. Then $\alpha$ and $\beta$ are  cocycle conjugate if and only if they are equivalent in the $G$-equivariant $KK$-category $KK^G$.
\end{theorem}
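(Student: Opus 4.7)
The forward direction (cocycle conjugacy implies $KK^G$-equivalence) is essentially formal: a cocycle conjugacy $(\sigma, w)$ induces an invertible class in $KK^G(A, B)$, since the cocycle perturbation $\mathrm{Ad}(w) \circ \beta$ is equivariantly homotopic (via the canonical path of unitaries in $\mathcal{M}(B)$) to $\beta$ itself, and a $G$-equivariant $^*$-isomorphism always defines a $KK^G$-equivalence. So the content lies in the converse.

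For the hard direction, the plan is to run an equivariant two-sided intertwining argument in the spirit of Elliott--Kirchberg--R\o{}rdam. Three ingredients are needed: (i) an equivariant absorption principle providing an ample supply of central sequences, (ii) an equivariant existence theorem lifting invertible classes in $KK^G(A,B)$ to cocycle morphisms $(\phi, w)$ with $\phi \circ \alpha_g = \mathrm{Ad}(w_g) \circ \beta_g \circ \phi$ and $w_{gh} = w_g \beta_g(w_h)$, and (iii) an equivariant uniqueness theorem saying that two such cocycle morphisms inducing the same $KK^G$-class are related by an approximately central unitary path (in a suitably cocycle-compatible sense). Ingredient (i) is already at hand: Szab\'o's equivariant $\mathcal O_\infty$-absorption theorem (cited in the excerpt as \cite[Theorem~3.4]{Szabo:CMP}) says that every outer action of a countable discrete amenable group on a Kirchberg algebra is equivariantly $\mathcal O_\infty$-stable, and this is the dynamical analog of $\mathcal O_\infty$-absorption that makes Cuntz-sum manipulations available equivariantly. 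Given (ii) and (iii), one realises both a $KK^G$-equivalence $\kappa$ and its inverse $\kappa^{-1}$ by cocycle morphisms $(\phi, w)\colon A \to B$ and $(\psi, v)\colon B \to A$; their compositions represent the identities in $KK^G(A,A)$ and $KK^G(B,B)$, so by (iii) they are equivariantly approximately unitarily equivalent to $\mathrm{id}_A$ and $\mathrm{id}_B$; a back-and-forth approximation then produces a genuine equivariant isomorphism between suitable cocycle perturbations of $\alpha$ and $\beta$, i.e.\ a cocycle conjugacy.

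For the existence step (ii), the plan is to build a $KK^G$-picture of Cuntz--Thomsen type, where classes are represented by pairs of equivariant, suitably absorbing, cocycle representations modulo equivariant stable unitary equivalence, and then to produce a model cocycle morphism by absorbing a trivial equivariant representation into a sufficiently large target; this requires an equivariant analog of Kasparov's stabilisation/Weyl--von Neumann theorem in the $\mathcal O_\infty$-stable setting, where outerness plus simplicity of the crossed product guarantees that ``full'' equivariant morphisms exist and absorb everything. For the uniqueness step (iii), the plan is to work inside the $G$-equivariant central sequence algebra $F_G(B) = (B_\omega \cap B')/B^\perp$, equipped with its induced $G$-action, and use Kirchberg's $\sigma$-ideal technique combined with equivariant $\mathcal O_\infty$-stability to produce the required cocycle-compatible approximately central unitaries connecting two cocycle morphisms that agree in $KK^G$.

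The main obstacle will be (iii), the equivariant uniqueness theorem. The difficulty is not just reproducing Kirchberg's asymptotic unitary equivalence argument one $g$ at a time, but coupling the approximately central unitaries to the cocycle datum $v_g w_g^{-1}$ in a coherent way across all $g \in G$ simultaneously; this is where amenability of $G$ has to enter, by way of a F\o{}lner-type averaging that converts pointwise equivariant data into a globally coherent cocycle in the central sequence algebra. A secondary obstacle is arranging the intertwining so that, along the way, one keeps track of cocycles without accumulating a non-trivial obstruction: one needs to verify that the limit cocycle is a genuine $\beta$-cocycle and that the limit $^*$-isomorphism does intertwine $\alpha$ with $\mathrm{Ad}(w) \circ \beta$ for this cocycle.
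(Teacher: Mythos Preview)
The paper does not contain a proof of this theorem; it is quoted from \cite{Gabe-Szabo} and accompanied only by a short paragraph of commentary on the method. Comparing your proposal to that commentary: your overall architecture---existence and uniqueness theorems for cocycle morphisms, combined with an Elliott-type two-sided intertwining---matches what the paper describes as Gabe and Szab\'o's approach, which it says is ``tightly linked to the Elliott intertwining technology'' via Szab\'o's cocycle category \cite{Szabo:JFA} and the equivariant stable uniqueness theorem of \cite{GabeSzabo:AJM}. Your ingredient (i), equivariant $\mathcal O_\infty$-stability from \cite{Szabo:CMP}, is indeed one of the inputs the paper names elsewhere.

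One point where your sketch drifts from the actual Gabe--Szab\'o framework: the intertwining is carried out in Szab\'o's cocycle category rather than by building approximately central unitaries via $\sigma$-ideal arguments in $F_G(B)$ and F\o{}lner averaging. The paper explicitly contrasts Gabe--Szab\'o's method with the older Evans--Kishimoto/Rokhlin-property intertwining, and your description of step (iii) reads closer to that older paradigm. The equivariant stable uniqueness theorem \cite{GabeSzabo:AJM} is a $KK^G$-theoretic statement (an equivariant analogue of the Dadarlat--Eilers and Lin stable uniqueness theorems), and amenability enters primarily through absorption rather than through a direct F\o{}lner argument in the uniqueness step. This is not a fatal gap in your outline---the high-level structure is right---but the mechanism you propose for (iii) is not the one used, and the cocycle-category formalism is what actually makes the bookkeeping of cocycles along the intertwining tractable.
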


As set out in the introduction to \cite{Gabe-Szabo}, historically, the approach taken to uniqueness of actions typically relied on the Rokhlin property together with an intertwining technique of Evans and Kishimoto (\cite{EvansKishimoto}); see \cite{IzumiMatuiI,IzumiMatuiII}.  Gabe and Szab\'o approach the problem in a completely different fashion, which is much more tightly linked to the Elliott intertwining technology used in the classification of {$C^*$}-algebras
through Szab\'o's `cocycle category' (\cite{Szabo:JFA}) and tools such as an equivariant stable uniqueness theorem (\cite{GabeSzabo:AJM}).

On the stably finite side, outerness is not a strong enough condition to expect uniqueness -- one should work with strong outerness, so that one at least has uniqueness at the von Neumann level via Ocneanu.   We can, and should, now hope for a stably finite counterpart to the Gabe--Szab\'o theorem and also that such a result doesn't take the full 20 years required to complete the stably finite part of the unital classification theorem following Kirchberg--Phillips!  

\begin{challenge}
    Let $A$ be a unital simple separable nuclear stably finite $\mathcal Z$-stable {$C^*$}-algebra and let $G$ be a countable discrete amenable group. Classify strongly outer (cocycle) actions of $G$ on $A$ up to cocycle conjugacy in terms of some combination of equivariant $K$-theory (or $KK$-theory), traces, and group (co)homology, perhaps under suitable equivariant regularity and/or UCT hypotheses. 
\end{challenge}

The Gabe--Szab\'o theorem works beyond countable discrete groups (\cite[Theorem F]{Gabe-Szabo}), for example recapturing the uniqueness of a Rokhlin flow on Kirchberg algebras from \cite{Szabo:CMP21} and extending this to $\mathbb R^d$ actions (\cite[Corollary 6.15]{Gabe-Szabo}). However, outside the discrete setting, one needs more than just outerness of the action, and the required condition -- \emph{isometric shift absorption} -- does not apply in all cases of interest. Indeed, isometric shift absorption implies equivariant $\mathcal O_\infty$-stability (\cite[Proposition 3.9]{Gabe-Szabo}), and hence $\mathcal O_\infty$-stability of the crossed product.  Accordingly, the Gabe--Szabó theorem does not cover the situation where the crossed product is stably finite. One particularly prominent class of examples where this happens is the canonical gauge actions of the circle on simple Cuntz--Krieger algebras coming from irreducible adjacency matrices (which are not periodic). The study of these actions goes back to Cuntz and Krieger's foundational paper \cite{CuntzKrieger:IM}, constructing their now eponymous algebras from symbolic dynamics. 

We are grateful to Jamie Gabe for drawing our attention to Bratteli and Kishimoto's work (\cite{BratteliKishimoto:QJM}, building on \cite{EvansKishimoto,ElliottEvansKishimoto:MS}) on trace-scaling automorphisms of simple separable AF-algebras whose cone of densely defined lower semicontinuous traces has a base with finitely many extreme points.  Bratteli and Kishimoto show that such automorphisms have the Rokhlin property, and obtain a classification up to cocycle conjugacy by the induced action on $K_0$ (\cite[Theorem 1.1, Corollary 1.2]{BratteliKishimoto:QJM}).  Transferring their classifications to the dual action, they classified (both up to conjugacy, and up to cocycle conjugacy) actions $\alpha\colon\mathbb T\curvearrowright A$ of $\mathbb T$ on a unital Kirchberg\footnote{Bratteli and Kishimoto do not explicitly include nuclearity or the UCT in their hypotheses, but applying the dual action to the AF crossed product, it follows that $A$ must be nuclear and satisfy the UCT.} algebra $A$ such that the crossed product $A\rtimes\mathbb T$ is a simple AF-algebra with a unique tracial ray.\footnote{Having a unique tracial ray means there is a unique non-zero lower semicontinuous tracial weight up to scaling.} See \cite[Corollaries 4.1 and 4.2]{BratteliKishimoto:QJM} for precise statements including the description of the classification invariants in terms of the conjugacy of the induced actions on $K_0$ of the crossed product in a fashion compatible with the class of the spectral projection at $0$ of the unitary implementing the action. A stabilised version is given as \cite[Corollary 4.3]{BratteliKishimoto:QJM}.  Importantly, but not mentioned in \cite{BratteliKishimoto:QJM}, gauge actions on Cuntz-Krieger algebras coming from irreducible and aperiodic adjacency matrices satisfy all the required hypotheses.\footnote{The required unique trace on the crossed product follows from \cite{EW:MJ}, which shows that these actions have a unique KMS-state.} The implications of Bratteli and Kishimoto's result for Cuntz--Krieger algebras are set out in the introduction to \cite{CDE:APDE}. The point is that for Cuntz-Krieger algebras, the invariant in \cite[Corollary 4.3]{BratteliKishimoto:QJM} is precisely Krieger's dimension group from his classification of irreducible and aperiodic adjacency matrices up to shift equivalence from \cite[Theorem 4.2]{Krieger:IM} (see \cite[Theorem 6.4]{Effros:CBMS}).  As a consequence, shift equivalence of aperiodic irreducible  adjacency matrices $A$ and $B$ is determined by conjugacy of the gauge actions on the stabilisations $\mathcal O_A\otimes\mathcal K$ and $\mathcal O_B\otimes\mathcal K$ (see \cite[Remark 7.5]{CDE:APDE}).  

While the Bratteli--Kishimoto classification is extremely effective in this setting, ideally one would be able to access classification through an abstract condition on the action rather than through internal structure -- particularly that of being AF -- of the fixed-point algebra.

\begin{question}
Find a general classification framework for $\mathbb T$-actions on Kirchberg algebras with stably finite crossed products which encompasses the gauge actions on Cuntz--Krieger algebras described above as well as gauge actions arising from other more general constructions of UCT Kirchberg algebras.   
\end{question}

Another natural family to consider are the quasifree flows $\alpha^{(\lambda)}\colon \mathbb R\curvearrowright \mathcal O_2$ on $\mathcal O_2$, indexed by $\lambda\in\mathbb R$, given by $\alpha_t^{(\lambda)}(s_1)=e^{it}s_1$ and $\alpha_t^{(\lambda)}(s_2)=e^{i\lambda t}s_2$ (where $s_1,s_2$ are the canonical generators of $\mathcal O_2$).  These were analysed in \cite{Kishimoto:YMJ} and give rise to simple crossed products when $\lambda$ is irrational. When $\lambda<0$ is irrational, the flow is Rokhlin (\cite{Kishimoto:IJM}) and so is the unique such up to cocycle conjugacy.  Things are trickier when $\lambda>0$ and is irrational; in this case, the crossed product is stably projectionless with a unique tracial ray (\cite{KK:CJM}).  In \cite{Dean:CJM}, Dean showed that for generically many irrational $\lambda>0$, the crossed product is the stabilised Razak--Jacelon algebra $\mathcal W\otimes\mathcal K$ (but it remains open to establish this for all irrational $\lambda>0$).

\begin{question}
Determine when the quasifree flows $\alpha^{(\lambda)}$ on the Cuntz algebra $\mathcal O_2$ described above are cocycle conjugate for irrational $\lambda>0$.
\end{question}

Finally in this section, there are far fewer results for cocycle actions.  In the von Neumann algebra setting, every outer cocycle action of a discrete amenable group on any II$_1$ factor is cocycle conjugate to a genuine action (\cite{Popa21}).  
The following untwisting question for cocycle actions on {$C^*$}-algebra was promoted by Shlyakhtenko at several conferences.

\begin{challenge}[Shlyakhtenko]\label{q:Untwist}
    Determine when a strongly outer cocycle action of a countable discrete amenable group on a classifiable {$C^*$}-algebra $A$ is cocycle conjugate to a genuine action
    in terms of some combination of $K$-theoretic invariants of the algebra and (scalar-valued) cohomological invariants of the group.
\end{challenge}

There are known examples of strongly outer cocycle actions of amenable groups on classifiable {$C^*$}-algebras which cannot be untwisted.  For example, see \cite[Examples~8.19 and~8.20]{IzumiMatuiI}.
Matui and Sato studied Problem \ref{q:Untwist} for actions on UHF algebras and on $\mathcal Z$; in \cite[Theorems~8.6 and 8.8]{MatuiSato:CMP} they give a condition for untwisting cocycles for strongly outer actions of $\mathbb Z^2$ on UHF algebras and $\mathcal Z$.
In \cite{MatuiSato:AJM}, they show that all strongly  outer cocycle actions of the Klein bottle group on $\mathcal Z$ are cocycle conjugate.

\section{Classification without the UCT}

In the setting of Kirchberg algebras, classification is possible without the UCT.  One has that two unital Kirchberg algebras $A$ and $B$ are isomorphic if and only if there is an invertible element $\kappa \in KK(A, B)$ that preserves the class of the unit on $K_0$.  The UCT can then be used to compute this $KK$-group in terms of $K_*$, which leads to the traceless case of Theorem~\ref{Thm:UnitalClassification} -- the most well-known (and well-used) form of the Kirchberg--Phillips theorem.  
By contrast, in the stably finite classification theorem, the UCT plays a more pervasive role -- see \cite[Section~1.3.6]{CGSTW} for a discussion. 
This combined with the more involved invariant in which traces are entwined with $K$-theory makes it less clear what a stably finite classification theorem should look like without a UCT assumption.

\begin{question}[{\cite[Conjecture~D]{Schafhauser24}}]\label{q:KKclassification}
    Let $A$ and $B$ be unital separable simple nuclear $\mathcal Z$-stable {$C^*$}-algebras.  If there is an invertible $\kappa \in KK(A, B)$ 
    preserving the unit on $K_0$ and an affine homeomorphism $\gamma \colon T(B) \rightarrow T(A)$ such that $\kappa$ and $\gamma$ are compatible in a suitable sense, do we have $A \cong B$?
\end{question}

Finding the correct compatibility condition should be viewed as part of the problem.  The minimal compatibility condition (which is sufficient under the UCT by Theorem~\ref{Thm:UnitalClassification}) is that $\kappa_0$ and $\gamma$ are adjoints under the natural pairing $T \times K_0 \rightarrow \mathbb R$; i.e.\
\begin{equation}\label{eq:pairing}
    \langle \gamma(\tau), x \rangle = \langle \tau, \kappa_0(x) \rangle, \quad \tau \in T(B),\ x \in K_0(A).
\end{equation}
This is the compatibility condition optimistically suggested in \cite[Conjecture~D]{Schafhauser24}.  Without the UCT, it's unclear whether this condition would be strong enough to capture the full interaction between $KK$-theory and traces.
This is closely related to Problem~\ref{q:RUCT}; every trace $\sigma \in T(A)$ induces an element $[\sigma] \in KK(A,\mathcal R^\omega)$;\footnote{Using separability and nuclearity of $A$, Connes' theorem provides a $^*$-homomorphism $\theta \colon A \rightarrow \mathcal R^\omega$ with $\mathrm{tr}\circ \theta = \sigma$, which is unique up to unitary equivalence.
Then $[\sigma] \coloneqq [\theta] \in KK(A, \mathcal R^\omega)$ is well-defined.} without the UCT for $A$ (or, at least, the injectivity part of the UCT for the pair $(A,\mathcal R^\omega)$ -- cf.\ Problem~\ref{q:RUCT}), agreement in the sense of \eqref{eq:pairing} might not imply that $[\gamma(\tau)]=[\tau]\circ\kappa$ for all $\tau \in T(B)$.

The following condition may then be the right compatibility condition between $\kappa$ and $\gamma$, in the absence of the UCT:
\begin{equation}
    [\gamma(\tau)]=[\tau]\circ\kappa,\quad \tau \in T(B).
\end{equation}
It is plausible that this condition might be sufficient to obtain classification without the UCT, at least in the unique trace setting.

Some partial progress was recently announced by the first-named author in the preprint \cite{Schafhauser24}.  Note that whatever the `correct' $KK$/trace compatibility condition might be, it should be satisfied by a pair $\kappa = [\phi]$ and $\gamma = T(\phi)$ for a unital $^*$-homomorphism $\phi \colon A \rightarrow B$. By \cite[Theorem~A]{Schafhauser24}, if the isomorphisms $\kappa$ and $\gamma$ are induced by a $^*$-homomorphism in this sense, then $A \cong B$.

\section{$K_1$-injectivity and $KK$-uniqueness}
\label{sec:K1injectivity}

By definition, $K$-theory is built out of equivalence classes of projections and unitaries at all matrix levels and so is stable: $K_*(A)\cong K_*(A\otimes\mathcal K)$. Moreover, working in the unital case, 
\begin{equation}
\pi_n(U_\infty(A))\cong \begin{cases}
    K_1(A),&n\text{ even},\\K_0(A),&n\text{ odd},
\end{cases}
\end{equation}
via Bott periodicity (here, $U_\infty(A)$ is the union of the unitary groups of all matrix amplifications of $A$). Non-stable $K$-theory is concerned with the relationship between $K$-theory and projections and unitaries in the original {$C^*$}-algebra, through properties such as cancellation of projections and examination of (in the unital case) the homotopy groups $\pi_n(U(A))$.\footnote{The latter was formalised by Thomsen in \cite{Thomsen:KTheory}. This makes non-stable $K$-theory into a homology theory on the category of {$C^*$}-algebras (albeit that $\pi_0(U(A))$ need not be abelian), and includes how to handle the non-unital case.} 

It is natural to ask under what circumstances does the algebra $A$ contain enough relevant information about $K$-theory without the need for matrix amplification. A unital {$C^*$}-algebra $A$ is called \emph{$K_1$-injective} or \emph{$K_1$-surjective} when the natural map $\pi_0(U(A))\to K_1(A)$ is injective or surjective, respectively, and \emph{$K_1$-bijective} when it is both $K_1$-injective and $K_1$-surjective. There are numerous important classes of $K_1$-bijective {$C^*$}-algebras, including all simple purely infinite {$C^*$}-algebras -- as shown by Cuntz in \cite[Theorem 1.9]{Cuntz:Ann} as part of his calculation of $K$-theory for Cuntz algebras -- and all classifiable {$C^*$}-algebras (via Theorem \ref{Jiang} below). More generally, Thomsen introduced the notion of \emph{$K$-stability}, to describe the situation when all the natural maps $\pi_n(U_m(A))\to \pi_n(U_{m+1}(A))$ for $n\geq 0$ and $m\geq 1$ are isomorphisms  and gave a number of examples. In particular, Thomsen pioneered the use of tensorial absorption to obtain $K$-stability, showing that algebras which have a simple infinite-dimensional AF algebra or a Cuntz algebra $\mathcal O_n$ as a tensor factor are $K$-stable. This set the scene for the following result of Jiang.

\begin{theorem}[{Jiang; \cite{Jiang:arXiv} -- cf.\ \cite[Section 4.2]{CGSTW} and \cite{Hua:arXiv}}]\label{Jiang}
All $\Z$-stable {$C^*$}-al\-ge\-bras are $K$-stable and hence $K_1$-bijective.
\end{theorem}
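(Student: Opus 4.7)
Proof proposal: The plan is to show that for any $\Z$-stable {$C^*$}-algebra $A$ and all integers $n \geq 0$, $m \geq 1$, the natural inclusion $\iota_m \colon U_m(A) \hookrightarrow U_{m+1}(A)$ induces an isomorphism $\pi_n(U_m(A)) \to \pi_n(U_{m+1}(A))$. Once this is done, $K_1$-bijectivity is a formal consequence: $K_1(A) = \varinjlim_m \pi_0(U_m(A))$ stabilises at $m=1$, so the natural map $\pi_0(U(A)) \to K_1(A)$ is bijective, and similarly for the higher $K$-groups via Bott periodicity.

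The central technical input will be the abundance of approximately central structure provided by $\Z$-stability. Since $\Z$ is strongly self-absorbing and arises as an inductive limit of prime dimension-drop algebras $Z_{p,q}$, for any finite set $F \subseteq A$ and $\e > 0$ one obtains unital $^*$-homomorphisms $Z_{p,q} \to A$ whose images $\e$-commute with $F$. In particular, coprime $p,q$ yield approximately central unital copies of $M_p$ and $M_q$, and hence approximately central c.p.c.\ order zero maps $M_k \to A$ for every $k$, via the McDuff-type characterisation of $\Z$-stability. These will play the role analogous to matrix units in the stable case.

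The main step is a swindle-style argument establishing that $\iota_m$ is a weak homotopy equivalence. For surjectivity of $\iota_{m,*}$ on $\pi_n$: given $f \colon S^n \to U_{m+1}(A)$, one uses an approximately central order zero map $\phi \colon M_2 \to A$ to construct a homotopy from $f$ to a unitary agreeing with the identity on the last coordinate. Concretely, $\phi(e_{11})$ provides an approximately central projection that lets one `rotate' the $(m+1)$-th coordinate into a fresh direction absorbed by the $M_k$-blocks, and the homotopy $t \mapsto \exp(it\phi(h))$ for a suitable self-adjoint $h$ deforms $f$ through unitaries into $U_m(A)$. Injectivity is handled dually: a homotopy through $U_{m+1}(A)$ between two elements of $U_m(A)$ is deformed (within $U_{m+1}(A)$) to a homotopy in $U_m(A)$ via the same rotation. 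To upgrade the approximate conclusions to genuine equalities of homotopy classes, one uses an Elliott-style reindexing, realising the approximate centrality as exact centrality in the ultrapower $A_\omega \cap A'$ and then pulling back paths.

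The hard part will be making the rotation/swindle construction uniformly on compact parameter spaces $S^n$ with full norm control, so that the resulting homotopies are continuous (not merely pointwise close). Since $K_1(\Z) = 0$ and $K_0(\Z) = \mathbb Z$, the dimension-drop structure must substitute for what, in the AF or $\mathcal O_n$ case handled by Thomsen, is provided by a large supply of orthogonal projections; the key point is that each $Z_{p,q}$ is homotopy-trivial enough (contractible $K_1$) to permit the path-construction, while still carrying the divisibility needed to absorb extra coordinates. Combining this with the McDuff-type characterisation of $\Z$-stability applied to paths of unitaries (i.e.\ working in $C(S^n, A)$ rather than $A$) should close the argument.
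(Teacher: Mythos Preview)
The paper does not contain a proof of this theorem. This is a survey/problem paper, and Theorem~\ref{Jiang} is stated with attribution to Jiang's unpublished preprint \cite{Jiang:arXiv} (with pointers to the exposition in \cite[Section~4.2]{CGSTW} and the recent \cite{Hua:arXiv}), but no argument is given here. So there is nothing in the paper to compare your proposal against.

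That said, a brief comment on your outline relative to Jiang's actual approach (as recorded in the cited references). You have the right architecture: the proof does proceed by showing each $\iota_m$ is a weak homotopy equivalence, and the mechanism is indeed the approximately central copies of dimension-drop algebras supplied by $\Z$-stability. One inaccuracy: you write that ``$\phi(e_{11})$ provides an approximately central projection''---it does not; $\phi$ is order zero, so $\phi(e_{11})$ is only a positive contraction, and $\Z$ itself is projectionless. The actual rotation is implemented not via projections but via \emph{paths of unitaries} in the (approximately central) copy of $Z_{p,q}$ or $\Z$: since $K_1(\Z)=0$ and $\Z$ has connected unitary group, one can connect suitable permutation-type unitaries to the identity inside the approximately central subalgebra, and conjugating by such a path moves a unitary in $U_{m+1}$ into $U_m$ (this is closer to a Whitehead-lemma manoeuvre than to a projection-based swindle). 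Your ultrapower/reindexing remark is reasonable but not strictly needed; the argument can be run with careful $\epsilon$-bookkeeping, and the uniformity over $S^n$ is handled by working in $C(S^n,A)$, which is again $\Z$-stable. So the skeleton is right, but the ``approximately central projection'' step would fail as written and should be replaced by the unitary-path mechanism.
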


In relation to this result, Thiel raises the following question.
We view it as particularly interesting in the non-nuclear case (where it is known that pureness does not imply $\mathcal Z$-stability).
Pureness is defined in Section~\ref{Sec:CuReg}, and see Section~\ref{sec:NonsimpleTW} for more discussion about pureness in the non-simple case.

\begin{question}
    Is every pure {$C^*$}-algebra $K_1$-bijective (or even $K$-stable)?
\end{question}

As Thomsen noted in his calculations in \cite[Section 4]{Thomsen:KTheory}, non-stable $K$-theory is `very sensitive’ to whether a unital {$C^*$}-algebra $A$ is homotopy equivalent to an abelian {$C^*$}-algebra, as in this case, $\pi_n(U(A))=0$ for $n\geq 2$. Thus no commutative {$C^*$}-algebra with non-trivial $K$-theory can be $K$-stable, and in particular, a (non-zero) unital commutative C$^*$-algebra cannot be $K$-stable.  Villadsen's 
  breakthrough constructions (\cite{Villadsen:JFA,Villadsen:JAMS}) showed how to build inductive limits to replicate certain commutative phenomena  in simple {$C^*$}-algebras (such as perforation in $K_0$ and high stable rank). Through these techniques, he gave examples of simple non-elementary non-$K$-stable algebras; indeed, for any $n_0\in\mathbb N$, Villadsen constructs a simple separable unital nuclear {$C^*$}-algebra $A$ such that for all $k\geq 0$, the maps $\pi_k(U_n(A))\to \pi_k(U_{n+1}(A))$ fail to be surjective for $n\leq n_0$ and are bijective for $n>n_0$ (\cite[Theorem 12]{Villadsen:Crelle}).\footnote{We thank Andrew Toms for bringing this reference to our attention.} In particular, simple {$C^*$}-algebras need not be $K_1$-surjective (this sort of question was asked at least as far back as \cite[End of Section~3]{Thomsen:RIMS}). To the best of our knowledge, the corresponding question for $K_1$-injectivity is open.

\begin{question}
    Are all unital simple {$C^*$}-algebras $K_1$-injective?  
\end{question}

Beyond the simple setting, all of stable rank one, real rank zero, and infiniteness give rise to some $K$-stability phenomena.
Rieffel's introduction of stable rank was motivated by non-stable $K$-theory: {$C^*$}-algebras with stable rank one have cancellation (\cite{Rieffel:PLMS}; see \cite[Proposition V.3.1.24]{Blackadar-encyclopedia}) and are $K_1$-injective (\cite[Theorem~2.10]{Rieffel:JOT}).

For general {$C^*$}-algebras of real rank zero, $K$-theoretic regularity first appeared in Zhang's work \cite{Zhang:JOT}, which shows that a unital C$^*$-algebra $A$ of real rank zero is $K_0$-surjective, i.e. the classes of projections from $A$ generate $K_0(A)$.\footnote{Let $A$ be a real rank zero C$^*$-algebra.  Zhang's main result in \cite{Zhang:JOT} is that the dimension range $\mathcal D(A)$ (the Murray--von Neumann equivalence classes of projections in $A$ with a partially defined order) has the Riesz decomposition property: if $x\leq y+z$, then $x=x_1+x_2$ for some $x_1\leq y$ and $x_2\leq z$.  The $K_0$-surjectivity statement is not explicitly given in the paper. It is obtained by applying the Riesz decomposition property to $M_n(A)$ (which inherits real rank zero), so that an induction argument shows that any projection $p\in M_n(A)$ is equivalent to a sum of orthogonal projections from $A$.  Note that Zhang works with property (FS) -- self-adjoints of finite spectrum are dense in the self-adjoints -- which was shortly afterwards shown to be equivalent to real rank zero in \cite{BrownPedersen:JFA}.} Subsequently, Lin established $K_1$-injectivity (\cite[Lemma 2.1]{Lin:PJM}, heavily using his earlier work \cite{Lin:JFA} on the somewhat unfortunately named weak property (FU)).  Substantial work has been put into the question of whether unital real rank zero {$C^*$}-algebras are $K_1$-surjective (which appears to be an unpublished conjecture of Shuang Zhang; see \cite[Section 3]{AGOR:PJM}).  As part of their work aiming to unify results for simple purely infinite {$C^*$}-algebras with those of stable rank one (\cite{BrownPedersen:Crelle,BrownPedersen:JFA14}), Larry Brown and Pedersen introduce the notion of \emph{weak cancellation}: $A$ has weak cancellation if for all projections $p,q\in A$ which generate the same closed ideal $I$, one has $[p]_0=[q]_0$ in $K_0(I)$ $\implies p\sim q$.  This property spiritually goes back to Cuntz's work on the $K$-theory of simple purely infinite {$C^*$}-algebras, which have weak cancellation  (\cite[Section 1]{Cuntz:Ann}); likewise, stable rank one gives rise to cancellation, and so certainly to weak cancellation.  This is related to the notion of separativity\footnote{A {$C^*$}-algebra $A$ is \emph{separative} if for projections $p,q\in A\otimes\mathcal K$
\begin{equation}p\oplus p\sim p\oplus q \sim q\oplus q\implies p\sim q.
\end{equation}}
investigated in \cite{AGOP:IJM,AGOR:PJM}: $A$ is separative if and only if $A$ has stable weak cancellation, i.e.\ $A\otimes\mathcal K$ has weak cancellation (\cite[Paragraph 2.3]{BrownPedersen:JFA14}, \cite[Section 3]{AGOR:PJM}).

\begin{question}\label{q:K1SurjectRR0}
\begin{enumerate}[(1)]
    \item Do real rank zero {$C^*$}-algebras have weak cancellation?\label{q:K1SurjectRR0.1}
    \item Are real rank zero {$C^*$}-algebras $K_1$-surjective?\label{q:K1SurjectRR0.2}
\end{enumerate}
\end{question}

The connection between these problems was given by Ara, Goodearl, O'Meara and Raphael, who showed that separative {$C^*$}-algebras of real rank zero are $K_1$-surjective (\cite[Theorem 3.1]{AGOR:PJM}) and asked Problem \ref{q:K1SurjectRR0}(\ref{q:K1SurjectRR0.1}) (as \cite[Question 3.2]{AGOR:PJM}, in the equivalent form of whether real rank zero algebras are separative). In slightly earlier work, Ara, Goodearl, O'Meara, and Pardo  showed that for separative unital real rank zero {$C^*$}-algebras, finiteness implies stable finiteness (\cite[Theorem 7.6]{AGOP:IJM}) (and in the simple case, this also implies stable rank one).  As is known to experts, including the referee (but wasn't to us), this means that if simple {$C^*$}-algebras of real rank zero have weak cancellation, then R\o{}rdam's dichotomy problem (Problem~\ref{q:RR0dichotomy}) has a positive solution.\footnote{If R\o{}rdam's dichotomy fails, then there is an infinite simple unital {$C^*$}-algebra $A$ of real rank zero with a finite projection $p$.  Then $pAp$ has real rank zero, so (assuming weak cancellation) $pAp \otimes \mathbb K \cong A\otimes \mathbb K$ is stably finite, a contradiction.}  

For a unital properly infinite\footnote{Recall that a unital {$C^*$}-algebra $B$ is \emph{properly infinite} if there exist orthogonal projections $p,q\in B$ with $p\sim q\sim 1_B$. Equivalently, there is a unital embedding of $\mathcal O_\infty$, or a unital embedding of some (or all) of the Cuntz--Toeplitz algebras $\mathcal T_n$ for $n\geq 2$.} 
$C^*$-algebra $A$, any element of $K_0(A)$ is realised as the class of a properly infinite full projection in $A$, and two properly infinite full projections in $A$ which agree in $K$-theory are Murray--von Neumann equivalent. Every unital properly infinite {$C^*$}-algebra is $K_1$-surjective.\footnote{Both of these results are due to Cuntz in \cite{Cuntz:Ann}, though they are not explicitly stated there; see \cite[Exercises~4.6,~4.9 and~8.9]{Rordam:KBook}.} Prominently missing is an answer to the following question. 

\begin{question}\label{PropInfiniteK1Injective}
Are all properly infinite unital {$C^*$}-algebras $K_1$-injective?
\end{question}

This question seems to have been first formally asked by Blanchard, Rohde, and R{\o}rdam in \cite{BRR:JNCG},\footnote{As noted there, it has been implicitly around for much longer, with $K_1$-injectivity of properly infinite {$C^*$}-algebras appearing as a hypothesis in \cite{Rordam:Crelle} to obtain certain uniqueness theorems.} which develops a number of equivalent characterisations of $K_1$-injectivity for a unital properly infinite {$C^*$}-algebra $A$ (such as that for projections $p,q\in A$ with $p$, $q$, and their respective complements full and properly infinite,\footnote{In his manuscript \cite[Page 488]{Kirchberg:Book}, Kirchberg called such projections \emph{splitting}.} Murray--von Neumann equivalence and homotopy coincide).
Blanchard, Rohde, and R{\o}rdam further show that it suffices to answer Problem~\ref{PropInfiniteK1Injective} in certain particular cases, such as the unital full free product $\mathcal O_\infty*\mathcal O_\infty$ (see \cite[Section~5]{BRR:JNCG}). Many natural examples of properly infinite {$C^*$}-algebras are known to be $K_1$-injective (such as coronas of stable {$C^*$}-algebras, recorded in \cite[Proposition 4.9]{GabeRuiz:GMJ}, using earlier lifting results). 
Kirchberg's interest in Problem~\ref{PropInfiniteK1Injective} prompted his \emph{squeezing property} (see \cite[Definition~4.2.14, Proposition~4.2.15 and Question~2.5.20]{Kirchberg:Book}) as a potential route to $K_1$-injectivity of properly infinite {$C^*$}-algebras.

One particular point of relevance of Problem \ref{PropInfiniteK1Injective} to classification is through the connection to uniqueness theorems for $KK$-theory via  Paschke duality. We'll set up the uniqueness problem first, then discuss the connection to $K_1$-injectivity.
Let $A$ and $B$ be {$C^*$}-algebras with $A$ separable and $B$ $\sigma$-unital and stable.  
The Cuntz pair picture of $KK(A,B)$ consists of  homotopy classes of Cuntz pairs $(\phi,\psi)\colon A\rightrightarrows \mathcal M(B)\rhd B$; i.e.\ pairs $\phi,\psi \colon A \rightarrow \mathcal M(B)$ of $^*$-homomorphisms such that $\phi(a)-\psi(a)\in B$ for all $a\in A$. The class of $(\phi,\psi)$ is thought of as a formal difference  $\phi-\psi$. If $\phi\colon A\to\mathcal M(B)$ is an absorbing representation,\footnote{There are many equivalent formulations of absorption.  The original, inspired by Voiculescu's theorem is that for any $\theta\colon A\to \mathcal M(B)$, the direct sum $\phi\oplus\theta$ (obtained by adding $\phi$ and $\theta$ diagonally in $M_2(\mathcal M(B))$ and using stability of $B$ to obtain a so-called `standard isomorphism' $M_2(\mathcal M(B))\cong \mathcal M(B)$, i.e.\ one induced by an isomorphism $M_2(\mathcal K)\cong\mathcal K$), is approximately unitarily equivalent to $\phi$ modulo $B$; i.e.\ there exists $(u_n)\in\mathcal M(B)$ so that $u_n(\phi(a)\oplus \theta(a))u_n^*-\phi(a)\in B$ and $\|u_n(\phi(a)\oplus \theta(a))u_n^*-\phi(a)\|\to 0$ for all $a\in A$.\label{def-absorb}} then any class $\kappa$ in $KK(A,B)$ can be realised as a formal difference with $\phi$, i.e.\  $\kappa$ is represented by some Cuntz pair of the form $(\phi,\psi)$ (see \cite[Theorem~5.4(i)]{CGSTW}, for example).
We view this as a `\emph{$KK$-existence}’ statement; but how unique is such a $\psi$?  Since $\psi$ will necessarily be absorbing, this amounts to asking what happens when $[\phi,\psi]=0$.

\begin{question}[$KK$-uniqueness ({\cite[Question 5.17]{CGSTW}})]\label{KKUniqueness}
    Suppose $A$ is a separable {$C^*$}-algebra, $B$ is a $\sigma$-unital stable {$C^*$}-algebra, and $\phi, \psi \colon A \rightarrow \mathcal M(B)$ are absorbing representations such that $\phi(a) - \psi(a) \in B$ for all $a \in A$.
    If $[\phi, \psi] = 0 \in KK(A, B)$, is there a continuous family of unitaries $(u_t)_{t\in[1,\infty)} \subseteq U(B + \mathbb C1_{\mathcal M(B)})$ such that 
    \begin{equation}
        \lim_{t \rightarrow \infty} \|u_t\phi(a)u_t^* - \psi(a)\| = 0
    \end{equation}
    for all $a \in A$?
\end{question}

 Given a pair of absorbing representations $(\phi,\psi)\colon A\rightrightarrows\mathcal M(B)\rhd B$ which agree modulo $B$, absorption gives a continuous path $(u_t)_{t\in[1,\infty)}$ of unitaries in $\mathcal M(B)$ with $u_t\phi(a)u_t^*-\psi(a)\in B$ and $\|u_t\phi(a)u_t^*-\psi(a)\|\to 0$ for all $a\in A$. The point in  Problem \ref{KKUniqueness} is to be able to find these unitaries in the minimal unitisation of $B$, assuming $[\phi,\psi]=0$. This gives Dadarlat and Eilers' relation of \emph{proper asymptotic unitary equivalence} between $\phi$ and $\psi$ (\cite{DadarlatEilers:KT}), which is very powerful in applications as it ensures that if $\phi$ and $\psi$ were obtained from maps into some {$C^*$}-algebra $E$ containing $B$ as an ideal (followed by the canonical map $E\to \mathcal M(B)$), then the asymptotic unitary equivalence also can be found in $E$. This is how, with Carri\'on and Gabe, we use such $KK$-uniqueness type statements in \cite{CGSTW}.

It is a slightly more convenient to explain how to go from Problem \ref{PropInfiniteK1Injective} to a version of Problem \ref{KKUniqueness} for \emph{unitally absorbing} representations.  Even when $A$ is unital, an absorbing representation $\phi\colon A \to \mathcal M(B)$ can never be unital (a unital map cannot absorb $0$); unital absorption is defined analogously to absorption as in footnote \ref{def-absorb}, working now with unital $\theta$. The connection between these notions is that $\phi\colon A\to \mathcal M(B)$ is absorbing if and only if the forced unitisation $\phi^\dagger\colon A^\dagger\to \mathcal M(B)$ is unitally absorbing; see \cite{Thomsen:PAMS}.\footnote{Given a unitally absorbing $\phi\colon A\to \mathcal M(B)$, where $A$ is unital and $B$ is $\sigma$-unital and stable, one gets an absorbing representation back as $\phi\oplus 0$}  The version of the $KK$-uniqueness question for unitally absorbing maps is exactly the same as Problem \ref{KKUniqueness}, with $A$ unital, and $\phi,\psi$ unitally absorbing.  Moreover, by taking a forced unitisation it suffices to prove the unitally absorbing version (see the proof of \cite[Lemma~5.15(i)]{CGSTW}).

Given a unitally absorbing representation $\phi\colon A\to \mathcal M(B)$ with $A$ separable and unital, and $B$ $\sigma$-unital and stable,  Paschke duality (\cite[Theorem 3.2]{Thomsen:PAMS}) gives an isomorphism \begin{equation}
KK(A,B)\cong K_1(\mathcal Q(B)\cap \overline{\phi}(A)')
\end{equation}
where $\mathcal Q(B)\coloneqq \mathcal M(B)/B$ is the corona of $B$ and $\overline{\phi}\colon A\to \mathcal Q(B)$ is induced by $\phi$.\footnote{This is not how Paschke duality is usually stated. In \cite[Section 3]{Thomsen:PAMS}, Thomsen sets this up for an absorbing map $\phi_0\colon A\to \mathcal M(B)$ (as ever, here  $A$ is separable and $B$ is $\sigma$-unital and stable).  His result gives the duality $KK(A,B)\cong K_1((\mathcal Q(B)\cap \overline{\phi_0}(A)')/\mathrm{Ann}(\overline{\phi_0}(A)))$.  Here, $\mathrm{Ann}(\overline{\phi_0}(A))=\{x\in \mathcal Q(B):x\overline{\phi_0}(A)=\overline{\phi_0}(A)x=0\}$ is the annihilator of $\overline{\phi_0}(A)$ in $\mathcal Q(B)$.  Applying this to the absorbing map $\phi_0=\phi\oplus 0$ associated to the unitally absorbing map $\phi$, one has $\mathcal Q(B)\cap\overline{\phi_0}(A)'\cong (\mathcal Q(B)\cap \overline{\phi}(A)')\oplus \mathcal Q(B)$, with the annihilator making up the second direct summand.} Tracking through the Paschke duality, a pair $[\phi,\psi]$ in $KK(A,B)$ with $\phi$ and $\psi$ unitally absorbing is mapped to $[\overline{u}_1]_1$ in $K_1(\mathcal Q(B)\cap \overline{\phi}(A)')$, where $(u_t)_{t\geq 1} \subseteq \mathcal M(B)$ asymptotically conjugates $\phi$ onto $\psi$ modulo $B$,\footnote{The fact that one can upgrade the approximate unitary equivalence modulo $B$ that one gets from absorption to asymptotic unitary equivalence modulo $B$ goes back at least to \cite{DadarlatEilers:KT}.} and $\overline{u}_t$ is the image of this path in $U(\mathcal Q(B))$.  Dadarlat and Eilers' argument from \cite[Theorem~3.12]{DadarlatEilers:KT} shows that $\phi$ and $\psi$ are properly asymptotically unitarily equivalent when $\overline{u}_1$ is homotopic to $1$ in $U(\mathcal Q(B)\cap \overline{\phi}(A)')$ (this is set out abstractly in \cite[Lemma~5.16]{CGSTW}).  For this reason, $K_1$-injectivity of $\mathcal Q(B)\cap \overline{\phi}(A)'$, gives rise to $KK$-uniqueness. In this way, Dadarlat and Eilers obtained a positive answer to the $KK$-uniqueness problem when $B=\mathcal K$ from Paschke's earlier $K_1$-injectivity of relative commutants relevant to this case (\cite[Lemma 3]{Paschke:PJM}).  Since $\mathcal Q(B)\cap\overline{\phi}(A)'$ is properly infinite (as a consequence of $\phi$ and $\psi$ being unitally absorbing), a positive answer to the $K_1$-injectivity problem (Problem \ref{PropInfiniteK1Injective}) gives a positive answer to the $KK$-uniqueness problem (Problem~\ref{KKUniqueness}). 

 As we don't have a view as to whether $KK$-uniqueness should hold in full generality, it may be worthwhile to consider absorbing representations associated to stabilised Villadsen algebras or other similar constructions.  


Recently, Loreaux and Ng have obtained positive results for the $KK$-uniqueness problem for some codomains $B$ with strict comparison, and unital separable nuclear (and sometimes also simple) $A$ through $K_1$-injectivity of the associated Paschke duals (see \cite[Theorem 2.4]{LoreauxNg:IEOT} for purely infinite $B$, and \cite[Lemma 2.9]{LoreauxNg:IEOT} and the preprint \cite{LNS:arXiv22}, authored together with Sutradhar, for various results with $B$ stably finite and with finitely many extremal traces).
In our abstract approach to classification with Carri\'o{}n and Gabe, we tensored on a copy of the Jiang--Su algebra to the Paschke dual in order to be able to use Theorem~\ref{Jiang} (due to Jiang) to get $K_1$-injectivity. This gives a `$\Z$-stable $KK$-uniqueness theorem' (\cite[Theorem~5.15]{CGSTW}), which has subsequently been put in a much cleaner framework by Farah and Szab\'o (\cite[Theorem~5.5]{FarahSzabo:arXiv}).

\subsection{Addendum: April 2026} We do now have a view as to $KK$-uniqueness: in January 2026, G\'abor Szab\'o gave a positive answer to the $KK$-uniqueness problem (Problem \ref{KKUniqueness}) and moreover, obtains general equivariant $KK$-uniqueness theorems which will be very broadly applicable (\cite{Szabo:preprint}).  This goes through $K_1$-injectivity of relative commutants but uses particular features of multiplier algebras (going back to work of Cuntz and Higson (\cite{CuntzHigson})), and so it doesn't touch on Problem \ref{PropInfiniteK1Injective}.

\section{Embeddings of $\mathcal Z$}\label{Sec:Zembed}

Most, if not all, modern {$C^*$}-classification results go via very general classification results for embeddings $A\hookrightarrow B$ up to approximate unitary equivalence together with complementary existence theorems describing the range of the invariant.  This strategy dates back at least to R\o{}rdam's work \cite{Rordam:JFA}, identifying a concrete class of Kirchberg algebras whose embeddings can be classified by $KL$ (introduced by R\o{}rdam for this purpose).  Kirchberg demonstrated that such classification of morphism results can hold in extreme generality with most of the hypotheses lying on the morphisms (retaining hypotheses on the domain $A$ and codomain $B$ only when it is essential to do so).  A fantastic example is given by the following underlying classification result Gabe obtains in his approach to the Kirchberg--Phillips theorem.

\begin{theorem}[{\cite{Gabe:MAMS}, cf.\ \cite{Kirchberg:Book}}]\label{OInftyStableMaps}
    Let $A$ be a unital separable exact {$C^*$}-algebra and $B$ a unital properly infinite {$C^*$}-algebra.  Then unital full nuclear $\mathcal O_\infty$-stable\footnote{A unital map $\theta\colon A\to B$ is \emph{$\mathcal O_\infty$-stable} if there is a unital embedding of $\mathcal O_\infty$ into the relative commutant $B_\omega\cap \theta(A)'$.  Similar definitions using Kirchberg's relative commutant algebra $F(B,\theta(A))$ from \cite{Kirchberg:Abel} are used in the non-unital setting.} morphisms $\theta\colon A \rightarrow B$, up to approximate unitary equivalence, are canonically in bijection with the elements of $KL_{\mathrm{nuc}}(A,B)$ that are compatible with the unit of $K_0$, via $\theta\mapsto [\theta]_{KL}$.
\end{theorem}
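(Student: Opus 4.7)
The plan is to split the bijection into its two halves: an \emph{existence} statement (every compatible $\kappa$ in $KL_{\mathrm{nuc}}(A,B)$ arises as $[\theta]_{KL}$) and a \emph{uniqueness} statement (two such morphisms representing the same class are approximately unitarily equivalent). Both halves exploit the defining feature of $\mathcal{O}_\infty$-stability: a unital copy of $\mathcal{O}_\infty$ sits inside $B_\omega \cap \theta(A)'$, furnishing a pair of isometries that commute approximately with $\theta$. This is the ``absorption engine'' which converts formal additive constructions at the level of $KK$ into genuine statements about morphisms up to approximate unitary equivalence.

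For uniqueness, I would first prove a stable uniqueness theorem: for any two such $\theta_1, \theta_2$ with $[\theta_1]_{KL}=[\theta_2]_{KL}$, and any sufficiently absorbing nuclear morphism (or trivial representation) $\sigma$ into a corner of $B$, one has $\theta_1 \oplus \sigma \approx_u \theta_2 \oplus \sigma$, where the direct sum is implemented by two orthogonal isometries in $B$ arising from proper infiniteness. The heart of the argument is to realise the vanishing of $[\theta_1] - [\theta_2]$ in $KL_{\mathrm{nuc}}$ as an asymptotic rather than just approximate unitary equivalence on Cuntz pairs; this requires controlling paths of unitaries in the relevant Paschke dual algebra, which is properly infinite and for which one can invoke $K_1$-injectivity phenomena of the sort discussed in Section~\ref{sec:K1injectivity}. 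Having done so, the $\mathcal{O}_\infty$-stability of each $\theta_i$ allows it to absorb $\sigma$ (i.e.\ $\theta_i \approx_u \theta_i \oplus \sigma$), thereby collapsing $\theta_1 \oplus \sigma \approx_u \theta_2 \oplus \sigma$ to $\theta_1 \approx_u \theta_2$.

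For existence, I would first construct one explicit unital full nuclear $\mathcal{O}_\infty$-stable morphism $\theta_0 \colon A \to B$; this can be assembled from a unital completely positive map into a small subalgebra of $B$, Stinespring-dilated and then tensored with an $\mathcal{O}_\infty$-factor that sits inside $B$ (using proper infiniteness to realise the tensor product inside $B$). Given an arbitrary compatible $\kappa$, the difference $\kappa - [\theta_0]_{KL}$ has trivial $K_0$-action on $[1_A]$, so it can be represented by a Cuntz pair whose two legs are morphisms into a corner of $B$. Splicing $\theta_0$ with this Cuntz-pair data diagonally via two orthogonal isometries then yields the required morphism in the class $\kappa$; its unital, full, nuclear, and $\mathcal{O}_\infty$-stable properties are inherited from the components thanks to proper infiniteness of the codomain.

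The principal technical obstacle is proving the stable uniqueness theorem with all four of the ``unital, full, nuclear, $\mathcal{O}_\infty$-stable'' adjectives respected simultaneously. The nuclearity of the morphism and its interaction with the Cuntz-pair picture of $KK_{\mathrm{nuc}}(A,B)$ is precisely where exactness of $A$ is used, ensuring that approximately multiplicative c.p.\ maps give rise to honest $KK_{\mathrm{nuc}}$-classes. Controlling the $KL$-Hausdorffification inside the proof forces one to upgrade from approximate to asymptotic unitary equivalence, and this upgrade is where Kirchberg's detailed analysis of homotopies of unitaries in properly infinite $C^*$-algebras, together with Gabe's framework for nuclear absorbing representations, does the substantive work.
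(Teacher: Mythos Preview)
The paper does not contain a proof of this theorem: it is a survey/problem-list article, and Theorem~\ref{OInftyStableMaps} is quoted from Gabe's memoir \cite{Gabe:MAMS} (with Kirchberg's book \cite{Kirchberg:Book} as antecedent) purely as background for the discussion of one-sided classification in Section~\ref{Sec:Zembed}. There is therefore no ``paper's own proof'' to compare your proposal against.

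That said, your sketch is a fair high-level outline of how the Gabe--Kirchberg approach actually proceeds: the existence/uniqueness split, the use of $\mathcal O_\infty$-stability to absorb auxiliary summands, and the stable uniqueness theorem as the engine of the uniqueness half are all correct structural features of \cite{Gabe:MAMS}. Two cautions. First, your appeal to $K_1$-injectivity of properly infinite algebras (Section~\ref{sec:K1injectivity}) is precisely the open Problem~\ref{PropInfiniteK1Injective}; Gabe does not assume this, and instead works with a carefully constructed absorbing representation framework where the relevant Paschke-type commutants are tractable without needing $K_1$-injectivity in general. Second, your existence argument is a bit vague at the step where you ``splice $\theta_0$ with Cuntz-pair data'': in Gabe's treatment this is handled via his notion of $\mathcal O_\infty$-stable nuclearly absorbing representations and a concrete realisation of $KL_{\mathrm{nuc}}$ classes, rather than an ad hoc diagonal construction. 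These are not fatal gaps in a sketch, but they are exactly the places where the real work lies.
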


 The point is that the domain and codomain hypotheses are obviously necessary: separability of $A$ is crucial to appeal to intertwining arguments, the exactness of $A$ is equivalent to the existence of at least one full nuclear map out of $A$, and the proper infiniteness of $B$ is necessary for the existence of a unital $\mathcal O_\infty$-stable map into $B$ (and also evidently sufficient when $A\coloneqq \mathcal O_\infty$).  The simplicity hypothesis (fullness), nuclearity condition, and even the tensorial absorption condition needed for classification have all been transferred to the level of the maps.

A striking consequence of this one-sided Kirchberg--Phillips theorem is that for any unital {$C^*$}-algebra $B$, any two unital embeddings $\mathcal O_\infty \rightarrow B$ are approximately unitarily equivalent (see \cite[Proposition~4.4]{Gabe:Crelle}, which records the stronger uniqueness up to asymptotic unitary equivalence; see also \cite[Theorem~7.4.4]{Bouwen:Masters} for an alternative, more direct, proof).\footnote{The analogous result for $\mathcal O_2$ also holds by \cite[Theorem~3.6]{Rordam:Crelle} (cf.\ \cite[Lemma~5.4]{BlanchardKirchberg:JFA}).}  
Indeed, this follows since $\mathcal O_\infty$ satisfies the UCT and there is only one unital map out of $\mathcal O_\infty$ at the level of $K$-theory. This result (and its $\mathcal O_2$-counterpart) ensures that Gabe's natural definition of $\mathcal O_2$-stability and $\mathcal O_\infty$-stability characterise the unital maps which are approximately unitarily equivalent to maps factoring through $\mathcal O_2$-stable and $\mathcal O_\infty$-stable {$C^*$}-algebras (see \cite[Corollary~4.5]{Gabe:Crelle}).  

The lack of a counterpart result for $\mathcal Z$ makes it much less clear how far one-sided classification results can be pushed in the stable finite setting.  In the long term, what kind of regularity hypothesis on the codomain are necessary for classification theorems? 
While it is possible to define the notion of $\Z$-stable morphisms, without such a uniqueness result for embeddings of $\Z$, it is unclear whether such maps should be (under very general conditions) approximately unitary equivalent to maps factoring through $\Z$-stable {$C^*$}-algebras. Accordingly, we regard the following as an important test question.

\begin{question}\label{unique-z}
    Let $B$ be a unital {$C^*$}-algebra.  Are any two unital embeddings $\mathcal Z \rightarrow B$ approximately unitarily equivalent, or even asymptotically unitary equivalent?  If this fails, what are the minimal hypotheses on $B$ needed to obtain such a result?
\end{question}

If $B$ itself is $\mathcal Z$-stable, Problem~\ref{unique-z} has a positive answer by the general theory of strongly self-absorbing algebras.  Some other cases can be seen from Robert's classification theorem in \cite{Robert:AIM} -- see \cite[Proposition~6.3.1]{Robert:AIM} in particular. As Robert's work gives rise to a classification of maps from $\Z$ into {$C^*$}-algebras of stable rank one by Cuntz semigroup data (Robert's $\widetilde{\Cu}$), it is very natural to consider Problem~\ref{unique-z} when $B$ is a Villadsen algebra of the first type (\cite{Villadsen:JFA,Toms:Ann,TomsWinter:JFA}), which have stable rank one.

\begin{question}[{cf. \cite[Questions 16.1, 16.2, 16.3]{GardellaPerera}}]\label{Q:CuVilToms}
What is the Cuntz semigroup of the Villadsen/Toms counterexamples to classification with stable rank one, and in particular, do there exist two distinct Cuntz semigroup morphisms from $\Cu(\Z)$ into these Cuntz semigroups?
\end{question}

We note (with thanks to Jamie Gabe) that the version of Problem~\ref{unique-z} with $M_{2^\infty}$ in place of $\mathcal Z$ has a negative answer (with obstructions arising from torsion in $K_0$).  For example, consider a unital Kirchberg algebra $B$ with $K_0(B)=\mathbb Z[1/2]/\mathbb Z$ with $[1_B]_0=0$, and $K_1(B)=0$. Then the zero map on $K_0$ and the quotient map $K_0(M_{2^\infty})\cong \mathbb Z[1/2]\twoheadrightarrow \mathbb Z[1/2]/\mathbb Z\cong K_0(B)$ are distinct unit-preserving morphisms. These are realised by unital $^*$-homomorphisms, which cannot be approximately unitarily equivalent.

The problem of when unital embeddings of $\mathcal Z$ exist is also of interest.  It is known that receiving a unital map from $\mathcal Z$ does entail some amount of regularity, as it gives Cuntz semigroup divisibility conditions on the unit (see \cite[Lemma~4.2]{Rordam:IJM}). This sort of obstruction underpins the construction of a unital simple infinite-dimensional {$C^*$}-algebra $A$ such that there is no unital embedding $\mathcal Z \rightarrow A$ (\cite{DHTW:MRL}).  But any regularity gained from an embedding of $\Z$ is certainly far from the full force of strict comparison.  Indeed, the Villadsen-type counterexamples of Toms (\cite{Toms:Ann}) all contain unital copies of $\Z$.  We hope answering the following question will shed light on the appropriate condition to replace proper infiniteness in a stably finite version of Theorem~\ref{OInftyStableMaps}.

\begin{question}
Characterise those  unital (simple) {$C^*$}-algebras $B$ for which there exists an embedding $\Z\hookrightarrow B$.
\end{question}

However, it is not clear how much regularity is forced by a unital embedding of a well-behaved {$C^*$}-algebra.  For example, the following problem is open.

\begin{question}\label{M2InftyInfinite}
 Does there exist a unital embedding of $M_{2^\infty}$ into a simple infinite C$^*$-algebra $B$ such that the image contains finite projections?
\end{question}

When R\o{}rdam constructed a unital simple infinite {$C^*$}-algebra $B$ with a finite projection in \cite{Rordam:Acta}, he arranged for $1_B$ to decompose into two equivalent finite projections, giving rise to an embedding $M_2\to B$ containing a finite projection in its image.  By modifying the construction, for each $n$, one can obtain a  unital simple infinite $B$ with a unital embedding $M_n \to B$ such that $1_B$ is the only infinite projection in the image of the embedding, but it seems challenging to combine these to answer Problem \ref{M2InftyInfinite}.  An analogous question with the Jiang--Su algebra in place of the CAR algebra (asking for such an embedding whose range contains finite positive elements in the sense of the Cuntz comparison) is just as natural; we would expect it to have the same answer as Problem \ref{M2InftyInfinite}.

\section{Classification for non-simple {$C^*$}-algebras}\label{sec:nonsimpleclass}

Whereas a separably acting von Neumann algebra decomposes in a nice way as a direct integral of simple von Neumann algebras (factors), there is no corresponding statement that allows results for simple {$C^*$}-algebras to be transferred to the non-simple case.  Nevertheless, in a major tour de force, Kirchberg was able to extend the Kirchberg--Phillips classification theorem to the non-simple case, classifying all separable nuclear $\mathcal O_\infty$-stable {$C^*$}-algebras using ideal-related $KK$-theory.

\begin{theorem}[{\cite{Kirchberg:Book}; see also a new proof by Gabe in \cite{Gabe:MAMS}}]\label{OInftyStableClass}
    Let $A$ and $B$ be unital separable nuclear $\mathcal O_\infty$-stable {$C^*$}-algebras.
    Then $A \cong B$ if and only if they are unitally ideal-related $KK$-equivalent; i.e.\ $X\coloneqq \mathrm{Prim}(A)\cong \mathrm{Prim}(B)$, and this topological isomorphism is accompanied by a unital $KK_X$-equivalence between $A$ and $B$.
\end{theorem}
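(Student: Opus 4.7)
The forward direction is essentially formal: any isomorphism $A\to B$ induces a homeomorphism $\mathrm{Prim}(A)\cong\mathrm{Prim}(B)$ and gives rise to a unital $KK_X$-equivalence by taking its class in equivariant $KK$-theory. So the substance is in the reverse implication, and the plan is to reduce it to a one-sided classification of ideal-preserving morphisms and an Elliott-style two-sided intertwining.

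The core step is to establish an ideal-related analogue of Theorem \ref{OInftyStableMaps}: for unital separable nuclear $\mathcal O_\infty$-stable $A$ and $B$ together with a fixed homeomorphism $X\cong \mathrm{Prim}(A)\cong \mathrm{Prim}(B)$, unital $X$-equivariant $^*$-homomorphisms $\theta\colon A\to B$ whose restriction to each primitive quotient is full and nuclear are classified up to approximate unitary equivalence by their classes in $KK_X(A,B)$ (or its Hausdorffification $KL_X$), with the range being precisely the unit-preserving invertible-on-ideal-lattice elements when one passes to the equivalence setting. The output of this step is both an \emph{existence} statement (every $KK_X$-class of the appropriate form is realised by such a $\theta$) and a \emph{uniqueness} statement (two such $\theta$'s with the same class are approximately unitarily equivalent). $\mathcal O_\infty$-stability of the codomain gives access to: an abundance of isometries in every corner corresponding to an open subset of $X$, Cuntz-sum constructions that work ideal-equivariantly, and the McDuff-type property letting one absorb extra copies of $\mathcal O_\infty$ inside central sequences. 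These are the ingredients that make an equivariant Kasparov--Voiculescu absorption theorem work in the present non-simple setting.

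Granted the one-sided classification, the theorem follows from the standard Elliott intertwining. Let $\kappa\in KK_X(A,B)$ be the given unital $X$-equivariant $KK$-equivalence and $\kappa^{-1}\in KK_X(B,A)$ its inverse. Use existence to realise $\kappa=[\phi]$ and $\kappa^{-1}=[\psi]$ by unital ideal-preserving nuclear $^*$-homomorphisms $\phi\colon A\to B$ and $\psi\colon B\to A$. The compositions $\psi\phi$ and $\phi\psi$ have $KK_X$-classes equal to $1_A$ and $1_B$ respectively, hence by uniqueness are approximately unitarily equivalent to $\mathrm{id}_A$ and $\mathrm{id}_B$. A standard back-and-forth argument, with unitaries chosen on a countable dense $^*$-algebra and successive approximations strengthened to the point of convergence, then upgrades $(\phi,\psi)$ to an actual $^*$-isomorphism $A\cong B$. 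To obtain asymptotic rather than merely approximate unitary equivalence (which is needed to make the intertwining converge without any extra work), one appeals to the proper asymptotic unitary equivalence machinery discussed in Section~\ref{sec:K1injectivity}, using that the relevant Paschke dual algebras are $\mathcal O_\infty$-stable and hence $K_1$-bijective via Theorem~\ref{Jiang} applied to the properly infinite corona commutants.

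The main obstacle is the one-sided classification of ideal-preserving morphisms, and within it the equivariant $KK$-uniqueness step. In the simple (non-equivariant) case one uses Kirchberg's $\mathcal O_\infty$-absorption together with a Cuntz pair picture of $KK$ to pass from $[\phi,\psi]=0$ in $KK(A,B)$ to an asymptotic unitary equivalence. In the $X$-equivariant setting one must carry all constructions through a possibly complicated lattice of ideals: absorbing representations have to be built into $\mathcal M(B\otimes\mathcal K)^X$ (multiplier algebras respecting the $X$-action), Cuntz sums have to be performed ideal by ideal coherently, and the passage from $KK_X$ triviality to a unitary path has to be done with all unitaries preserving every ideal. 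Kirchberg's original approach in \cite{Kirchberg:Book} handles this via his theory of $X$-$\sigma$-ideals and epimorphism theorem, while Gabe's approach in \cite{Gabe:MAMS} replaces this with a direct classification of one-sided nuclear $\mathcal O_\infty$-stable morphisms using his equivariant absorption theorem; either route is viable, with the latter arguably more modular. Once this equivariant uniqueness is in hand, the rest of the argument is a direct adaptation of the Elliott intertwining that proves the simple Kirchberg--Phillips theorem.
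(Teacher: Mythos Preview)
The paper does not prove this theorem: it is a survey and problem list, and Theorem~\ref{OInftyStableClass} is stated with attribution to Kirchberg's manuscript \cite{Kirchberg:Book} and Gabe's new proof \cite{Gabe:MAMS}, without any argument given in the paper itself. So there is no in-paper proof to compare your proposal against.

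That said, your sketch is a reasonable outline of the architecture common to both cited proofs: reduce to a one-sided classification of (full, nuclear, $X$-equivariant, $\mathcal O_\infty$-stable) morphisms by their $KK_X$-classes, then run an Elliott intertwining. One technical point deserves care. Your appeal to Theorem~\ref{Jiang} for $K_1$-injectivity of the Paschke duals is not quite the right tool: Jiang's theorem concerns $\mathcal Z$-stable algebras, and while $\mathcal O_\infty$-stability implies $\mathcal Z$-stability, the relative commutants in coronas that arise here are not separable, and establishing their (separable) $\mathcal O_\infty$-stability is itself nontrivial rather than something you can take for granted. In practice, neither Kirchberg nor Gabe routes the asymptotic uniqueness through Jiang's theorem. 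Kirchberg uses his $X$-equivariant Weyl--von~Neumann absorption and the machinery of $\sigma_X$-ideals; Gabe obtains asymptotic (not just approximate) unitary equivalence directly from his analysis of nuclear $\mathcal O_\infty$-stable maps, exploiting that the relevant commutants are simple purely infinite and hence $K_1$-bijective by Cuntz's theorem rather than by Jiang's. Your high-level plan is sound, but that particular justification should be replaced.
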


Translating Theorem \ref{OInftyStableClass} into an ideal-related $K$-theory classification theorem (under suitable UCT hypotheses) is primarily a problem in homological algebra and has seen considerable attention (see \cite{Meyer19,BentmannMeyer:DM}, for example, together with work on the relevant UCT conditions in \cite{MeyerNest:CJM,MeyerNest:MJM}). 

In the important special case of $\mathcal O_2$-stable algebras, ideal-related $KK$-theory reduces to the primitive ideal space, or equivalently, the ideal lattice.  The version of this we state uses the framework of Gabe's new approach to this result from \cite{Gabe:Crelle}.

\begin{theorem}[{\cite{Kirchberg:Book,Gabe:Crelle}}]\label{O2StableClass}
    Let $A,B$ be unital separable nuclear $\mathcal O_2$-stable {$C^*$}-algebras.  The following are equivalent:
    \begin{enumerate}
    \item $A\cong B$;
    \item $\mathrm{Prim}(A)$ is homeomorphic to $\mathrm{Prim}(B)$;
    \item $\Cu(A) \cong \Cu(B)$.\footnote{For purely infinite {$C^*$}-algebras, the Cuntz semigroup exactly encodes the ideal lattice.}
    \end{enumerate}
\end{theorem}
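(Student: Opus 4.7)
The plan is to reduce this theorem to the ideal-related Kirchberg--Phillips theorem (Theorem~\ref{OInftyStableClass}), after which the only content is to show that the unital $KK_X$-class of a unital $\mathcal O_2$-stable {$C^*$}-algebra is determined by $X=\mathrm{Prim}(A)$ alone.

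The easy implications (1)$\Rightarrow$(2) and (1)$\Rightarrow$(3) are functoriality. For the equivalence (2)$\Leftrightarrow$(3), I would use that $\mathcal O_2$-stability forces $A\otimes\mathcal K$ to be strongly purely infinite, so that every non-zero class in $\Cu(A)$ is $\infty$-properly infinite and depends only on the closed two-sided ideal it generates. Consequently $\Cu(A)$ is canonically isomorphic, as an ordered semigroup whose addition is realised as supremum, to the lattice of open subsets of $\mathrm{Prim}(A)$ with basepoint $\emptyset$. Since $\mathrm{Prim}(A)$ is sober (being second-countable and $T_0$) it is recovered from this lattice, so a homeomorphism of primitive ideal spaces induces a Cuntz semigroup isomorphism and conversely.

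For the main implication (2)$\Rightarrow$(1), I would fix a homeomorphism $h\colon\mathrm{Prim}(A)\to\mathrm{Prim}(B)$, identify both sides with a common second-countable space $X$, and view $A$ and $B$ as {$C^*$}-algebras over $X$. By Theorem~\ref{OInftyStableClass} it suffices to produce a unital $KK_X$-equivalence $A\to B$. The driving input is that $\mathcal O_2$ satisfies the UCT and has vanishing $K$-theory, so $\mathcal O_2\sim_{KK}0$ and, via the K\"unneth formula, $KK(D,E\otimes\mathcal O_2)=0$ for every $D$ in the UCT class. An ideal-equivariant refinement of this principle, worked through the subquotients of the primitive ideal filtration of $A$ and $B$, should yield a canonical (and automatically invertible) nuclear $KK_X$-class $A\to B$ depending only on $X$; Theorem~\ref{OInftyStableClass} then upgrades this class to a $^*$-isomorphism.

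The main obstacle I anticipate is promoting the triviality of $KK_X$ for $\mathcal O_2$-stable algebras, which naturally lives at the stable non-unital level, to a \emph{unit-preserving} $KK_X$-equivalence, and handling arbitrary (possibly pathological, non-Hausdorff, infinite-dimensional) primitive ideal spaces rather than only well-behaved ones. Two routes look viable. The first is via Kirchberg's $\mathcal O_2$-embedding and absorption theorems, constructing for each primitive ideal space $X$ a universal unital model into which every unital separable nuclear $\mathcal O_2$-stable algebra over $X$ unitally absorbs, thereby reducing (2)$\Rightarrow$(1) to identifying $A$ and $B$ each with the model. The second is to follow Gabe's approach in \cite{Gabe:Crelle}, which bypasses $KK_X$ entirely: one directly classifies unital ideal-preserving nuclear $\mathcal O_2$-stable $^*$-homomorphisms between such algebras by their action on $\mathrm{Prim}$, then constructs mutually approximately inverse such maps and runs a two-sided Elliott intertwining to extract the isomorphism $A\cong B$.
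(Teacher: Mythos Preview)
The paper does not supply a proof of this theorem; it is stated as a cited result (Kirchberg's manuscript and Gabe's \cite{Gabe:Crelle}), with the surrounding text only remarking that ``in the important special case of $\mathcal O_2$-stable algebras, ideal-related $KK$-theory reduces to the primitive ideal space, or equivalently, the ideal lattice,'' and that the formulation given follows Gabe's framework. So there is no in-paper proof to compare against.

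Your outline is sound and correctly identifies both of the approaches that the paper cites. Your (2)$\Leftrightarrow$(3) argument is exactly what the footnote records (for purely infinite algebras the Cuntz semigroup encodes the ideal lattice), and your reduction of (2)$\Rightarrow$(1) to Theorem~\ref{OInftyStableClass} together with the vanishing of ideal-related $KK$ for $\mathcal O_2$-stable algebras is precisely Kirchberg's route; your second route is Gabe's. One small correction: sobriety of $\mathrm{Prim}(A)$ does not follow from being second-countable and $T_0$ alone; it is a separate property of primitive ideal spaces of separable {$C^*$}-algebras (see the discussion at the start of Section~21 of the paper). This does not affect the argument, since you only need that the open-set lattice determines $\mathrm{Prim}(A)$, and sobriety is indeed available.
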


Kirchberg and Gabe's results in Theorems \ref{OInftyStableClass} and \ref{O2StableClass} suggest the possibility that classification could hold for all separable nuclear $\Z$-stable {$C^*$}-algebras.  But outside the $\mathcal O_\infty$-stable case (the case when both the ideal and quotient are purely infinite in Problem \ref{q:NonsimpleClass}), this seems challenging.  Indeed, even with just one ideal, while there are various classification results for extensions of classifiable algebras, and results for various inductive limits (particularly, those satisfying the ideal property; see \cite{GJL:TLMS} for example), there are no abstract results where tensorial absorption is the fundamental classification hypothesis.

\begin{challenge}\label{q:NonsimpleClass}
    Classify those (unital) separable nuclear $\mathcal Z$-stable {$C^*$}-algebras (potentially satisfying suitable UCT hypotheses) which have exactly one proper non-zero ideal using a suitable combination of ideal-related $KK$-theory and tracial data.
\end{challenge}

It may be natural to start with the case when both the ideal and the quotient are stably finite before tackling mixed cases where one of the ideal or quotient are stably finite and the other is purely infinite.  We hope that a solution to Problem~\ref{q:NonsimpleClass} could be the beginning of a greater challenge of classifying $\mathcal Z$-stable {$C^*$}-algebras with finitely many ideals and beyond.

For non-simple infinite {$C^*$}-algebras, Kirchberg's $\mathcal O_2$-stable classification -- while a huge achievement in its own right -- is an order of magnitude simpler than his final $\mathcal O_\infty$-stable classification as $\mathcal O_2$-stability kills $KK$-theory (in all quotients of all ideals).  In the stably finite setting, the Razak--Jacelon algebra $\mathcal W$, explored in \cite{Razak:CJM,Dean:CJM,Jacelon:JLMS}, is a natural analogue of $\mathcal O_2$. It is $KK$-contractible, simple, separable, nuclear,  $\Z$-stable with unique tracial state, and so tensoring by $\mathcal W$ annihilates $K$-theory, but leaves tracial data unchanged. Precisely, one works with the lower semicontinuous tracial weights (which come with a natural cancellative cone structure and topology making it a suitable potential classification invariant;\footnote{For a closed ideal $I\lhd A$, one gets a lower semicontinuous tracial weight on $A$ by \begin{equation} \tau_I(a)\coloneqq \begin{cases} 0,\quad &a \in I_+; \\ \infty,\quad &a\in A_+\setminus I_+.\end{cases}
\end{equation}  In this way, the cone of lower semicontinuous tracial weights recovers the ideal lattice of $A$.\label{FootnoteIdealTraces}} see \cite{ERS:AJM}). This sets up the following problem, posed by Leonel Robert. 


\begin{question}\label{q:Robert}
    For a separable nuclear {$C^*$}-algebra $A$, does the isomorphism class of $A\otimes\mathcal W\otimes \mathcal K$ depend only on the cone of lower semicontinuous tracial weights on $A$?
\end{question}

In the simple setting, Problem~\ref{q:Robert} has a positive answer: in the traceless case it is the uniqueness of $\mathcal O_2\otimes\mathcal K$, while for stably finite algebras it is the $KK$-contractible classification result of \cite{EGLN:JGP}, combined with \cite{CE:APDE}. For non-simple algebras, one should view Problem \ref{q:Robert} as proposing a tracial analogue of $\mathcal O_2$-stable classification.  In the traceless setting (in the sense of Definition \ref{Def:TracialWeight}), every lower semicontinuous tracial weight on $A$ comes from a closed ideal as in footnote \ref{FootnoteIdealTraces}.  Also, using the following result of R\o{}rdam, if $A$ is separable nuclear and traceless, then $A\otimes\mathcal W\otimes\mathcal K$ is $\mathcal O_2$-stable (since $\mathcal W$ is $\Z$-stable, and $\mathcal W\otimes\mathcal O_\infty\cong \mathcal O_2\otimes\mathcal K$).  In this way, Kirchberg's $\mathcal O_2$-stable classification theorem given in Theorem \ref{O2StableClass} gives a positive answer to Problem \ref{q:Robert} for traceless algebras. 

\begin{theorem}[R\o{}rdam, {\cite[Theorem 5.2]{Rordam:IJM}}]\label{Thm:RordamZInfinite}
Let $A$ be a separable nuclear $\Z$-stable {$C^*$}-algebra.  Then $A$ is $\mathcal O_\infty$-stable if and only if $A$ is traceless.
\end{theorem}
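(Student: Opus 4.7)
The plan is to prove the two directions separately. The forward implication, that $\mathcal O_\infty$-stability forces tracelessness, is a structural fact that needs neither nuclearity nor $\mathcal Z$-stability and stems from the purely infinite nature of $\mathcal O_\infty$. The reverse implication carries all the weight and proceeds by establishing pure infiniteness of $A$ (in the sense of Kirchberg--R\o{}rdam), which in the nuclear setting is known to coincide with $\mathcal O_\infty$-absorption.

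For the easy direction, suppose $\tau$ is a lower semicontinuous tracial weight on $A \cong A \otimes \mathcal O_\infty$. Pick a sequence of isometries $(s_i)_{i\in\mathbb N} \subseteq \mathcal O_\infty$ with pairwise orthogonal range projections $p_i \coloneqq s_i s_i^*$, and for $a \in A_+$ apply the tracial identity to $a^{1/2}\otimes s_i$ to obtain $\tau(a \otimes p_i) = \tau(a \otimes 1)$ for every $i$. Since the $a \otimes p_i$ are pairwise orthogonal and bounded above by $a \otimes 1$, lower semicontinuity forces $\tau(a \otimes 1) \in \{0, \infty\}$. Approximating arbitrary positive elements of $A \otimes \mathcal O_\infty$ by sums of elementary tensors and using that each $\tau$ restricts sensibly to hereditary subalgebras generated by such tensors shows $\tau$ is $\{0,\infty\}$-valued; i.e.\ $A$ is traceless.

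For the hard direction, assume $A$ is separable, nuclear, $\mathcal Z$-stable, and traceless. The first key input is R\o{}rdam's theorem that $\mathcal Z$-stability implies strict comparison of positive elements by lower semicontinuous $2$-quasitraces (\cite[Corollary~4.6]{Rordam:IJM}); in view of nuclearity, Haagerup's Theorem~\ref{thm:HaagerupQT} upgrades these to traces, which by tracelessness are $\{0,\infty\}$-valued. Consequently, whenever $a,b \in A_+$ are non-zero with $b$ in the closed ideal generated by $a$, one has $d_\tau(b) \leq d_\tau(a)$ for every such quasitrace (both sides equal $\infty$ when non-zero), and strict comparison yields $b \precsim a$. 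Applied to $a \oplus a \in M_2(A)$, this gives $a \oplus a \precsim a$, which is the standard Cuntz-theoretic characterisation that $a$ is properly infinite. Tracelessness additionally forbids characters (which would pull back to bounded traces), and so $A$ is purely infinite in the Kirchberg--R\o{}rdam sense.

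The main obstacle is the final step: converting pure infiniteness into the \emph{strong} pure infiniteness (a genuinely two-variable comparison property) that is actually equivalent to $\mathcal O_\infty$-stability for separable algebras via Kirchberg's absorption theorem (of which Theorem~\ref{Thm:Oinftystable} is the simple case). This is where nuclearity of $A$ is indispensable: Kirchberg--R\o{}rdam showed that, for separable nuclear $C^*$-algebras, pure infiniteness automatically upgrades to strong pure infiniteness, and hence to $\mathcal O_\infty$-stability. Once this is invoked, the proof is complete. The delicate point in their upgrade -- producing the sharper comparison witnesses that strong pure infiniteness demands from the weaker Cuntz-theoretic data we have in hand -- is what makes the hard direction genuinely hard, and is the reason the theorem fails outside the nuclear setting.
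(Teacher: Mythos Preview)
Your argument has a genuine gap in the hard direction. You correctly derive pure infiniteness: $\mathcal Z$-stability gives almost unperforation of $\Cu(A)$, and in the traceless setting this is exactly the Kirchberg--R\o{}rdam comparison condition (as the paper notes in the bullet defining pure infiniteness in Section~\ref{S19}). But your final step asserts that ``Kirchberg--R\o{}rdam showed that, for separable nuclear $C^*$-algebras, pure infiniteness automatically upgrades to strong pure infiniteness.'' No such theorem exists --- this is precisely the open Problem~\ref{q:PIConditions}, and the paper explicitly points out (in the paragraph following Theorem~\ref{Thm:KR-PI}) that the potential equivalence of pure infiniteness and strong pure infiniteness for separable nuclear algebras is \emph{equivalent} to an unresolved regularity question.

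The actual proof, as the paper describes just before Theorem~\ref{Thm:RordamZInfinite}, does not pass through pure infiniteness at all: R\o{}rdam establishes \emph{strong} pure infiniteness directly for traceless $\mathcal Z$-stable algebras, using $\mathcal Z$-stability in a more substantial way than merely extracting strict comparison. The two-variable comparison witnesses required by strong pure infiniteness are produced using the extra structure that tensorial absorption of $\mathcal Z$ provides (roughly, the ability to move and separate elements via the $\mathcal Z$-factor), not from almost unperforation alone. Only then does Theorem~\ref{Thm:KR-PI}(ii) convert strong pure infiniteness into $\mathcal O_\infty$-stability under nuclearity. So you have identified the right target (strong pure infiniteness) but invoked a nonexistent shortcut to reach it; the route you need is R\o{}rdam's direct argument in \cite[Theorem~5.2]{Rordam:IJM}.
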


Note that the combination of Problems \ref{q:NonsimpleClass} and \ref{q:Robert} is open: for a separable nuclear {$C^*$}-algebra $A$ with one proper non-zero ideal, does the isomorphism class of $A\otimes\mathcal W\otimes \mathcal K$ depend only on the cone of lower semicontinuous extended traces on $A$?

Szab\'o has asked the following, predicting a strong structural result for the {$C^*$}-algebras in Problem~\ref{q:Robert} under an additional strong null-homotopy hypothesis.

\begin{question}\label{q:NullHomotopy1NCCW}
Let $A$ be a separable nuclear {$C^*$}-algebra, which is \emph{homotopic to zero in an ideal-preserving fashion}, i.e.\  there is a point-norm continuous family $(\theta_t)_{t\in[0,1]}$ of endomorphisms of $A$ connecting $\theta_0=0$ to $\theta_1=\mathrm{id}_{A}$, such that $\theta_t(I)\subseteq I$ for all ideals $I\lhd A$.  Must $A\otimes\mathcal W\otimes\mathcal K$ be an inductive limit of $1$-dimensional NCCW-complexes?
\end{question}

While this question arose in the context of nuclear dimension bounds for $\Z$-stable {$C^*$}-algebras (see a discussion towards the end of Section~\ref{sec:NonsimpleTW}), it is really a classification-type question. The basis for reasonably expecting a positive answer comes from the following striking result of Kirchberg and R\o{}rdam, which shows Problem~\ref{q:NullHomotopy1NCCW} has a positive answer in the $\mathcal O_\infty$-stable case.
\begin{theorem}[{\cite[Theorem~5.12]{KirchbergRordam:GAFA}}]\label{Thm:KRNullHomotopic}
Any separable nuclear $\mathcal O_\infty$-stable {$C^*$}-algebra which is homotopic to zero in an ideal-preserving fashion has an inductive limit description where the building blocks are homogeneous with 1-dimensional spectra.
\end{theorem}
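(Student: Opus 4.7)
My proposed strategy is to combine Kirchberg's classification of $\mathcal O_\infty$-stable {$C^*$}-algebras (Theorem~\ref{OInftyStableClass}) with an explicit range-of-invariant construction modelled on the primitive ideal space $X \coloneqq \mathrm{Prim}(A)$. The idea is to realise $A$ as a $KK_X$-equivalent inductive limit algebra of the desired form, and then cite classification to identify the two.

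\textbf{Step 1: the $KK_X$-data is trivial.} Each $\theta_t$ respects the ideal lattice of $A$ by hypothesis, so the family $(\theta_t)_{t\in[0,1]}$ defines a homotopy in the ideal-equivariant $KK$-group $KK_X(A,A)$ between $[\mathrm{id}_A]$ and $[0]$. Homotopy invariance of $KK_X$ then gives $[\mathrm{id}_A]=0$ in $KK_X(A,A)$, and by naturality this forces $KK_X(A,B) = 0$ for every separable {$C^*$}-algebra $B$ over $X$. In particular, the $X$-equivariant $KK$-data of $A$ is the trivial one attached to $X$.

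\textbf{Step 2: constructing a 1-dimensional homogeneous model.} I would produce an inductive limit $B = \varinjlim (B_n, \psi_n)$, where each $B_n$ is a direct sum of building blocks of the form $C_0(Y_{n,i}, M_{k_{n,i}})$ with $\dim(Y_{n,i})\leq 1$, and where the system is designed so that (a) $\mathrm{Prim}(B)\cong X$ in a way that matches $A$, (b) $B$ is $\mathcal O_\infty$-stable, and (c) $B$ is ideal-preservingly null-homotopic (so that Step~1 also yields trivial $KK_X$-self-data for $B$). The $\mathcal O_\infty$-absorption cannot come from the (finite) building blocks themselves and must be engineered into the connecting maps $\psi_n$: one arranges many orthogonal approximate isometries at each stage whose images in the limit generate a central copy of $\mathcal O_\infty$ inside the sequence algebra of $B$. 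The ideal-preserving null-homotopy is built by exploiting the interval factor present in the $1$-dimensional $Y_{n,i}$'s and contracting it compatibly with the ideal-lattice structure $X$.

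\textbf{Step 3: invoke classification.} By Theorem~\ref{OInftyStableClass} (and its non-unital refinement due to Kirchberg and Gabe), separable nuclear $\mathcal O_\infty$-stable {$C^*$}-algebras are classified up to isomorphism by their $X$-equivariant $KK$-data for $X$ their primitive ideal space. Both $A$ and $B$ are separable, nuclear, $\mathcal O_\infty$-stable, have primitive ideal space $X$, and have trivial $X$-equivariant $KK$-data (by Step~1 applied to each). They are therefore $KK_X$-equivalent and hence isomorphic, exhibiting $A$ as an inductive limit of $1$-dimensional homogeneous {$C^*$}-algebras as required.

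\textbf{Main obstacle.} The real work is Step~2: engineering a model whose primitive ideal space recovers a potentially complicated second countable $T_0$-space $X$, while simultaneously achieving $\mathcal O_\infty$-stability from $1$-dimensional homogeneous building blocks (which are themselves rigid and far from purely infinite) and preserving an ideal-lattice-respecting null-homotopy in the limit. All the flexibility has to sit in the connecting maps $\psi_n$, and balancing these three constraints against each other is the delicate part; this is where I expect ideas from the theory of graph and groupoid models for purely infinite inductive limits (in the spirit of Spielberg-type constructions cited in Section~\ref{S14}) to play a decisive role.
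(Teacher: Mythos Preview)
The paper does not supply its own proof of this result; it is quoted verbatim from Kirchberg--R{\o}rdam's GAFA paper.  That said, your outline does track the shape of the original argument (build a model, invoke classification), so let me comment on where it lands and where it falls short.

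Your Step~1 is correct, but you stop one line too early.  Once $[\mathrm{id}_A]=0$ in $KK_X(A,A)$, the algebra $A$ is in fact $\mathcal O_2$-stable: $A\otimes\mathcal O_2$ is separable, nuclear, $\mathcal O_\infty$-stable with $\mathrm{Prim}(A\otimes\mathcal O_2)\cong X$, and is $KK_X$-contractible because $\mathcal O_2$ is $KK$-contractible; hence Theorem~\ref{OInftyStableClass} gives $A\cong A\otimes\mathcal O_2$.  This matters, because you can then replace Step~3 by the much simpler $\mathcal O_2$-stable classification (Theorem~\ref{O2StableClass}), where the only invariant is the primitive ideal space.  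In particular, in Step~2 you do \emph{not} need to engineer an ideal-preserving null-homotopy for $B$, nor verify $KK_X$-triviality of $B$ by hand: it suffices to build any separable nuclear $\mathcal O_2$-stable $B$ with $\mathrm{Prim}(B)\cong X$ as an inductive limit of $1$-dimensional homogeneous blocks.  Imposing the null-homotopy on $B$ as an extra constraint, as you do, is circular (it would follow a posteriori from $B\cong A$) and makes the construction look harder than it is.

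The genuine gap is that Step~2, as written, is a wish rather than a construction.  This is exactly where the content of the Kirchberg--R{\o}rdam paper lies: one must (a) identify the topological constraint that the ideal-preserving null-homotopy places on $X$ (roughly, $X$ has no non-empty compact open subsets, reflecting that no quotient of $A$ can be unital), and (b) show that for every second countable locally compact $T_0$ point-complete space $X$ satisfying this constraint, there is an $\mathcal O_2$-stable inductive limit of algebras of the form $\bigoplus_i C_0((0,1],M_{k_i})$ with primitive ideal space $X$.  Part~(b) is a substantial generalisation of R{\o}rdam's $\mathcal A_{[0,1]}$ construction (which handles the case $X=(0,1]$ with the right order topology), and your appeal to Spielberg-type graph models does not obviously produce homogeneous $1$-dimensional building blocks.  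Until you can carry out (a) and (b), the proposal is a correct reduction but not a proof.
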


We end this section by returning to classification results for algebras with non-trivial $K$-theory. Graph {$C^*$}-algebras give a particularly prominent class which mixes stably finite and purely infinite behaviour. Following early work by R\o rdam (\cite{Rordam:MA}) and Restorff (\cite{Restorff:Crelle}), $K$-theoretic invariants (e.g.\ `filtered'  $K$-theory, which is sometimes called `filtrated' $K$-theory)  which carefully account for the $K$-theory of ideal-quotients have been identified and studied. Decades of work in this direction culminated in the remarkable complete classification of graph algebras by Eilers, Restorff, Ruiz and S\o{}rensen (\cite{ERRS:Duke}).

\begin{theorem}[{\cite[Theorem 3.1]{ERRS:Duke}}]\label{GraphClassification}
Let $A$ and $B$ each be the stabilized graph {$C^*$}-algebra associated to a directed graph with finitely many vertices and countably many edges.
Then $A \cong B$ if and only if their ordered reduced filtered $K$-theories, relative to gauge-invariant prime ideals, are isomorphic. 
\end{theorem}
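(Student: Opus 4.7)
My plan is to exploit the very rigid combinatorial structure of graph $C^*$-algebras, which places them simultaneously inside the framework of abstract $KK_X$-classification (as in Theorem~\ref{OInftyStableClass}) and inside a concrete ``graph-moves'' framework generalising the symbolic-dynamical theory of flow equivalence for shifts of finite type. The proof will necessarily split into existence and uniqueness halves, but with a twist: the uniqueness (invariance) half is relatively clean, while the existence half is carried by a delicate combinatorial reduction to a small list of graph moves.

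I would begin by recalling the well-understood ideal theory for a graph $E$ with finitely many vertices: the gauge-invariant ideals of $C^*(E)$ are parametrised by admissible pairs (saturated hereditary sets together with a choice of breaking vertices), the gauge-invariant primitive ideal space $\mathrm{Prim}_\gamma(C^*(E))$ is finite, and each gauge-invariant simple subquotient is either an AF algebra (from a finite acyclic piece) or a unital UCT Kirchberg algebra (from a component satisfying Condition~(K) with a cycle). Since $C^*(E)$ is the $C^*$-algebra of an amenable \'etale groupoid, Theorem~\ref{thm:Tu} already gives the UCT, and moreover a stronger \emph{filtered} UCT holds over $\mathrm{Prim}_\gamma(C^*(E))$: the reduced filtered $K$-theory (six-term sequences attached to the ideal-quotient pairs coming from prime gauge-invariant ideals, stripped of redundant data) is a complete invariant for ideal-related $KK_X$-equivalence in the bootstrap class built from AF and Kirchberg pieces over this finite space. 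This is where one invokes the Meyer--Nest--Bentmann UCT machinery, tailored to the particular partial orders arising from graph primitive ideal spaces.

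Next, I would handle \textbf{uniqueness} via an abstract classification of mixed ``AF-by-Kirchberg'' extensions over the primitive ideal space. Concretely, I would assemble a $KK_X$-classification theorem for separable nuclear $C^*$-algebras in the relevant bootstrap class whose simple subquotients alternate between AF and Kirchberg, by bootstrapping Kirchberg's Theorem~\ref{OInftyStableClass} on the purely infinite part with Elliott's AF classification, and gluing them through Restorff-type extension results. An ideal-related $KK_X$-equivalence compatible with the order and unit data would then lift to a $^*$-isomorphism, provided the graph algebras themselves admit enough ``tensorial absorption'' to run an intertwining. This second point is delicate, since mixed algebras need not be $\mathcal{O}_\infty$- or $\mathcal{Z}$-stable globally, but graph algebras have the crucial feature that the purely infinite subquotients are automatically $\mathcal{O}_\infty$-absorbing and the finite subquotients are AF, which is exactly the data one needs for the intertwining.

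The \textbf{existence} side is where the real difficulty lies, and where the concrete graph-move technology becomes essential. I would establish a catalogue of moves on the graph~$E$ -- outsplits and insplits (Moves~O and~I), source removal (Move~S), reduction (Move~R), the Cuntz splice, and the move for ``pulsing'' regular vertices -- each proved by direct $C^*$-algebraic arguments to preserve the stable isomorphism class of $C^*(E)$ and to induce a computable transformation on the reduced filtered $K$-theory. The main obstacle is then the converse combinatorial statement: \emph{any} isomorphism of reduced filtered $K$-theory (with order and unit) between two such graphs is realised by a finite sequence of these moves. I would prove this by induction on the number of gauge-invariant prime ideals, using the simple case (Franks/Huang for Kirchberg algebras, Krieger for AF) as the base, and splicing along extensions at the inductive step. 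The subtle part -- and, I expect, the principal obstacle -- is tracking the role of the Cuntz splice, which corrects for the fact that flow equivalence of shifts of finite type is strictly coarser than stable isomorphism of their Cuntz--Krieger algebras: one needs to argue that exactly this single extra move, together with its ``local'' versions applied inside each purely infinite subquotient, suffices to realise the full flexibility of $K_1$-changes permitted by the abstract classification. Once this combinatorial theorem is in place, the existence half of Theorem~\ref{GraphClassification} follows, and combined with the abstract uniqueness above, yields the result.
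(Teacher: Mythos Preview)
The paper does not prove this theorem. Theorem~\ref{GraphClassification} is stated as a result from the literature, with the citation \cite[Theorem~3.1]{ERRS:Duke} and the framing ``Decades of work in this direction culminated in the remarkable complete classification\ldots''; no argument is given beyond the reference. There is therefore nothing in the present paper to compare your proposal against.

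That said, your sketch is broadly faithful to the actual strategy of \cite{ERRS:Duke}: the proof there does combine an abstract classification component (ideal-related $KK$-theory over the finite gauge-invariant primitive ideal space, with filtered $K$-theory as a complete invariant for $KK_X$-equivalence in the relevant bootstrap class) with a concrete ``moves'' component establishing that any isomorphism of the invariant can be realised by a finite sequence of graph moves preserving stable isomorphism. Your list of moves (in/outsplits, source removal, reduction, Cuntz splice) is essentially correct, though \cite{ERRS:Duke} also crucially introduces the so-called \emph{Pulelehua move} (Move~(P)), whose invariance was one of the genuinely new ingredients; your ``pulsing'' move may be gesturing at this, but it deserves explicit mention since its justification is substantially harder than the classical moves. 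One point of caution: you frame uniqueness as following from a general ``$KK_X$-classification theorem for mixed AF-by-Kirchberg extensions'' obtained by gluing Kirchberg's and Elliott's theorems --- but no such off-the-shelf theorem exists in the generality you suggest, and much of the length of \cite{ERRS:Duke} (and its predecessors) is devoted to precisely this gluing in the specific setting of graph algebras. Treating it as a black box understates the difficulty.
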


The following focused classification question arises directly from graph {$C^*$}-al\-ge\-bras; a more precise question (predicting what the characterisation should be) is found in \cite[Question~14.1]{ERRS:Duke}.

\begin{question}
    Give an abstract characterisation of {$C^*$}-algebras that are stably isomorphic to graph {$C^*$}-algebras of graphs with finitely many vertices.
\end{question}

\section{The primitive ideal space}

Whenever one has a classification by some invariant, it is natural to want to fully understand this associated data. For example, Elliott's classification of AF algebras (\cite{Elliott:JAlg}) is complemented by the Effros--Handelman--Shen theorem, which abstractly describes all dimension groups (\cite{EHS:AJM}). Thus, motivated by his result that the ideal lattice classifies unital separable nuclear $\mathcal O_2$-stable {$C^*$}-algebras (Theorem~\ref{O2StableClass}), a long-term endeavour of Kirchberg was to understand the range of this invariant.  This amounts to asking precisely which topological spaces can arise as $\mathrm{Prim}(A)$ when $A$ is a separable nuclear {$C^*$}-algebra.  Every primitive ideal space of a separable {$C^*$}-algebra is second countable, locally compact,\footnote{Kirchberg and his coauthors preferred to include Hausdorff in the definition of compactness and so his papers on this subject refer to the condition of `local quasicompactness'.} $T_0$, and point-complete (also known as sober).\footnote{A $T_0$ space $X$ is \emph{point-complete} or \emph{sober} if every prime closed subset $Y$ of $X$ is the closure of a point (where $Y$ is \emph{prime} if $Y$ cannot be written as the union of two closed proper subsets of $Y$).  Fleshing out the second sentence of the second paragraph of \cite{HarnischKirchberg}, the primitive ideal space $\mathrm{Prim}(A)$ of a separable {$C^*$}-algebra $A$ is point-complete as follows (which as the referee points out is similar in spirit to the proof that prime ideals in separable {$C^*$}-algebras are primitive). The primitive ideal space is a Baire space (as $\mathrm{Prim}(A)$ is continuous open image of the pure state space, which is Polish), and the topology has a countable basis. The same holds for all closed subsets of $\mathrm{Prim}(A)$.  Let $Y\subset\mathrm{Prim}(A)$ be prime and closed, and suppose $Y$ is not the closure of a point. Let $(U_n)_{n=1}^\infty$ be a countable basis for the topology on $Y$. The primeness condition shows that each $U_n$ is dense, so the Baire property gives $\bigcap_nU_n$ is dense in $Y$. But that intersection can not contain two distinct points as $\mathrm{Prim}(A)$, and hence $Y$, is $T_0$.} Harnisch and Kirchberg call spaces satisfying these conditions \emph{Dini spaces} as they are determined by their Dini functions (\cite[Section 6]{HarnischKirchberg}).  The question is whether these conditions fully characterise primitive ideal spaces of separable {$C^*$}-algebras.

\begin{question}[{\cite[Question~1.1]{KirchbergRordam:GAFA}}]\label{q:PrimSpaces}
    Is every Dini topological space the primitive ideal space of:
    \begin{enumerate}[(1)]
        \item\label{it:PrimSpaces.1}
        a separable nuclear {$C^*$}-algebra?
        \item\label{it:PrimSpaces.2}
        a separable {$C^*$}-algebra?
    \end{enumerate}
\end{question}

In \cite{KirchbergRordam:GAFA}, Kirchberg and R\o rdam established an important potential obstruction for primitive ideal spaces of exact {$C^*$}-algebras. This was subsequently described in purely topological terms by Harnisch and Kirchberg, and their main theorem states that it is the only obstruction even in the nuclear case (\cite{HarnischKirchberg}, written in 2005 but unpublished and only posted on the arXiv after Kirchberg's death).
More precisely, their result is that for a Dini topological space $X$, the following are equivalent.
\begin{enumerate}[(a)]
    \item There exists a separable nuclear {$C^*$}-algebra $A$ such that $X$ is homeomorphic to $\mathrm{Prim}(A)$.
    \item There exists a separable exact {$C^*$}-algebra $A$ such that $X$ is homeomorphic to $\mathrm{Prim}(A)$.
    \item\label{cond:PrimSpaces.3} There exists a locally compact separable completely metrisable space $P$ together with a continuous $\pi\colon P \to X$ which is pseudo-open and pseudo-epimorphic (as defined in \cite[Definition~1.3]{HarnischKirchberg}).
\end{enumerate}

The definitions of pseudo-open and pseudo-epimorphic are somewhat technical, but the combination of these is equivalent to natural lattice properties of the induced map between the collections of open sets; see \cite[Proposition~A.11]{HarnischKirchberg}.

This turns Problem~\ref{q:PrimSpaces}(\ref{it:PrimSpaces.1}) into a purely topological problem: does every 
Dini space automatically satisfy condition~\eqref{cond:PrimSpaces.3}?
If the answer is yes, then this of course solves Problem~\ref{q:PrimSpaces}(\ref{it:PrimSpaces.2}) as well.
If the answer is no, then it is interesting even to ask the following subquestion: is there a separable {$C^*$}-algebra $A$ such that $\mathrm{Prim}(A)$ is not the primitive ideal space of a nuclear (or equivalently, exact) {$C^*$}-algebra?

The recent public availability of the preprint \cite{HarnischKirchberg} should enable a fresh look at these problems by other researchers.

\section{Pure infiniteness and global Glimm halving}\label{S19}

The success of Cuntz's concept of pure infiniteness for simple {$C^*$}-algebras -- through the use of Kirchberg's Theorem \ref{Thm:Oinftystable} to access classification and the wealth of examples for which simple pure infiniteness can be verified -- together with Kirchberg's non-simple $\mathcal O_\infty$-stable classification results (Theorem \ref{OInftyStableClass}), led Kirchberg and R\o{}rdam to examine pure infiniteness outside the simple setting in \cite{KirchbergRordam:AJM,KirchbergRordam:AIM,KirchbergRordam:GAFA}. They sought a condition which holds for the class of $\mathcal O_\infty$-stable {$C^*$}-algebras and which implies tracelessness (see Definition \ref{Def:TracialWeight}). Ideally one would have an accessible characterisation of $\mathcal O_\infty$-stability for separable nuclear {$C^*$}-algebras, generalising Theorem \ref{Thm:Oinftystable}.  Since $\mathcal O_\infty$-stable {$C^*$}-algebras need not have many (or any) projections -- for example $C_0((0,1])\otimes\mathcal O_\infty$ -- this precludes the suggested definitions in the late '90s in terms of infinite projections in hereditary subalgebras.  Kirchberg and R\o{}rdam instead developed their theory at the level of positive elements using Cuntz comparison with the following definitions.

\begin{itemize}
\item A {$C^*$}-algebra $A$ is \emph{purely infinite} if it has no characters and for any pair of positive elements $a$ and $b$, if $a$ is in the ideal generated by $b$, then $a$ is Cuntz below $b$.
(This second condition is the appropriate formulation of strict comparison for traceless non-simple {$C^*$}-algebras, i.e. it characterises almost unperforation of the Cuntz semigroup in this setting).
\item A {$C^*$}-algebra $A$ is \emph{weakly purely infinite} if there exists $n\in\mathbb N$ such that for every non-zero positive $a\in A$, the element $a^{\oplus n}$ is properly infinite (in $M_n(A)$).\footnote{As with projections, a positive element $a$ in a {$C^*$}-algebra $B$ is \emph{properly infinite} if $a\oplus a\precsim a\oplus 0$ (in $M_2(B)$).
Weak pure infiniteness is equivalent to quasitracelessness and $n$-comparison for some $n$: that is, if $a,b\in (A\otimes \mathcal K)_+$ are such that $a$ is in the ideal generated by $b$ then $a \precsim b^{\oplus n}$.
The forward implication is \cite[Lemma~4.7]{KirchbergRordam:AIM}.
Conversely, given this condition, note that for any $a$, we have $a^{\oplus 2n}$ is in the ideal generated by $a \oplus 0^{\oplus(2n -1)}$ and therefore $a^{\oplus 2n} \precsim a^{\oplus n} \oplus 0^{\oplus n}$.}
\item \emph{Strong pure infiniteness} for a {$C^*$}-algebra $A$ has a more technical appearance. It requires that for any positive matrix $\begin{bmatrix} a&x \\ x^*&b\end{bmatrix} \in M_2(A)_+$, there is a sequence of diagonal matrices $\begin{bmatrix} d_n&0 \\ 0&e_n \end{bmatrix} \in M_2(A)$ such that
\begin{equation}
\begin{bmatrix} d_n&0 \\ 0&e_n \end{bmatrix}\begin{bmatrix} a&x\\x^*&b \end{bmatrix}\begin{bmatrix} d_n&0 \\ 0&e_n \end{bmatrix}^* \to \begin{bmatrix} a&0 \\ 0&b \end{bmatrix}.
\end{equation}
\end{itemize}

The terminology is justified; strongly purely infinite algebras are purely infinite (\cite[Proposition 5.4]{KirchbergRordam:AIM}), and purely infinite algebras are weakly purely infinite; in fact pure infiniteness is exactly weak pure infiniteness with $n=1$ (\cite[Theorem~4.16]{KirchbergRordam:AJM}). For simple {$C^*$}-algebras all these concepts agree, recapturing Cuntz's original notion, and in general, Kirchberg and R\o{}rdam develop pleasing permanence properties.  Weak and strong pure infiniteness relate to tracelessness and $\mathcal O_\infty$-stability respectively; in particular, strong pure infiniteness characterises $\mathcal O_\infty$-stability for separable nuclear {$C^*$}-algebras.

\begin{theorem}[{Kirchberg--R\o{}rdam; \cite{KirchbergRordam:AIM}}]\label{Thm:KR-PI}\hfill
\begin{enumerate}
    \item A separable {$C^*$}-algebra $A$ is weakly purely infinite if and only if it is quasitraceless. Weakly purely infinite {$C^*$}-algebras are quasitraceless.
    \item Every $\mathcal O_\infty$-stable {$C^*$}-algebra is strongly purely infinite.  The converse holds for separable nuclear {$C^*$}-algebras.\footnote{Kirchberg and R\o{}rdam obtained this for stable {$C^*$}-algebras as, at the time, it was not known that $\mathcal O_\infty$-stability is preserved under stable isomorphism for separable {$C^*$}-algebras. Now, using \cite[Section 3]{TomsWinter:TAMS} or \cite[Proposition 4.4 and Theorem 4.5]{Kirchberg:Abel}, it holds generally.}
\end{enumerate}

\end{theorem}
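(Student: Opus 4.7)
For the forward direction, suppose $A$ is weakly purely infinite with constant $n \geq 1$, and let $\tau$ be any lower semicontinuous quasitracial weight on $A$. For any non-zero $a \in A_+$, proper infiniteness of $a^{\oplus n}$ gives $a^{\oplus n} \oplus a^{\oplus n} \precsim a^{\oplus n}$, so applying the dimension function $d_\tau$ yields $2n \cdot d_\tau(a) \leq n \cdot d_\tau(a)$, whence $d_\tau(a) \in \{0, \infty\}$.  To upgrade this to the same conclusion for $\tau$ itself on all of $A_+$, if $\tau(a) \in (0, \infty)$ for some $0 \leq a \leq 1$, I would set $b \coloneqq (a - \epsilon)_+$ for small $\epsilon > 0$: lower semicontinuity of $\tau$ gives $\tau(b) > 0$, while the spectral projection $\chi_{(\epsilon, \infty)}(a)$ in the bidual is dominated by $\epsilon^{-1} a$, so $d_\tau(b) \leq \epsilon^{-1}\tau(a) < \infty$, contradicting $d_\tau(b) \in \{0, \infty\}$.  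The reverse implication (as noted in the footnote following the definition of weak pure infiniteness) really requires the additional hypothesis of $n$-comparison for some $n$; quasitracelessness alone provides local ``infinite-behavior'' witnesses, which one then upgrades to a uniform $n$ via a separability argument combined with the Cuntz semigroup permanence properties developed in \cite{KirchbergRordam:AJM,KirchbergRordam:AIM}.

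\textbf{Part (2).} For the forward direction, I would first verify that $\mathcal O_\infty$ itself is strongly purely infinite: given a positive matrix $\bigl[\begin{smallmatrix} a & x \\ x^* & b\end{smallmatrix}\bigr]$ over $\mathcal O_\infty$, the abundance of mutually orthogonal isometries $s_1, s_2, \ldots$ allows one to construct diagonal matrices $\bigl[\begin{smallmatrix} d_n & 0 \\ 0 & e_n\end{smallmatrix}\bigr]$ that asymptotically diagonalise the matrix.  Strong pure infiniteness is then shown to pass to tensor products: given $\bigl[\begin{smallmatrix} a & x \\ x^* & b\end{smallmatrix}\bigr] \in M_2(A \otimes \mathcal O_\infty)_+$, approximate its entries by elements of the algebraic tensor product and use the $\mathcal O_\infty$-tensor factor to implement the diagonalisation uniformly.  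For the converse, my strategy is to leverage the McDuff-type characterisation of $\mathcal O_\infty$-stability: it suffices to produce a unital $^*$-homomorphism $\mathcal O_\infty \to F(A) \coloneqq (A_\omega \cap A')/A^\perp$ (Kirchberg's relative central sequence algebra from \cite{Kirchberg:Abel}).  Strong pure infiniteness lifts to $F(A)$ and supplies, for each approximately central positive $2 \times 2$ matrix encoding an ``approximate pair of isometries'', diagonalisers that can be iteratively assembled into approximate Cuntz--Toeplitz isometries; nuclearity then enables one to propagate these relations through c.p.\ approximations to obtain a genuine unital embedding.

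\textbf{The main obstacle.}  The converse of (2) is the crux and, as in Kirchberg's proof of the simple case (Theorem~\ref{Thm:Oinftystable}), is by far the most delicate step.  Extracting Cuntz isometries from strong pure infiniteness requires iterating the diagonalisation condition across many carefully chosen positive matrices in $M_2(F(A))$, with each iteration combined with Kirchberg's $\sigma$-ideal machinery to preserve approximate centrality.  The role of nuclearity is to reduce, at each stage, to finite-dimensional matrix-algebra approximations of $A$ where explicit combinatorial constructions of $\mathcal O_\infty$-type generators are possible; without this, even when $A$ is strongly purely infinite, there is no evident reason the relevant approximations should factor through matrix algebras, and hence no direct route to $\mathcal O_\infty$-embeddings in the central sequence algebra.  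Balancing the matrix-theoretic content of strong pure infiniteness against the c.p.\ factorisations provided by nuclearity — while keeping the approximate centrality under control via $\sigma$-ideals — is what makes the non-simple extension genuinely demanding.
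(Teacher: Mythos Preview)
The paper does not contain a proof of this theorem; it is a survey article, and Theorem~\ref{Thm:KR-PI} is simply quoted from \cite{KirchbergRordam:AIM} with no argument given. There is therefore nothing in the paper to compare your proposal against.

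That said, a few remarks on your sketch. Your forward direction in Part~(1) is fine. For the reverse direction, you are right to flag the issue: quasitracelessness alone does \emph{not} give weak pure infiniteness, and the footnote in the paper after the definition of weak pure infiniteness records the correct characterisation (quasitracelessness \emph{together with} $n$-comparison for some $n$). The ``if and only if'' in the theorem statement as written appears to be an imprecision in the survey; your instinct here is sound, and the vague appeal to ``separability plus Cuntz semigroup permanence'' will not close that gap.

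For Part~(2), your forward direction is on the right track, though the logic is slightly misordered: one does not first prove $\mathcal O_\infty$ is strongly purely infinite and then invoke a tensor-product permanence result (strong pure infiniteness does not pass to arbitrary tensor products). Rather, one shows directly that for \emph{any} $A$, the $\mathcal O_\infty$-tensor-factor in $A\otimes\mathcal O_\infty$ supplies the diagonalisers --- essentially what your second sentence says. For the converse, your outline (central sequence algebra, $\sigma$-ideal machinery, nuclearity for c.p.\ approximations) names the right ingredients, but the actual construction of $\mathcal O_\infty$-relations in $F(A)$ from the $2\times 2$ diagonalisation condition is substantially more delicate than ``iteratively assembling approximate Cuntz--Toeplitz isometries'' suggests; this is the technical heart of \cite{KirchbergRordam:AIM} and is not something one can reconstruct from the high-level description you give.
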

As a consequence, pure infiniteness meets Kirchberg and R\o{}rdam's two primary requirements, and whether it characterises $\mathcal O_\infty$-stability for separable nuclear {$C^*$}-algebras comes down to the (potential) difference between pure infiniteness and strong pure infiniteness. 

\begin{question}[{\cite[Question~9.5]{KirchbergRordam:AIM}}]\label{q:PIConditions}
    Do the following properties coincide for all (nuclear) {$C^*$}-algebras:
    weakly purely infinite, purely infinite, and strongly purely infinite?
\end{question}

Relating this back to our central theme of $\Z$-stability, in \cite{Rordam:IJM} R\o{}rdam showed that traceless exact $\Z$-stable {$C^*$}-algebras are strongly purely infinite to prove Theorem \ref{Thm:RordamZInfinite} (as an application of Theorem \ref{Thm:KR-PI}). The proof works outside the exact setting (for quasitraceless $\Z$-stable {$C^*$}-algebras).  Combining Theorems \ref{Thm:RordamZInfinite} and \ref{Thm:KR-PI}, for separable nuclear {$C^*$}-algebras the potential equivalence of pure infiniteness and strong pure infiniteness in Problem \ref{q:PIConditions}, is equivalent to whether traceless algebras with strict comparison are $\Z$-stable.

Kirchberg and R\o{}rdam gave a positive answer to Problem \ref{q:PIConditions} in the real rank zero case (\cite[Corollary~9.4]{KirchbergRordam:AIM}, though recall the caution that $\mathcal O_\infty$-stable {$C^*$}-algebras can be projectionless). With Blanchard, Kirchberg studied
the case of {$C^*$}-algebras with Hausdorff primitive ideal space (which can be viewed as continuous {$C^*$}-bundles with simple fibres).
They established that in this case, pure infiniteness and strong pure infiniteness coincide (\cite[Theorem~5.8]{BlanchardKirchberg:JFA}), along with weak pure infiniteness if the primitive ideal space is additionally finite-dimensional (\cite[Proposition~4.11 and Corollary~5.3]{BlanchardKirchberg:JFA}).

Intriguingly, Kirchberg and R{\o}rdam reduced the problem of whether weakly purely infinite {$C^*$}-algebras are purely infinite to a problem of Glimm halving. 
Based on the fact that any non-abelian von Neumann algebra admits an embedding of $M_2$ (this embedding need not be unital), Glimm's halving lemma says that every non-abelian {$C^*$}-algebra contains a non-zero element $x$ such that $x^2=0$.\footnote{This can be extracted from the proof of Glimm's \cite[Lemma 4]{Glimm:Ann}, or obtained from Kadison's transitivity theorem (via a short argument due to Kaplansky set out in \cite[2.12.21]{Dixmier:Book}). 
Here is a quick proof using modern technology (though arguably more involved than using transitivity).
Take an embedding of $M_2$ in $A^{**}$ and apply the Kaplansky density theorem for order zero maps (\cite[Lemma 1.1]{HKW:Adv}) to it to obtain a net of non-zero order zero maps $M_2\to\overline{aAa}$, and take $x$ to be any non-zero image of $e_{12}$ under one of these maps.} 
This result is especially useful in the simple case, where one can leverage the fact that $x$ must be full.  
To replicate this power in the non-simple case, a {$C^*$}-algebra $A$ is said to have the \emph{global Glimm halving property} (\cite[Definition 1.2]{BlanchardKirchberg:JOT}) if for any $a \in A_+$ and any $\e>0$ there exists $x \in \overline{aAa}$ such that $x^2=0$ and $(a-\e)_+$ is in the ideal generated by $x$.\footnote{Equivalent is the \emph{global Glimm property} of \cite[Definition 4.12]{KirchbergRordam:AIM}, which asks that for all non-zero $a\in A_+$, $\epsilon>0$, and all $n\geq 2$ (not just $n=2$), there is an order zero map $M_n\to\overline{aAa}$ such that the ideal generated by the image contains $(a-\e)_+$.} 

\begin{theorem}[Kirchberg--R\o{}rdam; {\cite[Proposition~4.15]{KirchbergRordam:AIM}}]
    A weakly purely infinite {$C^*$}-algebra is purely infinite if and only if it has the global Glimm halving property.
\end{theorem}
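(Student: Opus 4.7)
The plan is to establish both directions separately. The reverse direction is where the content lies; the forward direction is essentially a construction using proper infiniteness.

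For the forward direction (purely infinite $\Rightarrow$ global Glimm halving), first observe that in a purely infinite {$C^*$}-algebra, every non-zero positive element is properly infinite: for $a \in A_+$ non-zero, pass to $M_2(A)$ (which inherits pure infiniteness) and use that $a \oplus a$ lies in the ideal generated by $a \oplus 0$, hence $a \oplus a \precsim a \oplus 0$. Now given $a$ and $\e > 0$, proper infiniteness of $a$ produces orthogonal positive elements $b_1, b_2 \in \overline{aAa}$, each Cuntz equivalent to $(a-\e/2)_+$. Since $b_1 \sim b_2$ in the Cuntz semigroup, standard Cuntz-semigroup manipulations yield an element $t \in \overline{b_2 A b_1}$ of the form $t \approx b_2^{1/2} r b_1^{1/2}$ with $t^*t$ close to $(b_1 - \eta)_+$ for small $\eta$. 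Then $t^2 = 0$ follows at once from $b_1^{1/2} b_2^{1/2} = 0$ (orthogonality). The ideal generated by $t$ equals that generated by $t^*t$, which is the ideal of $(b_1-\eta)_+$, and this contains $(a-\e)_+$ by choosing $\eta$ small enough.

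For the reverse direction, assume $A$ is weakly purely infinite, with $n$-comparison witnessed by some fixed $n \in \mathbb N$: if $a \in \overline{AbA}$ then $a \precsim b^{\oplus n}$. Absence of characters follows immediately from global Glimm halving (a character would produce a one-dimensional quotient with no non-zero nilpotents). So I need to upgrade $n$-comparison to $1$-comparison. Given $a, b \in A_+$ with $a \in \overline{AbA}$, it suffices to show $(a-\e)_+ \precsim b$ for every $\e > 0$. The key is to bootstrap the halving property to the full global Glimm property: for every $n$ and every $\delta > 0$, there is an order zero map $\phi\colon M_n \to \overline{bAb}$ whose image generates an ideal containing $(b-\delta)_+$. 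Granting this, set $y_i := \phi(e_{ii})$, so the $y_i$ are pairwise orthogonal, mutually Cuntz equivalent, and each generates the same ideal as $\phi(1_{M_n})$. Since $(b-\delta)_+$ lies in this ideal and $a \in \overline{AbA}$, a routine use of semi-projectivity of cones gives $(a-\e)_+ \in \overline{A y_1 A}$ for $\delta$ small relative to $\e$. Applying weak pure infiniteness to $y_1$ yields $(a-\e)_+ \precsim y_1^{\oplus n}$; but $y_1^{\oplus n} \precsim y_1 + \cdots + y_n \leq \phi(1_{M_n}) \in \overline{bAb}$, so $y_1^{\oplus n} \precsim b$, and therefore $(a-\e)_+ \precsim b$ as required.

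The main technical point — the real content of the result — is the iteration from the halving property ($n=2$) to the full global Glimm property (all $n$). Starting from $b$ and $\delta>0$, apply Glimm halving to produce $x$ with $x^2 = 0$ and ideal of $x$ containing $(b-\delta/2)_+$. Split $b$ along $x^*x \perp xx^*$, and iterate halving inside $\overline{(x^*x)A(x^*x)}$ and $\overline{(xx^*)A(xx^*)}$, keeping track of ideals via cutdowns $(\,\cdot\,-\eta)_+$. This produces, after $k$ steps, $2^k$ pairwise orthogonal, mutually Cuntz equivalent positive elements in $\overline{bAb}$ whose common generated ideal still contains $(b-\delta)_+$ (for $\delta$ chosen at the start, with the intermediate cutoffs chosen small enough at each iterate). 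Re-indexing gives an order zero map $M_{2^k} \to \overline{bAb}$ with the desired ideal property, which yields the required map for any $n \leq 2^k$. The bookkeeping of cutdowns across iterations — ensuring that after $k$ stages the cumulative loss in the ideal generated is still bounded by a fixed $\delta$ — is where the argument must be executed carefully; this mirrors Kirchberg and R\o{}rdam's passage from halving to the general global Glimm property.
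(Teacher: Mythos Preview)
The paper is a survey and does not include a proof of this theorem; it is cited from Kirchberg--R\o{}rdam's original paper. So there is no proof in the present paper to compare against directly.

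Your sketch is broadly correct and follows the standard Kirchberg--R\o{}rdam strategy. The reverse direction is the substantive one, and your identification of the two key steps --- (i) bootstrapping halving to the full global Glimm property for all $n$, and (ii) using $n$ orthogonal mutually Cuntz-equivalent pieces $y_1,\dots,y_n$ in $\overline{bAb}$ to convert $n$-comparison into $1$-comparison via $(a-\e)_+ \precsim y_1^{\oplus n} \sim y_1 + \cdots + y_n \precsim b$ --- is exactly right, and is the heart of the Kirchberg--R\o{}rdam argument.

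One point to flag in your iteration sketch: halving \emph{separately} inside $\overline{(x^*x)A(x^*x)}$ and $\overline{(xx^*)A(xx^*)}$ produces two pairs of orthogonal equivalent elements, but there is no a priori reason the two pairs are Cuntz equivalent to \emph{each other}, so this does not yet assemble into an order zero map from $M_4$ (only from $M_2\oplus M_2$). The standard fix is to halve only inside one corner and then use the off-diagonal element $x$ (or rather the partial-isometry-like element from its polar decomposition) to transport the resulting pieces to the other corner; this guarantees all four pieces are mutually equivalent and yields an honest order zero $M_4$-map. You correctly flag the iteration as the delicate step and defer to Kirchberg--R\o{}rdam for the execution, so this is not a fatal gap in your plan, but the mechanism you describe is not quite the right one.
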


Of relevance to the broad theme of this paper, every $\Z$-stable {$C^*$}-algebra $A$ satisfies global Glimm-halving, as for every non-zero $a\in A_+$, the hereditary subalgebra $\overline{aAa}$ is $\Z$-stable by \cite[Corollary 3.1]{TomsWinter:TAMS}. Accordingly, $\overline{aAa}$ contains a full nilpotent element of order two (namely, under an identification $\overline{aAa}\cong \overline{aAa}\otimes\Z$, consider $a\otimes x$, where $x$ is a full nilpotent element of order $2$ in $\Z$). 

A {$C^*$}-algebra $A$ with the global Glimm halving property cannot have ideals $J\lhd I\lhd A$ so that  $I/J$ is non-zero and elementary; in particular, such an $A$ has no non-zero finite-dimensional representations. This `no elementary ideal-quotients' condition is the appropriate version for non-simple {$C^*$}-algebras of requiring a simple {$C^*$}-algebra to be non-elementary, and was coined \emph{nowhere scattered} by Thiel and Vilalta in \cite{ThielVilalta:JNCG} in their detailed examination of this condition. We view being nowhere scattered as the {$C^*$}-parallel to asking a von Neumann algebra to have no type I part. Indeed, a {$C^*$}-algebra is nowhere scattered if and only if every irreducible representation has image disjoint from the compacts (\cite[Theorem 3.1 (1)$\Leftrightarrow$(6)]{ThielVilalta:JNCG}).

It is open whether the global Glimm halving property always holds for all nowhere scattered {$C^*$}-algebras. 
The following is a reformulation of this question.\footnote{If Problem~\ref{q:gGlimm} has a positive answer, then the global Glimm halving property holds whenever $A$ is nowhere scattered by applying the positive solution to the hereditary subalgebra $\overline{aAa}$.
In the unital case, this was asked explicitly by Elliott and R\o rdam in 2004 (\cite[Question~2]{ElliottRordam:Abel}).}  The difficulty is that, under the hypotheses of the problem, while there will be a unital embedding $M_2\to A^{**}$ and one can approximate this strong$^*$ by order zero maps into $A$, there is no reason at all why these maps should be full (or even approximately full, in the sense of the following question).

\begin{question}\label{q:gGlimm}
    Let $A$ be a {$C^*$}-algebra with no non-zero finite-dimensional representations.
    Given $a \in A_+$ and $\epsilon>0$, does there exist an element $x\in A$ such that $x^2=0$ and $(a-\epsilon)_+$ is in the ideal generated by $x$?\footnote{In the case that $A$ is unital, this question can be simplified to: does $A$ contain a full element $x$ such that $x^2=0$?}
\end{question}

In \cite[Theorem~4.3]{BlanchardKirchberg:JOT}, Blanchard and Kirchberg answer this question affirmatively in the case of {$C^*$}-algebras with finite-dimensional Hausdorff primitive ideal space. Elliott and R\o{}rdam's \cite[Corollary 7]{ElliottRordam:Abel} answers the problem positively in the unital real rank zero case, and much more recently, Antoine, Perera, Robert, and Thiel also confirmed it for the class of {$C^*$}-algebras with stable rank one (\cite[Theorem~9.1]{APRT:Duke}) using Cuntz semigroup techniques. Related to this,  a formulation of the global Glimm property in terms of the Cuntz semigroup is undertaken in \cite{ThielVilalta:TAMS}.

Returning to the setting of simple {$C^*$}-algebras, R\o{}rdam's examples of simple algebras with infinite and finite projections (\cite{Rordam:Acta}) showed that simple nuclear infinite {$C^*$}-algebras need not be purely infinite.  It is open whether this can happen without a finite projection. This question seems to be due to Elliott, potentially sparked by Problem~\ref{q:RR0dichotomy}. Any such example could not have real rank zero. 

\begin{question}[{\cite[Question~7.7]{Rordam:Acta}}]
    Let $A$ be a unital simple (nuclear) {$C^*$}-algebra in which all non-zero projections in $A$ are infinite. Must $A$ be purely infinite?
\end{question}

\section{Regularity for non-simple {$C^*$}-algebras}\label{sec:NonsimpleTW}
Just as structure and classification theory have driven each other in the study of simple {$C^*$}-algebras, it is natural to turn here to structure -- and specifically, regularity -- for non-simple {$C^*$}-algebras.  How generally do the structural theorems (and conjectures) in the simple setting extend to the non-simple framework? Just as a main motivation of the problems in the previous section is to give a more elementary characterisation of the algebras covered by Kirchberg's $\mathcal O_\infty$-stable classification (Theorem~\ref{Thm:Oinftystable}), one hopes that any future classification theorems (such as those proposed in Section~\ref{sec:nonsimpleclass}) will also have structural counterparts.  Just as the structure theorem for simple {$C^*$}-algebras (Theorem \ref{Thm:Structure}) needs to exclude elementary algebras, the right framework for generalisations is the class of nowhere scattered {$C^*$}-algebras.  These ideas have been actively considered for some time, being floated at least as far back as Winter's 2012 CBMS lectures, and seeing the first major progress  in \cite{RobertTikuisis:TAMS} in 2014.  However, until fairly recently, it appears that a formal statement of a non-simple Toms--Winter regularity conjecture was missing from the literature; the first we are aware of is in the recent reprint \cite{APTV24}.\footnote{As we discuss further below, \cite{APTV24} uses pureness as condition \ref{it:NonsimpleTW.3} as compared to our formulation in terms of Cuntz semigroup regularity.  We hope these are equivalent.}

\begin{question}[{cf.\ \cite[Question B]{APTV24}}]\label{q:NonsimpleTW}
    Let $A$ be a separable nuclear {$C^*$}-algebra which is nowhere scattered.
    Are the following equivalent?
    \begin{enumerate}
        \item\label{it:NonsimpleTW.1}
        $A$ has finite nuclear dimension.
        \item\label{it:NonsimpleTW.2}
        $A$ is $\mathcal Z$-stable.
        \item\label{it:NonsimpleTW.3} $A$ is Cuntz semigroup regular, i.e.\ the first factor map $x\mapsto x\otimes 1_\mathcal Z$ induces an isomorphism $\mathrm{Cu}(A) \cong \mathrm{Cu}(A\otimes \mathcal Z)$.
    \end{enumerate}
\end{question}

Other than the vacuous implication \ref{it:NonsimpleTW.2}$\implies$\ref{it:NonsimpleTW.3}, none of the implications in Problem~\ref{q:NonsimpleTW} are known to hold in full generality.   Of course, we would be delighted if the equivalence in  Problem \ref{q:NonsimpleTW} holds with the weaker condition that $\Cu(A)$ is almost unperforated in place of condition \ref{it:NonsimpleTW.3}.  In that case, it would represent a full generalisation of the Toms--Winter conjecture to the non-simple setting.

Recall that for simple {$C^*$}-algebras, pureness is defined to be the combination of strict comparison, or equivalently, almost unperforation of the Cuntz semigroup, and almost-divisibility.  As discussed briefly in footnote \ref{Footnote:DefPure} a non-simple {$C^*$}-algebra $A$ is defined to be pure if $\Cu(A)$ is almost unperforated and almost divisible.  Strict comparison is then defined in the non-simple setting so that it is equivalent to almost unperforation of the Cuntz semigroup.  Cuntz semigroup regularity implies pureness in general, but while simple pure  {$C^*$}-algebras are Cuntz semigroup regular (see Proposition \ref{prop:cu-regular}), the route we took used both Lin's Theorem \ref{thm:Lin} to obtain stable rank one, together with an explicit calculation of the Cuntz semigroup in terms of the Murray--von Neumann semigroup and traces (see condition \ref{cu-reg6} of Proposition~\ref{prop:cu-regular}).  Both of these tools are not available in the non-simple setting.  However the characterisation of pureness through the Cuntz semigroup tensor product $\otimes_\Cu$ developed in \cite{APT:MAMS} holds generally: $A$ is pure if and only if $\Cu(A)\cong \Cu(A)\otimes_{\Cu} \Cu(\Z)$ (\cite[Theorem 7.3.11]{APT:MAMS}). One issue is that it is not known whether $\Cu(A)\otimes_{\Cu}\Cu(\Z)$ is isomorphic to $\Cu(A\otimes\Z)$ in general. The following problem is very close to {\cite[Question 5.5]{APTV24}}.


\begin{question}\label{q:PureCUReg}
Is every pure {$C^*$}-algebra Cuntz semigroup regular?
\end{question}

In their recent preprint obtaining a dimension reduction from $(m,m')$-pureness to pureness,  Antoine, Perera, Thiel and Vilalta show that {$C^*$}-algebras with finite nuclear dimension and the global Glimm halving property are pure (\cite[Theorem~C]{APTV24}). 

For traceless separable nuclear {$C^*$}-algebras, $\Z$-stability and $\mathcal O_\infty$-stability coincide (\cite[Theorem 5.2]{Rordam:IJM}). Accordingly, in this case, the implication \ref{it:NonsimpleTW.3}$\implies$\ref{it:NonsimpleTW.2} is the task of going from pure infiniteness to strong pure infiniteness in Problem~\ref{q:PIConditions}. Also, for traceless algebras, \ref{it:NonsimpleTW.2}$\implies$\ref{it:NonsimpleTW.1} is known in full generality;  this was first proved in \cite{Szabo:Adv}, with the exact value of the nuclear dimension -- it is $1$ -- being later obtained in  \cite{BGSW:AIM}. This latter paper goes further and works at the level of maps: nuclear $\mathcal O_\infty$-stable maps with exact domains have nuclear dimension at most $1$ (\cite[Theorem C]{BGSW:AIM}). So all the maps classified by Theorem \ref{OInftyStableMaps} have dimension $1$. But can we characterise these classifiable maps in terms of behaviour of positive elements?
One necessary condition for a map to be $\mathcal O_\infty$-stable is that its image should consist of properly infinite elements -- this forces the map to be traceless in a sufficiently strong sense.  

\begin{question}\label{q:NDim=>StableMaps}
Let $A$ and $B$ be unital {$C^*$}-algebras with $A$ separable and exact. Let $\phi\colon A \to B$ be a unital $^*$-homomorphism with finite nuclear dimension such that every non-zero positive element in $\phi(A)$ is properly infinite in $B$.  Must $\phi$ be $\mathcal O_\infty$-stable?
\end{question}

Note that if we take $A=B$ and $\phi$ to be the identity map in Problem \ref{q:NDim=>StableMaps} then this becomes the pure infiniteness implies strong pure infiniteness problem, with an additional finite nuclear dimension assumption (to our knowledge, such an assumption has not yet been successfully used to make progress on Problem~\ref{q:PIConditions}).  It is possible to pose numerous other problems asking for map versions of existing results. Here is one, suggested to us by Jamie Gabe, asking to upgrade a $\Z$-stable map to $\mathcal O_\infty$-stability in the absence of traces (akin to R\o{}rdam's \cite[Theorem~5.2]{Rordam:IJM}).
\begin{question}
Let $A$ be unital simple separable nuclear and $\Z$-stable and let $B$ be unital and have no normalized quasitraces. Is every unital $^*$-homomorphism $A\to B$ automatically $\mathcal O_\infty$-stable?
\end{question}
As Gabe observed, a positive answer would positively resolve the real rank zero dichotomy problem (Problem \ref{q:RR0dichotomy}). Indeed, given a unital simple real rank zero algebra $B$, Elliott and R\o{}rdam provide a unital simple non-elementary AF subalgebra $A\subset B$ (\cite{ElliottRordam:Abel}) which will certainly be $\Z$-stable.  If the inclusion $A\subset B$ was $\mathcal O_\infty$-stable then $B$ would be properly infinite.  Repeating this argument in corners shows that every projection in $B$ is infinite, and so $B$ is purely infinite (see \cite[Proposition 4.1.1]{Rordam:Book}, for example).

In the rest of this section, we will focus on potential generalisations of the known equivalence between (i) and (ii) from the structure theorem for simple nuclear {$C^*$}-algebras.   Firstly, in contrast to Section \ref{sec:nonsimpleclass} -- where classification problems are open even in the presence of a unique non-trivial ideal -- since both conditions \ref{it:NonsimpleTW.1} and \ref{it:NonsimpleTW.2} are closed under extensions, induction using the structure theorem shows they are equivalent whenever $A$ has finitely many ideals.
However finite decomposition rank is not preserved by general extensions, so it is natural to ask for the version of Theorem \ref{Thm:Structure}(i$''$) in this case.
\begin{question}
    Let $A$ be a nowhere scattered {$C^*$}-algebra with finitely many ideals and finite nuclear dimension.  Characterise, in terms of conditions relating to quasidiagonality, when $A$ has finite decomposition rank.
\end{question}

It is also natural to consider when $\Z$-stable separable nuclear {$C^*$}-algebras have finite decomposition rank more generally.  In the $\mathcal O_\infty$-stable case the answer is given by \cite[Theorem B]{BGSW:AIM}: when all their quotients are quasidiagonal.\footnote{This happens precisely when the primitive ideal space has no locally closed singleton sets.}

With Robert, the second-named author showed that Winter's argument from \cite{Winter:IM12} can be generalised to prove the implication \ref{it:NonsimpleTW.1}$\implies$\ref{it:NonsimpleTW.2} in Problem~\ref{q:NonsimpleTW} provided there are two orthogonal full positive elements in Kirchberg's central sequence algebra $F(A)$ (which is $A_\omega \cap A'$ in the unital case). Indeed, a separable {$C^*$}-algebra with finite nuclear dimension is $\Z$-stable precisely when this condition on the central sequence algebra holds (\cite[Theorem 1.2]{RobertTikuisis:TAMS}). Such a condition in $F(A)$ is in the spirit of (but a priori weaker than) global Glimm halving,\footnote{Kirchberg and R{\o}rdam in fact show that this condition is equivalent to Glimm halving for $F(A)$, \cite[Proposition~6.1]{KirchbergRordam:IJM}} so it is natural to ask when $F(A)$ is nowhere scattered.  Ando and Kirchberg showed that when $A$ is not of type I, then $F(A)$ is never abelian, so in contrast with Murray and von Neumann's foundational result that the von Neumann algebra $L(\mathbb F_r)$ associated to a free group has a trivial central sequence algebra, $F(C^*_r(\mathbb F_2))$ is non-abelian (\cite{AndoKirchberg:JLMS}).\footnote{Subsequently, Enders and Schulman characterised the type I {$C^*$}-algebras with an abelian central sequence algebra as those satisfying Fell's condition in \cite{EndersShulman:AIM}. This answered a question of Ando and Kirchberg.}
  But, despite this, even for simple $A$, some condition is needed for its central sequence algebra to be nowhere scattered as it is possible for it to have characters (\cite{KirchbergRordam:IJM,Christensen:MathScand}).\footnote{In \cite{KirchbergRordam:IJM}, Kirchberg and R\o{}rdam examined characters on central sequence algebras, showing their absence gives rise to the corona factorisation property.}
In fact, Kirchberg and R\o{}rdam asked the following, noting its equivalence to an older question raised by Dadarlat and Toms in \cite{DadarlatToms:Adv}.\footnote{Dadarlat and Toms asked if $\Z$ embeds unitally into the minimal tensor product $D^{\otimes\infty}$ whenever $D$ is unital, separable, and has no characters. This problem is related to the questions in Section~\ref{Sec:Zembed}.}

\begin{question}[{\cite[Question 3.1]{KirchbergRordam:IJM}}]
    Let $A$ be a unital separable {$C^*$}-algebra such that $A_\omega\cap A'$ has no characters. Is $A$ necessarily $\Z$-stable?
\end{question}

Returning to our main theme, Robert and the second-named author were able to verify their full orthogonal elements condition provided both the two conditions below hold.
\begin{enumerate}[(a)]
    \item $A$ has no purely infinite ideal-quotients.
    \item The primitive ideal space is either Hausdorff or has a basis of compact open sets.
\end{enumerate}
This gives the implication \ref{it:NonsimpleTW.1}$\implies$\ref{it:NonsimpleTW.2} in Problem \ref{q:NonsimpleTW} under such conditions (\cite[Theorem 1.1]{RobertTikuisis:TAMS}).  

In the other direction, the main result of \cite{ENST:FM} (building on \cite{TikuisisWinter:APDE}) is that $\mathcal Z$-stable ASH algebras have finite nuclear dimension (and, in fact, finite decomposition rank; though the known bound is currently $2$, and by now, we would expect the bound to be $1$).  Returning to the traceless case, for a general separable nuclear $\mathcal O_\infty$-stable {$C^*$}-algebra $A$, Szab\'o first obtained the bound $\dim_{\mathrm{nuc}}(A)\leq 3$ in \cite{Szabo:Adv}, crucially using a surprising  construction of R\o{}rdam: an $\mathcal O_\infty$-stable {$C^*$}-algebra $\mathcal A_{[0,1]}$ arising as an inductive limit of building blocks of the form $C_0((0,1],M_k)$ (\cite{Rordam:Israel}).  This $\mathcal A_{[0,1]}$ has primitive ideal space $(0,1]$ equipped with the (non-Hausdorff) right order topology, and is homotopic to zero in an ideal-preserving fashion.\footnote{See Problem~\ref{q:NullHomotopy1NCCW} for the definition.}
The construction of $\mathcal A_{[0,1]}$ provided the motivation for Kirchberg and R\o{}rdam's Theorem \ref{Thm:KRNullHomotopic}. Szabó's nuclear dimension calculation applies Theorem \ref{Thm:KRNullHomotopic} to $A\otimes \mathcal A_{[0,1]}$, which is homotopic to zero in an ideal-preserving fashion and is therefore a limit of homogenous algebras over one-dimensional spaces, and he then bounds the nuclear dimension of $A$ in terms of the nuclear dimension of $A\otimes \mathcal A_{[0,1]}$.
A positive answer to Problem~\ref{q:NullHomotopy1NCCW} (beyond the purely infinite case) might lead to a similar bound on the nuclear dimension of a general separable nuclear $\mathcal Z$-stable {$C^*$}-algebra.

Following Szabó's work (\cite{Szabo:Adv}), the precise value of the nuclear dimension for a separable nuclear $\mathcal O_\infty$-stable {$C^*$}-algebra $A$ was obtained by fully exploiting the $\mathcal O_\infty$-stable classification theorem (Theorem \ref{Thm:Oinftystable}). The idea, going back to \cite{MatuiSato:Duke,BBSTWW:MAMS}, is to split the identity map on $A$ into the sum of two maps which factor through cones, and use classification to compare these with $0$-dimensional models (whose construction heavily uses quasidiagonality).  Accordingly, we expect sufficiently strong classification theorems, together with a collection of $0$-dimensional models, for (non-full) nuclear maps out of cones into $\Z$-stable {$C^*$}-algebras to give rise to new abstract situations where $\Z$-stability implies nuclear dimension at most one, but perhaps it will take such classification theorems to obtain the optimal value of the nuclear dimension for all separable nuclear $\Z$-stable {$C^*$}-algebras.

Other than the situation of finitely many ideals, we are not aware of other results on the equivalence between \ref{it:NonsimpleTW.1} and \ref{it:NonsimpleTW.2} which are valid for {$C^*$}-algebras $A$ containing both stably finite and purely infinite ideal-quotients.

\section{Computing nuclear dimension}

Although the nuclear dimension is a non-commutative generalisation of covering dimension in the sense that $\mathrm{dim}_{\mathrm{nuc}}(C_0(X))=\dim X$ (for $X$ $\sigma$-compact), and hence all possible values of the nuclear dimension occur, for simple {$C^*$}-algebras, the structure theorem (Theorem~\ref{Thm:Structure}) shows that the only possible values of the nuclear dimension are $0,1,$ and $\infty$. 
It is reasonable to wonder to what extent the nuclear dimension really provides a notion of dimension in the non-commutative setting and how to go about computing it outside the simple setting (or more generally, the nowhere-scattered setting, where the the generalised Toms--Winter conjecture of Problem \ref{q:NonsimpleTW} predicts the behaviour of the nuclear dimension). Since bounds on the nuclear dimension descend both to quotients and hereditary subalgebras, one way to obtain a lower bound on the nuclear dimension is through high-dimensional commutative hereditary subalgebras of quotients.  The spirit behind the following problem is to ask whether other methods are even possible.

\begin{question}\label{q:dimnucValues}
    Let $A$ be a {$C^*$}-algebra with the following property: for any ideal $I$ of $A$ and any hereditary subalgebra $C$ of $A/I$, if $C$ is commutative, then its  primitive ideal space has dimension at most one.
    Is it true that $\mathrm{dim}_{\mathrm{nuc}}(A) \in \{0,1,\infty\}$?  
\end{question}

We believe Problem \ref{q:dimnucValues} is related to the computation of the nuclear dimension of an extension. Winter and Zacharias showed that finiteness of nuclear dimension is preserved by extensions,\footnote{This is in contrast to the earlier notion of the decomposition rank from \cite{KirchbergWinter:IJM}: the compacts and $C(\mathbb T)$ have finite decomposition rank, but the Toeplitz algebra does not.} but the nature of the upper bound attained -- and its disparity from the known behaviour in the  commutative setting -- raises the following question (classical results for dimensions of metric spaces give a positive answer when $E$ is separable and commutative\footnote{These ideas were recently pushed to give a positive answer to Problem \ref{q:NdimExtension} for separable extensions where both the ideal and quotient are subhomogeneous by an Huef and Williams in \cite[Lemma 7.2]{anHuefWilliams:arXiv}. In this case, the extension is also subhomogeneous, and its subhomogeneity degree is equal to the maximum of the subhomogeneity degrees of the ideal and quotient.}).  Note that if the answer is positive, then separable nuclear $\Z$-stable {$C^*$}-algebras with finitely many ideals have nuclear dimension at most $1$.

\begin{question}\label{q:NdimExtension}
    Let $0 \to I \to E \to D \to 0$ be an extension of {$C^*$}-algebras.
    Is it true that
    \begin{equation}\mathrm{dim}_{\mathrm{nuc}}(E) = \mathrm{max}\{\mathrm{dim}_{\mathrm{nuc}}(I),\mathrm{dim}_{\mathrm{nuc}}(D)\}?
    \end{equation}
\end{question}

A negative answer to this Problem \ref{q:NdimExtension} would suggest a negative answer to Problem~\ref{q:dimnucValues}, whereas a positive answer (combined with Theorem~\ref{Thm:Structure}) would resolve Problem~\ref{q:dimnucValues} in the case that $A$ has finitely many ideals.

The nuclear dimension zero {$C^*$}-algebras are precisely the AF algebras, and since the extension of AF algebras by AF algebras is again AF (\cite{Brown80}), Problem \ref{q:NdimExtension} has a positive answer when both ideal and quotient are zero-dimensional. The Toeplitz algebra was an important test case for the nuclear dimension of extensions; Winter and Zacharias' original estimates show that its nuclear dimension is either $1$ or $2$ (\cite[Proposition 2.9]{WinterZacharias:AIM}), and nearly 10 years later, Brake and Winter elegantly demonstrated that the value is $1$ (\cite{BrakeWinter:BLMS}). This provided the motivation for Problem \ref{q:NdimExtension} and sparked further progress for ideals and quotients of particular forms (\cite{15authors,Evington:JOT,GardnerTikuisis,ENSW:inPrep}).\footnote{For example, \cite{GardnerTikuisis} and \cite{15authors} cover extensions of commutative algebras and Kirchberg algebras, respectively, by the compacts.}  

As every finite graph algebra is built up from iterated extensions of {$C^*$}-algebras of nuclear dimension at most one,\footnote{To be more precise, finite graph algebras are built from iterated extensions of AF algebras, Kirchberg algebras and $C(\mathbb T)\otimes F$, where $F$ is elementary.} a positive answer to Problem \ref{q:NdimExtension} gives a positive answer to the following problem.

\begin{question}[{\cite[Question C]{ENSW:inPrep}}]\label{q:NDimGraph}
Is the nuclear dimension of any graph {$C^*$}-algebra at most one?
\end{question}

As pointed out to us by the referee, arguably the first nuclear dimension computation for graph algebras is found in Winter and Zacharias foundational paper \cite{WinterZacharias:AIM}, where they show $\dim_{\mathrm{nuc}}(\mathcal O_n)=1$ for $n\in \mathbb N$ (they also obtained the estimate $\dim_{\mathrm{nuc}}(\mathcal O_\infty)\leq 2$; see \cite[Theorem 7.4]{WinterZacharias:AIM}).  This calculation is performed using the standard graph model for $\mathcal O_n$, and not via another model accessed through classification.  However, focus on the nuclear dimension of more general graph {$C^*$}-algebras really begins in 
 \cite{RST:Adv}, which relies crucially on Enders' work \cite{Enders:JFA} showing that UCT Kirchberg algebras with torsion-free $K_1$-group have nuclear dimension $1$.  Enders' theorem covers all simple purely infinite graph {$C^*$}-algebras. There are now a number of examples of positive answers to Problem \ref{q:NDimGraph} and also situations where the slightly larger upper bound of $2$ has been established. For example, for graph {$C^*$}-algebras with real rank zero,\footnote{A graph {$C^*$}-algebra has real rank zero if and only if the graph satisfies a property known (for slightly opaque reasons; see  \cite[Section 6]{KPRR:JFA}) as `condition (K)' (\cite{JeongPark:JFA}). This condition (K) is a kind of freeness condition on the graph dynamics equivalent to all ideals being invariant under the gauge action.}  the nuclear dimension is at most $2$ (\cite[Section~5]{RST:Adv}, \cite{FaurotSchafhauser:PAMS}). Real rank zero has the effect of ruling out algebras stably isomorphic to $C(\mathbb T)$ from the composition series of a graph algebra. In the case of a finite graph, if the graph $C^*$-algebra has real rank zero, then it decomposes as an extension of an $\mathcal O_\infty$-stable quotient by an AF ideal (see the proof of \cite[Theorem A]{FaurotSchafhauser:PAMS}). Using this, and the fact that nuclear dimension computations reduce to finite graphs,  \cite[Theorem B]{FaurotSchafhauser:PAMS} obtains an optimal upper bound of one for graphs with condition (K) satisfying an additional condition on sources, by means of using the results on Problem \ref{q:NdimExtension} from \cite{Evington:JOT}. Some examples of positive answers to Problem \ref{q:NDimGraph} coming from graphs with stabilised circle algebra quotients can be found in \cite{ENSW:inPrep}.

In another direction, {$C^*$}-algebras associated to amenable groups give a particularly prominent class of examples which have some type I behaviour.  

\begin{question}\label{q:NDimGroup}
Determine for which countable discrete amenable groups the group {$C^*$}-algebra has finite nuclear dimension.  Even better, compute the exact nuclear dimension of such groups.
\end{question}

Finitely generated groups with polynomial growth form a particularly natural class to consider. These have been extensively examined by Eckhardt in collaboration with Gillaspy and McKenny: the {$C^*$}-algebras generated by these groups have finite nuclear dimension, and in fact, finite decomposition rank (\cite{EMc:Crelle,EGMc:TAMS}).\footnote{It is worth noting that \cite{EMc:Crelle}, showing that finitely generated nilpotent groups have finite nuclear dimension, went through the monotracial case of the structure theorem.} This was followed by work (\cite{EG:MJM,Eckhardt:Adv}) on the UCT for irreducible representations of such groups   culminating in the classifiability of {$C^*$}-algebras associated to infinite-dimensional irreducible representations of groups of polynomial growth (\cite[Theorem A]{Eckhardt:Adv}).\footnote{Infinite dimensionality is needed as our definition of classifiability excludes elementary {$C^*$}-algebras.} These have unique trace and real rank zero. While outside the scope of the main topic of this section, we definitely want to ask for more computations of the invariant, in the spirit of \cite{EKMc:JFA}, which handles the case of representations of the unitriangular group $UT(4,\mathbb Z)$; interestingly, the classification theorem reveals unexpected isomorphisms for this {$C^*$}-algebra (see \cite[Theorem~6.2]{EKMc:JFA}).

\begin{question}    
Compute $KT_u(C^*_\pi(G))$ for infinite-dimensional irreducible representations $\pi$ of groups $G$ of polynomial growth.
\end{question}

Work on the decomposition rank and nuclear dimension of group {$C^*$}-algebras has been connected to the regularity questions in Sections \ref{sec:crossed-product} and \ref{Sect:noncomdynamics} by way of wreath products. For example $C^*(\mathbb Z\wr \mathbb Z)$ has infinite nuclear dimension as a consequence of Giol and Kerr's construction of a non-classifiable simple crossed product $C(X)\rtimes \mathbb Z$ in \cite{GK:Crelle} (see \cite[Therorem 5.1]{EMc:Crelle}), and the {$C^*$}-algebra of the lamplighter group $(\mathbb Z/2\mathbb Z)\wr \mathbb Z$ has finite nuclear dimension as a consequence of Hirshberg and Wu's result that $C(X)\rtimes\mathbb Z$ has finite nuclear dimension for any action of $\mathbb Z$ on a finite-dimensional compact Hausdorff space $X$ (\cite{HW:Adv}).  The lamplighter group is not strongly quasidiagonal (\cite[Corollary 3.5]{CarrionDadarlatEckhard:JFA}), and so its {$C^*$}-algebra cannot have finite decomposition rank (\cite{KirchbergWinter:IJM}). At present, there are no known examples of finitely generated groups whose {$C^*$}-algebra has finite decomposition rank which do not have polynomial growth. 
In the elementary amenable setting, based on the analysis of certain wreath products, Eckhardt and Wu conjecture that polynomial growth is necessary.

\begin{question}[{\cite[Conjecture II]{EW:arXiv24}}]
Let $G$ be a finitely generated elementary amenable group. Is it true that $C^*(G)$ has finite decomposition rank if and only if $G$ has polynomial growth?
\end{question}

The examples of strongly quasidiagonal groups which fail to have polynomial growth from \cite{Eckhardt:JOT} may be worth examining. These are of the form $\mathbb Z^3\rtimes_\alpha \mathbb Z^2$ for suitable actions, and so by means of Hirshberg and Wu's long-thin-covering dimension (\cite{HW:arXiv}), the resulting {$C^*$}-algebras will have finite nuclear dimension.  Pushing this further, a recent preprint of Eckhardt and Wu (\cite{EW:arXiv24}) combines the strategies of \cite{EMc:Crelle,EGMc:TAMS} with the ideas from  \cite{HW:arXiv} to tackle virtually polycyclic groups.
\begin{theorem}[{\cite[Theorem A]{EW:arXiv24}}]
    Let $G$ be a finitely generated virtually polycyclic group. Then $C^*(G)$ has finite nuclear dimension, bounded in terms of the Hirsch length of $G$.
\end{theorem}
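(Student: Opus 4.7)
My plan is to proceed by induction on the Hirsch length $h(G)$, following the blueprint suggested by combining the Eckhardt--McKenny and Eckhardt--Gillaspy--McKenny treatments of polynomial growth groups with the long-thin covering dimension machinery of Hirshberg--Wu. The base case $h(G)=0$ collapses to the finite-by-torsion case, for which $C^*(G)$ is subhomogeneous with bounded irreducible representation dimension and hence has nuclear dimension $0$. Passing from virtually polycyclic to polycyclic can be handled via an averaging/induced-representation argument, at the cost of multiplying the eventual bound by a constant depending on the index.

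For the inductive step, a polycyclic group $G$ of Hirsch length $h$ admits a normal subgroup $N$ with $G/N$ either finite or isomorphic to $\mathbb Z$, chosen so that $h(N) = h - 1$. In the $\mathbb Z$ case one obtains $C^*(G) \cong C^*(N) \rtimes \mathbb Z$ for an induced action $\alpha$, and by hypothesis $\dim_{\mathrm{nuc}} C^*(N)$ is controlled by $h-1$. The heart of the argument will be a non-commutative version of Hirshberg--Wu's long-thin covering dimension bound for $\alpha$: one wants to conclude that $\dim_{\mathrm{nuc}}(C^*(N) \rtimes_\alpha \mathbb Z)$ is bounded in terms of $\dim_{\mathrm{nuc}}(C^*(N))$ and a dimensional invariant of $\alpha$ on the primitive ideal space $\mathrm{Prim}(C^*(N))$. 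For polycyclic $N$, the Mackey machine yields a $G$-equivariant stratification of $\mathrm{Prim}(C^*(N))$ whose strata are (up to finite covers) locally closed subsets of character varieties of abelian subquotients, whose local dimensions can be bounded by $h-1$; the induced action of $\mathbb Z$ respects this stratification, and its long-thin covers can be built by lifting covers from each stratum using the composition series of $C^*(N)$.

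The main obstacle, and where the most delicate work lies, is making the Hirshberg--Wu argument go through when the base algebra of the crossed product is no longer $C(X)$ but the non-type-I algebra $C^*(N)$. The approximations produced by a long-thin cover in the commutative setting rely heavily on Urysohn-type partitions of unity and the central nature of $C(X) \subseteq C(X) \rtimes \mathbb Z$; in the non-commutative case, one must instead produce approximately central, approximately orthogonal c.p.c.\ order zero maps out of a small-dimensional $C^*$-algebra whose images are long-and-thin with respect to the $\mathbb Z$-action along the fibres of $\mathrm{Prim}(C^*(N)) \to \mathrm{Prim}(C^*(N))/\mathbb Z$. Controlling this simultaneously at every stratum -- and patching the resulting approximations across the composition series of $C^*(N)$ without the luxury of a general extension bound for nuclear dimension (Problem~\ref{q:NdimExtension}) -- is the technical heart of the argument. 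Once this is in place, the uniform bound on all strata in terms of $h(N)$ translates to a bound on $\dim_{\mathrm{nuc}}(C^*(G))$ of the form $f(h(G))$ for some explicit $f$.
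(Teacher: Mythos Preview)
The paper does not contain a proof of this theorem: it is stated as a citation of an external preprint \cite{EW:arXiv24}, with only a one-line gloss that the argument ``combines the strategies of \cite{EMc:Crelle,EGMc:TAMS} with the ideas from \cite{HW:arXiv}''. So there is no proof in the paper to compare against in detail.

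Your proposal is consistent with that one-line description --- you correctly identify the two ingredients (the Eckhardt--McKenney/Eckhardt--Gillaspy--McKenney analysis of representations of nilpotent/polycyclic groups via the Mackey machine, and the Hirshberg--Wu long-thin covering technology) and you correctly locate the main difficulty in transporting the long-thin covering argument from a commutative base $C(X)$ to the non-type-I base $C^*(N)$. That said, your sketch of how to overcome this difficulty is essentially a restatement of the problem rather than a strategy: saying that one must ``produce approximately central, approximately orthogonal c.p.c.\ order zero maps \ldots\ long-and-thin with respect to the $\mathbb Z$-action'' and ``patch across the composition series'' names the desiderata without indicating any mechanism for achieving them. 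The actual Eckhardt--Wu argument requires substantial new input here, and your proposal does not yet contain that input.

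A smaller point: your inductive reduction is slightly off. In a polycyclic series $G=G_0\rhd G_1\rhd\cdots$, the top quotient $G/G_1$ is cyclic but may be finite, in which case $h(G_1)=h(G)$ and you do not drop Hirsch length. You need either to pass to a characteristic subgroup (e.g.\ via the Fitting subgroup) or to allow the inductive step to handle both a $\mathbb Z$-crossed product and a finite-group crossed product without assuming the Hirsch length decreases at every stage.
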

Based on this and the fact that all known examples of groups with {$C^*$}-algebras of finite nuclear dimension have finite Hirsch length (in the sense of Hilman in the case that the group is not virtually polycyclic), Eckhardt and Wu conjecture a partial answer to Problem 
\ref{q:NDimGroup}: for a finitely generated elementary amenable group $G$, $C^*(G)$ has finite nuclear dimension if and only if $G$ has finite Hirsch length (\cite[Conjecture I]{EW:arXiv24}). 

Roe algebras provide another case of interest.
Here, the asymptotic dimension of a discrete metric space $X$ provides an upper bound for the nuclear dimension of the corresponding uniform Roe algebra $C^*_u(X)$ (\cite[Theorem 8.5]{WinterZacharias:AIM}).
Willett and Winter (and others) have made attempts at a converse, which is the following problem.   Even the case $X \coloneqq \mathbb Z^2$ (which has asymptotic dimension two) is open, and at least one of the authors suspects, or perhaps hopes, that the answer might be negative.  The only situations where positive answers are known is when the asymptotic dimension is zero (which, by \cite[Corollary~1.5]{LiWillett:JLMS}, is precisely when the uniform Roe algebra is AF,\footnote{Normally for non-separable {$C^*$}-algebras, one needs to worry about whether AF should be interpreted as a local property, or as an inductive limit of finite-dimensional {$C^*$}-algebras, as these properties are not generally equivalent outside the separable setting. But for uniform Roe algebras \cite[Corollary~1.5]{LiWillett:JLMS} shows these are equivalent.} or equivalently, when it has nuclear dimension zero)  or one.

\begin{question}[{\cite[Question~9.5]{WinterZacharias:AIM}}] \label{q:dimnucRoe}
    Let $X$ be a countable discrete metric space with bounded geometry.
    Is $\mathrm{dim}_{\mathrm{nuc}}(C^*_u(X))$ equal to the asymptotic dimension of $X$?
\end{question}

In the preprint \cite{LiLiaoWinter}, Li, Liao, and Winter introduced a variant of nuclear dimension, called \emph{diagonal dimension}, that takes into account a diagonal subalgebra.
They showed that for a discrete metric space $X$ with bounded geometry, the diagonal dimension of $\ell_\infty(X) \subseteq C^*_u(X)$ does agree with the asymptotic dimension of $X$.
Roughly, this interprets Problem~\ref{q:dimnucRoe} as follows: does the nuclear dimension of $C^*_u(X)$ see the diagonal subalgebra $\ell_\infty(X)$?
Through rigidity results by the last-named author and Willett (\cite{WhiteWillett:GGD}) and Baudier, Braga, Farah, Khukhro, Vignati, and Willett (\cite{BBFKVW:IM}; for the case of interest to this problem -- when $C^*_u(X)$ is nuclear -- \v{S}pakula and Willett's original \cite{SpakulaWillett:AIM} suffices), we now know that $C^*_u(X)$ itself does remember the diagonal subalgebra $\ell_\infty(X)$.

\section{Semiprojectivity}

Semiprojectivity is a lifting property for {$C^*$}-algebras which is intimately connected to stability of relations (in the uniform norm).
It arose in an adaptation of shape theory to the non-commutative setting by Effros--Kaminker and Blackadar (\cite{EffrosKaminker:GMOA,Blackadar:MS}); whereas an initial definition was made by Effros and Kaminker, Blackadar formulated a stronger notion that is now in use: a {$C^*$}-algebra $A$ is \emph{semiprojective} if for any {$C^*$}-algebra $B$, any increasing sequence of ideals $I_1\lhd I_2 \lhd \cdots\lhd B$, and any $^*$-homomorphism $\phi:A \to B/\overline{\bigcup I_n}$, there exists some $n$ and a $^*$-homomorphism $\tilde\phi:A \to B/I_n$ giving a (partial) lift of $\phi$.  If we restrict the algebras $B$ and $I_n$ to lie in some subcategory of {$C^*$}-algebras, then $A$ is said to be \emph{semiprojective} in that subcategory.

The definition is designed to be a non-commutative version of absolute neighbourhood retracts (ANR), and indeed, a unital commutative {$C^*$}-algebra $C(X)$ is semiprojective in the category of of commutative {$C^*$}-algebras precisely when $X$ is an ANR (\cite[Proposition~2.11]{Blackadar:MS}).\footnote{A unital commutative {$C^*$}-algebra that is semiprojective in the commutative category need not be semiprojective (in the category of all {$C^*$}-algebras).  For example, $C(\mathbb T^2)$ is semiprojective in the category of commutative {$C^*$}-algebras, but Voiculescu's work on almost commuting unitaries show that this is not true in the category of all {$C^*$}-algebras (\cite{Voiculescu:ActaSci}).  As shown by S\o{}rensen and Thiel, semiprojectivity of $C(X)$ forces the dimension of $X$  to be at most one, and moreover, this is the only other obstruction: if $X$ is an ANR of dimension at most one, then $C(X)$ is semiprojective (\cite{SorensenThiel:PLMS}).  This crucially uses Loring's semiprojectivity of $C(X)$ for finite graphs $X$ (\cite[Theorem~5.1]{Loring:PJM2}).}
Some natural examples of {$C^*$}-algebras are semiprojective: finite-dimensional {$C^*$}-algebras, $C([0,1])$, $C(\mathbb T)$, $\mathcal O_n$, and $C^*(\mathbb F_n)$ (for $n$ finite) for example.
This is usually proven by expressing them as appropriate universal {$C^*$}-algebras, although other examples such as $\mathcal O_\infty$ (\cite{Blackadar:ASPM}) and general 1-dimensional non-commutative CW complexes (\cite{ELP:Crelle}) can require significantly more intricate arguments.
More generally, in his unpublished paper \cite{Enders15}, Enders shows that a UCT Kirchberg algebra is semiprojective if and only if it has finitely generated $K$-theory.
(On the other hand, no finite classifiable {$C^*$}-algebra is semiprojective, as a quasidiagonal semiprojective {$C^*$}-algebra must be residually finite.)

A fundamental result in topological shape theory is that every compact Hausdorff space is an inverse limit of absolute neighbourhood retracts.
There are a couple of reasonable formulations of potential non-commutative analogues of this result; the most straightforward would be that all separable {$C^*$}-algebras are inductive limits of semiprojective {$C^*$}-algebras, but another possibility is that all separable nuclear {$C^*$}-algebras are inductive limits of semiprojective nuclear {$C^*$}-algebras.
Blackadar raised both of these questions nearly 40 years ago in \cite{Blackadar:MS}, and they remain widely open.
In the absence of a positive answer to either question, Blackadar rescued non-commutative shape theory by noting that an arbitrary separable {$C^*$}-algebra can be written as an inductive limit of (highly non-nuclear) {$C^*$}-algebras with semiprojective connecting maps\footnote{Semiprojectivity of a $^*$-homomorphism is a relative version of the definition of semiprojectivity of a {$C^*$}-algebra; see \cite[Definition~2.10]{Blackadar:MS}.} (\cite[Proposition~4.2]{Blackadar:MS}).

\begin{question}[{Blackadar, {(\cite{Blackadar:MS})}}]\label{q:Semiprojective}
\begin{enumerate}[(1)]
    \item Is every separable {$C^*$}-algebra an inductive limit of a sequence of semiprojective {$C^*$}-algebras?\label{q:Semiprojective.1}
    \item Is every separable nuclear {$C^*$}-algebra an inductive limit of a sequence of semiprojective nuclear {$C^*$}-algebras? Or at least, of a sequence of nuclear {$C^*$}-algebras, with semiprojective connecting maps? \label{q:Semiprojective.2}
    \end{enumerate}
\end{question}


In \cite{Thiel:Adv}, Thiel shows that the set of {$C^*$}-algebras which have a positive answer to Problem \ref{q:Semiprojective}(\ref{q:Semiprojective.1}) (known as having a \emph{strong shape system}) is closed under shape domination, and in particular under homotopy.  This gives rise to a number of positive answers to the problem (see \cite[Theorem 5.4]{Thiel:Adv}).  However, there is still a long way to go: Blackadar noted that the answer to the second of these questions is ``not even clear'' for commutative {$C^*$}-algebras, and indeed, this case of both problems remains open! 

\section{Examples of non-nuclear simple pure {$C^*$}-algebras}
\label{sec:NonNuc}

Given the role of Cuntz semigroup regularity -- equivalently pureness -- in the study of simple nuclear stably finite {$C^*$}-algebras, it is logical to seek natural examples outside the nuclear setting.  A good starting place is mono(quasi)tracial simple {$C^*$}-algebras as then all ranks occur and this becomes a search for strict comparison. Based on ideas of Dykema and R\o rdam (\cite{DykemaRordam:MZ}), $C^*_r(\mathbb F_\infty)$, which is the reduced free product of infinitely many copies of $C(\mathbb T)$ with respect to integration against Haar measure, has strict comparison of positive elements (\cite[Proposition~6.3.2]{Robert:AIM}).\footnote{Robert attributes \cite[Proposition~6.3.2]{Robert:AIM} to R\o rdam.}  These arguments cover many other reduced free products of \emph{infinitely many} {$C^*$}-algebras; the infiniteness of the family is used to ensure that given elements $a,b$ in (matrices over) a finite stage of free product, one can find a unitary $u$ from a later component so that $uC^*(a)u^*$ and $C^*(b)$ are free.   In his paper, Robert pointed out that the following question is open.\footnote{It is no longer open; see Addendum~\ref{Addendum25.1}.}
It is also asked as \cite[Problem~16.4]{GardellaPerera}.\footnote{In \cite{GardellaPerera}, they also ask for a computation of $\mathrm{Cu}(C^*_r(\mathbb F_2))$. If $C^*_r(\mathbb F_2)$ has strict comparison -- which would be the expected outcome -- then using this, unique trace, and stable rank one, it would follow that $\mathrm{Cu}(C^*_r(\mathbb F_2))\cong \mathrm{Cu}(\mathcal Z)$.}

\begin{question}\label{q:C*F2comparison}
    Does $C^*_r(\mathbb F_2)$ have strict comparison?
\end{question}

More generally, following the breakthrough result \cite{BKKO:IHES} showing reduced group {$C^*$}-algebras have a unique trace whenever they are simple, we hope that this trace is good enough to see the order on positive elements.

\begin{question}\label{q:C*simplecomparison}
    Let $G$ be a countable discrete {$C^*$}-simple group (i.e.\ $C^*_r(G)$ is simple).  Does $C^*_r(G)$ have strict comparison with respect to its unique trace?\footnote{As pointed out to the authors by Hannes Thiel, it seems challenging even to determine whether $C^*_r(G)$ has a unique quasitrace in this case (i.e.\  whether Problem~\ref{q:QT} holds for $C^*_r(G)$ when $G$ is {$C^*$}-simple). Strict comparison with respect to the unique trace would show that there is a unique quasitrace. For exact groups, Haagerup's Theorem \ref{thm:HaagerupQT} skirts the quasitrace problem, and a solution to Problem \ref{q:C*simplecomparison} for exact groups would already be very significant progress.}
\end{question}

In the preprint \cite{Robert23}, Robert gives a unified proof of strict comparison for $C^*_r(\mathbb F_\infty)$ and $\mathcal Z$ (among other {$C^*$}-algebras); he defines a {$C^*$}-algebra $A$ with a state\footnote{In our context, $\tau$ will always be a trace, hence the choice of notation.} $\tau$ to be \emph{selfless} if there exists a $^*$-homomorphism $\sigma$ as in the following diagram, where $i_1:A \to (A,\tau)^{*\infty}$ denotes the first-factor embedding into the reduced free product:
\begin{equation}
\begin{tikzcd}
    A \ar[rr] \ar[dr,"i_1", swap] && A_\omega \\
    & (A,\tau)^{*\infty} \ar[ur,dotted,"\exists \sigma", swap].
\end{tikzcd}
\end{equation}
It is shown that both $\mathcal Z$ and $C^*_r(\mathbb F_\infty)$ are selfless in this sense and that selfless {$C^*$}-algebras are simple, with strict comparison, and are either purely infinite, or have a unique trace, which is also the unique quasitrace  (\cite[Theorem 5.2, Proposition 2.2 and Theorem 3.1, respectively]{Robert23}).
Refining Problems~\ref{q:C*F2comparison} and \ref{q:C*simplecomparison}, Robert asked the following.\footnote{This was Question 12 in the first version of \cite{Robert23} on the arXiv, but, with the first part of the question being solved (see Addendum \ref{Addendum25.1}), it no longer appears explicitly in the revised version.}

\begin{question}\label{q:Selfless}
Is $C^*_r(\mathbb F_2)$ selfless? If $G$ is a countable discrete {$C^*$}-simple group, is $C^*_r(G)$ selfless?
\end{question}

Another very natural class to consider is crossed products. In the setting of II$_1$ factors, taking a crossed product associated to an outer action by an amenable group preserves the McDuff property and property $\Gamma$ (\cite{Bedos:JFA,Bisch:TAMS}). One could ask similar questions at the {$C^*$}-level, regarding preservation of $\Z$-stability under crossed products (in the spirit of many of the questions in Section \ref{Sect:noncomdynamics}), but arguably even more fundamental is to determine when pureness is preserved under crossed products. For simple purely infinite {$C^*$}-algebras, crossed products by outer actions of amenable groups preserve pure infiniteness (and hence pureness); see \cite[Lemma~10]{KK:OAA}. In the stably finite setting, we are not aware of corresponding results, save those obtained as consequences of situations where one has preservation of $\Z$-stability (as per Problem \ref{q:z-stable-product}). The following question is wide open for simple pure {$C^*$}-algebras which are not $\Z$-stable.

\begin{question}\label{PureAmenableCrossedProd}
Suppose that $G\curvearrowright A$ is an outer action of a countable discrete amenable group on a unital simple separable pure {$C^*$}-algebra. Under what conditions is $A\rtimes G$ pure?
\end{question}

Central sequences have played an immense role in the theory of von Neumann algebras, both in the injective setting (e.g.\ in the proof of Connes' theorem) and for non-injective algebras where they were the source of large numbers of examples (\cite{McDuff:Ann}) and remain of interest through to the present day.  Accordingly, we should like to better understand central sequences of regular, even $\Z$-stable, simple {$C^*$}-algebras outside the nuclear setting. For a separable unital {$C^*$}-algebra $A$, it is a result of Kirchberg that if the central sequence algebra $A_\omega\cap A'$ is simple, then $A$ is either a matrix algebra or simple,  nuclear, and purely infinite (in which case, $A_\omega\cap A'$ is also purely infinite); see \cite[Proposition 2.10 and Theorem 2.12]{Kirchberg:Abel}. What restrictions are there on the central sequence algebra having some form of comparison without being simple -- in particular, can we find examples of non-nuclear unital simple $\Z$-stable stably finite $A$ with property (SI)? (Recall Matui and Sato's Theorem \ref{Thm:SI}, where property (SI) follows from strict comparison in the simple nuclear case).

An analogy can be drawn between property (SI) for {$C^*$}-algebras and the super McDuff property for a {\rm II}$_1$ factor $\mathcal M$,\footnote{The terminology is from \cite{GoldbringHart:IMRN}, but the property goes back to Dixmier and Lance's work \cite{DixmierLance:Invent}, which obtained the 6$^\text{th}$ and 7$^\text{th}$ distinct examples of separably acting II$_1$ factors (see \cite[Corollaire~26]{DixmierLance:Invent}).} which asks that $\mathcal M^\omega \cap \mathcal M'$ is a ${\rm II}_1$ factor. When $\mathcal M$ is McDuff, it follows that $\mathcal M^\omega\cap \mathcal M'$ is type II$_1$ but not necessarily a factor; being super McDuff is equivalent to $\mathcal M$ being McDuff and the trace on $\mathcal M^\omega$ -- which is the trace induced from $\mathcal M$ -- classifying projections in $\mathcal M^\omega \cap \mathcal M'$.  Property (SI) asks for small-to-large comparison in the central sequence algebra $A_\omega\cap A'$ by the limit traces coming from $A$. 

By \cite[Proposition~19]{DixmierLance:Invent}, the ${\rm II}_1$ factor $L(\mathbb F_2) \bar\otimes \mathcal R$ is super McDuff (and indeed, so too is the tensor product of any II$_1$ factor without property $\Gamma$ with the hyperfinite II$_1$ factor; this follows from \cite[Theorem 4.7]{FangGeLi:Taiwan}, see also \cite[Section 6]{AGKE:Adv}).  This suggests the following question.

\begin{question}
Does $C^*_r(\mathbb F_2)\otimes\mathcal Z$ have property (SI)?
\end{question}

As the central sequence algebra of $C^*_r(\mathbb F_2)\otimes \Z$ is pure (see the discussion in Section \ref{Sec:CuReg}, before Problem \ref{Q7b}) it has strict comparison. The problem is about the (quasi)traces on this algebra: $C^*_r(\mathbb F_2)\otimes\mathcal Z$ has property (SI) if and only if its central sequence algebra has a unique quasitrace (in the forward direction, this is obtained by following the arguments for pulling back traces and strict comparison along the trace-kernel extension using property (SI) from \cite{MatuiSato:Duke} and extended further in \cite{BBSTWW:MAMS}). 

Another prominent example of a simple pure {$C^*$}-algebra is the hyperfinite II$_1$ factor $\mathcal R$ (or indeed, any II$_1$ factor). Moreover, $\mathcal R$ has uniform property $\Gamma$ as it has unique trace and property $\Gamma$ (and indeed, $\mathcal R$ has the uniform McDuff property since it is McDuff), however it is not $\Z$-stable as by \cite{Ghasemi:GMJ} all II$_1$ factors are tensorially prime as {$C^*$}-algebras (i.e.\ they do not admit a tensor decomposition $A\otimes B$ with both $A$ and $B$ infinite-dimensional).  But tensorial absorption of $\Z$ is not the right notion for non-separable {$C^*$}-algebras, and instead, one should ask for separable $\Z$-stability. Whereas, as discussed in Section \ref{sec:UCT}, $\mathcal R^\omega$ is separably $\Z$-stable, this is not known for $\mathcal R$.

\begin{question}\label{Q:RSepZStable}
    Is the hyperfinite II$_1$ factor separably $\Z$-stable?
\end{question}

A positive answer to this question (which would happen if $\mathcal R$ has property (SI) -- suitably adapted to work with relative commutants in ultrapowers of sufficiently large cardinality in place of central sequence algebras) would allow $\mathcal R$ to be taken as a codomain in the forthcoming classification theorems for morphisms in \cite{CGSTW:draft}.\footnote{In fact, by forthcoming work of Hua and the last-named author (\cite{HuaWhite}), one can use II$_1$ factors as co-domains of classification of morphisms without requiring separable $\Z$-stability, but we would still very much like to know the answer to Problem \ref{Q:RSepZStable}.}

\subsection{Addendum: June 2025 -- April 2026}\label{Addendum25.1}

In the time since the first version of this article, there has been a breakthrough in obtaining selflessness and hence strict comparison for reduced group {$C^*$}-algebras.  In 2022, around a year before Robert's selflessness paper, Louder and Magee generalised the celebrated strong convergence result of Haagerup and Thorbj\o{}rnsen (\cite{HT:Annals}) for free groups to show that all limit groups\footnote{Limit groups were introduced by Sela in \cite[Section~1]{Sela} and were later shown in \cite[Theorem~1.1]{ChampetierGuirardel} to coincide with the class of \emph{fully residually free} groups: those groups $G$ satisfying that for all finite sets $S \subset G$, there is a free group $F$ and a group homomorphism $\phi \colon G \rightarrow F$ such that $\phi|_S$ is injective.  Note in particular that free groups are limit groups.}
 $G$ have a sequence of finite-dimensional representations which strongly converge to the regular representation of $G$ (\cite{LouderMagee:JFA}). In their recent paper \cite{AGKP:preprint}, Amrutam, Gao, Kunnawalkam Elayavalli, and Patchell showed how to use Louder and Magee's techniques to obtain selflessness for $C^*_r(\mathbb F_n)$ for $n\geq 2$ -- answering Problem~\ref{q:C*F2comparison} and the first part of Problem~\ref{q:Selfless} -- and go a lot further. They prove that $C^*_r(G)$ is selfless, whenever $G$ is acylindrically hyperbolic with no non-trivial finite normal subgroups and $G$ has rapid decay (\cite[Theorem B]{AGKP:preprint}; see the discussion after this theorem for a host of examples covered by their theorem).

Louder and Magee quantify results of Gilbert Baumslag and Benjamin Baumslag to obtain a quantitative version of full residual freeness for limit groups (see \cite[Lemma 1.8]{LouderMagee:JFA} in particular).  This, combined with Haagerup's rapid decay property for free groups (\cite{Haagerup:Invent}), allows them to promote the approximate embeddings $\phi_n \colon G \rightarrow\mathbb  F$ of a limit group $G$ into a free group $\mathbb F$ to an embedding $C^*_r(G) \rightarrow C^*_r(\mathbb F)_\omega$, and then Haagerup and Thorbj\o{}rnsen's result gives that $C^*_r(G)$ is MF.

The results in \cite{AGKP:preprint} follow a similar idea of combining rapid decay with a quantitative version of the mixed identity freeness condition.  Briefly, a group $G$ is \emph{mixed identity free (MIF)} if one can find an infinite order element in the discrete group ultrapower $G^\omega$ that is in free position from the diagonal copy of $G$. Alternatively, assuming $G$ has a finite generating set giving a length function $|\cdot|$, being MIF means that for every integer $n \geq 1$, there is a group homomorphism $\phi_n \colon G \ast \mathbb Z \rightarrow G$ that is injective on the ball of radius $n$.  A group is \emph{selfless} (\cite[Definition~3.1]{AGKP:preprint}) if one can further find a subexponential function $f \colon \mathbb N \rightarrow \mathbb N$ such that for every $g \in G$, $|\phi_n(g)| \leq f(|g|)$.  A similar use of rapid decay shows these embeddings extend to a $^*$-homomorphism $C^*_r(G \ast \mathbb Z) \rightarrow C^*_r(G)_\omega$.  Then the image of the generator of $\mathbb Z$ provides the Haar unity needed to show $C^*_r(G)$ is selfless.  This verifies Robert's selflessness condition (and hence also strict comparison) for $C^*_r(G)$ whenever $G$ is selfless and has rapid decay.  While it later turned out that it is possible to deduce selflessness for free groups from Louder and Magee's work (see \cite[Section 3.4]{AGKP:preprint}),  Amrutam, Gao, Kunnawalkam Elayavalli, and Patchell required  quantitative versions of results for acylindrical actions of groups on hyperbolic space to reach the full force of their \cite[Theorem 3.4]{AGKP:preprint}.

Like the decay used in these selflessness arguments, subsequent progress has been rapid.  The fact that reduced free group {$C^*$}-algebras have strict comparison has been used by Kunnawalkam Elayavalli and the first-named author (\cite{KunnawalkamElayavalli-Schafhauser}) to show that these algebras have non-isomorphic ultrapowers, or in model-theoretic language, $C^*_r(\mathbb F_n)$ and $C^*_r(\mathbb F_m)$ are not elementarily equivalent for $n \neq m$. The point is that strict comparison allows access to the abstract classification techniques of \cite{Schafhauser:Ann,CGSTW}, showing that the ultrapower of the $K_1$-group is the $K_1$-group of the ultrapower: $K_1(C^*(\mathbb F_n)_\omega)\cong K_1(C^*(\mathbb F_n))^\omega \cong ({\mathbb Z}^n)^\omega$. In another direction, Vigdorovich has established selflessness for the reduced group {$C^*$}-algebras of cocompact lattices in $PSL_n(\mathbb R)$ ($\mathbb R$ can be replaced by any local field of characteristic zero) (\cite{Vigdorovich:arXiv}).  In particular, he deduces that these group {$C^*$}-algebras have stable rank one. In yet another direction, \cite{HEKR:arXiv} establishes selflessness (and hence strict comparison) of the {$C^*$}-reduced free products of semicircular systems (which appear in Problem \ref{FreeIsoQn} below).

This area is moving very quickly, and in the edits made to this paper in September 2025 Ozawa posted a preprint (\cite{Ozawa:selfless}) giving a dynamical approach to obtaining groups with selfless reduced {$C^*$}-algebras avoiding the use of rapid decay: infinite groups which admit a topologically free and minimal action on a compact Hausdorff space which is extremely proximal have selfless reduced group {$C^*$}-algebras. Such groups are all mixed identity free.  Getting away from the MIF condition, Ozawa also shows that the class of selfless and exact {$C^*$}-probability spaces is closed under the minimal tensor product  (\cite[Theorem~2]{Ozawa:selfless}) so that $C^*_r(\mathbb F_m\times \mathbb F_n)$ is selfless, for example. The following problem was slated to appear in the version of the paper posted in June 2025, with the motivation of proving that $C^*_r(\mathbb F_r\times \mathbb F_s)$ is pure.  Although Ozawa's methods now give pureness of $C^*_r(\mathbb F_r\times \mathbb F_s)$ through selflessness, the question of tensor products of pure $C^*$-algebras are pure remains open and natural.

\begin{question}\label{XCV}
Let $A$ and $B$ be {$C^*$}-algebras with $\Cu(A)\cong \Cu(B)\cong\Cu(\Z)$. Do we have $\Cu(A\otimes B)\cong \Cu(\Z)$?  More generally, is the minimal tensor product of pure {$C^*$}-algebras pure?
\end{question}

Even more recently, in a February 2026 preprint \cite{Vigdorovich:linear}, Vigdorovich showed that non-trivial linear groups with trivial amenable radical have selfless reduced $C^*$-algebra, which provides a positive answer to Problem \ref{q:Selfless} in the case of linear groups.

%

\section{Generation and isomorphism problems}\label{Sect:Gen}

Two of the longest standing problems in von Neumann algebras are the free group factor problem (whether $L(\mathbb F_2)$ and $L(\mathbb F_3)$ are isomorphic), which implicitly goes back to Murray and von Neumann (\cite{MurrayVonNeumann4}), and the generation problem: whether every separably acting von Neumann algebra is generated by a single operator (or equivalently, by two self-adjoint operators). The generation problem goes back at least to Kadison's Baton Rouge problem list from 1967; it holds for properly infinite von Neumann algebras, and the remaining II$_1$ case has been reduced to the setting of factors (see \cite{Topping:Lectures} for early work on this problem, and \cite{Wogan:BAMS,Willig:TMath} for the reduction to II$_1$ factors).  Here, single generation can very often be obtained from structural decompositions. Examples of singly generated II$_1$ factors include separably acting factors with Cartan subalgebras, with property $\Gamma$, and those which are tensorially non-prime  (\cite{GePopa:Duke}). These two problems are hopefully connected: ideally the free group factor $L(\mathbb F_\infty)$ would not be finitely generated, resolving both the generator and free group factor problems.

While separably acting abelian von Neumann algebras are singly generated, this is certainly not the case for separable abelian {$C^*$}-algebras. Yet surprisingly, the generation problem is open for unital simple separable {$C^*$}-algebras. (In a similar fashion to properly infinite von Neumann algebras, the stabilisations of  unital separable {$C^*$}-algebras are singly generated; see \cite[Theorem 8]{OlsenZame:TAMS}).

\begin{question}
Is every unital simple separable {$C^*$}-algebra generated by a single operator? More generally, is every unital separable and nowhere scattered {$C^*$}-algebra generated by a single operator?
\end{question}

 Thiel and Winter gave a positive answer to the generator problem for unital separable $\Z$-stable {$C^*$}-algebras in \cite{ThielWinter:TAMS}.\footnote{This generalised much earlier work of Olsen and Zame for unital separable UHF-stable {$C^*$}-algebras (\cite{OlsenZame:TAMS}).  In both cases (and also in Olsen and Zame's work on stabilisations of separable unital {$C^*$}-algebras), the unital assumption is used in the functional calculus arguments to show generation. In the non-unital case, Thiel and Winter use the minimal $\Z$-stable unitisation to deduce that general separable $\Z$-stable {$C^*$}-algebras are generated by at most $3$ self-adjoint operators; to the best of our knowledge, it is not known whether this can be reduced to $2$.}  In fact, they obtain single generation for tensor products\footnote{Thiel and Winter work with the maximal tensor product, so their result holds for any separable tensor product.} $A\otimes B$ when $A$ is unital and has enough orthogonal full positive elements (e.g.\ if it is simple and non-elementary) and $B$ contains a unital copy of $\Z$.  This includes $C(X,\mathcal Z)$ for any compact metrisable $X$ -- another instance where tensoring by $\Z$ reduces dimension in some sense. Further work of Thiel shows that unital simple separable $\Z$-stable {$C^*$}-algebras have a dense set of generators, when they have real rank zero or are ASH (\cite{Thiel:CJM,Thiel:JNCG}). In fact, Thiel's study of when the set of generators is dense arose in \cite{Thiel:JFA} to show that this property holds for AF algebras -- and in particular, that (non-simple) AF algebras are always singly generated.
To the best of our knowledge, there are no general results for {$C^*$}-algebras with other decomposition properties analogous to von Neumann results: for example, under what circumstances is the presence of a Cartan subalgebra enough for single generation of a separable simple {$C^*$}-algebra? What about the combination of strict comparison and uniform property $\Gamma$ (these conditions are equivalent to $\Z$-stability in the nuclear case but perhaps offer a direction of travel for a {$C^*$}-version of single generation of II$_1$ factors with property $\Gamma$ outside it, with strict comparison providing some minimal amount of regularity)? Another potentially interesting collection of examples are the reduced group {$C^*$}-algebras $C^*_r(SL_n(\mathbb Z))$ for $n\geq 3$ as their von Neumann completions $L(SL_n(\mathbb Z))$ are known to be singly generated (\cite{GeShen:PNAS}).  

We also highlight that, unlike the von Neumann situation, where the hyperfinite II$_1$ factor is certainly singly generated, the generator problem is equally open in the nuclear case. Just as Villadsen found ways of building exotic simple nuclear {$C^*$}-algebras replicating certain commutative phenomena in the simple setting, perhaps one can construct counterexamples to the generator problem in a similar vein. 

\begin{question}
Is every unital simple separable nuclear {$C^*$}-algebra generated by a single operator?
\end{question}

Of course, one hopes that the reduced group {$C^*$}-algebras associated to free groups with infinitely many generators should not be finitely generated.  Formally, this should be easier to show than the infinite generation of $L(\mathbb F_\infty)$, but this difference may be a mere formality; such a result appears to be extremely challenging. 

Finally, whereas the free group factor problem (whether $L(\mathbb F_2)$ and $L(\mathbb F_3)$ are isomorphic) is open, the directly analogous problem for {$C^*$}-algebras is not: $C^*_r(\mathbb F_2)$ is distinguished from $C^*_r(\mathbb F_3)$ by their $K_1$-groups (\cite[Corollary~3.2]{PimsnerVoiculescu:JOT}).
This is no longer the case for reduced free products of the form $\mathcal Z^{*k}$ (with respect to the unique trace) and  $C([0,1])^{* k}$ (with respect to the trace $\tau_{\mathrm{Leb}}$ induced by the Lebesgue measure)\footnote{This {$C^*$}-algebra is isomorphic to the reduced free product of $k$ semicircular elements, and so by the very recent preprint \cite{HEKR:arXiv} is now known to have strict comparison.} by $K$-theory computations of Germain (\cite[Corollary~2.6]{Germain:Crelle} and \cite[Theorem~4.1]{Germain:Duke}; the case of $C([0,1],\tau_{\mathrm{Leb}})^{*k}$ is addressed directly in \cite[Corollary 6.1]{Germain:Duke}).
The following may therefore be regarded as a `correct' version of the free group factor problem for {$C^*$}-algebras.

\begin{question}\label{FreeIsoQn}
    Determine whether any of the  {$C^*$}-algebras
    \begin{equation}
  (\mathcal Z,\tau)^{*k}\text{ and }(C([0,1]),\tau_{\mathrm{Leb}})^{* \ell}\text{ for }k,\ell\geq 2,       
    \end{equation}are isomorphic.
\end{question}

\subsection{Addendum April 2026} In (\cite{HP:arXiv}), Hirshberg and Phliips have resolved Problem \ref{FreeIsoQn} in the case when $k=\ell=\infty$, showing that $(\mathcal Z,\tau)^{*\infty}\cong (C([0,1]),\tau_{\mathrm{Leb}})^{* \infty}$. These techniques crucially use the infinite free product, and whether such an isomorphism holds for finite $k=\ell$ looks like a very intriguing question.

\section{Algebras close to $\Z$-stable algebras}

We end with a problem that did not fit anywhere else but has irritated one of the authors since they learnt the definition of $\Z$-stability.  In \cite{KadisonKastler}, Kadison and Kastler examined the metric space of all operator algebras acting on a fixed Hilbert space equipped with the Hausdorff metric on their unit balls, suggesting that sufficiently close algebras should be (spatially) isomorphic.  Positive answers have been obtained when one algebra is an injective von Neumann algebra (\cite{Christensen:Acta}), when one algebra is a separable nuclear {$C^*$}-algebra (\cite{CSSWW:Acta}), and -- outside the amenable setting -- when one algebra is the $\mathcal R$-stabilisation of a von Neumann crossed product associated to an action of a group with suitable cohomological properties (\cite{CCSSWW:DJM}). (For the long history, we refer to the introductions to these papers).  In the last of these, a necessary step is to show that any II$_1$ factor sufficiently close to a separably acting McDuff factor is again McDuff.
The following asks for the {$C^*$}-analogue of this.

\begin{question}\label{Q:PerturbZ}
Does there exist $\epsilon>0$ such that if $A\subset\mathcal B(\mathcal H)$ is a separable $\Z$-stable {$C^*$}-algebra and $B\subset\mathcal B(\mathcal H)$ is another {$C^*$}-algebra with $d_{\mathrm{Hausdorff}}(A_1,B_1)<\epsilon$, then $B$ is necessarily $\Z$-stable?
\end{question}

In \cite{CCSSWW:DJM}, it is further shown that, for a II$_1$ factor $\mathcal N$ close to a separably acting McDuff factor $\mathcal M\coloneqq \mathcal M_0\bar\otimes\mathcal R$, one can perform a small\footnote{That is, $\|u-1\|$ is small.} spatial perturbation $u\mathcal Nu^*$ of $\mathcal N$ so that $u\mathcal Nu^*$ has a McDuff decomposition using the same copy of $\mathcal R$ as in that for $\mathcal M$.  We would not expect a {$C^*$}-analogue of this,\footnote{The examples from  \cite{Johnson:CMB} suggest that this is unlikely -- for separable nuclear {$C^*$}-algebras, such a spatial perturbation cannot be done with the compacts in place of $\mathcal Z$.} though it is conceivable if one asks for the spatial perturbation to be point-norm small.\footnote{That is, the commutators $[u,x]$ can be taken small for finite sets of operators $x$.}

Although we have focused on $\Z$ in Problem \ref{Q:PerturbZ}, nothing is known if $\Z$ is replaced by another strongly self-absorbing {$C^*$}-algebra such as $M_{2^\infty}$ or $\mathcal O_\infty$.  At the level of the Cuntz semigroup, there are positive results. Firstly, {$C^*$}-algebras sufficiently close to simple purely infinite {$C^*$}-algebras are again simple and purely infinite; see the proof of \cite[Theorem 6.4]{CSSW:GAFA}, which shows that algebras close to Kirchberg algebras are again Kirchberg.\footnote{The approach taken there is to show that {$C^*$}-algebras close to those with real rank zero again have real rank zero, and likewise with the property of every non-zero projection being infinite. The result is then obtained from Zhang's characterisation that simple {$C^*$}-algebras are purely infinite if and only if they have real rank zero and all non-zero projections are infinite (the fact that simplicity transfers to close subalgebras goes back to \cite{Phillips:IUMJ}).  It is possible to obtain better constants by a direct argument (such a possibility is hinted at in \cite{CSSW:GAFA}).}  More generally, provided two {$C^*$}-algebras $A$ and $B$ have sufficiently close stabilisations $A\otimes\mathcal K$ and $B\otimes\mathcal K$, then $\Cu(A)\cong\Cu(B)$ and so in particular $A$ is pure if and only if $B$ is pure (see \cite[Section 3]{PTWW:APDE}). A priori, stabilisations are needed to be able to see all of the Cuntz semigroup but in the presence of a solution to Kadison's similarity problem, one can get an estimate on the distance between $A\otimes\mathcal K$ and $B\otimes\mathcal K$ in terms of the original distance between $A$ and $B$ (see \cite{CSSW:GAFA}).  Of relevance to Problem \ref{Q:PerturbZ} is that if $A$ is $\mathcal Z$-stable then it will satisfy Kadison's similarity property (see \cite[Corollary 4.9]{PTWW:APDE}), and any {$C^*$}-algebra $B$ close enough to $A$ will have an isomorphic Cuntz semigroup and so be pure. These ideas were used in \cite{PTWW:APDE} to investigate stability (as stable {$C^*$}-algebras, and more generally those with no bounded traces, satisfy Kadison's similarity problem). In the presence of  weak cancellation, stability is seen in the Cuntz semigroup together with its natural scale via work of Hjelmborg and R\o{}rdam (\cite{HR:JFA}; see \cite[Lemma~4.6]{PTWW:APDE}).  Using stable rank one to obtain weak cancellation, it follows that if $A$ is close enough  to a stable {$C^*$}-algebra of stable rank one, then $A$ must be stable (\cite[Theorem 4.7]{PTWW:APDE}). We would be interested both in a general perturbation result for stability, without any hypotheses to ensure weak cancellation, or progress on Problem \ref{Q:PerturbZ} in the presence of stable rank one.


\begin{thebibliography}{100}

\bibitem{AL:MJM}
M.~Alboiu and J.~Lutley.
\newblock The stable rank of diagonal {ASH} algebras and crossed products by
  minimal homeomorphisms.
\newblock {\em M\"{u}nster J. Math.}, 15(1):167--220, 2022.

\bibitem{AGKP:preprint}
T.~Amrutam, D.~Gao, S.~Kunnawalkam~Elayavalli, and G.~Patchell.
\newblock Strict comparison in reduced group {$C^*$}-algebras.
\newblock {\em Invent. Math.}, 242(3):639--657, 2025.

\bibitem{anHuefWilliams:arXiv}
A.~an~Huef and D.~P. Williams.
\newblock Nuclear dimension of groupoid {$C^*$}-algebras with large abelian
  isotropy, with applications to {$C^*$}-algebras of directed graphs and
  twists.
\newblock {\em Int. Math. Res. Not. IMRN}, (22):Paper No. rnaf340, 28, 2025.

\bibitem{Anantharaman-Delaroche:MathScand}
C.~Anantharaman-Delaroche.
\newblock Action moyennable d'un groupe localement compact sur une alg\`ebre de
  von {N}eumann.
\newblock {\em Math. Scand.}, 45(2):289--304, 1979.

\bibitem{AD:MA}
C.~Anantharaman-Delaroche.
\newblock Syst\`emes dynamiques non commutatifs et moyennabilit\'e.
\newblock {\em Math. Ann.}, 279(2):297--315, 1987.

\bibitem{ADR:Book}
C.~Anantharaman-Delaroche and J.~Renault.
\newblock {\em Amenable groupoids}, volume~36 of {\em Monographies de
  L'Enseignement Math\'{e}matique [Monographs of L'Enseignement
  Math\'{e}matique]}.
\newblock L'Enseignement Math\'{e}matique, Geneva, 2000.
\newblock With a foreword by G. Skandalis and Appendix B by E. Germain.

\bibitem{AndoKirchberg:JLMS}
H.~Ando and E.~Kirchberg.
\newblock Non-commutativity of the central sequence algebra for separable
  non-type {I} {$\mathrm C^*$}-algebras.
\newblock {\em J. Lond. Math. Soc. (2)}, 94(1):280--294, 2016.

\bibitem{APRT:Duke}
R.~Antoine, F.~Perera, L.~Robert, and H.~Thiel.
\newblock {$\mathrm C^*$}-algebras of stable rank one and their {C}untz
  semigroups.
\newblock {\em Duke Math. J.}, 171(1):33--99, 2022.

\bibitem{APRT:JFA}
R.~Antoine, F.~Perera, L.~Robert, and H.~Thiel.
\newblock Traces on ultrapowers of {$\mathrm C^*$}-algebras.
\newblock {\em J. Funct. Anal.}, 286(8):Paper No. 110341, 65, 2024.

\bibitem{APT:MAMS}
R.~Antoine, F.~Perera, and H.~Thiel.
\newblock Tensor products and regularity properties of {C}untz semigroups.
\newblock {\em Mem. Amer. Math. Soc.}, 251(1199):viii+191, 2018.

\bibitem{APTV24}
R.~Antoine, F.~Perera, H.~Thiel, and E.~Vilalta.
\newblock Pure {$\mathrm C^*$}-algebras.
\newblock arXiv:2406.11052.

\bibitem{AGOP:IJM}
P.~Ara, K.~R. Goodearl, K.~C. O'Meara, and E.~Pardo.
\newblock Separative cancellation for projective modules over exchange rings.
\newblock {\em Israel J. Math.}, 105:105--137, 1998.

\bibitem{AGOR:PJM}
P.~Ara, K.~R. Goodearl, K.~C. O'Meara, and R.~Raphael.
\newblock {$K_1$} of separative exchange rings and {$\mathrm C^*$}-algebras
  with real rank zero.
\newblock {\em Pacific J. Math.}, 195(2):261--275, 2000.

\bibitem{AraPereraToms}
P.~Ara, F.~Perera, and A.~S. Toms.
\newblock {$K$}-theory for operator algebras. {C}lassification of {$\mathrm
  C^*$}-algebras.
\newblock In {\em Aspects of operator algebras and applications}, volume 534 of
  {\em Contemp. Math.}, pages 1--71. Amer. Math. Soc., Providence, RI, 2011.

\bibitem{AS:PEMS}
R.~J. Archbold and J.~Spielberg.
\newblock Topologically free actions and ideals in discrete {$\mathrm
  C^*$}-dynamical systems.
\newblock {\em Proc. Edinburgh Math. Soc. (2)}, 37(1):119--124, 1994.

\bibitem{AGKE:Adv}
S.~Atkinson, I.~Goldbring, and S.~Kunnawalkam~Elayavalli.
\newblock Factorial relative commutants and the generalized {J}ung property for
  {$\rm II_1$} factors.
\newblock {\em Adv. Math.}, 396:Paper No. 108107, 53, 2022.

\bibitem{AustinMitra:NYJM}
K.~Austin and A.~Mitra.
\newblock Groupoid models of {$\mathrm C^*$}-algebras and the {G}elfand
  functor.
\newblock {\em New York J. Math.}, 27:740--775, 2021.

\bibitem{Barlak-Li17}
S.~Barlak and X.~Li.
\newblock Cartan subalgebras and the {UCT} problem.
\newblock {\em Adv. Math.}, 316:748--769, 2017.

\bibitem{BarlakSzabo:TAMS}
S.~Barlak and G.~Szab\'{o}.
\newblock Rokhlin actions of finite groups on {UHF}-absorbing {$\mathrm
  C^*$}-algebras.
\newblock {\em Trans. Amer. Math. Soc.}, 369(2):833--859, 2017.

\bibitem{BBFKVW:IM}
F.~P. Baudier, B.~M. Braga, I.~Farah, A.~Khukhro, A.~Vignati, and R.~Willett.
\newblock Uniform {R}oe algebras of uniformly locally finite metric spaces are
  rigid.
\newblock {\em Invent. Math.}, 230(3):1071--1100, 2022.

\bibitem{Bedos:JFA}
E.~B\'{e}dos.
\newblock On actions of amenable groups on {${\rm II}_1$}-factors.
\newblock {\em J. Funct. Anal.}, 91(2):404--414, 1990.

\bibitem{BentmannMeyer:DM}
R.~Bentmann and R.~Meyer.
\newblock A more general method to classify up to equivariant {KK}-equivalence.
\newblock {\em Doc. Math.}, 22:423--454, 2017.

\bibitem{Bisch:TAMS}
D.~H. Bisch.
\newblock On the existence of central sequences in subfactors.
\newblock {\em Trans. Amer. Math. Soc.}, 321(1):117--128, 1990.

\bibitem{Blackadar:MS}
B.~Blackadar.
\newblock Shape theory for {$\mathrm C^*$}-algebras.
\newblock {\em Math. Scand.}, 56(2):249--275, 1985.

\bibitem{Blackadar88}
B.~Blackadar.
\newblock Comparison theory for simple {$\mathrm C^*$}-algebras.
\newblock In {\em Operator algebras and applications, {V}ol. 1}, volume 135 of
  {\em London Math. Soc. Lecture Note Ser.}, pages 21--54. Cambridge Univ.
  Press, Cambridge, 1988.

\bibitem{Blackadar:Ann}
B.~Blackadar.
\newblock Symmetries of the {CAR} algebra.
\newblock {\em Ann. of Math. (2)}, 131(3):589--623, 1990.

\bibitem{Blackadar-kbook}
B.~Blackadar.
\newblock {\em {$K$}-theory for operator algebras}, volume~5 of {\em
  Mathematical Sciences Research Institute Publications}.
\newblock Cambridge University Press, Cambridge, second edition, 1998.

\bibitem{Blackadar:ASPM}
B.~Blackadar.
\newblock Semiprojectivity in simple {$\mathrm C^*$}-algebras.
\newblock In {\em Operator algebras and applications}, volume~38 of {\em Adv.
  Stud. Pure Math.}, pages 1--17. Math. Soc. Japan, Tokyo, 2004.

\bibitem{Blackadar-encyclopedia}
B.~Blackadar.
\newblock {\em Operator algebras}, volume 122 of {\em Encyclopaedia of
  Mathematical Sciences}.
\newblock Springer-Verlag, Berlin, 2006.
\newblock Theory of $\mathrm C^*$-algebras and von Neumann algebras, Operator
  Algebras and Non-commutative Geometry, III.

\bibitem{BlackadarCuntz:AMJ}
B.~Blackadar and J.~Cuntz.
\newblock The structure of stable algebraically simple {$\mathrm C^*
  $}-algebras.
\newblock {\em Amer. J. Math.}, 104(4):813--822, 1982.

\bibitem{BlackadarHandelman:JFA}
B.~Blackadar and D.~E. Handelman.
\newblock Dimension functions and traces on {$\mathrm C^* $}-algebras.
\newblock {\em J. Functional Analysis}, 45(3):297--340, 1982.

\bibitem{BlackadarKirchberg:MA}
B.~Blackadar and E.~Kirchberg.
\newblock Generalized inductive limits of finite-dimensional {$\mathrm
  C^*$}-algebras.
\newblock {\em Math. Ann.}, 307(3):343--380, 1997.

\bibitem{BlackadarKirchberg:PJM}
B.~Blackadar and E.~Kirchberg.
\newblock Inner quasidiagonality and strong {NF} algebras.
\newblock {\em Pacific J. Math.}, 198(2):307--329, 2001.

\bibitem{BlanchardKirchberg:JOT}
E.~Blanchard and E.~Kirchberg.
\newblock Global {G}limm halving for {$\mathrm C^*$}-bundles.
\newblock {\em J. Operator Theory}, 52(2):385--420, 2004.

\bibitem{BlanchardKirchberg:JFA}
E.~Blanchard and E.~Kirchberg.
\newblock Non-simple purely infinite {$\mathrm C^*$}-algebras: the {H}ausdorff
  case.
\newblock {\em J. Funct. Anal.}, 207(2):461--513, 2004.

\bibitem{BRR:JNCG}
E.~Blanchard, R.~Rohde, and M.~R{\o}rdam.
\newblock Properly infinite {$C(X)$}-algebras and {$K_1$}-injectivity.
\newblock {\em J. Noncommut. Geom.}, 2(3):263--282, 2008.

\bibitem{BCHL:JFA}
C.~B\"{o}nicke, S.~Chakraborty, Z.~He, and H.-C. Liao.
\newblock Isomorphism and {M}orita equivalence classes for crossed products of
  irrational rotation algebras by cyclic subgroups of {$SL_2(\Bbb{Z})$}.
\newblock {\em J. Funct. Anal.}, 275(11):3208--3243, 2018.

\bibitem{BBSTWW:MAMS}
J.~Bosa, N.~P. Brown, Y.~Sato, A.~Tikuisis, S.~White, and W.~Winter.
\newblock Covering dimension of {$\mathrm C^*$}-algebras and 2-coloured
  classification.
\newblock {\em Mem. Amer. Math. Soc.}, 257(1233):vii+97, 2019.

\bibitem{BGSW:AIM}
J.~Bosa, J.~Gabe, A.~Sims, and S.~White.
\newblock The nuclear dimension of {$\mathcal O_\infty$}-stable {$\mathrm
  C^*$}-algebras.
\newblock {\em Adv. Math.}, 401:Paper No. 108250, 51, 2022.

\bibitem{Bouwen:Masters}
B.~Bouwen.
\newblock An efficient approach to {K}irchberg--{P}hillips absorption.
\newblock Master's thesis, {KU} {L}euven, 2020.

\bibitem{BoyleHandelman:IJM}
M.~Boyle and D.~E. Handelman.
\newblock Orbit equivalence, flow equivalence and ordered cohomology.
\newblock {\em Israel J. Math.}, 95:169--210, 1996.

\bibitem{BrakeWinter:BLMS}
L.~Brake and W.~Winter.
\newblock The {T}oeplitz algebra has nuclear dimension one.
\newblock {\em Bull. Lond. Math. Soc.}, 51(3):554--562, 2019.

\bibitem{Bratteli:TAMS}
O.~Bratteli.
\newblock Inductive limits of finite dimensional {$\mathrm C^{*} $}-algebras.
\newblock {\em Trans. Amer. Math. Soc.}, 171:195--234, 1972.

\bibitem{BratteliKishimoto:QJM}
O.~Bratteli and A.~Kishimoto.
\newblock Trace scaling automorphisms of certain stable {AF} algebras. {II}.
\newblock {\em Q. J. Math.}, 51(2):131--154, 2000.

\bibitem{BKKO:IHES}
E.~Breuillard, M.~Kalantar, M.~Kennedy, and N.~Ozawa.
\newblock {$\mathrm C^*$}-simplicity and the unique trace property for discrete
  groups.
\newblock {\em Publ. Math. Inst. Hautes \'{E}tudes Sci.}, 126:35--71, 2017.

\bibitem{Brown80}
L.~G. Brown.
\newblock Extensions of {AF} algebras: the projection lifting problem.
\newblock In {\em Operator algebras and applications, {P}art 1 ({K}ingston,
  {O}nt., 1980)}, volume~38 of {\em Proc. Sympos. Pure Math.}, pages 175--176.
  Amer. Math. Soc., Providence, RI, 1982.

\bibitem{BrownPedersen:JFA}
L.~G. Brown and G.~K. Pedersen.
\newblock {$\mathrm C^*$}-algebras of real rank zero.
\newblock {\em J. Funct. Anal.}, 99(1):131--149, 1991.

\bibitem{BrownPedersen:Crelle}
L.~G. Brown and G.~K. Pedersen.
\newblock On the geometry of the unit ball of a {$\mathrm C^*$}-algebra.
\newblock {\em J. Reine Angew. Math.}, 469:113--147, 1995.

\bibitem{BrownPedersen:JFA14}
L.~G. Brown and G.~K. Pedersen.
\newblock Non-stable {K}-theory and extremally rich {$\mathrm C^*$}-algebras.
\newblock {\em J. Funct. Anal.}, 267(1):262--298, 2014.

\bibitem{Brown:JFA}
N.~P. Brown.
\newblock A{F} embeddability of crossed products of {AF} algebras by the
  integers.
\newblock {\em J. Funct. Anal.}, 160(1):150--175, 1998.

\bibitem{Brown:HMJ}
N.~P. Brown.
\newblock Crossed products of {UHF} algebras by some amenable groups.
\newblock {\em Hokkaido Math. J.}, 29(1):201--211, 2000.

\bibitem{Brown:MAMS}
N.~P. Brown.
\newblock Invariant means and finite representation theory of {$\mathrm
  C^*$}-algebras.
\newblock {\em Mem. Amer. Math. Soc.}, 184(865):viii+105, 2006.

\bibitem{BCW:Abel}
N.~P. Brown, J.~R. Carri\'{o}n, and S.~White.
\newblock Decomposable approximations revisited.
\newblock In {\em Operator algebras and applications---the {A}bel {S}ymposium
  2015}, volume~12 of {\em Abel Symp.}, pages 45--65. Springer, [Cham], 2017.

\bibitem{BrownOzawa:book}
N.~P. Brown and N.~Ozawa.
\newblock {\em {$\mathrm C^*$}-algebras and finite-dimensional approximations},
  volume~88 of {\em Graduate Studies in Mathematics}.
\newblock American Mathematical Society, Providence, RI, 2008.

\bibitem{BPT:Crelle}
N.~P. Brown, F.~Perera, and A.~S. Toms.
\newblock The {C}untz semigroup, the {E}lliott conjecture, and dimension
  functions on {$\mathrm C^*$}-algebras.
\newblock {\em J. Reine Angew. Math.}, 621:191--211, 2008.

\bibitem{BT:IMRN}
N.~P. Brown and A.~S. Toms.
\newblock Three applications of the {C}untz semigroup.
\newblock {\em Int. Math. Res. Not., IMRN}, page Article ID rnm068, 2007.

\bibitem{BEW:ContempMath}
A.~Buss, S.~Echterhoff, and R.~Willett.
\newblock Injectivity, crossed products, and amenable group actions.
\newblock In {\em {$K$}-theory in algebra, analysis and topology}, volume 749
  of {\em Contemp. Math.}, pages 105--137. Amer. Math. Soc., [Providence], RI,
  2020.

\bibitem{CCSSWW:DJM}
J.~Cameron, E.~Christensen, A.~M. Sinclair, R.~R. Smith, S.~White, and A.~D.
  Wiggins.
\newblock Kadison--{K}astler stable factors.
\newblock {\em Duke Math. J.}, 163(14):2639--2686, 2014.

\bibitem{CDE:APDE}
T.~M. Carlsen, A.~Dor-On, and S.~r. Eilers.
\newblock Shift equivalences through the lens of {C}untz-{K}rieger algebras.
\newblock {\em Anal. PDE}, 17(1):345--377, 2024.

\bibitem{CarrionDadarlatEckhard:JFA}
J.~R. Carri\'{o}n, M.~Dadarlat, and C.~Eckhardt.
\newblock On groups with quasidiagonal {$\mathrm C^*$}-algebras.
\newblock {\em J. Funct. Anal.}, 265(1):135--152, 2013.

\bibitem{CGSTW}
J.~R. Carri\'on, J.~Gabe, C.~Schafhauser, A.~Tikuisis, and S.~White.
\newblock Classifying $^*$-homomorphisms {I}: {U}nital simple nuclear {$\mathrm
  C^*$}-algebras.
\newblock arXiv:2307.06480.

\bibitem{CGSTW:draft}
J.~R. Carri\'on, J.~Gabe, C.~Schafhauser, A.~Tikuisis, and S.~White.
\newblock Classifying $^*$-homomorphisms {II}.
\newblock In preparation.

\bibitem{Carrion-Schafhauser}
J.~R. Carri\'{o}n and C.~Schafhauser.
\newblock A topology on {$E$}-theory.
\newblock {\em J. Lond. Math. Soc. (2)}, 109(6):Paper No. e12917, 32, 2024.

\bibitem{CCEGSTW}
J.~Castillejos, J.~R. Carri\'o{}n, S.~Evington, J.~Gabe, C.~Schafhauser,
  A.~Tikuisis, and S.~White.
\newblock Tracially complete {$\mathrm C^*$}-algebras.
\newblock arXiv:2310.20594.

\bibitem{CE:APDE}
J.~Castillejos and S.~Evington.
\newblock Nuclear dimension of simple stably projectionless {$ C^*$}-algebras.
\newblock {\em Anal. PDE}, 13(7):2205--2240, 2020.

\bibitem{CETW:IMRN}
J.~Castillejos, S.~Evington, A.~Tikuisis, and S.~White.
\newblock Uniform property {$\Gamma$}.
\newblock {\em Int. Math. Res. Not. IMRN}, (13):9864--9908, 2022.

\bibitem{CETWW:IM}
J.~Castillejos, S.~Evington, A.~Tikuisis, S.~White, and W.~Winter.
\newblock Nuclear dimension of simple {$\mathrm C^*$}-algebras.
\newblock {\em Invent. Math.}, 224(1):245--290, 2021.

\bibitem{Chabert-Echterhoff-OyonoOyono:GAFA}
J.~Chabert, S.~Echterhoff, and H.~Oyono-Oyono.
\newblock Going-down functors, the {K}\"{u}nneth formula, and the
  {B}aum-{C}onnes conjecture.
\newblock {\em Geom. Funct. Anal.}, 14(3):491--528, 2004.

\bibitem{CEKN:MathAnn}
S.~Chakraborty, S.~Echterhoff, J.~Kranz, and S.~Nishikawa.
\newblock {$K$}-theory of noncommutative {B}ernoulli shifts.
\newblock {\em Math. Ann.}, 388(3):2671--2703, 2024.

\bibitem{ChampetierGuirardel}
C.~Champetier and V.~Guirardel.
\newblock Limit groups as limits of free groups.
\newblock {\em Israel J. Math.}, 146:1--75, 2005.

\bibitem{ChoiEffros:Ann}
M.~D. Choi and E.~G. Effros.
\newblock The completely positive lifting problem for {$\mathrm C^*$}-algebras.
\newblock {\em Ann. of Math. (2)}, 104(3):585--609, 1976.

\bibitem{Christensen:Acta}
E.~Christensen.
\newblock Near inclusions of {$\mathrm C^* $}-algebras.
\newblock {\em Acta Math.}, 144(3-4):249--265, 1980.

\bibitem{CSSW:GAFA}
E.~Christensen, A.~M. Sinclair, R.~R. Smith, and S.~White.
\newblock Perturbations of {$\mathrm C^*$}-algebraic invariants.
\newblock {\em Geom. Funct. Anal.}, 20(2):368--397, 2010.

\bibitem{CSSWW:Acta}
E.~Christensen, A.~M. Sinclair, R.~R. Smith, S.~A. White, and W.~Winter.
\newblock Perturbations of nuclear {$\mathrm C^*$}-algebras.
\newblock {\em Acta Math.}, 208(1):93--150, 2012.

\bibitem{Christensen:MathScand}
M.~S. Christensen.
\newblock Regularity of {V}illadsen algebras and characters on their central
  sequence algebras.
\newblock {\em Math. Scand.}, 123(1):121--141, 2018.

\bibitem{CFaH:EM}
L.~O. Clark, J.~Fletcher, and A.~an~Huef.
\newblock All classifiable {K}irchberg algebras are {$\mathrm C^*$}-algebras of
  ample groupoids.
\newblock {\em Expo. Math.}, 38(4):559--565, 2020.

\bibitem{CJKMST:MA}
C.~T. Conley, S.~C. Jackson, D.~Kerr, A.~S. Marks, B.~Seward, and R.~D.
  Tucker-Drob.
\newblock F{\o}lner tilings for actions of amenable groups.
\newblock {\em Math. Ann.}, 371(1-2):663--683, 2018.

\bibitem{Connes75}
A.~Connes.
\newblock Outer conjugacy classes of automorphisms of factors.
\newblock {\em Ann. Sci. \'{E}cole Norm. Sup. (4)}, 8(3):383--419, 1975.

\bibitem{Connes:Ann}
A.~Connes.
\newblock Classification of injective factors. {C}ases {${\rm II}_1$}, {${\rm
  II}_\infty$}, {${\rm III}_\lambda$}, {$\lambda \neq 1$}.
\newblock {\em Ann. of Math. (2)}, 104(1):73--115, 1976.

\bibitem{Connes77}
A.~Connes.
\newblock Periodic automorphisms of the hyperfinite factor of type {II}$_1$.
\newblock {\em Acta Sci. Math. (Szeged)}, 39(1-2):39--66, 1977.

\bibitem{Connes:Adv}
A.~Connes.
\newblock An analogue of the {T}hom isomorphism for crossed products of a
  {$C^{\ast} $}-algebra by an action of {${\bf R}$}.
\newblock {\em Adv. Math.}, 39(1):31--55, 1981.

\bibitem{CFW:ETDS}
A.~Connes, J.~Feldman, and B.~Weiss.
\newblock An amenable equivalence relation is generated by a single
  transformation.
\newblock {\em Ergodic Theory Dynam. Systems}, 1(4):431--450, 1981.

\bibitem{ConnesHigson:CRAcadSci}
A.~Connes and N.~Higson.
\newblock D\'{e}formations, morphismes asymptotiques et {$K$}-th\'{e}orie
  bivariante.
\newblock {\em C. R. Acad. Sci. Paris S\'{e}r. I Math.}, 311(2):101--106, 1990.

\bibitem{ConnesJones:BAMS}
A.~Connes and V.~F.~R. Jones.
\newblock A {${\rm II}_{1}$} factor with two nonconjugate {C}artan subalgebras.
\newblock {\em Bull. Amer. Math. Soc. (N.S.)}, 6(2):211--212, 1982.

\bibitem{CEI:Crelle}
K.~T. Coward, G.~A. Elliott, and C.~Ivanescu.
\newblock The {C}untz semigroup as an invariant for {$\mathrm C^*$}-algebras.
\newblock {\em J. Reine Angew. Math.}, 623:161--193, 2008.

\bibitem{Cuntz:Ann}
J.~Cuntz.
\newblock {$K$}-theory for certain {$\mathrm C^* $}-algebras.
\newblock {\em Ann. of Math. (2)}, 113(1):181--197, 1981.

\bibitem{CuntzHigson}
J.~Cuntz and N.~Higson.
\newblock Kuiper's theorem for {H}ilbert modules.
\newblock In {\em Operator algebras and mathematical physics ({I}owa {C}ity,
  {I}owa, 1985)}, volume~62 of {\em Contemp. Math.}, pages 429--435. Amer.
  Math. Soc., Providence, RI, 1987.

\bibitem{CuntzKrieger:IM}
J.~Cuntz and W.~Krieger.
\newblock A class of {$C^{\ast} $}-algebras and topological {M}arkov chains.
\newblock {\em Invent. Math.}, 56(3):251--268, 1980.

\bibitem{Dadarlat01}
M.~Dadarlat.
\newblock Some remarks on the universal coefficient theorem in {$KK$}-theory.
\newblock In {\em Operator algebras and mathematical physics ({C}onstan\c{t}a,
  2001)}, pages 65--74. Theta, Bucharest, 2003.

\bibitem{Dadarlat:JFA}
M.~Dadarlat.
\newblock On the topology of the {K}asparov groups and its applications.
\newblock {\em J. Funct. Anal.}, 228(2):394--418, 2005.

\bibitem{Dadarlat:JNCG}
M.~Dadarlat.
\newblock The homotopy groups of the automorphism group of {K}irchberg
  algebras.
\newblock {\em J. Noncommut. Geom.}, 1(1):113--139, 2007.

\bibitem{DadarlatEilers:Crelle}
M.~Dadarlat and S.~Eilers.
\newblock Approximate homogeneity is not a local property.
\newblock {\em J. Reine Angew. Math.}, 507:1--13, 1999.

\bibitem{DadarlatEilers:KT}
M.~Dadarlat and S.~Eilers.
\newblock Asymptotic unitary equivalence in {$KK$}-theory.
\newblock {\em $K$-Theory}, 23(4):305--322, 2001.

\bibitem{DHTW:MRL}
M.~Dadarlat, I.~Hirshberg, A.~S. Toms, and W.~Winter.
\newblock The {J}iang--{S}u algebra does not always embed.
\newblock {\em Math. Res. Lett.}, 16(1):23--26, 2009.

\bibitem{Dadarlat-Loring:Duke}
M.~Dadarlat and T.~A. Loring.
\newblock A universal multicoefficient theorem for the {K}asparov groups.
\newblock {\em Duke Math. J.}, 84(2):355--377, 1996.

\bibitem{DadarlatPennig:JNCG}
M.~Dadarlat and U.~Pennig.
\newblock A {D}ixmier--{D}ouady theory for strongly self-absorbing {$\mathrm
  C^*$}-algebras {II}: the {B}rauer group.
\newblock {\em J. Noncommut. Geom.}, 9(4):1137--1154, 2015.

\bibitem{DadarlatPennig:AGT}
M.~Dadarlat and U.~Pennig.
\newblock Unit spectra of {$K$}-theory from strongly self-absorbing {$\mathrm
  C^*$}-algebras.
\newblock {\em Algebr. Geom. Topol.}, 15(1):137--168, 2015.

\bibitem{DadarlatPennig:Crelle}
M.~Dadarlat and U.~Pennig.
\newblock A {D}ixmier--{D}ouady theory for strongly self-absorbing {$\mathrm
  C^*$}-algebras.
\newblock {\em J. Reine Angew. Math.}, 718:153--181, 2016.

\bibitem{DadarlatToms:Adv}
M.~Dadarlat and A.~S. Toms.
\newblock {$\mathcal Z$}-stability and infinite tensor powers of {$\mathrm
  C^*$}-algebras.
\newblock {\em Adv. Math.}, 220(2):341--366, 2009.

\bibitem{DadarlatToms:JFA}
M.~Dadarlat and A.~S. Toms.
\newblock Ranks of operators in simple {$\mathrm C^*$}-algebras.
\newblock {\em J. Funct. Anal.}, 259(5):1209--1229, 2010.

\bibitem{DadarlatWinter:MS}
M.~Dadarlat and W.~Winter.
\newblock On the {$KK$}-theory of strongly self-absorbing {$\mathrm
  C^*$}-algebras.
\newblock {\em Math. Scand.}, 104(1):95--107, 2009.

\bibitem{Dean:CJM}
A.~Dean.
\newblock A continuous field of projectionless {$\mathrm C^*$}-algebras.
\newblock {\em Canad. J. Math.}, 53(1):51--72, 2001.

\bibitem{Deeley:ETDS}
R.~J. Deeley.
\newblock A counterexample to the {HK}-conjecture that is principal.
\newblock {\em Ergodic Theory Dynam. Systems}, 43(6):1829--1846, 2023.

\bibitem{DPS:Crelle}
R.~J. Deeley, I.~F. Putnam, and K.~R. Strung.
\newblock Constructing minimal homeomorphisms on point-like spaces and a
  dynamical presentation of the {J}iang--{S}u algebra.
\newblock {\em J. Reine Angew. Math.}, 742:241--261, 2018.

\bibitem{DPS:PAMS}
R.~J. Deeley, I.~F. Putnam, and K.~R. Strung.
\newblock Non-homogeneous extensions of {C}antor minimal systems.
\newblock {\em Proc. Amer. Math. Soc.}, 149(5):2081--2089, 2021.

\bibitem{DPS:GGD}
R.~J. Deeley, I.~F. Putnam, and K.~R. Strung.
\newblock Minimal homeomorphisms and topological {$K$}-theory.
\newblock {\em Groups Geom. Dyn.}, 17(2):501--532, 2023.

\bibitem{DPS:MA}
R.~J. Deeley, I.~F. Putnam, and K.~R. Strung.
\newblock Classifiable {$\mathrm C^*$}-algebras from minimal {$\Bbb Z$}-actions
  and their orbit-breaking subalgebras.
\newblock {\em Math. Ann.}, 388(1):703--729, 2024.

\bibitem{Dixmier:Book}
J.~Dixmier.
\newblock {\em {$\mathrm C^*$}-algebras}.
\newblock North-Holland Publishing Co., Amsterdam-New York-Oxford, 1977.
\newblock Translated from the French by Francis Jellett, North-Holland
  Mathematical Library, Vol. 15.

\bibitem{DixmierLance:Invent}
J.~Dixmier and E.~C. Lance.
\newblock Deux nouveaux facteurs de type {${\rm II}_{1}$}.
\newblock {\em Invent. Math.}, 7:226--234, 1969.

\bibitem{DownarowiczZhang:MAMS}
T.~Downarowicz and G.~Zhang.
\newblock Symbolic extensions of amenable group actions and the comparison
  property.
\newblock {\em Mem. Amer. Math. Soc.}, 281(1390):vi+95, 2023.

\bibitem{Dye:AJM}
H.~A. Dye.
\newblock On groups of measure preserving transformations. {II}.
\newblock {\em Amer. J. Math.}, 85:551--576, 1963.

\bibitem{DykemaRordam:MZ}
K.~J. Dykema and M.~R{\o}rdam.
\newblock Projections in free product {$\mathrm C^*$}-algebras. {II}.
\newblock {\em Math. Z.}, 234(1):103--113, 2000.

\bibitem{15authors}
P.~Easo, E.~Garijo, S.~Kaubrys, D.~Nkansah, M.~Vrabec, D.~Watt, C.~Wilson,
  C.~B\"{o}nicke, S.~Evington, M.~Forough, S.~Gir\'{o}n~Pacheco, N.~Seaton,
  S.~White, M.~F. Whittaker, and J.~Zacharias.
\newblock The {C}untz--{T}oeplitz algebras have nuclear dimension one.
\newblock {\em J. Funct. Anal.}, 279(7):108690, 14, 2020.

\bibitem{ELPW:Crelle}
S.~Echterhoff, W.~L\"uck, N.~C. Phillips, and S.~Walters.
\newblock The structure of crossed products of irrational rotation algebras by
  finite subgroups of {${\rm SL}_2(\mathbb Z)$}.
\newblock {\em J. Reine Angew. Math.}, 639:173--221, 2010.

\bibitem{Eckhardt:JOT}
C.~Eckhardt.
\newblock A note on strongly quasidiagonal groups.
\newblock {\em J. Operator Theory}, 73(2):417--424, 2015.

\bibitem{Eckhardt:Adv}
C.~Eckhardt.
\newblock {$\mathrm C^*$}-algebras generated by representations of virtually
  nilpotent groups.
\newblock {\em Adv. Math.}, 444:Paper No. 109628, 15, 2024.

\bibitem{EG:MJM}
C.~Eckhardt and E.~Gillaspy.
\newblock Irreducible representations of nilpotent groups generate classifiable
  {$\mathrm C^*$}-algebras.
\newblock {\em M\"{u}nster J. Math.}, 9(1):253--261, 2016.

\bibitem{EGMc:TAMS}
C.~Eckhardt, E.~Gillaspy, and P.~McKenney.
\newblock Finite decomposition rank for virtually nilpotent groups.
\newblock {\em Trans. Amer. Math. Soc.}, 371(6):3971--3994, 2019.

\bibitem{EKMc:JFA}
C.~Eckhardt, C.~Kleski, and P.~McKenney.
\newblock Classification of {${\mathrm C}^*$}-algebras generated by
  representations of the unitriangular group {$UT(4,\Bbb{Z})$}.
\newblock {\em J. Funct. Anal.}, 271(4):1022--1042, 2016.

\bibitem{EMc:Crelle}
C.~Eckhardt and P.~McKenney.
\newblock Finitely generated nilpotent group {$\mathrm C^*$}-algebras have
  finite nuclear dimension.
\newblock {\em J. Reine Angew. Math.}, 738:281--298, 2018.

\bibitem{EW:arXiv24}
C.~Eckhardt and J.~Wu.
\newblock Nuclear dimension and virtually polycyclic groups.
\newblock {\em Adv. Math.}, 488:Paper No. 110768, 47, 2026.

\bibitem{Effros:CBMS}
E.~G. Effros.
\newblock {\em Dimensions and {$C^{\ast} $}-algebras}, volume~46 of {\em CBMS
  Regional Conference Series in Mathematics}.
\newblock Conference Board of the Mathematical Sciences, Washington, DC, 1981.

\bibitem{Effros:Kingston}
E.~G. Effros.
\newblock On the structure theory of {$\mathrm C^{*} $}-algebras: some old and
  new problems.
\newblock In {\em Operator algebras and applications, {P}art 1 ({K}ingston,
  {O}nt., 1980)}, volume~38 of {\em Proc. Sympos. Pure Math.}, pages 19--34.
  Amer. Math. Soc., Providence, RI, 1982.

\bibitem{EHS:AJM}
E.~G. Effros, D.~E. Handelman, and C.~L. Shen.
\newblock Dimension groups and their affine representations.
\newblock {\em Amer. J. Math.}, 102(2):385--407, 1980.

\bibitem{EffrosKaminker:GMOA}
E.~G. Effros and J.~Kaminker.
\newblock Homotopy continuity and shape theory for {$\mathrm C^*$}-algebras.
\newblock In {\em Geometric methods in operator algebras ({K}yoto, 1983)},
  volume 123 of {\em Pitman Res. Notes Math. Ser.}, pages 152--180. Longman
  Sci. Tech., Harlow, 1986.

\bibitem{EffrosRosenberg:PJM}
E.~G. Effros and J.~Rosenberg.
\newblock {$\mathrm C^* $}-algebras with approximately inner flip.
\newblock {\em Pacific J. Math.}, 77(2):417--443, 1978.

\bibitem{ELP:Crelle}
S.~Eilers, T.~A. Loring, and G.~K. Pedersen.
\newblock Stability of anticommutation relations: an application of
  noncommutative {CW} complexes.
\newblock {\em J. Reine Angew. Math.}, 499:101--143, 1998.

\bibitem{ERRS:Duke}
S.~Eilers, G.~Restorff, E.~Ruiz, and A.~P.~W. S\o{}rensen.
\newblock The complete classification of unital graph {$\mathrm C^*$}-algebras:
  geometric and strong.
\newblock {\em Duke Math. J.}, 170(11):2421--2517, 2021.

\bibitem{Elliott:JAlg}
G.~A. Elliott.
\newblock On the classification of inductive limits of sequences of semisimple
  finite-dimensional algebras.
\newblock {\em J. Algebra}, 38(1):29--44, 1976.

\bibitem{Elliott:Crelle}
G.~A. Elliott.
\newblock On the classification of {$\mathrm C^*$}-algebras of real rank zero.
\newblock {\em J. Reine Angew. Math.}, 443:179--219, 1993.

\bibitem{Elliott:ICM}
G.~A. Elliott.
\newblock The classification problem for amenable {$\mathrm C^*$}-algebras.
\newblock In {\em Proceedings of the {I}nternational {C}ongress of
  {M}athematicians, {V}ol. 1, 2 ({Z}\"{u}rich, 1994)}, pages 922--932.
  Birkh\"{a}user, Basel, 1995.

\bibitem{Elliott:CMS}
G.~A. Elliott.
\newblock An invariant for simple {$\mathrm C^*$}-algebras.
\newblock In {\em Canadian {M}athematical {S}ociety. 1945--1995, {V}ol. 3},
  pages 61--90. Canadian Math. Soc., Ottawa, ON, 1996.

\bibitem{EE:Ann}
G.~A. Elliott and D.~E. Evans.
\newblock The structure of the irrational rotation {$\mathrm C^*$}-algebra.
\newblock {\em Ann. of Math. (2)}, 138(3):477--501, 1993.

\bibitem{ElliottEvansKishimoto:MS}
G.~A. Elliott, D.~E. Evans, and A.~Kishimoto.
\newblock Outer conjugacy classes of trace scaling automorphisms of stable
  {UHF} algebras.
\newblock {\em Math. Scand.}, 83(1):74--86, 1998.

\bibitem{EGLN:JGP}
G.~A. Elliott, G.~Gong, H.~Lin, and Z.~Niu.
\newblock The classification of simple separable {$KK$}-contractible {$\mathrm
  C^*$}-algebras with finite nuclear dimension.
\newblock {\em J. Geom. Phys.}, 158:103861, 51, 2020.

\bibitem{EGLN:JNCG}
G.~A. Elliott, G.~Gong, H.~Lin, and Z.~Niu.
\newblock Simple stably projectionless {$\mathrm C^*$}-algebras with
  generalized tracial rank one.
\newblock {\em J. Noncommut. Geom.}, 14(1):251--347, 2020.

\bibitem{EGLN}
G.~A. Elliott, G.~Gong, H.~Lin, and Z.~Niu.
\newblock On the classification of simple amenable {${\rm C}^*$}-algebras with
  finite decomposition rank, {II}.
\newblock {\em J. Noncommut. Geom.}, 19(1):73--104, 2025.

\bibitem{EHT:JFA}
G.~A. Elliott, T.~M. Ho, and A.~S. Toms.
\newblock A class of simple {$\mathrm C^*$}-algebras with stable rank one.
\newblock {\em J. Funct. Anal.}, 256(2):307--322, 2009.

\bibitem{ElliottLiNiu:JFA}
G.~A. Elliott, C.~G. Li, and Z.~Niu.
\newblock Remarks on {V}illadsen algebras.
\newblock {\em J. Funct. Anal.}, 287(7):Paper No. 110547, 55, 2024.

\bibitem{ElliottNiu25}
G.~A. Elliott and Z.~Niu.
\newblock On the small boundary property and {$\mathcal Z$}-absorption.
\newblock arXiv:2504.03611.

\bibitem{ElliottNiu24}
G.~A. Elliott and Z.~Niu.
\newblock On the small boundary property, {$\mathcal Z$}-stability, and {B}auer
  simplexes.
\newblock arXiv:2406.09748.

\bibitem{ElliottNiu:Duke}
G.~A. Elliott and Z.~Niu.
\newblock The {{$\mathrm C^*$}}-algebra of a minimal homeomorphism of zero mean
  dimension.
\newblock {\em Duke Math. J.}, 166(18):3569--3594, 2017.

\bibitem{ENST:FM}
G.~A. Elliott, Z.~Niu, L.~Santiago, and A.~Tikuisis.
\newblock Decomposition rank of approximately subhomogeneous {$\mathrm
  C^*$}-algebras.
\newblock {\em Forum Math.}, 32(4):827--889, 2020.

\bibitem{ERS:AJM}
G.~A. Elliott, L.~Robert, and L.~Santiago.
\newblock The cone of lower semicontinuous traces on a {$\mathrm C^*$}-algebra.
\newblock {\em Amer. J. Math.}, 133(4):969--1005, 2011.

\bibitem{ElliottRordam:Abel}
G.~A. Elliott and M.~R{\o}rdam.
\newblock Perturbation of {H}ausdorff moment sequences, and an application to
  the theory of {$\mathrm C^*$}-algebras of real rank zero.
\newblock In {\em Operator {A}lgebras: {T}he {A}bel {S}ymposium 2004}, volume~1
  of {\em Abel Symp.}, pages 97--115. Springer, Berlin, 2006.

\bibitem{ElliottSato22}
G.~A. Elliott and Y.~Sato.
\newblock Rationally {AF} algebras and {KMS} states of $\mathcal Z$-absorbing {$\mathrm
  C^*$}-algebras.
\newblock arXiv:2207.11653.

\bibitem{EST:CMP}
G.~A. Elliott, Y.~Sato, and K.~Thomsen.
\newblock On the bundle of {KMS} state spaces for flows on a
  {$\mathcal{Z}$}-absorbing {${\mathrm C}^*$}-algebra.
\newblock {\em Comm. Math. Phys.}, 393(2):1105--1123, 2022.

\bibitem{ET:CRMASSRC}
G.~A. Elliott and K.~Thomsen.
\newblock The bundle of {KMS} state spaces for flows on a unital {AF} {$\mathrm
  C^*$}-algebra.
\newblock {\em C. R. Math. Acad. Sci. Soc. R. Can.}, 43(4):103--121, 2021.

\bibitem{Enders15}
D.~Enders.
\newblock Semiprojectivity for {K}irchberg algebras.
\newblock arXiv:1507.06091.

\bibitem{Enders:JFA}
D.~Enders.
\newblock On the nuclear dimension of certain {UCT}-{K}irchberg algebras.
\newblock {\em J. Funct. Anal.}, 268(9):2695--2706, 2015.

\bibitem{EndersShulman:AIM}
D.~Enders and T.~Shulman.
\newblock Commutativity of central sequence algebras.
\newblock {\em Adv. Math.}, 401:Paper No. 108262, 22, 2022.

\bibitem{EW:MJ}
M.~Enomoto, M.~Fujii, and Y.~Watatani.
\newblock K{MS} states for gauge action on {$O_{A}$}.
\newblock {\em Math. Japon.}, 29(4):607--619, 1984.

\bibitem{EvansKishimoto}
D.~E. Evans and A.~Kishimoto.
\newblock Trace scaling automorphisms of certain stable {AF} algebras.
\newblock {\em Hokkaido Math. J.}, 26(1):211--224, 1997.

\bibitem{Evington:JOT}
S.~Evington.
\newblock Nuclear dimension of extensions of {$\mathcal O_\infty$}-stable
  algebras.
\newblock {\em J. Operator Theory}, 88(1):171--187, 2022.

\bibitem{Evington:draft}
S.~Evington.
\newblock Traces on the uniform tracial completion of {$\mathcal Z$}-stable
  {$\rm C^*$}-algebras.
\newblock {\em J. Lond. Math. Soc. (2)}, 111(6):Paper No. e70207, 20, 2025.

\bibitem{ENSW:inPrep}
S.~Evington, A.~C.~S. Ng, A.~Sims, and S.~White.
\newblock Nuclear dimension of extensions of commutative {$\rm C^*$}-algebras
  by {K}irchberg algebras.
\newblock {\em Math. Z.}, 310(4):Paper No. 89, 28, 2025.

\bibitem{FangGeLi:Taiwan}
J.~Fang, L.~Ge, and W.~Li.
\newblock Central sequence algebras of von {N}eumann algebras.
\newblock {\em Taiwanese J. Math.}, 10(1):187--200, 2006.

\bibitem{FarahSzabo:arXiv}
I.~Farah and G.~Szab\'o.
\newblock Coronas and strongly self-absorbing {$\mathrm C^*$}-algebras.
\newblock arXiv:2411.02274.

\bibitem{FarahVaccaro:arXiv}
I.~Farah and A.~Vaccaro.
\newblock Continuous selection of unitaries in {II{$_1$}} factors.
\newblock {\em Proc. Amer. Math. Soc.}, 154(4):1609--1622, 2026.

\bibitem{FKPS:MJM}
C.~Farsi, A.~Kumjian, D.~Pask, and A.~Sims.
\newblock Ample groupoids: equivalence, homology, and {M}atui's {HK}
  conjecture.
\newblock {\em M\"unster J. Math.}, 12(2):411--451, 2019.

\bibitem{FaurotSchafhauser:PAMS}
G.~Faurot and C.~Schafhauser.
\newblock Nuclear dimension of graph {$\mathrm C^*$}-algebras with condition
  ({K}).
\newblock {\em Proc. Amer. Math. Soc.}, 152(10):4421--4435, 2024.

\bibitem{Fu:Preprint}
X.~Fu.
\newblock From stable rank one to real rank zero: A note on tracial approximate
  oscillation zero.
\newblock arXiv:2512.23911.

\bibitem{FLL:JLMS}
X.~Fu, K.~Li, and H.~Lin.
\newblock Tracial approximate divisibility and stable rank one.
\newblock {\em J. Lond. Math. Soc. (2)}, 106(4):3008--3042, 2022.

\bibitem{FuLin:CJM}
X.~Fu and H.~Lin.
\newblock Tracial oscillation zero and stable rank one.
\newblock {\em Canad. J. Math.}, 77(2):563--630, 2025.

\bibitem{Gabe:JFA}
J.~Gabe.
\newblock Quasidiagonal traces on exact {$\mathrm C^*$}-algebras.
\newblock {\em J. Funct. Anal.}, 272(3):1104--1120, 2017.

\bibitem{Gabe:Crelle}
J.~Gabe.
\newblock A new proof of {K}irchberg's {$\mathcal O_2$}-stable classification.
\newblock {\em J. Reine Angew. Math.}, 761:247--289, 2020.

\bibitem{Gabe:GAFA}
J.~Gabe.
\newblock Traceless {AF} embeddings and unsuspended {$E$}-theory.
\newblock {\em Geom. Funct. Anal.}, 30(2):323--333, 2020.

\bibitem{Gabe:MAMS}
J.~Gabe.
\newblock Classification of {$\mathcal O_\infty$}-stable {$\mathrm
  C^*$}-algebras.
\newblock {\em Mem. Amer. Math. Soc.}, 293(1461):v+115, 2024.

\bibitem{GabeRuiz:GMJ}
J.~Gabe and E.~Ruiz.
\newblock The unital {E}xt-groups and classification of {$\mathrm
  C^*$}-algebras.
\newblock {\em Glasg. Math. J.}, 62(1):201--231, 2020.

\bibitem{Gabe-Szabo}
J.~Gabe and G.~Szab\'{o}.
\newblock The dynamical {K}irchberg--{P}hillips theorem.
\newblock {\em Acta Math.}, 232(1):1--77, 2024.

\bibitem{GabeSzabo:AJM}
J.~Gabe and G.~Szab\'{o}.
\newblock The stable uniqueness theorem for equivariant {K}asparov theory.
\newblock {\em Amer. J. Math.}, 147(6):1527--1576, 2025.

\bibitem{GGGKN24}
E.~Gardella, S.~Geffen, R.~Gesing, G.~Kopsacheilis, and P.~Naryshkin.
\newblock Essential freeness, allostery and {$\mathcal Z$}-stability of crossed
  products.
\newblock arXiv:2405.04343.

\bibitem{GGKN:Crelle}
E.~Gardella, S.~Geffen, J.~Kranz, and P.~Naryshkin.
\newblock Classifiability of crossed products by nonamenable groups.
\newblock {\em J. Reine Angew. Math.}, 797:285--312, 2023.

\bibitem{GGKNV:MathAnn}
E.~Gardella, S.~Geffen, J.~Kranz, P.~Naryshkin, and A.~Vaccaro.
\newblock Tracially amenable actions and purely infinite crossed products.
\newblock {\em Math. Ann.}, 390(3):3665--3690, 2024.

\bibitem{GGNV:Adv}
E.~Gardella, S.~Geffen, P.~Naryshkin, and A.~Vaccaro.
\newblock Dynamical comparison and {$\mathcal Z$}-stability for crossed
  products of simple {$\mathrm C^*$}-algebras.
\newblock {\em Adv. Math.}, 438:Paper No. 109471, 31, 2024.

\bibitem{GHV:JMPA}
E.~Gardella, I.~Hirshberg, and A.~Vaccaro.
\newblock Strongly outer actions of amenable groups on {$\mathcal{Z}$}-stable
  nuclear {$\mathrm C^*$}-algebras.
\newblock {\em J. Math. Pures Appl. (9)}, 162:76--123, 2022.

\bibitem{GardellaPerera}
E.~Gardella and F.~Perera.
\newblock The modern theory of {C}untz semigroups of {$C^{\ast}$}-algebras.
\newblock {\em EMS Surv. Math. Sci.}, 13(1):133--214, 2026.

\bibitem{GardnerTikuisis}
R.~Gardner and A.~Tikuisis.
\newblock The nuclear dimension of extensions of commutative {$\mathrm
  C^*$}-algebras by the compact operators.
\newblock {\em J. Operator Theory}, 92(2):331--348, 2024.

\bibitem{GePopa:Duke}
L.~Ge and S.~Popa.
\newblock On some decomposition properties for factors of type {${\rm II}_1$}.
\newblock {\em Duke Math. J.}, 94(1):79--101, 1998.

\bibitem{GeShen:PNAS}
L.~Ge and J.~Shen.
\newblock Generator problem for certain property {$T$} factors.
\newblock {\em Proc. Natl. Acad. Sci. USA}, 99(2):565--567, 2002.

\bibitem{GeffenKranz:MJM}
S.~Geffen and J.~Kranz.
\newblock Note on {$\mathrm C^*$}-algebras associated to boundary actions of
  hyperbolic 3-manifold groups.
\newblock {\em M\"{u}nster J. Math.}
\newblock To appear. arXiv:2407.15215

\bibitem{GeffenUrsu}
S.~Geffen and D.~Ursu.
\newblock Simplicity of crossed products by {FC}-hypercentral groups.
\newblock {\em Compos. Math.}, 161(9):2272--2319, 2025.

\bibitem{Germain:Duke}
E.~Germain.
\newblock {$KK$}-theory of reduced free-product {$\mathrm C^*$}-algebras.
\newblock {\em Duke Math. J.}, 82(3):707--723, 1996.

\bibitem{Germain:Crelle}
E.~Germain.
\newblock {$KK$}-theory of the full free product of unital {$\mathrm
  C^*$}-algebras.
\newblock {\em J. Reine Angew. Math.}, 485:1--10, 1997.

\bibitem{Ghasemi:GMJ}
S.~Ghasemi.
\newblock {${SAW}^*$}-algebras are essentially non-factorizable.
\newblock {\em Glasg. Math. J.}, 57(1):1--5, 2015.

\bibitem{Ghasemi:BLMS}
S.~Ghasemi.
\newblock Strongly self-absorbing {$\mathrm C^*$}-algebras and {F}ra\"iss\'e{}
  limits.
\newblock {\em Bull. Lond. Math. Soc.}, 53(3):937--955, 2021.

\bibitem{GK:Crelle}
J.~Giol and D.~Kerr.
\newblock Subshifts and perforation.
\newblock {\em J. Reine Angew. Math.}, 639:107--119, 2010.

\bibitem{GKPT:Book}
T.~Giordano, D.~Kerr, N.~C. Phillips, and A.~S. Toms.
\newblock {\em Crossed products of {$\mathrm C^*$}-algebras, topological
  dynamics, and classification}.
\newblock Advanced Courses in Mathematics. CRM Barcelona.
  Birkh\"{a}user/Springer, Cham, 2018.
\newblock Lecture notes based on the course held at the Centre de Recerca
  Matem\`atica (CRM) Barcelona, June 14--23, 2011, Edited by Francesc Perera.

\bibitem{GPS}
T.~Giordano, I.~F. Putnam, and C.~F. Skau.
\newblock Topological orbit equivalence and {$\mathrm C^*$}-crossed products.
\newblock {\em J. Reine Angew. Math.}, 469:51--111, 1995.

\bibitem{Glimm:TAMS}
J.~Glimm.
\newblock On a certain class of operator algebras.
\newblock {\em Trans. Amer. Math. Soc.}, 95:318--340, 1960.

\bibitem{Glimm:Ann}
J.~Glimm.
\newblock Type {I} {$\mathrm C^* $}-algebras.
\newblock {\em Ann. of Math. (2)}, 73:572--612, 1961.

\bibitem{GoldbringHart:IMRN}
I.~Goldbring and B.~Hart.
\newblock On the theories of {M}c{D}uff's {$\rm II_1$} factors.
\newblock {\em Int. Math. Res. Not. IMRN}, (18):5609--5628, 2017.

\bibitem{GJL:TLMS}
G.~Gong, C.~Jiang, and L.~Li.
\newblock A classification of inductive limit {$\mathrm C^*$}-algebras with
  ideal property.
\newblock {\em Trans. London Math. Soc.}, 9(1):158--236, 2022.

\bibitem{GJS:CMB}
G.~Gong, X.~Jiang, and H.~Su.
\newblock Obstructions to {$\mathcal Z$}-stability for unital simple {$\mathrm
  C^*$}-algebras.
\newblock {\em Canad. Math. Bull.}, 43(4):418--426, 2000.

\bibitem{GongLin:JGP}
G.~Gong and H.~Lin.
\newblock On classification of non-unital amenable simple {$\mathrm
  C^*$}-algebras, {II}.
\newblock {\em J. Geom. Phys.}, 158:103865, 102, 2020.

\bibitem{GongLin:AKT}
G.~Gong and H.~Lin.
\newblock On classification of nonunital amenable simple {$\mathrm C^
  *$}-algebras, {III}: {T}he range and the reduction.
\newblock {\em Ann. K-Theory}, 7(2):279--384, 2022.

\bibitem{GongLin:AKT2}
G.~Gong and H.~Lin.
\newblock On classification of nonunital amenable simple {$\mathrm
  C^*$}-algebras, {IV}: {S}tably projectionless {$\mathrm C^*$}-algebras.
\newblock {\em Ann. K-Theory}, 9(2):143--339, 2024.

\bibitem{GLN23:Survey}
G.~Gong, H.~Lin, and Z.~Niu.
\newblock A review of the {E}lliott program of classification of simple
  amenable {$\mathrm C^*$}-algebras.
\newblock arXiv:2311.14238.

\bibitem{GLN:CR1}
G.~Gong, H.~Lin, and Z.~Niu.
\newblock A classification of finite simple amenable {$\mathcal Z$}-stable
  {$\mathrm C^*$}-algebras, {I}: {$\mathrm C^*$}-algebras with generalized
  tracial rank one.
\newblock {\em C. R. Math. Acad. Sci. Soc. R. Can.}, 42(3):63--450, 2020.

\bibitem{GLN:CR2}
G.~Gong, H.~Lin, and Z.~Niu.
\newblock A classification of finite simple amenable {$\mathcal Z$}-stable
  {$\mathrm C^*$}-algebras, {II}: {$\mathrm C^*$}-algebras with rational
  generalized tracial rank one.
\newblock {\em C. R. Math. Acad. Sci. Soc. R. Can.}, 42(4):451--539, 2020.

\bibitem{Gromov:IHES}
M.~Gromov.
\newblock Groups of polynomial growth and expanding maps.
\newblock {\em Inst. Hautes \'{E}tudes Sci. Publ. Math.}, (53):53--73, 1981.

\bibitem{GLT:GAFA}
Y.~Gutman, E.~Lindenstrauss, and M.~Tsukamoto.
\newblock Mean dimension of {$\Bbb{Z}^k$}-actions.
\newblock {\em Geom. Funct. Anal.}, 26(3):778--817, 2016.

\bibitem{Haagerup:Invent}
U.~Haagerup.
\newblock An example of a nonnuclear {$\mathrm C^{*} $}-algebra, which has the
  metric approximation property.
\newblock {\em Invent. Math.}, 50(3):279--293, 1978/79.

\bibitem{Haagerup:CRMASSRC}
U.~Haagerup.
\newblock Quasitraces on exact {$\mathrm C^*$}-algebras are traces.
\newblock {\em C. R. Math. Acad. Sci. Soc. R. Can.}, 36(2-3):67--92, 2014.

\bibitem{HT:Annals}
U.~Haagerup and S.~Thorbj\o{}rnsen.
\newblock A new application of random matrices: {${\rm Ext}(C^*_{\rm
  red}(F_2))$} is not a group.
\newblock {\em Ann. of Math. (2)}, 162(2):711--775, 2005.

\bibitem{Halmos:BAMS}
P.~R. Halmos.
\newblock Ten problems in {H}ilbert space.
\newblock {\em Bull. Amer. Math. Soc.}, 76:887--933, 1970.

\bibitem{HarnischKirchberg}
H.~Harnisch and E.~Kirchberg.
\newblock The inverse problem for primitive ideal spaces.
\newblock arXiv:2401.05917.

\bibitem{HEKR:arXiv}
B.~Hayes, S.~Kunnawalkam~Elayavalli, and L.~Robert.
\newblock Selfless reduced free product {$\mathrm C^*$}-algebras.
\newblock arXiv:2050.12265.

\bibitem{Higson:JPAA}
N.~Higson.
\newblock Categories of fractions and excision in {$KK$}-theory.
\newblock {\em J. Pure Appl. Algebra}, 65(2):119--138, 1990.

\bibitem{HigsonKasparov:IM}
N.~Higson and G.~G. Kasparov.
\newblock {$E$}-theory and {$KK$}-theory for groups which act properly and
  isometrically on {H}ilbert space.
\newblock {\em Invent. Math.}, 144(1):23--74, 2001.

\bibitem{Hirshberg:JFA}
I.~Hirshberg.
\newblock Exotic circle actions on classifiable {$\mathrm C^*$}-algebras.
\newblock {\em J. Funct. Anal.}, 286(5):Paper No. 110296, 12, 2024.

\bibitem{HKW:Adv}
I.~Hirshberg, E.~Kirchberg, and S.~White.
\newblock Decomposable approximations of nuclear {$\mathrm C^*$}-algebras.
\newblock {\em Adv. Math.}, 230(3):1029--1039, 2012.

\bibitem{HO:JFA}
I.~Hirshberg and J.~Orovitz.
\newblock Tracially {$\mathcal{Z}$}-absorbing {$\mathrm C^*$}-algebras.
\newblock {\em J. Funct. Anal.}, 265(5):765--785, 2013.

\bibitem{HP:arXiv}
I.~Hirshberg and N.C.~Phillips.
\newblock{An isomorphism theorem for infinite reduced free products}
\newblock arXiv:2602.10220.

\bibitem{HirshbergWinter:PJM}
I.~Hirshberg and W.~Winter.
\newblock Rokhlin actions and self-absorbing {$\mathrm C^*$}-algebras.
\newblock {\em Pacific J. Math.}, 233(1):125--143, 2007.

\bibitem{HWZ:CMP}
I.~Hirshberg, W.~Winter, and J.~Zacharias.
\newblock Rokhlin dimension and {$\mathrm C^*$}-dynamics.
\newblock {\em Comm. Math. Phys.}, 335(2):637--670, 2015.

\bibitem{HW:arXiv}
I.~Hirshberg and J.~Wu.
\newblock Long thin covers and nuclear dimension.
\newblock arXiv:2308.12504.

\bibitem{HW:Adv}
I.~Hirshberg and J.~Wu.
\newblock The nuclear dimension of {$\mathrm C^*$}-algebras associated to
  homeomorphisms.
\newblock {\em Adv. Math.}, 304:56--89, 2017.

\bibitem{HR:JFA}
J.~v.~B. Hjelmborg and M.~R\o{}rdam.
\newblock On stability of {$\mathrm C^*$}-algebras.
\newblock {\em J. Funct. Anal.}, 155(1):153--170, 1998.

\bibitem{Hua:arXiv}
S.~Hua.
\newblock {$K$}-stability of {$\mathcal Z$}-stable {${\rm C}^*$}-algebras.
\newblock {\em Internat. J. Math.}, 36(9):Paper No. 2550022, 8, 2025.

\bibitem{HuaWhite}
S.~Hua and S.~White.
\newblock Uniqueness of embeddings of nuclear {$\mathrm C^*$}-algebras into
  {II$_1$} factors.
\newblock arXiv:2601.08779.

\bibitem{Izumi:Duke}
M.~Izumi.
\newblock Finite group actions on {$\mathrm C^*$}-algebras with the {R}ohlin
  property. {I}.
\newblock {\em Duke Math. J.}, 122(2):233--280, 2004.

\bibitem{Izumi:ICM}
M.~Izumi.
\newblock Group actions on operator algebras.
\newblock In {\em Proceedings of the {I}nternational {C}ongress of
  {M}athematicians. {V}olume {III}}, pages 1528--1548. Hindustan Book Agency,
  New Delhi, 2010.

\bibitem{Izumi:ASPM}
M.~Izumi.
\newblock The {$K$}-theory of the flip automorphisms.
\newblock In {\em Operator algebras and mathematical physics}, volume~80 of
  {\em Adv. Stud. Pure Math.}, pages 123--137. Math. Soc. Japan, Tokyo, 2019.

\bibitem{IzumiMatui:Adv}
M.~Izumi and H.~Matui.
\newblock {$\Bbb Z^2$}-actions on {K}irchberg algebras.
\newblock {\em Adv. Math.}, 224(2):355--400, 2010.

\bibitem{IzumiMatuiII}
M.~Izumi and H.~Matui.
\newblock Poly-{$\Bbb Z$} group actions on {K}irchberg algebras {II}.
\newblock {\em Invent. Math.}, 224(3):699--766, 2021.

\bibitem{IzumiMatuiI}
M.~Izumi and H.~Matui.
\newblock Poly-{$\Bbb{Z}$} group actions on {K}irchberg algebras {I}.
\newblock {\em Int. Math. Res. Not. IMRN}, (16):12077--12154, 2021.

\bibitem{Jacelon:JLMS}
B.~Jacelon.
\newblock A simple, monotracial, stably projectionless {$\mathrm C^*$}-algebra.
\newblock {\em J. Lond. Math. Soc. (2)}, 87(2):365--383, 2013.

\bibitem{Jacelon:Sigma}
B.~Jacelon.
\newblock Chaotic tracial dynamics.
\newblock {\em Forum Math. Sigma}, 11:Paper No. e39, 21, 2023.

\bibitem{JeongPark:JFA}
J.~A. Jeong and G.~H. Park.
\newblock Graph {$\mathrm C^*$}-algebras with real rank zero.
\newblock {\em J. Funct. Anal.}, 188(1):216--226, 2002.

\bibitem{JNVWY}
Z.~Ji, A.~Natarajan, T.~Vidick, J.~Wright, and H.~Yuen.
\newblock {MIP$^*$=RE}.
\newblock arXiv:2001.04383.

\bibitem{Jiang:arXiv}
X.~Jiang.
\newblock Nonstable {$K$}-theory for {$\mathcal Z$}-stable {$\mathrm
  C^*$}-algebras.
\newblock arXiv:math/9707228.

\bibitem{JiangSu:AJM}
X.~Jiang and H.~Su.
\newblock On a simple unital projectionless {$\mathrm C^*$}-algebra.
\newblock {\em Amer. J. Math.}, 121(2):359--413, 1999.

\bibitem{Johnson:CMB}
B.~E. Johnson.
\newblock A counterexample in the perturbation theory of {$\mathrm C^*
  $}-algebras.
\newblock {\em Canad. Math. Bull.}, 25(3):311--316, 1982.

\bibitem{Jones79}
V.~F.~R. Jones.
\newblock An invariant for group actions.
\newblock In {\em Alg\`ebres d'op\'{e}rateurs ({S}\'{e}m., {L}es
  {P}lans-sur-{B}ex, 1978)}, volume 725 of {\em Lecture Notes in Math.}, pages
  237--253. Springer, Berlin, 1979.

\bibitem{Jones80}
V.~F.~R. Jones.
\newblock Actions of finite groups on the hyperfinite type {${\rm II}_{1}$}
  factor.
\newblock {\em Mem. Amer. Math. Soc.}, 28(237):v+70, 1980.

\bibitem{KadisonKastler}
R.~V. Kadison and D.~Kastler.
\newblock Perturbations of von {N}eumann algebras. {I}. {S}tability of type.
\newblock {\em Amer. J. Math.}, 94:38--54, 1972.

\bibitem{Kaplansky1}
I.~Kaplansky.
\newblock Projections in {B}anach algebras.
\newblock {\em Ann. of Math. (2)}, 53:235--249, 1951.

\bibitem{Kaplansky2}
I.~Kaplansky.
\newblock Algebras of type {${\rm I}$}.
\newblock {\em Ann. of Math. (2)}, 56:460--472, 1952.

\bibitem{Kasparov79}
G.~G. Kasparov.
\newblock The {$K$}-functors in the theory of extensions of {$\mathrm C^{*}
  $}-algebras.
\newblock {\em Funktsional. Anal. i Prilozhen.}, 13(4):73--74, 1979.

\bibitem{Katsura08}
T.~Katsura.
\newblock A construction of actions on {K}irchberg algebras which induce given
  actions on their {$K$}-groups.
\newblock {\em J. Reine Angew. Math.}, 617:27--65, 2008.

\bibitem{KennedySchafhauser}
M.~Kennedy and C.~Schafhauser.
\newblock Noncommutative boundaries and the ideal structure of reduced crossed
  products.
\newblock {\em Duke Math. J.}, 168(17):3215--3260, 2019.

\bibitem{Kerr:JEMS}
D.~Kerr.
\newblock Dimension, comparison, and almost finiteness.
\newblock {\em J. Eur. Math. Soc. (JEMS)}, 22(11):3697--3745, 2020.

\bibitem{KerrNaryshkin}
D.~Kerr and P.~Naryshkin.
\newblock Elementary amenability and almost finiteness.
\newblock \emph{Compos. Math.}, to appear. arXiv:2107.05273.

\bibitem{KerrSzabo:CMP}
D.~Kerr and G.~Szab\'{o}.
\newblock Almost finiteness and the small boundary property.
\newblock {\em Comm. Math. Phys.}, 374(1):1--31, 2020.

\bibitem{Kirchberg:Book}
E.~Kirchberg.
\newblock The classification of purely infinite {$\mathrm C^*$}-algebras using
  {K}asparov’s theory.
\newblock Manuscript available at
  \url{https://ivv5hpp.uni-muenster.de/u/echters/ekneu1.pdf}.

\bibitem{Kirchberg:MathAnn}
E.~Kirchberg.
\newblock Discrete groups with {K}azhdan's property {${\rm T}$} and
  factorization property are residually finite.
\newblock {\em Math. Ann.}, 299(3):551--563, 1994.

\bibitem{Kirchberg:ICM}
E.~Kirchberg.
\newblock Exact {$\mathrm C^*$}-algebras, tensor products, and the
  classification of purely infinite algebras.
\newblock In {\em Proceedings of the {I}nternational {C}ongress of
  {M}athematicians, {V}ol. 1, 2 ({Z}\"{u}rich, 1994)}, pages 943--954.
  Birkh\"{a}user, Basel, 1995.

\bibitem{Kirchberg97}
E.~Kirchberg.
\newblock On the existence of traces on exact stably projectionless simple
  {$\mathrm C^*$}-algebras.
\newblock In {\em Operator algebras and their applications ({W}aterloo, {ON},
  1994/1995)}, volume~13 of {\em Fields Inst. Commun.}, pages 171--172. Amer.
  Math. Soc., Providence, RI, 1997.

\bibitem{Kirchberg:Abel}
E.~Kirchberg.
\newblock Central sequences in {$\mathrm C^*$}-algebras and strongly purely
  infinite algebras.
\newblock In {\em Operator {A}lgebras: {T}he {A}bel {S}ymposium 2004}, volume~1
  of {\em Abel Symp.}, pages 175--231. Springer, Berlin, 2006.

\bibitem{KirchbergPhillips:Crelle}
E.~Kirchberg and N.~C. Phillips.
\newblock Embedding of exact {$\mathrm C^*$}-algebras in the {C}untz algebra
  {$\mathcal O_2$}.
\newblock {\em J. Reine Angew. Math.}, 525:17--53, 2000.

\bibitem{KirchbergRordam:AJM}
E.~Kirchberg and M.~R{\o}rdam.
\newblock Non-simple purely infinite {$\mathrm C^*$}-algebras.
\newblock {\em Amer. J. Math.}, 122(3):637--666, 2000.

\bibitem{KirchbergRordam:AIM}
E.~Kirchberg and M.~R{\o}rdam.
\newblock Infinite non-simple {$\mathrm C^*$}-algebras: absorbing the {C}untz
  algebras {$\mathcal O_\infty$}.
\newblock {\em Adv. Math.}, 167(2):195--264, 2002.

\bibitem{KirchbergRordam:GAFA}
E.~Kirchberg and M.~R{\o}rdam.
\newblock Purely infinite {$\mathrm C^*$}-algebras: ideal-preserving zero
  homotopies.
\newblock {\em Geom. Funct. Anal.}, 15(2):377--415, 2005.

\bibitem{KirchbergRordam:Crelle}
E.~Kirchberg and M.~R{\o}rdam.
\newblock Central sequence {$\mathrm C^*$}-algebras and tensorial absorption of
  the {J}iang--{S}u algebra.
\newblock {\em J. Reine Angew. Math.}, 695:175--214, 2014.

\bibitem{KirchbergRordam:IJM}
E.~Kirchberg and M.~R{\o}rdam.
\newblock When central sequence {$\mathrm C^*$}-algebras have characters.
\newblock {\em Internat. J. Math.}, 26(7):1550049, 32, 2015.

\bibitem{KirchbergWinter:IJM}
E.~Kirchberg and W.~Winter.
\newblock Covering dimension and quasidiagonality.
\newblock {\em Internat. J. Math.}, 15(1):63--85, 2004.

\bibitem{Kishimoto:YMJ}
A.~Kishimoto.
\newblock Simple crossed products of {$\mathrm C^{*} $}-algebras by locally
  compact abelian groups.
\newblock {\em Yokohama Math. J.}, 28(1-2):69--85, 1980.

\bibitem{Kishimoto:CMP}
A.~Kishimoto.
\newblock Outer automorphisms and reduced crossed products of simple {$\mathrm
  C^{*} $}-algebras.
\newblock {\em Comm. Math. Phys.}, 81(3):429--435, 1981.

\bibitem{Kishimoto:Crelle}
A.~Kishimoto.
\newblock The {R}ohlin property for automorphisms of {UHF} algebras.
\newblock {\em J. Reine Angew. Math.}, 465:183--196, 1995.

\bibitem{Kishimoto:IJM}
A.~Kishimoto.
\newblock Rohlin flows on the {C}untz algebra {$\mathcal O_2$}.
\newblock {\em Internat. J. Math.}, 13(10):1065--1094, 2002.

\bibitem{KK:CJM}
A.~Kishimoto and A.~Kumjian.
\newblock Simple stably projectionless {$\mathrm C^*$}-algebras arising as
  crossed products.
\newblock {\em Canad. J. Math.}, 48(5):980--996, 1996.

\bibitem{KK:OAA}
A.~Kishimoto and A.~Kumjian.
\newblock Crossed products of {C}untz algebras by quasi-free automorphisms.
\newblock In {\em Operator algebras and their applications ({W}aterloo, {ON},
  1994/1995)}, volume~13 of {\em Fields Inst. Commun.}, pages 173--192. Amer.
  Math. Soc., Providence, RI, 1997.

\bibitem{KLTV}
G.~Kopsacheilis, H.-C. Liao, A.~Tikuisis, and A.~Vaccaro.
\newblock Uniform property {$\Gamma$} and the small boundary property.
\newblock \emph{Trans. Amer. Math. Soc.}, to appear, arXiv:2406.09808.

\bibitem{Krieger:IM}
W.~Krieger.
\newblock On dimension functions and topological {M}arkov chains.
\newblock {\em Invent. Math.}, 56(3):239--250, 1980.

\bibitem{KPRR:JFA}
A.~Kumjian, D.~Pask, I.~Raeburn, and J.~Renault.
\newblock Graphs, groupoids, and {C}untz--{K}rieger algebras.
\newblock {\em J. Funct. Anal.}, 144(2):505--541, 1997.

\bibitem{KunnawalkamElayavalli-Schafhauser}
S.~Kunnawalkam~Elayavalli and C.~Schafhauser.
\newblock Negative resolution to the {$\mathrm C^*$}-algebraic {T}arski
  problem.
\newblock arXiv:2503.10505.

\bibitem{LacaSpielberg:Crelle}
M.~Laca and J.~Spielberg.
\newblock Purely infinite {$\mathrm C^*$}-algebras from boundary actions of
  discrete groups.
\newblock {\em J. Reine Angew. Math.}, 480:125--139, 1996.

\bibitem{Lafforgue:JNCG}
V.~Lafforgue.
\newblock La conjecture de {B}aum-{C}onnes \`a coefficients pour les groupes
  hyperboliques.
\newblock {\em J. Noncommut. Geom.}, 6(1):1--197, 2012.

\bibitem{LiNiu:StableRank}
C.~G. Li and Z.~Niu.
\newblock Stable rank of {$C(X)\rtimes\Gamma$}.
\newblock arXiv:2008.03361.

\bibitem{LiLiaoWinter}
K.~Li, H.-C. Liao, and W.~Winter.
\newblock The diagonal dimension of sub-{$\mathrm C^*$}-algebras.
\newblock arXiv:2303.16762.

\bibitem{LiMa23}
K.~Li and X.~Ma.
\newblock Nonfree almost finite actions for locally finite-by-virtually {${\Bbb
  {Z}}$} groups.
\newblock {\em J. Lond. Math. Soc. (2)}, 110(1):Paper No. e12959, 22, 2024.

\bibitem{LiWillett:JLMS}
K.~Li and R.~Willett.
\newblock Low-dimensional properties of uniform {R}oe algebras.
\newblock {\em J. Lond. Math. Soc. (2)}, 97(1):98--124, 2018.

\bibitem{Li:IM}
X.~Li.
\newblock Every classifiable simple {$\mathrm C^*$}-algebra has a {C}artan
  subalgebra.
\newblock {\em Invent. Math.}, 219(2):653--699, 2020.

\bibitem{Li:IMRN}
X.~Li.
\newblock Constructing {M}enger manifold {$\mathrm C^*$}-diagonals in
  classifiable {$\mathrm C^*$}-algebras.
\newblock {\em Int. Math. Res. Not. IMRN}, (23):18992--19053, 2022.

\bibitem{LiRenault:TAMS}
X.~Li and J.~Renault.
\newblock Cartan subalgebras in {$\mathrm C^*$}-algebras. {E}xistence and
  uniqueness.
\newblock {\em Trans. Amer. Math. Soc.}, 372(3):1985--2010, 2019.

\bibitem{Lin:arXiv22}
H.~Lin.
\newblock Tracial approximation and {$\mathcal Z$}-stability.
\newblock \emph{J. Noncomm. Geom.}, to appear, arXiv:2205.04013.

\bibitem{Lin:JFA}
H.~Lin.
\newblock Exponential rank of {$\mathrm C^*$}-algebras with real rank zero and
  the {B}rown-{P}edersen conjectures.
\newblock {\em J. Funct. Anal.}, 114(1):1--11, 1993.

\bibitem{Lin:PJM}
H.~Lin.
\newblock Approximation by normal elements with finite spectra in {$\mathrm
  C^*$}-algebras of real rank zero.
\newblock {\em Pacific J. Math.}, 173(2):443--489, 1996.

\bibitem{Lin:Duke}
H.~Lin.
\newblock Classification of simple {$\mathrm C^*$}-algebras of tracial
  topological rank zero.
\newblock {\em Duke Math. J.}, 125(1):91--119, 2004.

\bibitem{Lin:IMRP}
H.~Lin.
\newblock A{F}-embeddings of the crossed products of {AH}-algebras by finitely
  generated abelian groups.
\newblock {\em Int. Math. Res. Pap. IMRP}, (3):Art. ID rpn007, 67, 2008.

\bibitem{Lin:IM}
H.~Lin.
\newblock Asymptotic unitary equivalence and classification of simple amenable
  {$\mathrm C^*$}-algebras.
\newblock {\em Invent. Math.}, 183(2):385--450, 2011.

\bibitem{Lin:AIM}
H.~Lin.
\newblock Tracial oscillation zero and {$\mathcal{Z}$}-stability.
\newblock {\em Adv. Math.}, 439:Paper No. 109462, 51, 2024.

\bibitem{Lin23}
H.~Lin.
\newblock Strict comparison and stable rank one.
\newblock {\em J. Funct. Anal.}, 289(9):Paper No. 111065, 2025.

\bibitem{LinNiu:AIM}
H.~Lin and Z.~Niu.
\newblock Lifting {$KK$}-elements, asymptotic unitary equivalence and
  classification of simple {$\mathrm C^*$}-algebras.
\newblock {\em Adv. Math.}, 219(5):1729--1769, 2008.

\bibitem{LindenstraussWeiss:IJM}
E.~Lindenstrauss and B.~Weiss.
\newblock Mean topological dimension.
\newblock {\em Israel J. Math.}, 115:1--24, 2000.

\bibitem{LoreauxNg:IEOT}
J.~Loreaux and P.~W. Ng.
\newblock Remarks on essential codimension.
\newblock {\em Integral Equations Operator Theory}, 92(1):Paper No. 4, 35,
  2020.

\bibitem{LNS:arXiv22}
J.~Loreaux, P.~W. Ng, and A.~Sutradhar.
\newblock {$K_1$}-injectivity of the {P}aschke dual algebra for certain simple
  {$\mathrm{C}^*$}-algebras.
\newblock {\em J. Math. Anal. Appl.}, 554(2):Paper No. 129947, 56, 2026.

\bibitem{Loring:MathScand}
T.~A. Loring.
\newblock Projective {$\mathrm C^*$}-algebras.
\newblock {\em Math. Scand.}, 73(2):274--280, 1993.

\bibitem{Loring:PJM2}
T.~A. Loring.
\newblock Stable relations. {II}. {C}orona semiprojectivity and dimension-drop
  {$\mathrm C^*$}-algebras.
\newblock {\em Pacific J. Math.}, 172(2):461--475, 1996.

\bibitem{LouderMagee:JFA}
L.~Louder and M.~Magee.
\newblock Strongly convergent unitary representations of limit groups.
\newblock {\em J. Funct. Anal.}, 288(6):Paper No. 110803, 28, 2025.

\bibitem{Ma:TAMS}
X.~Ma.
\newblock Comparison and pure infiniteness of crossed products.
\newblock {\em Trans. Amer. Math. Soc.}, 372(10):7497--7520, 2019.

\bibitem{MaWu24}
X.~Ma and J.~Wu.
\newblock Almost elementary groupoid models for {$\mathrm C^*$}-algebras.
\newblock arXiv:2407.05251.

\bibitem{Marrakchi18}
A.~Marrakchi.
\newblock Strongly ergodic actions have local spectral gap.
\newblock {\em Proc. Amer. Math. Soc.}, 146(9):3887--3893, 2018.

\bibitem{Marrakchi19}
A.~Marrakchi.
\newblock Spectral gap characterization of full type {III} factors.
\newblock {\em J. Reine Angew. Math.}, 753:193--210, 2019.

\bibitem{Matui:Crelle}
H.~Matui.
\newblock {$\Bbb Z^N$}-actions on {UHF} algebras of infinite type.
\newblock {\em J. Reine Angew. Math.}, 657:225--244, 2011.

\bibitem{MatuiSato:CMP}
H.~Matui and Y.~Sato.
\newblock {$\mathcal{Z}$}-stability of crossed products by strongly outer
  actions.
\newblock {\em Comm. Math. Phys.}, 314(1):193--228, 2012.

\bibitem{MatuiSato:Acta}
H.~Matui and Y.~Sato.
\newblock Strict comparison and {$\mathcal Z$}-absorption of nuclear {$\mathrm
  C^*$}-algebras.
\newblock {\em Acta Math.}, 209(1):179--196, 2012.

\bibitem{MatuiSato:Duke}
H.~Matui and Y.~Sato.
\newblock Decomposition rank of {UHF}-absorbing {$\mathrm C^*$}-algebras.
\newblock {\em Duke Math. J.}, 163(14):2687--2708, 2014.

\bibitem{MatuiSato:AJM}
H.~Matui and Y.~Sato.
\newblock {$\mathcal Z$}-stability of crossed products by strongly outer
  actions {II}.
\newblock {\em Amer. J. Math.}, 136(6):1441--1496, 2014.

\bibitem{McDuff:Ann}
D.~McDuff.
\newblock Uncountably many {${\rm II}_{1}$} factors.
\newblock {\em Ann. of Math. (2)}, 90:372--377, 1969.

\bibitem{MeslundSengun:JNCG}
B.~Mesland and M.~H. {\c{S}}eng\"un.
\newblock Hecke operators in {$KK$}-theory and the {$K$}-homology of {B}ianchi
  groups.
\newblock {\em J. Noncommut. Geom.}, 14(1):125--189, 2020.

\bibitem{Meyer19}
R.~Meyer.
\newblock A more general method to classify up to equivariant
  {$KK$}-equivalence {II}: {C}omputing obstruction classes.
\newblock In {\em {$K$}-theory in algebra, analysis and topology}, volume 749
  of {\em Contemp. Math.}, pages 237--277. Amer. Math. Soc., [Providence], RI,
  2020.

\bibitem{MeyerNest:Top}
R.~Meyer and R.~Nest.
\newblock The {B}aum--{C}onnes conjecture via localisation of categories.
\newblock {\em Topology}, 45(2):209--259, 2006.

\bibitem{MeyerNest:MJM}
R.~Meyer and R.~Nest.
\newblock {$\mathrm C^*$}-algebras over topological spaces: the bootstrap
  class.
\newblock {\em M\"{u}nster J. Math.}, 2:215--252, 2009.

\bibitem{MeyerNest:CJM}
R.~Meyer and R.~Nest.
\newblock {$\mathrm C^*$}-algebras over topological spaces: filtrated
  {K}-theory.
\newblock {\em Canad. J. Math.}, 64(2):368--408, 2012.

\bibitem{MilhojRordam}
H.~O. Milh{\o}j and M.~R{\o}rdam.
\newblock Around quasitraces and traces.
\newblock {\em M\"unster J. Math.}
\newblock To appear. arXiv:2309.17412

\bibitem{MitscherSpielberg:TAMS}
I.~Mitscher and J.~Spielberg.
\newblock A{F} {$\mathrm C^*$}-algebras from non-{AF} groupoids.
\newblock {\em Trans. Amer. Math. Soc.}, 375(10):7323--7371, 2022.

\bibitem{MurrayVonNeumann1}
F.~J. Murray and J.~Von~Neumann.
\newblock On rings of operators.
\newblock {\em Ann. of Math. (2)}, 37(1):116--229, 1936.

\bibitem{MurrayVonNeumann2}
F.~J. Murray and J.~von Neumann.
\newblock On rings of operators. {II}.
\newblock {\em Trans. Amer. Math. Soc.}, 41(2):208--248, 1937.

\bibitem{MurrayVonNeumann4}
F.~J. Murray and J.~von Neumann.
\newblock On rings of operators. {IV}.
\newblock {\em Ann. of Math. (2)}, 44:716--808, 1943.

\bibitem{Naryshkin21}
P.~Naryshkin.
\newblock Polynomial growth, comparison, and the small boundary property.
\newblock {\em Adv. Math.}, 406:Paper No. 108519, 9, 2022.

\bibitem{Nawata:APDE}
N.~Nawata.
\newblock A characterization of the {R}azak--{J}acelon algebra.
\newblock {\em Anal. PDE}, 16(8):1799--1824, 2023.

\bibitem{Neagu24}
R.~Neagu.
\newblock The admissible {KMS}-bundles on classifiable {$\mathrm
  C^*$}-algebras.
\newblock \emph{Proc. Roy. Soc. Edinburgh Sect. A}, to appear,
  arXiv:2401.14096.

\bibitem{Nielsen:Thesis}
K.~B. Nielsen.
\newblock Automorphisms on classifiable {$\mathrm C^*$}-algebras.
\newblock Master's thesis, University of Southern Denmark, 2024.

\bibitem{Niu:JAM}
Z.~Niu.
\newblock Comparison radius and mean topological dimension: {R}okhlin property,
  comparison of open sets, and subhomogeneous {$\rm C^*$}-algebras.
\newblock {\em J. Anal. Math.}, 146(2):595--672, 2022.

\bibitem{Niu:CJM}
Z.~Niu.
\newblock Radius of comparison and mean topological dimension: {$\Bbb
  Z^d$}-actions.
\newblock {\em Canad. J. Math.}, 76(4):1240--1266, 2024.

\bibitem{Ocneanu}
A.~Ocneanu.
\newblock {\em Actions of discrete amenable groups on von {N}eumann algebras},
  volume 1138 of {\em Lecture Notes in Mathematics}.
\newblock Springer-Verlag, Berlin, 1985.

\bibitem{OlsenZame:TAMS}
C.~Olsen and W.~Zame.
\newblock Some {$\mathrm C^*$}-algebras with a single generator.
\newblock {\em Trans. Amer. Math. Soc.}, 215:205--217, 1976.

\bibitem{OrnsteinWeiss:BAMS}
D.~S. Ornstein and B.~Weiss.
\newblock Ergodic theory of amenable group actions. {I}. {T}he {R}ohlin lemma.
\newblock {\em Bull. Amer. Math. Soc. (N.S.)}, 2(1):161--164, 1980.

\bibitem{OrtegaPereraRordam:TAMS}
E.~Ortega, F.~Perera, and M.~R{\o}rdam.
\newblock The corona factorization property and refinement monoids.
\newblock {\em Trans. Amer. Math. Soc.}, 363(9):4505--4525, 2011.

\bibitem{Ozawa:selfless}
N.~Ozawa.
\newblock Proximality and selflessness for group {$\mathrm C^*$}-algebras.
\newblock arXiv:2508.07938.

\bibitem{Ozawa:GAFA}
N.~Ozawa.
\newblock Homotopy invariance of {AF}-embeddability.
\newblock {\em Geom. Funct. Anal.}, 13(1):216--222, 2003.

\bibitem{Ozawa:JMSUT}
N.~Ozawa.
\newblock Dixmier approximation and symmetric amenability for {$\mathrm
  C^*$}-algebras.
\newblock {\em J. Math. Sci. Univ. Tokyo}, 20(3):349--374, 2013.

\bibitem{OzawaRordamSato:GAFA}
N.~Ozawa, M.~R{\o}rdam, and Y.~Sato.
\newblock Elementary amenable groups are quasidiagonal.
\newblock {\em Geom. Funct. Anal.}, 25(1):307--316, 2015.

\bibitem{OzawaSuzuki:Selecta}
N.~Ozawa and Y.~Suzuki.
\newblock On characterizations of amenable {$\mathrm C^*$}-dynamical systems
  and new examples.
\newblock {\em Selecta Math. (N.S.)}, 27(5):Paper No. 92, 29, 2021.

\bibitem{Packer-Raeburn1}
J.~A. Packer and I.~Raeburn.
\newblock Twisted crossed products of {$\mathrm C^*$}-algebras.
\newblock {\em Math. Proc. Cambridge Philos. Soc.}, 106(2):293--311, 1989.

\bibitem{Packer-Raeburn2}
J.~A. Packer and I.~Raeburn.
\newblock Twisted crossed products of {$\mathrm C^*$}-algebras. {II}.
\newblock {\em Math. Ann.}, 287(4):595--612, 1990.

\bibitem{Paschke:PJM}
W.~L. Paschke.
\newblock {$K$}-theory for commutants in the {C}alkin algebra.
\newblock {\em Pacific J. Math.}, 95(2):427--434, 1981.

\bibitem{Pedersen:Book}
G.~K. Pedersen.
\newblock {\em {$\mathrm C^*$}-algebras and their automorphism groups},
  volume~14 of {\em London Mathematical Society Monographs}.
\newblock Academic Press Inc. [Harcourt Brace Jovanovich Publishers], London,
  1979.

\bibitem{Pedersen:JOT}
G.~K. Pedersen.
\newblock The linear span of projections in simple {$\mathrm C^* $}-algebras.
\newblock {\em J. Operator Theory}, 4(2):289--296, 1980.

\bibitem{Perera:IJM}
F.~Perera.
\newblock The structure of positive elements for {$\mathrm C^*$}-algebras with
  real rank zero.
\newblock {\em Internat. J. Math.}, 8(3):383--405, 1997.

\bibitem{PereraRordam:JFA}
F.~Perera and M.~R{\o}rdam.
\newblock A{F}-embeddings into {$\mathrm C^*$}-algebras of real rank zero.
\newblock {\em J. Funct. Anal.}, 217(1):142--170, 2004.

\bibitem{PereraToms:MA}
F.~Perera and A.~S. Toms.
\newblock Recasting the {E}lliott conjecture.
\newblock {\em Math. Ann.}, 338(3):669--702, 2007.

\bibitem{PTWW:APDE}
F.~Perera, A.~S. Toms, S.~White, and W.~Winter.
\newblock The {C}untz semigroup and stability of close {$\mathrm
  C^*$}-algebras.
\newblock {\em Anal. PDE}, 7(4):929--952, 2014.

\bibitem{Phillips:IUMJ}
J.~Phillips.
\newblock Perturbations of {$\mathrm C^{*} $}-algebras.
\newblock {\em Indiana Univ. Math. J.}, 23:1167--1176, 1973/74.

\bibitem{Phillips:DM}
N.~C. Phillips.
\newblock A classification theorem for nuclear purely infinite simple {$\mathrm
  C^*$}-algebras.
\newblock {\em Doc. Math.}, 5:49--114, 2000.

\bibitem{Phillips:TAMS}
N.~C. Phillips.
\newblock Cancellation and stable rank for direct limits of recursive
  subhomogeneous algebras.
\newblock {\em Trans. Amer. Math. Soc.}, 359(10):4625--4652, 2007.

\bibitem{Pimsner:ETDS}
M.~V. Pimsner.
\newblock Embedding some transformation group {$\mathrm C^* $}-algebras into
  {AF}-algebras.
\newblock {\em Ergodic Theory Dynam. Systems}, 3(4):613--626, 1983.

\bibitem{Pimsner:OACTET}
M.~V. Pimsner.
\newblock Ranges of traces on {$K_0$} of reduced crossed products by free
  groups.
\newblock In {\em Operator algebras and their connections with topology and
  ergodic theory ({B}u\c{s}teni, 1983)}, volume 1132 of {\em Lecture Notes in
  Math.}, pages 374--408. Springer, Berlin, 1985.

\bibitem{PV:JOT}
M.~V. Pimsner and D.~Voiculescu.
\newblock Imbedding the irrational rotation {$\mathrm C^*$}-algebra into an
  {AF}-algebra.
\newblock {\em J. Operator Theory}, 4(2):201--210, 1980.

\bibitem{PimsnerVoiculescu:JOT}
M.~V. Pimsner and D.~Voiculescu.
\newblock {$K$}-groups of reduced crossed products by free groups.
\newblock {\em J. Operator Theory}, 8(1):131--156, 1982.

\bibitem{Popa21}
S.~Popa.
\newblock On the vanishing cohomology problem for cocycle actions of groups on
  {$\rm II_1$} factors.
\newblock {\em Ann. Sci. \'{E}c. Norm. Sup\'{e}r. (4)}, 54(2):407--443, 2021.

\bibitem{Power:Book}
S.~C. Power.
\newblock {\em Limit algebras: an introduction to subalgebras of {$\mathrm
  C^*$}-algebras}, volume 278 of {\em Pitman Research Notes in Mathematics
  Series}.
\newblock Longman Scientific \& Technical, Harlow; copublished in the United
  States with John Wiley \& Sons, Inc., New York, 1992.

\bibitem{ProiettiYamashita:ETDS}
V.~Proietti and M.~Yamashita.
\newblock Homology and {$K$}-theory of dynamical systems {I}. {T}orsion-free
  ample groupoids.
\newblock {\em Ergodic Theory Dynam. Systems}, 42(8):2630--2660, 2022.

\bibitem{Putnam:MA}
I.~F. Putnam.
\newblock Some classifiable groupoid {$\mathrm C^*$}-algebras with prescribed
  {$K$}-theory.
\newblock {\em Math. Ann.}, 370(3-4):1361--1387, 2018.

\bibitem{RainoneSchafhauser:Adv}
T.~Rainone and C.~Schafhauser.
\newblock Crossed products of nuclear {$\mathrm C^*$}-algebras and their
  traces.
\newblock {\em Adv. Math.}, 347:105--149, 2019.

\bibitem{Razak:CJM}
S.~Razak.
\newblock On the classification of simple stably projectionless {$\mathrm
  C^*$}-algebras.
\newblock {\em Canad. J. Math.}, 54(1):138--224, 2002.

\bibitem{RenaultIMSB}
J.~Renault.
\newblock Cartan subalgebras in {$\mathrm C^*$}-algebras.
\newblock {\em Irish Math. Soc. Bull.}, (61):29--63, 2008.

\bibitem{Restorff:Crelle}
G.~Restorff.
\newblock Classification of {C}untz--{K}rieger algebras up to stable
  isomorphism.
\newblock {\em J. Reine Angew. Math.}, 598:185--210, 2006.

\bibitem{Rieffel:PJM}
M.~A. Rieffel.
\newblock {$\mathrm C^* $}-algebras associated with irrational rotations.
\newblock {\em Pacific J. Math.}, 93(2):415--429, 1981.

\bibitem{Rieffel:PLMS}
M.~A. Rieffel.
\newblock Dimension and stable rank in the {$K$}-theory of {$\mathrm
  C^{*}$}-algebras.
\newblock {\em Proc. London Math. Soc. (3)}, 46(2):301--333, 1983.

\bibitem{Rieffel:JOT}
M.~A. Rieffel.
\newblock The homotopy groups of the unitary groups of noncommutative tori.
\newblock {\em J. Operator Theory}, 17(2):237--254, 1987.

\bibitem{Robert:AIM}
L.~Robert.
\newblock Classification of inductive limits of 1-dimensional {NCCW} complexes.
\newblock {\em Adv. Math.}, 231(5):2802--2836, 2012.

\bibitem{R:Scand}
L.~Robert.
\newblock The cone of functionals on the {C}untz semigroup.
\newblock {\em Math. Scand.}, 113(2):161--186, 2013.

\bibitem{Robert23}
L.~Robert.
\newblock Selfless {${\rm C}^*$}-algebras.
\newblock {\em Adv. Math.}, 478:Paper No. 110409, 28, 2025.

\bibitem{RobertTikuisis:TAMS}
L.~Robert and A.~Tikuisis.
\newblock Nuclear dimension and {$\mathcal Z$}-stability of non-simple
  {$\mathrm C^*$}-algebras.
\newblock {\em Trans. Amer. Math. Soc.}, 369(7):4631--4670, 2017.

\bibitem{Rordam:JFA91}
M.~R\o{}rdam.
\newblock On the structure of simple {$\mathrm C^*$}-algebras tensored with a
  {UHF}-algebra.
\newblock {\em J. Funct. Anal.}, 100(1):1--17, 1991.

\bibitem{Rordam:Crelle}
M.~R{\o}rdam.
\newblock Classification of inductive limits of {C}untz algebras.
\newblock {\em J. Reine Angew. Math.}, 440:175--200, 1993.

\bibitem{Rordam:JFA}
M.~R{\o}rdam.
\newblock Classification of certain infinite simple {$\mathrm C^*$}-algebras.
\newblock {\em J. Funct. Anal.}, 131(2):415--458, 1995.

\bibitem{Rordam:MA}
M.~R{\o}rdam.
\newblock Classification of extensions of certain {$\mathrm C^*$}-algebras by
  their six term exact sequences in {$K$}-theory.
\newblock {\em Math. Ann.}, 308(1):93--117, 1997.

\bibitem{Rordam:DM}
M.~R{\o}rdam.
\newblock Stability of {$\mathrm C^*$}-algebras is not a stable property.
\newblock {\em Doc. Math.}, 2:375--386, 1997.

\bibitem{Rordam:Acta}
M.~R{\o}rdam.
\newblock A simple {$\mathrm C^*$}-algebra with a finite and an infinite
  projection.
\newblock {\em Acta Math.}, 191(1):109--142, 2003.

\bibitem{Rordam:Israel}
M.~R\o{}rdam.
\newblock A purely infinite {AH}-algebra and an application to
  {AF}-embeddability.
\newblock {\em Israel J. Math.}, 141:61--82, 2004.

\bibitem{Rordam:IJM}
M.~R{\o}rdam.
\newblock The stable and the real rank of {$\mathcal Z$}-absorbing {$\mathrm
  C^*$}-algebras.
\newblock {\em Internat. J. Math.}, 15(10):1065--1084, 2004.

\bibitem{Rordam05}
M.~R{\o}rdam.
\newblock The real rank of certain simple {$\mathrm C^*$}-algebras.
\newblock In {\em Advances in operator algebras and mathematical physics},
  volume~5 of {\em Theta Ser. Adv. Math.}, pages 197--206. Theta, Bucharest,
  2005.

\bibitem{Rordam:TAMS}
M.~R\o{}rdam.
\newblock Fixed-points in the cone of traces on a {$C^{\ast}$}-algebra.
\newblock {\em Trans. Amer. Math. Soc.}, 371(12):8879--8906, 2019.

\bibitem{Rordam:KBook}
M.~R{\o}rdam, F.~Larsen, and N.~Laustsen.
\newblock {\em An introduction to {$K$}-theory for {$\mathrm C^*$}-algebras},
  volume~49 of {\em London Mathematical Society Student Texts}.
\newblock Cambridge University Press, Cambridge, 2000.

\bibitem{Rordam:Book}
M.~R{\o}rdam and E.~St{\o}rmer.
\newblock {\em Classification of nuclear {$\mathrm C^*$}-algebras. {E}ntropy in
  operator algebras}, volume 126 of {\em Encyclopaedia of Mathematical
  Sciences}.
\newblock Springer-Verlag, Berlin, 2002.
\newblock Operator Algebras and Non-commutative Geometry, 7.

\bibitem{RordamWinter:Crelle}
M.~R{\o}rdam and W.~Winter.
\newblock The {J}iang--{S}u algebra revisited.
\newblock {\em J. Reine Angew. Math.}, 642:129--155, 2010.

\bibitem{Rosenberg-Schochet87}
J.~Rosenberg and C.~Schochet.
\newblock The {K}\"{u}nneth theorem and the universal coefficient theorem for
  {K}asparov's generalized {$K$}-functor.
\newblock {\em Duke Math. J.}, 55(2):431--474, 1987.

\bibitem{RST:Adv}
E.~Ruiz, A.~Sims, and M.~Tomforde.
\newblock The nuclear dimension of graph {$\mathrm C^*$}-algebras.
\newblock {\em Adv. Math.}, 272:96--123, 2015.

\bibitem{Sato11}
Y.~Sato.
\newblock Discrete amenable group actions on von {N}eumann algebras and
  invariant nuclear {$\mathrm C^*$}-subalgebras.
\newblock arXiv:1104.4339.

\bibitem{Sato12}
Y.~Sato.
\newblock Trace spaces of simple nuclear {$\mathrm C^*$}-algebras with
  finite-dimensional extreme boundary.
\newblock arXiv:1209.3000.

\bibitem{Sato:ASPM}
Y.~Sato.
\newblock Actions of amenable groups and crossed products of {$\mathcal
  Z$}-absorbing {$\mathrm C^*$}-algebras.
\newblock In {\em Operator algebras and mathematical physics}, volume~80 of
  {\em Adv. Stud. Pure Math.}, pages 189--210. Math. Soc. Japan, Tokyo, 2019.

\bibitem{SWW:IM}
Y.~Sato, S.~White, and W.~Winter.
\newblock Nuclear dimension and {$\mathcal{Z}$}-stability.
\newblock {\em Invent. Math.}, 202(2):893--921, 2015.

\bibitem{Scarparo:ETDS}
E.~Scarparo.
\newblock Homology of odometers.
\newblock {\em Ergodic Theory Dynam. Systems}, 40(9):2541--2551, 2020.

\bibitem{Schafhauser24}
C.~Schafhauser.
\newblock {$KK$}-rigidity of simple nuclear {$\mathrm C^*$}-algebras.
\newblock arXiv:2408.02745.

\bibitem{Schafhauser:Crelle}
C.~Schafhauser.
\newblock A new proof of the {T}ikuisis--{W}hite--{W}inter theorem.
\newblock {\em J. Reine Angew. Math.}, 759:291--304, 2020.

\bibitem{Schafhauser:Ann}
C.~Schafhauser.
\newblock Subalgebras of simple {AF}-algebras.
\newblock {\em Ann. of Math. (2)}, 192(2):309--352, 2020.

\bibitem{Schemaitat:JFA}
A.~Schemaitat.
\newblock The {J}iang--{S}u algebra is strongly self-absorbing revisited.
\newblock {\em J. Funct. Anal.}, 282(6):Paper No. 109347, 39, 2022.

\bibitem{Schochet82}
C.~Schochet.
\newblock Topological methods for {$\mathrm C^{*} $}-algebras. {I}. {S}pectral
  sequences.
\newblock {\em Pacific J. Math.}, 96(1):193--211, 1981.

\bibitem{Sela}
Z.~Sela.
\newblock Diophantine geometry over groups. {I}. {M}akanin-{R}azborov diagrams.
\newblock {\em Publ. Math. Inst. Hautes \'Etudes Sci.}, (93):31--105, 2001.

\bibitem{Skandalis88}
G.~Skandalis.
\newblock Une notion de nucl\'{e}arit\'{e} en {$K$}-th\'{e}orie (d'apr\`es {J}.
  {C}untz).
\newblock {\em $K$-Theory}, 1(6):549--573, 1988.

\bibitem{SorensenThiel:PLMS}
A.~P.~W. S{\o}rensen and H.~Thiel.
\newblock A characterization of semiprojectivity for commutative {$\mathrm
  C^*$}-algebras.
\newblock {\em Proc. Lond. Math. Soc. (3)}, 105(5):1021--1046, 2012.

\bibitem{Spielberg:JOT}
J.~Spielberg.
\newblock Graph-based models for {K}irchberg algebras.
\newblock {\em J. Operator Theory}, 57(2):347--374, 2007.

\bibitem{Suzuki:MJM}
Y.~Suzuki.
\newblock Amenable actions on finite simple {$\mathrm C^*$}-algebras arising
  from flows on {P}imsner algebras.
\newblock {\em M\"u{}nster J. Math.}
\newblock To appear, arXiv:2305.13056.

\bibitem{Suzuki:Crelle}
Y.~Suzuki.
\newblock Amenable minimal {C}antor systems of free groups arising from
  diagonal actions.
\newblock {\em J. Reine Angew. Math.}, 722:183--214, 2017.

\bibitem{Suzuki:JNCG}
Y.~Suzuki.
\newblock Simple equivariant {$\mathrm C^*$}-algebras whose full and reduced
  crossed products coincide.
\newblock {\em J. Noncommut. Geom.}, 13(4):1577--1585, 2019.

\bibitem{Suzuki:IMRN}
Y.~Suzuki.
\newblock Rigid sides of approximately finite-dimensional simple operator
  algebras in non-separable category.
\newblock {\em Int. Math. Res. Not. IMRN}, (3):2166--2190, 2021.

\bibitem{Suzuki24}
Y.~Suzuki.
\newblock Amenable actions on ill-behaved simple {$\mathrm C^*$}-algebras.
\newblock {\em Int. Math. Res. Not. IMRN}, (5):Paper No. rnaf033, 18, 2025.

\bibitem{Suzuki:AJM}
Y.~Suzuki.
\newblock Every countable group admits amenable actions on stably finite simple
  {$\rm C^*$}-algebras.
\newblock {\em Amer. J. Math.}, 148(1):69--77, 2026.

\bibitem{Szabo:MJM}
G.~Szab\'o.
\newblock Equivariant property {(SI)} revisited, {II}.
\newblock {\em M\"u{}nster J. Math.}
\newblock To appear, arXiv:2308.08878.

\bibitem{Szabo:preprint}
G.~Szab{\'o}.
\newblock The uniqueness theorem for {K}asparov theory.
\newblock arXiv:2601.23029.

\bibitem{Szabo:PLMS}
G.~Szab\'{o}.
\newblock The {R}okhlin dimension of topological {$\Bbb{Z}^m$}-actions.
\newblock {\em Proc. Lond. Math. Soc. (3)}, 110(3):673--694, 2015.

\bibitem{Szabo:Adv}
G.~Szab\'{o}.
\newblock On the nuclear dimension of strongly purely infinite {$\mathrm
  C^*$}-algebras.
\newblock {\em Adv. Math.}, 306:1262--1268, 2017.

\bibitem{Szabo:CMP}
G.~Szab\'{o}.
\newblock Equivariant {K}irchberg--{P}hillips-type absorption for amenable
  group actions.
\newblock {\em Comm. Math. Phys.}, 361(3):1115--1154, 2018.

\bibitem{Szabo:CMP19}
G.~Szab\'{o}.
\newblock Actions of certain torsion-free elementary amenable groups on
  strongly self-absorbing {$\mathrm C^*$}-algebras.
\newblock {\em Comm. Math. Phys.}, 371(1):267--284, 2019.

\bibitem{Szabo:CMP21}
G.~Szab\'{o}.
\newblock The classification of {R}okhlin flows on {$\mathrm C^*$}-algebras.
\newblock {\em Comm. Math. Phys.}, 382(3):2015--2070, 2021.

\bibitem{Szabo:AnalPDE}
G.~Szab\'{o}.
\newblock Equivariant property ({SI}) revisited.
\newblock {\em Anal. PDE}, 14(4):1199--1232, 2021.

\bibitem{Szabo:JFA}
G.~Szab\'{o}.
\newblock On a categorical framework for classifying {$\mathrm C^*$}-dynamics
  up to cocycle conjugacy.
\newblock {\em J. Funct. Anal.}, 280(8):Paper No. 108927, 66, 2021.

\bibitem{SzaboWouters:JIMJ}
G.~Szab\'{o} and L.~Wouters.
\newblock Dynamical {M}c{D}uff-type properties for group actions on von
  {N}eumann algebras.
\newblock {\em J. Inst. Math. Jussieu}, 23(6):2593--2629, 2024.

\bibitem{SWZ:ETDS}
G.~Szab\'{o}, J.~Wu, and J.~Zacharias.
\newblock Rokhlin dimension for actions of residually finite groups.
\newblock {\em Ergodic Theory Dynam. Systems}, 39(8):2248--2304, 2019.

\bibitem{Thiel:Adv}
H.~Thiel.
\newblock Inductive limits of semiprojective {$C^*$}-algebras.
\newblock {\em Adv. Math.}, 347:597--618, 2019.

\bibitem{Thiel:CMP}
H.~Thiel.
\newblock Ranks of operators in simple {$\mathrm C^*$}-algebras with stable
  rank one.
\newblock {\em Comm. Math. Phys.}, 377(1):37--76, 2020.

\bibitem{Thiel:JFA}
H.~Thiel.
\newblock The generator rank of {$\mathrm C^*$}-algebras.
\newblock {\em J. Funct. Anal.}, 280(4):Paper No. 108874, 34, 2021.

\bibitem{Thiel:JNCG}
H.~Thiel.
\newblock Generators in {$\mathcal{Z}$}-stable {$\mathrm C^*$}-algebras of real
  rank zero.
\newblock {\em J. Noncommut. Geom.}, 16(4):1259--1281, 2022.

\bibitem{Thiel:CJM}
H.~Thiel.
\newblock The generator rank of subhomogeneous {$\mathrm C^*$}-algebras.
\newblock {\em Canad. J. Math.}, 75(4):1314--1342, 2023.

\bibitem{ThielVilalta:TAMS}
H.~Thiel and E.~Vilalta.
\newblock The {G}lobal {G}limm {P}roperty.
\newblock {\em Trans. Amer. Math. Soc.}, 376(7):4713--4744, 2023.

\bibitem{ThielVilalta:JNCG}
H.~Thiel and E.~Vilalta.
\newblock Nowhere scattered {$\mathrm C^*$}-algebras.
\newblock {\em J. Noncommut. Geom.}, 18:231--263, 2024.

\bibitem{ThielWinter:TAMS}
H.~Thiel and W.~Winter.
\newblock The generator problem for {$\mathcal{Z}$}-stable {$\mathrm
  C^*$}-algebras.
\newblock {\em Trans. Amer. Math. Soc.}, 366(5):2327--2343, 2014.

\bibitem{Thomsen:KTheory}
K.~Thomsen.
\newblock Nonstable {$K$}-theory for operator algebras.
\newblock {\em $K$-Theory}, 4(3):245--267, 1991.

\bibitem{Thomsen:RIMS}
K.~Thomsen.
\newblock Traces, unitary characters and crossed products by {${\mathbb Z}$}.
\newblock {\em Publ. Res. Inst. Math. Sci.}, 31(6):1011--1029, 1995.

\bibitem{Thomsen:PAMS}
K.~Thomsen.
\newblock On absorbing extensions.
\newblock {\em Proc. Amer. Math. Soc.}, 129(5):1409--1417, 2001.

\bibitem{Tikuisis:MA}
A.~Tikuisis.
\newblock Nuclear dimension, {$\mathcal Z$}-stability, and algebraic simplicity
  for stably projectionless {$\mathrm C^*$}-algebras.
\newblock {\em Math. Ann.}, 358(3-4):729--778, 2014.

\bibitem{TT:CMB}
A.~Tikuisis and A.~S. Toms.
\newblock On the structure of {C}untz semigroups in (possibly) nonunital
  {$\mathrm C^*$}-algebras.
\newblock {\em Canad. Math. Bull.}, 58(2):402--414, 2015.

\bibitem{TWW:Ann}
A.~Tikuisis, S.~White, and W.~Winter.
\newblock Quasidiagonality of nuclear {$\mathrm C^*$}-algebras.
\newblock {\em Ann. of Math. (2)}, 185(1):229--284, 2017.

\bibitem{TikuisisWinter:APDE}
A.~Tikuisis and W.~Winter.
\newblock Decomposition rank of {${\mathcal Z}$}-stable {$\mathrm
  C^*$}-algebras.
\newblock {\em Anal. PDE}, 7(3):673--700, 2014.

\bibitem{Toms:Ann}
A.~S. Toms.
\newblock On the classification problem for nuclear {$\mathrm C^*$}-algebras.
\newblock {\em Ann. of Math. (2)}, 167(3):1029--1044, 2008.

\bibitem{Toms:CMP}
A.~S. Toms.
\newblock Comparison theory and smooth minimal {$\mathrm C^*$}-dynamics.
\newblock {\em Comm. Math. Phys.}, 281:401--433, 2009.

\bibitem{Toms:CRMASSRC}
A.~S. Toms.
\newblock Characterizing classifiable {AH} algebras.
\newblock {\em C. R. Math. Acad. Sci. Soc. R. Can.}, 33(4):123--126, 2011.

\bibitem{Toms:IM}
A.~S. Toms.
\newblock {$K$}-theoretic rigidity and slow dimension growth.
\newblock {\em Invent. Math.}, 183(2):225--244, 2011.

\bibitem{TWW:IMRN}
A.~S. Toms, S.~White, and W.~Winter.
\newblock {$\mathcal Z$}-stability and finite-dimensional tracial boundaries.
\newblock {\em Int. Math. Res. Not. IMRN}, (10):2702--2727, 2015.

\bibitem{TomsWinter:TAMS}
A.~S. Toms and W.~Winter.
\newblock Strongly self-absorbing {$\mathrm C^*$}-algebras.
\newblock {\em Trans. Amer. Math. Soc.}, 359(8):3999--4029, 2007.

\bibitem{TomsWinter:CJM}
A.~S. Toms and W.~Winter.
\newblock {$\mathcal Z$}-stable {ASH} algebras.
\newblock {\em Canad. J. Math.}, 60(3):703--720, 2008.

\bibitem{TomsWinter:JFA}
A.~S. Toms and W.~Winter.
\newblock The {E}lliott conjecture for {V}illadsen algebras of the first type.
\newblock {\em J. Funct. Anal.}, 256(5):1311--1340, 2009.

\bibitem{TW:GAFA}
A.~S. Toms and W.~Winter.
\newblock Minimal dynamics and {$K$}-theoretic rigidity: {E}lliott's
  conjecture.
\newblock {\em Geom. Funct. Anal.}, 23(1):467--481, 2013.

\bibitem{Topping:Lectures}
D.~Topping.
\newblock {\em Lectures on von {N}eumann algebras}.
\newblock Van Nostrand Reinhold, 1971.

\bibitem{Tu99}
J.-L. Tu.
\newblock La conjecture de {B}aum--{C}onnes pour les feuilletages moyennables.
\newblock {\em $K$-Theory}, 17(3):215--264, 1999.

\bibitem{Ursu:Adv}
D.~Ursu.
\newblock Characterizing traces on crossed products of noncommutative {$\mathrm
  C^*$}-algebras.
\newblock {\em Adv. Math.}, 391:Paper No. 107955, 29, 2021.

\bibitem{Vigdorovich:linear}
I.~Vigdorovich.
\newblock Selfless reduced $c^*$-algebras of linear groups.
\newblock arXiv:2602.10616.

\bibitem{Vigdorovich:arXiv}
I.~Vigdorovich.
\newblock Structural properties of reduced {$\mathrm C^*$}-algebras associated
  with higher-rank lattices.
\newblock arXiv:2503.12737.

\bibitem{Villadsen:JFA}
J.~Villadsen.
\newblock Simple {$\mathrm C^*$}-algebras with perforation.
\newblock {\em J. Funct. Anal.}, 154(1):110--116, 1998.

\bibitem{Villadsen:JAMS}
J.~Villadsen.
\newblock On the stable rank of simple {$\mathrm C^*$}-algebras.
\newblock {\em J. Amer. Math. Soc.}, 12(4):1091--1102, 1999.

\bibitem{Villadsen:Crelle}
J.~Villadsen.
\newblock Comparison of projections and unitary elements in simple {$\mathrm
  C^*$}-algebras.
\newblock {\em J. Reine Angew. Math.}, 549:23--45, 2002.

\bibitem{Voiculescu:ActaSci}
D.~Voiculescu.
\newblock Asymptotically commuting finite rank unitary operators without
  commuting approximants.
\newblock {\em Acta Sci. Math. (Szeged)}, 45(1-4):429--431, 1983.

\bibitem{Voiculescu:Duke}
D.~Voiculescu.
\newblock A note on quasi-diagonal {$\mathrm C^*$}-algebras and homotopy.
\newblock {\em Duke Math. J.}, 62(2):267--271, 1991.

\bibitem{SpakulaWillett:AIM}
J.~\v{S}pakula and R.~Willett.
\newblock On rigidity of {R}oe algebras.
\newblock {\em Adv. Math.}, 249:289--310, 2013.

\bibitem{White:ICM}
S.~White.
\newblock Abstract classification theorems for amenable {$\mathrm
  C^*$}-algebras.
\newblock In {\em I{CM}---{I}nternational {C}ongress of {M}athematicians.
  {V}ol. {IV}. {S}ections 5--8}, pages 3314--3338. EMS Press, Berlin, 2023.

\bibitem{WhiteWillett:GGD}
S.~White and R.~Willett.
\newblock Cartan subalgebras in uniform {R}oe algebras.
\newblock {\em Groups Geom. Dyn.}, 14(3):949--989, 2020.

\bibitem{Willig:TMath}
P.~Willig.
\newblock Generators and direct integral decompositions of {$W^{*} $}-algebras.
\newblock {\em Tohoku Math. J. (2)}, 26:35--37, 1974.

\bibitem{Winter:JFA}
W.~Winter.
\newblock Covering dimension for nuclear {$\mathrm C^*$}-algebras.
\newblock {\em J. Funct. Anal.}, 199(2):535--556, 2003.

\bibitem{Winter:IM10}
W.~Winter.
\newblock Decomposition rank and {$\mathcal Z$}-stability.
\newblock {\em Invent. Math.}, 179(2):229--301, 2010.

\bibitem{Winter:JNCG}
W.~Winter.
\newblock Strongly self-absorbing {$\mathrm C^*$}-algebras are {$\mathcal
  Z$}-stable.
\newblock {\em J. Noncommut. Geom.}, 5(2):253--264, 2011.

\bibitem{Winter:IM12}
W.~Winter.
\newblock Nuclear dimension and {$\mathcal Z$}-stability of pure {$\mathrm
  C^*$}-algebras.
\newblock {\em Invent. Math.}, 187(2):259--342, 2012.

\bibitem{Winter:Crelle}
W.~Winter.
\newblock Localizing the {E}lliott conjecture at strongly self-absorbing
  {$\mathrm C^*$}-algebras.
\newblock {\em J. Reine Angew. Math.}, 692:193--231, 2014.

\bibitem{Winter:AJM}
W.~Winter.
\newblock Classifying crossed product {$\mathrm C^*$}-algebras.
\newblock {\em Amer. J. Math.}, 138(3):793--820, 2016.

\bibitem{Winter:Abel}
W.~Winter.
\newblock Q{DQ} vs. {UCT}.
\newblock In {\em Operator algebras and applications---the {A}bel {S}ymposium
  2015}, volume~12 of {\em Abel Symp.}, pages 327--348. Springer, [Cham], 2017.

\bibitem{WinterZacharias:AIM}
W.~Winter and J.~Zacharias.
\newblock The nuclear dimension of {$\mathrm C^*$}-algebras.
\newblock {\em Adv. Math.}, 224(2):461--498, 2010.

\bibitem{Wogan:BAMS}
W.~Wogen.
\newblock On generators for von {N}eumann algebras.
\newblock {\em Bull. Amer. Math. Soc.}, 75:95--99, 1969.

\bibitem{Wouters}
L.~Wouters.
\newblock Equivariant {$\mathcal Z$}-stability for single automorphisms on
  simple {$\mathrm C^*$}-algebras with tractable trace simplices.
\newblock {\em Math. Z.}, 304(1):Paper No. 22, 36, 2023.

\bibitem{Wu:arXiv24}
V.~Wu.
\newblock {$\mathrm C^*$}-algebras associated to directed graphs of groups, and
  models of {K}irchberg algebras.
\newblock {\em J. Funct. Anal.}, 288(3):Paper No. 110740, 39, 2025.

\bibitem{Zhang:PJM2}
S.~Zhang.
\newblock Diagonalizing projections in multiplier algebras and in matrices over
  a {$\mathrm C^*$}-algebra.
\newblock {\em Pacific J. Math.}, 145(1):181--200, 1990.

\bibitem{Zhang:CJM}
S.~Zhang.
\newblock {$\mathrm C^*$}-algebras with real rank zero and the internal
  structure of their corona and multiplier algebras. {III}.
\newblock {\em Canad. J. Math.}, 42(1):159--190, 1990.

\bibitem{Zhang:JOT}
S.~Zhang.
\newblock A {R}iesz decomposition property and ideal structure of multiplier
  algebras.
\newblock {\em J. Operator Theory}, 24(2):209--225, 1990.

\bibitem{Zhang:TAMS}
S.~Zhang.
\newblock {$K_1$}-groups, quasidiagonality, and interpolation by multiplier
  projections.
\newblock {\em Trans. Amer. Math. Soc.}, 325(2):793--818, 1991.

\bibitem{Zhang:CM}
S.~Zhang.
\newblock Problems on {$\mathrm C^*$}-algebras of real rank zero and their
  multiplier algebras.
\newblock In {\em Selfadjoint and nonselfadjoint operator algebras and operator
  theory ({F}ort {W}orth, {TX}, 1990)}, volume 120 of {\em Contemp. Math.},
  pages 199--203. Amer. Math. Soc., Providence, RI, 1991.

\bibitem{Zhang:PJM}
S.~Zhang.
\newblock Certain {$\mathrm C^*$}-algebras with real rank zero and their corona
  and multiplier algebras. {I}.
\newblock {\em Pacific J. Math.}, 155(1):169--197, 1992.

\bibitem{Zhang:KT}
S.~Zhang.
\newblock Certain {$\mathrm C^*$}-algebras with real rank zero and their corona
  and multiplier algebras. {II}.
\newblock {\em $K$-Theory}, 6(1):1--27, 1992.

\bibitem{Zhang:IJM}
S.~Zhang.
\newblock {$\mathrm C^*$}-algebras with real rank zero and their corona and
  multiplier algebras. {IV}.
\newblock {\em Internat. J. Math.}, 3(2):309--330, 1992.

\bibitem{Zhang:Ann}
S.~Zhang.
\newblock Exponential rank and exponential length of operators on {H}ilbert
  {$\mathrm C^*$}-modules.
\newblock {\em Ann. of Math. (2)}, 137(1):129--144, 1993.

\bibitem{Zhang:JFA}
W.~Zhang.
\newblock Tracial state space with non-compact extreme boundary.
\newblock {\em J. Funct. Anal.}, 267(8):2884--2906, 2014.

\end{thebibliography}
\end{document}